\newtheorem{thm}{Theorem}[section]
\newtheorem{defn}[thm]{Definition}
\newtheorem{lmm}[thm]{Lemma}
\newtheorem{prop}[thm]{Proposition} 
\newtheorem*{rmk*}{Remark} 
\newtheorem{rmk}[thm]{Remark}
\newtheorem{coro}[thm]{Corollary}
\numberwithin{equation}{section}
\newcommand{\R}{\ensuremath{\mathbb{R}}}
\newcommand{\Z}{\ensuremath{\mathbb{Z}}}
\newcommand{\N}{\ensuremath{\mathbb{N}}}
\newcommand{\WF}{\ensuremath{\mathrm{WF}}}
\newcommand{\Ell}{\ensuremath{\mathrm{Ell}}}
\newcommand{\supp}{\ensuremath{\mathrm{supp}}}
\newcommand{\cu}{\ensuremath{\mathrm{cu}}}
\newcommand{\mk}{\ensuremath{\mathfrak}}
\newcommand{\tb}{\ensuremath{\textbf}}
\newcommand{\la}{\ensuremath{\langle}}
\newcommand{\ra}{\ensuremath{\rangle}}
\newcommand{\Op}{\ensuremath{\mathrm{Op}}}
\newcommand{\sgn}{\ensuremath{\mathrm{sgn}}}
\newcommand{\Id}{\ensuremath{\mathrm{Id}}}
\newcommand{\Sp}{\ensuremath{\mathrm{Sp}}}
\newcommand{\cualpha}{\ensuremath{\mathrm{cu},\alpha}}
\begin{document}
\setlength{\emergencystretch}{3em}
\title{Propagation of Singularities with Normally Hyperbolic Trapping}

\author{Qiuye Jia}

\begin{abstract}
We prove a new microlocal estimate with normally hyperbolic trapping, which can be applied to Kerr and Kerr-de Sitter spacetimes. 
We use a new type of symbol class, and corresponding operator class, which is constructed by blowing up the intersection of the unstable manifold and fiber infinity. For scalar wave equations on Kerr and Kerr-de Sitter spacetimes, the extra loss of the microlocal estimates compared with the standard propagation of singularities without trapping is arbitrarily small.
\end{abstract}

\maketitle

\tableofcontents

\section{Introduction}
\subsection{Background} \label{Background}
The Kerr(-de Sitter) spacetime (before compactifying at time infinity) is a manifold $M^\circ$ equipped with a Lorentzian metric $g_{\mk{m},\mk{a}}$ such that
\begin{align}
M^\circ=\R_t \times X , \quad X=(r_e,r_c) \times \mathbb{S}^2.
\label{kds_manifold}
\end{align}
For detailed definitions of $r_e,r_c,g_{\mk{m},\mk{a}}$, please refer to Sections \ref{main_results} and \ref{kds_intro}. $(M^\circ,g_{\mk{m},\mk{a}})$ describes a rotating black hole. It is determined by two parameters: the angular momentum $\mk{a}$ and the mass $\mk{m}$. Our paper concerns the regularity of solutions to wave equations on Kerr(-de Sitter) spacetimes by proving propagation of singularities type estimates. Propagation of singularities builds a connection between the regularity of a solution to a partial differential equation at different locations on null-bicharacteristics, which are integral curves of the Hamilton vector field associated to that equation. The propagation with finite time, i.e. the parameter of null-bicharacteristics, was considered by H\"ormander \cite{hormander_propagation} and his work joint with Duistermaat \cite{duistermaat1972fourier}. The propagation with infinite time bifurcates into radial point estimates and trapping estimates. An important form of the former was proved by Vasy \cite{vasy2013microlocal}. Trapping estimates, in particular with normally hyperbolic trapping, are developed by a series of works.

In \cite{wunsch2011resolvent}, Wunsch and Zworski proved high energy resolvent estimates (microlocally near the trapped set) with a logarithmic order loss of the regularity when the spectral parameter is on the real line and a polynomial order loss when the spectral parameter is in a thin band below the real line. 
And they combined this with standard theory on asymptotically Euclidean manifolds (playing the role of the spatial slice in a stationary spacetime) to obtain exponential energy decay for certain wave equations on stationary spacetimes with asymptotically Euclidean spatial slices.
Hintz \cite{hintz2021normally} proved propagation estimates with one extra Sobolev order loss (extra compared with non-trapping estimates) for weighted Sobolev spaces on Kerr(-de Sitter) spacetimes and its time-dependent perturbations, which is an important motivation for our work.

In this paper we prove propagation estimates with arbitrarily small extra loss of regularity compared with the classical non-trapping propagation estimates when the operator is symmetric up to subleading order.  
And this loss of regularity is necessary according to \cite{sbierski2015characterisation}.
One major application of estimates of this type is to wave equations on Kerr(-de Sitter) spacetimes. The novelty of this paper is that we associate different orders to different boundary faces in the blown-up compactified cotangent bundle. This further decomposes the positive commutator argument in \cite{hintz2021normally}.

\subsection{The main result}
\label{main_results}
The metric $g_{\mk{m},\mk{a}}$ on the Kerr(-de Sitter) spacetime $M^\circ$ with angular momentum $\mk{a}$ and mass $\mk{m}$ is given by 
\begin{align}
\begin{split}
g_{\mk{m},\mk{a}} =& -(r^2+\mk{a}^2\cos^2\theta)
\big(\frac{dr^2}{\Delta(r)}+\frac{d\theta^2}{\Delta_\theta}\big)
-\frac{\Delta_\theta \sin^2\theta}{\Delta_0^2(r^2+\mk{a}^2\cos^2\theta)}
\big(\mk{a}dt-(r^2+\mk{a}^2)d\varphi\big)^2\\
&+\frac{\Delta(r)}{\Delta_0^2(r^2+\mk{a}^2\cos^2\theta)}
\big(dt-\mk{a}\sin^2\theta d\varphi\big)^2,
\end{split}
\label{kds_metric}
\end{align}
where $\Lambda$ is the cosmological constant and
\begin{align}
\begin{split}
&\Delta(r) = (r^2+\mk{a}^2)(1-\frac{\Lambda r^2}{3})-2\mk{m}r, \quad \Delta_\theta=1+\frac{\Lambda \mk{a}^2}{3}\cos^2\theta,\\
&\Delta_0 = 1+\frac{\Lambda \mk{a}^2}{3}, \quad \Lambda \geq 0.
\end{split}
\label{delta_intro}
\end{align}
Specifically, the case $\Lambda>0$ is called the Kerr-de Sitter spacetime, while the case $\Lambda=0$ is called the Kerr spacetime.

The wave operator $\Box_{g_{\mk{m},\mk{a}}}$ on $(M^\circ,g_{\mk{m},\mk{a}})$ is a second order differential operator and its principal symbol is the dual metric function $G_{\mk{m},\mk{a}}(z,\zeta):=|\zeta|^2_{g_{\mk{m},\mk{a}}^{-1}(z)}$, where $\zeta \in T^*_zM^\circ$. The characteristic set $\Sigma_{\mk{m},\mk{a}}$ is defined to be 
\begin{align*}
\Sigma_{\mk{m},\mk{a}}:= \{ (z,\zeta) \in T^*M^\circ\backslash o : G_{\mk{m},\mk{a}}(z,\zeta) = 0   \},
\end{align*}
where $o$ is the zero section in $T^*M$. Denote momentum variables dual to $t,r,\varphi,\theta$ by $\xi_t,\xi_r,\xi_\varphi,\xi_\theta$ respectively, then $\Sigma_{\mk{m},\mk{a}}$ has two components
\begin{align*}
\Sigma_{\pm} := \Sigma_{\mk{m},\mk{a}} \cap \{\pm g_{\mk{m},\mk{a}}^{-1}(\zeta,dt) > 0\}.
\end{align*}
Since all objects concerned here are homogeneous with respect to the dilation of fiber variables, we can pass to the cosphere bundle which is obtained by identifying orbits of the $\R^+$ action given by dilations in the fibers: $S^*M^\circ=(T^*M^\circ\backslash o)/\R^+$. 
The rescaled vector field $\textbf{H}_{G_{\mk{m},\mk{a}}}:=(g_{\mk{m},\mk{a}}^{-1}(\zeta,dt))^{-1}H_{G_{\mk{m},\mk{a}}}$ is a homogeneous (with respect to the fiber dilation) vector field of degree 0, and thus can be viewed as a vector field on $S^*M^\circ$. The main feature we use is that there exists a trapped set for $\textbf{H}_{G_{\mk{m},\mk{a}}}$ in $\Sigma_{\mk{m},\mk{a}}$ where the $\textbf{H}_{G_{\mk{m},\mk{a}}}$-flow is $r$-normally hyperbolic for every $r$ in the sense of \cite{wunsch2011resolvent}. See Section \ref{application} for more discussion.
The trapped set is given by
\begin{align*}
\Gamma_0:=\{ (z,\zeta) \in \Sigma_{\mk{m},\mk{a}}: \xi_r=r-r_{\xi_t,\xi_\varphi}=G_{\mk{m},\mk{a}}=0 \}.
\end{align*}
The definition of $r_{\xi_t,\xi_\varphi}$ is given in Proposition \ref{prop_trap}. Null-geodesics starting from $\Gamma_0$ never escape through the event horizon $\{r=r_e\}$ or to `spatial' infinity. Instead, when projected to $X$, they stay in the compact set $\{r=r_{\xi_t,\xi_\varphi}\}$.  In addition, this trapped set has the form $\Gamma_0=\Gamma_0^u \cap \Gamma_0^s$. $\Gamma_0^{u/s}$ are unstable/stable manifolds respectively, consisting of $(z,\zeta)\in \Sigma_{\mk{m},\mk{a}}$ such that the backward/forward integral curve starting at $(z,\zeta)$ tends to $\Gamma_0$. Both $\Gamma^{u/s}_0$ are conic codimension 1 submanifolds of $\Sigma_{\mk{m},\mk{a}}$ given by
\begin{align}
\Gamma^{u/s}_0:= \{{\varphi}^{u/s} =0 \} \cap \Sigma_{\mk{m},\mk{a}},  \label{defn_Gammau/s}
\end{align}
where ${\varphi}^{u/s} := \xi_r \mp  \sgn(r-r_{\xi_t,\xi_\varphi})(1+\hat{\alpha})\sqrt{\frac{F_{\xi_t,\xi_\varphi}(r)-F_{\xi_t,\xi_\varphi}(r_{\xi_t,\xi_\varphi})}{\Delta(r)}}$, $r_{\xi_t,\xi_\varphi}$ is defined in Proposition \ref{prop_trap}, $F_{\xi_t,\xi_\varphi}(r):= \frac{1}{\Delta(r)}( (r^2+\mk{a}^2)\xi_t+\mk{a}\xi_\varphi )^2$, $\Delta(r)$ is defined in (\ref{delta_intro}), and $\hat{\alpha}=\frac{\Lambda\mk{a}^2}{3}$.

Our key analytic tool is the cusp pseudodifferential algebra, which we recall below. We recommend \cite{hintz2021normally}\cite{mazzeo1998pseudodifferential}\cite{vasy2018minicourse} as further references on this topic. Our construction is on Kerr(-de Sitter) spacetimes, but the same construction applies to other manifolds with boundary. First we compactify $M^\circ$ to $M= (M^\circ \sqcup ([0,\infty)_\tau \times X))/\sim$, where $\sim$ identifies $(t,x) \in \R_t \times X$ and $(\tau=t^{-1},x)$ for $0<t<\infty$. The cusp vector fields are
\begin{align*}
\mathcal{V}_{\mathrm{cu}}(M):=\{ V \in \mathcal{V}(M): V\tau \in \tau^2 \mathcal{C}^\infty(M) \}.
\end{align*}
Away from $\{\tau=0\}$, $\mathcal{V}_{\mathrm{cu}}(M)$ is the same as $\mathcal{V}(M)$. Let $x=(x_1,...,x_{n-1})$ be coordinates on $X$, then near $\{\tau=0\}$, $\mathcal{V}_{\mathrm{cu}}(M)$ is locally spanned by 
\begin{align}
\tau^2\partial_\tau,\partial_{x_1},\partial_{x_2},...\partial_{x_{n-1}} \label{cusp_frame}
\end{align}
as a $\mathcal{C}^\infty(M)$-module. And vector fields in $\mathcal{V}_{\mathrm{cu}}(M)$ can be viewed as smooth sections of a vector bundle ${}^{\mathrm{cu}}TM$ called the cusp tangent bundle. (\ref{cusp_frame}) also represents a local frame of ${}^{\mathrm{cu}}TM$.
The class of $m$-th order cusp differential operators $\mathrm{Diff}^m_{\mathrm{cu}}(M)$ then consists of sums of products of up to $m$ cusp vector fields. As an element of $\mathcal{V}_{\mathrm{cu}}(M)$, $\tau^2\partial_\tau$ is non-vanishing even down to $\tau=0$, which is similar to $\tau\partial_\tau$ being non-vanishing in Melrose's b-calculus. 

The cusp cotangent bundle ${}^{\mathrm{cu}}T^*M$ is the dual bundle of ${}^{\mathrm{cu}}TM$. Locally it is spanned by
\begin{align}
\frac{d\tau}{\tau^2},dx^1,dx^2,...,dx^{n-1}.                     \label{cusp_co_frame}
\end{align}
 Writing covectors in ${}^{\mathrm{cu}}T^*M$ as
\begin{align*}
-\sigma \frac{d\tau}{\tau^2} + \sum_{i=1}^{n-1}\xi_i dx^i = \sigma dt + \sum_{i=1}^{n-1}\xi_i dx^i,
\end{align*}
then to a differential operator
\begin{align*}
P = \sum_{j+|\alpha|}a_{j\alpha}(\tau,x)(-\tau^2D_\tau)^jD_x^\alpha \in \mathrm{Diff}^m_{\mathrm{cu}}(M), \quad a_{j\alpha} \in \mathcal{C}^\infty(M),
\end{align*}
we associate a function called its principal symbol:
\begin{align*}
\sigma_{\mathrm{cu}}^m(P):=\sum_{j+|\alpha|=m} a_{j\alpha}\sigma^j\xi^\alpha.
\end{align*}
We write $\sigma(P)$ when there is no confusion about the order and in which pseudodifferential algebra we are discussing this operator. More importantly, $\mathrm{Diff}^m_{\mathrm{cu}}(M)$ can be generalized by allowing symbols to be more general functions other than polynomials in fiber variables. The class of $m$-th order symbols $S^m_{\mathrm{cu}}(M)$ consists of functions $a\in \mathcal{C}^\infty({}^{\mathrm{cu}}T^*M)$ such that
\begin{align} \label{defn_cu_symbol}
|\partial_\tau^j \partial_x^\alpha\partial_\sigma^k\partial_\xi^\beta a(\tau,x,\sigma,\xi)| \leq C_{j\alpha k \beta} (1+|\sigma|+|\xi|)^{m-k-|\beta|}
\end{align}
in terms of local coordinates. $a$ is said to be a symbol of order $m$ on a cone if $a$ satisfies this estimate on this cone. $a$ is said to be of order $-\infty$ if it satisfies this estimate all $m$ (with constants depending also on $m$). Away from $\{\tau=0\}$, since $\tau^2\partial_\tau=-\partial_t$, this condition remains the same when we replace $\partial_\tau$ by $\partial_t$, which is how we define the uniform symbol class. The quantization $\Op(a)$ of $a \in S_{\mathrm{cu}}^m(M)$ is a pseudodifferential operator acting on smooth functions $u \in \dot{\mathcal{C}}^\infty(M)$ supported in a single coordinate patch near $\{\tau=0\}$ by
\begin{align*}
\Op(a)u(t,x) = (2\pi)^{-n}\int e^{i((t-t')\sigma+(x-x')\xi)}a(t^{-1},x,\sigma,\xi)u(t',x')dt'dx'd\sigma d\xi.
\end{align*}
For general $u$, we define this action using a partition of unity. We denote the collection of all such $\Op(a)$ by $\Psi_{\mathrm{cu}}^m(M)$. Then we define the cusp wavefront set $\WF'_{\mathrm{cu}}(A)$ of an operator $A=\Op(a)$ by defining its complement.
\begin{defn}
For $\mk{z} \in {}^{\mathrm{cu}}T^*M$, we say that $\mk{z} \notin \WF'_{\mathrm{cu}}(A)$ when there exists a cone containing $\mk{z}$ on which $a$ is of order $-\infty$ (i.e. satisfies (\ref{defn_cu_symbol}) for all $m \in \R$).
\label{defn_cusp_wf}
\end{defn}
This is a conic set and we identify it with its image in the quotient space ${}^{\mathrm{cu}}S^*M$. Next we define cusp $L^2$-spaces and Sobolev spaces. The cusp cotangent bundle is locally spanned by $\frac{d\tau}{\tau^2},dx_1,dx_2,...,dx_{n-1}$, whose wedge product gives the cusp density $\nu_{\mathrm{cu}}$. $L_{\mathrm{cu}}^2(M)$ consists of functions that are supported on $\{ \tau \leq 1 \}$ and square integrable with respect to density $\nu_{\mathrm{cu}}$, and it is equipped with the norm $||u||_{L_{\mathrm{cu}}^2(M)}:=(\int_M |u|^2d\nu_{\mathrm{cu}})^{\frac{1}{2}}$. For $r\in \R$, the $r$-weighted cusp $L^2$-space is defined by
\begin{align*}
L_{\mathrm{cu}}^{2,r}(M):=\{  u \in L_{\mathrm{loc}}^2(M):
\supp u \subset \{ \tau \leq 1 \}, \tau^{-r}u \in L_{\mathrm{cu}}^2(M) \},
\end{align*}
where $L_{\mathrm{loc}}^2(M)$ is the space of locally $L^2$-integrable function class on $M$. For a non-negative integer $s$, we define the weighted cusp Sobolev space as
\begin{align*}
H_{\mathrm{cu}}^{s,r}(M):=\{ u \in L_{\mathrm{cu}}^{2,r}: Au \in L_{\mathrm{cu}}^{2,r} \quad \text{for all } A \in \mathrm{Diff}_{\mathrm{cu}}^s(M)  \}.
\end{align*}
For general $s>0$, $H_{\mathrm{cu}}^{s,r}(M)$ is defined by interpolation. For $s<0$, $H_{\mathrm{cu}}^{s,r}(M)$ is defined to be the dual space of $H_{\mathrm{cu}}^{-s,-r}(M)$.

The operator of our major concern is $P \in \Psi_{\mathrm{cu}}^{2}(M)$ with real principal symbol $p= \sigma^2_{\mathrm{cu}}(P)$ and characteristic set $\Sigma:=p^{-1}(0) \subset \prescript{\mathrm{cu}}{}{T}^*M \backslash o$, where $o$ is the zero section of $\prescript{\mathrm{cu}}{}{T}^*M$. The order $2$  can be replaced by other numbers, and we make this choice in our statement of the theorem because of its most important application: scalar wave equations on asymptotically Kerr and Kerr-de Sitter spacetimes, which has differential order 2.

Let $\hat{\rho}$ be a defining function of fiber infinity in ${}^{\cu}T^*M$ and $\tb{H}_p=\hat{\rho}H_p$ be the rescaled Hamilton vector of $p$. 
The key property we need $P$ to satisfy is that the trapped set for the $\tb{H}_p$-flow is \emph{normally hyperbolic trapping} in the sense of Section \ref{sec_assumptions}. 
Let $\Gamma^u,\Gamma^s$ be the unstable and stable manifolds of the $\tb{H}_p$-flow respectively and $\Gamma = \Gamma^u \cap \Gamma^s \cap \{ \tau=0 \}$ is the trapped set at time infinity. See Section \ref{sec_assumptions} for detailed definitions, and see Proposition \ref{prop: trappedset} for their explicit characterization in exact Kerr(-de Sitter) spacetimes, Theorem \ref{thm: structure of perturbed dynamics} for the setting of asymptotically Kerr-(de Sitter) spacetimes.  

Our main result, the propagation of singularities says that for $Pv=f$,  when one has control of weighted Sobolev norms of $f$ near $\Gamma$ and weighted Sobolev norms of $v$ near the part of $\Gamma^u$ except $\Gamma$, then one can propagate this control of $v$ into $\Gamma$, with a loss of differential order $(1-\lambda\alpha)$. More precisely, we have: 
\begin{thm}
Let $P \in \Psi_{\mathrm{cu}}^{2}(M)$ with $\tb{H}_p$-flow being normally hyperbolic in the sense of Section~\ref{sec_assumptions}, and its sub-principal symbol ${\bf p}_1 = \hat{\rho} \sigma_{\cu}^1(\frac{1}{2i}(P-P^*))$ satisfies
\begin{align} \label{eq: p1 condition, in main thm}
\sup_{\Gamma} {\bf p}_1 < \frac{1}{2} \nu_{\min},
\end{align}
where $\nu_{\min}$ is the minimal expansion and contraction rate of the $\tb{H}_p$-flow on normal directions on $\Gamma$ (see \eqref{eq: nu min definition} for the precise definition).
Then for $\alpha \in (0,1)$ and $\lambda \in (0,1)$ satisfying (\ref{eq: lambda condition}), and $v \in H_{\cu}^{-N,\mu}(M)$, such that $Pv=f$, suppose
$\WF^{s+1-\lambda\alpha}(v) \cap \Gamma^u = \emptyset$, 
$\WF_{\mathrm{cu}}^{s+1-\lambda\alpha,\mu}(v)\cap(\bar{\Gamma}^u\backslash\Gamma)=\emptyset$, and $\WF_{\mathrm{cu}}^{s-\alpha,\mu}(f)\cap\Gamma=\emptyset$, then $\WF_{\mathrm{cu}}^{s,\mu}(v)\cap \Gamma=\emptyset$.
\label{thm_propagation_M}
\end{thm}

\begin{rmk} \label{rmk: lambda explanation}
$\lambda$ is introduced when we include factors like $(x_1^2+\hat{\rho}^{2\alpha})^{-\lambda}$ in our commutant (see Section \ref{sec_step1}).
Since the absence of wavefront sets means microlocal regularity of corresponding order there, this theorem can be roughly interpreted as: 
suppose $u$ has $H^{s+1-\lambda\alpha,\mu}_{\cu}$-regularity at somewhere outside $\Gamma$, we can propagate this regularity into $\Gamma$, with a loss of $(1-\lambda\alpha)$ order, concluding $H^{s,\mu}_{\cu}$-regularity at $\Gamma$. 
In fact, we prove a sharper estimate on refined function spaces defined in Section \ref{sec: Sobolev spaces and operator classes, weighted}.
$\lambda\alpha$ can be thought of as the order on the front face, which corresponds to the differential order on the unstable manifold, that we can improve when we take advantage of our new pseudodifferential algebra. 
When $P-P^* \in \Psi_{\cu}^{m-2}$, we can make the loss $(1-\lambda\alpha)$ arbitrarily close to $0$. 
See Theorem \ref{thm: mirolocal estimate} and Remark \ref{rmk: lambda, loss explanation}.
\end{rmk}

\begin{rmk}  \label{rmk: p1 condition}
Our $\lambda$ satisfying \eqref{eq: lambda condition} exists only if $\sup_{\Gamma} {\bf p}_1 < \nu_{\min}$, which holds by \eqref{eq: p1 condition, in main thm}, which is stronger than that and needed in our positive commutator argument in Section \ref{sec_step2.3}. 
This is satisfied for scalar wave operator $\Box_g$ when we choose the density defining $L^2$-norm (this does not change the space, just changing the norm to an equivalent one) and the adjoint to be the one adapted to $g$, which makes $\Box_g$ symmetric and the subprincipal symbol vanishes. Thus this condition also holds for perturbations of $\Box_g$ that are small on the subprincipal level.

For tensor valued equations, which we are not treating directly here, condition \eqref{eq: p1 condition, in main thm} is verified for the linearized gauge-fixed Einstein equation on Schwarzschild-de Sitter in \cite[Section~9.1]{hintz2018global}, and for wave equations on tensors in \cite{hintz2017resonance}. Those computations applies to Schwarzschild spacetime as well. And by continuity, this also verifies the case of slowly rotating Kerr(-de Sitter) black holes.
\end{rmk}

\subsection{Previous works}
From the vast literature on the study of wave equations on Kerr and Kerr-de Sitter spacetimes, we only list a few examples.

The close connection between the normally hyperbolic trapping and Kerr black holes is observed in \cite{wunsch2011resolvent}. Then this property is extended to the range $\{|\mk{a}|<\mk{m}\}$ by Dyatlov \cite{dy15}. The Kerr-de Sitter case with small angular momentum is discussed by Vasy \cite{vasy2013microlocal}. 

Nonnenmacher and Zworski \cite{nonnenmacher2015decay} extended the results in \cite{wunsch2011resolvent} with weaker conditions. Before Hintz's estimates in \cite{hintz2021normally}, Dyatlov \cite{dyatlov2016spectral} obtained the width of the resonance free strip of the modified Laplacian under the same dynamical assumptions. Hintz's work can be viewed as a quantitative version of it. Vasy \cite{vasy2013microlocal} gave a systematic microlocal treatment of Kerr-de Sitter spacetimes with small angular momentum $\mk{a}$ and obtained an expansion of solutions to wave equations on Kerr-de Sitter spacetimes with terms corresponding to quasinormal modes. Recently, this is extended to the full subextremal range (see Section \ref{kds_intro} for its definition) by Petersen and Vasy \cite{petersen2024wave}.
For the Kerr case ($\Lambda=0$), it is known that results for small $|\mk{a}|$ apply to the full subextremal range. See \cite{whiting1989mode}\cite{dafermos2012black}\cite{shlapentokh2015quantitative} and references therein. 

Regarding the stability aspect, the stability of the Schwarzschild black holes was considered as early as in \cite{wald1979note}. 
The linear stability of slowly rotating Kerr black holes was considered in \cite{hafner2021linear}. The non-linear stability of various families of black holes in various regions was considered in \cite{zilhao2014testing}\cite{hintz2018global}\cite{dafermos2021non}\cite{klainerman2020global}\cite{klainerman2023kerr}\cite{shen2024kerr}\cite{giorgi2024kerrwave} and references therein. The mode stability was also investigated in \cite{casals2021hidden}\cite{shlapentokh2015quantitative}.

Local energy and decay estimates in the Schwarzschild case are proved in \cite{price1972}. For the Kerr case, see \cite{tataru2011local}\cite{tataru2013local}, in which the author proved $t^{-3}$ local uniform decay rate for linear waves. See also \cite{andersson2015hidden}\cite{dafermos2016decay} for decay estimates for wave equations, and sharp decay estimates are proven in \cite{angelopoulos2018late}\cite{hintz2022sharpprice}. The existence of solutions to semilinear equations with small initial conditions and an extra null condition was considered in \cite{luk2013null}. 

The method of blowing up the compactified cotangent bundle and the construction of our pseudodifferential algebra in this paper borrow the idea of the second microlocalization from \cite{jean1986second}\cite{vasy2021resolvent}\cite{vasysemiclassical}. The method of treating Schwartz kernels as paired Lagrangian distributions in \cite{de2015diffraction} also inspired us.

\subsection{Structure of this paper}
In Section \ref{sec_microlocal_analysis}, we recall some basic notions of microlocal analysis, and introduce our new symbol classes, operator classes and Sobolev spaces. Then we prove basic facts about them, including mapping properties, composition laws, elliptic estimates and G\r{a}rding's inequality. In Section \ref{setup}, we state assumptions needed for the proof of the main result and a microlocal quantitative version of Theorem \ref{thm_propagation_M}. Section \ref{sec_commutator} is our main analytic part, which is a multistage positive commutator argument. Finally, in Section \ref{application}, we apply the microlocal framework that we constructed to Kerr(-de Sitter) spacetimes.

\section{Microlocal Analysis}
\label{sec_microlocal_analysis}
In this section, we are going to develop basic facts about microlocal analysis for the symbol class we construct for our purpose. Throughout the rest of the paper, we assume that all functions on $M$ are supported in the region $\{\tau \leq 1\}$ (or equivalently, $\{t \geq 1\}$) and all functions on throughout our argument, including latter sections. And we don't make this support assumption on symbols and pseudodifferential operators so that proofs of properties of our new pseudodifferential algebra referring to proofs in the setting of the standard or cusp pseudodifferential algebras become more transparent. $S^m_{\mathrm{cu}}(M),\Psi_{\mathrm{cu}}^m(M)$ denote the cusp symbol class and pseudodifferential operator class, respectively. 

\subsection{The new symbol class}  \label{sec: new symbol class}
The spacetime we consider is $M^\circ=\R_t \times X$, where $X$ is an $n-1$ dimensional closed manifold; thus, $n=4$ in our application to Kerr(-de Sitter) spacetimes. 
We recall the basic setup of the cusp calculus in the introduction.
$M$ is obtained by compactifying $M^\circ$ in the $t$-direction 
\begin{align}
  M:= (M^\circ \sqcup ([0,\infty)_\tau \times X))/\sim,   \label{Mdef}
\end{align}
where $\sim$ is the identification: $(t,x) \sim (\tau=t^{-1},x),\,x\in X,t \in (0,\infty)$. Use $\mathcal{V}(M)$ to denote smooth vector fields on $M$. To facilitate our analysis, we introduce the cusp vector fields:
\begin{align}
\mathcal{V}_{\mathrm{cu}}(M):= \{ V\in \mathcal{V}(M): V\tau \in \tau^2 \mathcal{C}^{\infty}(M) \}.	
\end{align}
Suppose $x_1,x_2,...,x_{n-1}$ are local coordinates on $X$, then $\mathcal{V}_{\mathrm{cu}}(M)$ is locally spanned by
\begin{align}
\tau^2\partial_\tau,\partial_{x_1},\partial_{x_2}...,\partial_{x_{n-1}}.   \label{cuvec}
\end{align}
${}^{\mathrm{cu}}TM$ is the vector bundle with local frame (\ref{cuvec}). We point out that, as a cusp vector field, $\tau^2\partial_\tau$ is nonvanishing down to $\tau=0$. And its dual bundle, the cusp cotangent bundle, ${}^{\mathrm{cu}}T^*M$ is locally spanned by
\begin{align}
\frac{d\tau}{\tau^2},dx_1,dx_2,...,dx_{n-1}.  \label{cucovec}
\end{align}
We denote the radial compactification of ${}^{\mathrm{cu}}TM,{}^{\mathrm{cu}}T^*M$ by ${}^{\mathrm{cu}}\bar{T}M,{}^{\mathrm{cu}}\bar{T}^*M$ respectively, and
the defining function of fiber infinity by $\hat{\rho}$. Concretely, let $\bar{\R}^n$ denote the radial compactification of $\R^n$, and in coordinate patches that trivialize ${}^{\mathrm{cu}}T^*M$, we replace the fibers $\R^n$ by $\bar{\R}^n$. Concretely, suppose $\mathcal{U}$ is an open set in $M$ on which $T^*M$ is trivialized, then we define:
\begin{align*}
{}^{\mathrm{cu}}\bar{T}^*_{\mathcal{U}}M:=\mathcal{U} \times \bar{\R}^n.
\end{align*}
We further identify the quotient of ${}^{\mathrm{cu}}\bar{T}^*M$ under the fiber dilation with the cusp cosphere bundle ${}^{\mathrm{cu}}S^*M$. 

Next we consider a codimension one conic submanifold $Y$ of ${}^{\mathrm{cu}}\bar{T}^*M\setminus o$, locally defined by $\phi_Y$ which is homogeneous of degree 0 with respect to the fiber dilation and assume that its restriction to $S^*M^\circ$ is smooth. In addition we assume that 
\begin{align*}
\bar{Y}:=Y \cap{}^{\mathrm{cu}}\bar{T}_{\partial M}^*M
\end{align*}
is a conic submanifold of ${}^{\mathrm{cu}}\bar{T}_{\partial M}^*M$ with local defining function $\bar{\phi}_Y:=\phi_Y|_{\bar{T}_{\partial M}^*M}$ such that when identified as a function on ${}^{\mathrm{cu}}S^*_{\partial M}M$, $\bar{\phi}_Y \in \mathcal{C}^\infty({}^{\mathrm{cu}}S^*_{\partial M}M)$.
Then we set 
\begin{align*}
Y_0:=[0,\infty)_\tau \times \bar{Y}
\end{align*}
to be the stationary extension of $\bar{Y}$. And $\phi_{Y_0}$, the defining function of $Y_0$, is the stationary extension of $\bar{\phi}_Y$. 
In addition, we assume that
\begin{align} \label{eq: phi Y0 = x1}
\phi_{Y_0} = x_1,
\end{align}
which is the first `spatial' coordinate on $M$. This is not satisfied for the unstable manifold in Kerr(-de Sitter) spacetimes, which is what $Y_0$ is expected to model, but we will reduce to this model case through conjugating by a Fourier integral operator in Section \ref{application}.

Next we define our new symbol class $S_{\mathrm{cu},\alpha}^{m,\tilde{m}}(M,Y)$. In our setting, $M= (M^\circ \sqcup ([0,\infty)_\tau \times X))/\sim$ is the base manifold before blowing up. 
Let $Z$ be the manifold with corners obtained by blowing up boundary hypersurface of $Y_0$ given by the intersection of $Y_0$ and fiber infinity in ${}^{\rm cu}\bar T^*M$ in a manner that we describe below. 

For convenience, we define 
\begin{align} \label{eq: rho, tilde rho definition}
\rho:=\hat{\rho}^\alpha, \quad \tilde{\rho}:=(\phi_{Y_0}^2 + \rho^2)^{1/2} =(x_1^2+\hat{\rho}^{2\alpha})^{1/2},
\end{align}
where $\alpha$ is `the order of the blow up', and we will assume 
\begin{align} \label{eq: 0<alpha<1}
0 < \alpha < 1
\end{align}
throughout the rest of the paper. 
The first step of our blow up is to change the smooth structure of ${}^{\rm cu}\bar T^*M$ near fiber infinity as follows: suppose $(\mathsf{X},\hat{\rho},\hat{\xi})$ is a local coordinate system of ${}^{\rm cu}\bar T^*M$, with $\mathsf{X}$ being a coordinate system of $M$ and $\hat{\xi}$ being spherical coordinates of the fiber part, then our new smooth structure uses  $(\mathsf{X},\rho,\hat{\xi})$ as the local coordinate system over the same region. We denote the manifold with corners which topologically is identical with ${}^{\rm cu}\bar T^*M$ but equipped with this new smooth structure by ${}^{\cualpha}\bar T^*M$.
Then 
\begin{align}
Z:= [{}^{\cualpha}\bar T^*M;\{\rho=0,x_1=0\}]
\end{align}
is obtained by blowing up $\{\rho=0,x_1=0\}$ in ${}^{\cualpha}\bar T^*M$ homogeneously, which means replacing $\{\rho=0,x_1=0\}$ by its inward-pointing normal sphere bundle in ${}^{\cualpha}\bar T^*M$. 
More concretely, suppose
\begin{align}
\tau,x_1,...,x_{n-1}, \rho, \hat{\xi}_1,...,\hat{\xi}_{n-1},
\end{align}
are local coordinates on ${}^{\cualpha}\bar T^*M$ near $\{\rho=0,x_1=0\}$, then in the interior of the front face (i.e. the image of $\{\rho=0,x_1=0\}$ in $Z$), or equivalently near the intersection of the front face and the lift of $Y_0$, 
\begin{align}  \label{eq: Z coordinates near Y0}
\sigma = \frac{x_1}{\rho} , \rho, \tau, x_2,...,x_{n-1}, \hat{\xi}_1,...,\hat{\xi}_{n-1}, 
\end{align}
is a valid coordinate system on $Z$. Near the corner formed by the front face and the lift of fiber infinity,
\begin{align}  \label{eq: Z coordinates near fiber infinity}
\sigma' =  \frac{\rho}{x_1}, x_1,\tau, x_2,...,x_{n-1}, \hat{\xi}_1,...,\hat{\xi}_{n-1},
\end{align}
is a coordinate system on $Z$. In summary, $\tilde{\rho}$ is a defining function of the front face and $\rho$ is a defining function of the lift of fiber infinity away from the front face.

We refer readers to \cite[Chapter~5]{melrose1996MWC} for more details about blow up. Notice that the blow up we performed above is \emph{not} the quasihomogeneous blow up introduced by Melrose in \cite[Chapter~5]{melrose1996MWC}, which involves only homogeneity with integer indices. 


\begin{figure}[h] 	
	\begin{center} 
		\includegraphics[scale=1]{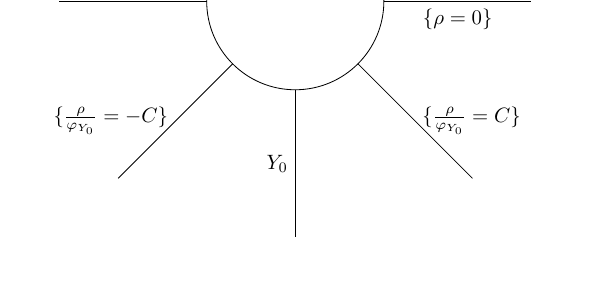}
	\end{center}
	\caption{The semicircle represents the front face. Two inclining lines are representing where $\frac{\rho}{\phi_{Y_0}}$ is a constant.}
	\label{fig:blow up, Y}
\end{figure}

\begin{defn}   \label{defn: smooth, new symbol class}
With $\tilde{\rho}$ in \eqref{eq: rho, tilde rho definition},
$S_{\mathrm{cu},\alpha}^{m,\tilde{m}}(M,Y)$ consists of functions on $Z$ satisfying:
\begin{align} \label{inv_1}
|W_1...W_ka| \leq C\hat{\rho}^{-m}\tilde{\rho}^{-\frac{\tilde{m}-m}{\alpha}},   
\end{align}
where $C$ may depends on $W_i$, which are are lifts of smooth vector fields on ${}^{\cualpha}\bar{T}^*M$ to $Z$ that are homogeneous of degree 0 with respect to both the fiber dilation near fiber infinity and tangent to the front face introduced by the blow up, and the lift of fiber infinity.  
\end{defn}
In the construction of $Z$ and the definition of $S_{\mathrm{cu},\alpha}^{m,\tilde{m}}(M,Y)$, we used $Y_0$ instead of $Y$. But $\bar{Y}$ and consequently $Y_0$ is uniquely determined by $Y$. Thus $S_{\mathrm{cu},\alpha}^{m,\tilde{m}}(M,Y)$ is well defined. In addition, for all $Y$  that settle down to the same $\bar{Y}$ in the manner described above as $\tau \rightarrow 0$, hence having the same stationary version $Y_0$, the class $S_{\mathrm{cu},\alpha}^{m,\tilde{m}}(M,Y)$ are the same. In particular, $S_{\mathrm{cu},\alpha}^{m,\tilde{m}}(M,Y) = S_{\mathrm{cu},\alpha}^{m,\tilde{m}}(M,Y_0)$. When $W_i$ are absent in \eqref{inv_1}, it imposes following pointwise bounds on $a$: 
on the region that is near the lift of fiber infinity in ${}^{\mathrm{cu}}\bar{T}^*M$ and away from the front face, we have $|a| \leq C \hat{\rho}^{-m}$; on the region that is near the interior of the front face, $a$ satisfies $|a| \leq C\hat\rho^{-\tilde m}$. Thus we say $m$ is the order (or index) associated with fiber infinity and $\tilde{m}$ is the order associated with the front face.

Next we give a description of $S_{\mathrm{cu},\alpha}^{m,\tilde{m}}(M,Y)$ in terms of local coordinates. 
\begin{lmm}
Suppose the defining functions of $Y_0$ and fiber infinity are $x_1$ and  $\hat{\rho}$, then $S_{\mathrm{cu},\alpha}^{m,\tilde{m}}(M,Y)$ consists of functions that are smooth on the interior of $Z$ which in terms of coordinates on $M$ satisfy:
\begin{align}
|\partial_\tau^{\mk{f}}\partial_{x_1}^\mk{l} \partial_{\hat{x}_1}^\beta   \partial_{\tilde{\xi}}^{\tilde{\gamma}} a(\tau,x_1,\hat{x}_1,\xi_{t},\xi)| \leq C \hat{\rho}^{-m+|{\tilde{\gamma}}|}\tilde{\rho}^{-\frac{\tilde{m}-m}{\alpha}-\mk{l}},
\label{diff_1}
\end{align}
where $C$ depends on $\mk{f},\mk{l},\beta,\tilde{\gamma}$. Here $\hat{x}_1$ means all $x_i$ other than $x_1$, $\tilde{\gamma}=(\gamma_\tau,\gamma_1,...,\gamma_{n-1})$, $\tilde{\xi}=(\xi_{t},\xi_1,...,\xi_{n-1})$ with $\xi_{t}$ being the variable dual to $\tau$ and $\xi_i$ being the variable dual to $x_i$, $1 \leq i \leq n-1$.
\label{lmm2}
\end{lmm}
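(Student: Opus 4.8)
The plan is to turn (\ref{diff_1}) into a finite-order Leibniz-and-commutator computation, once the module of vector fields occurring in (\ref{inv_1}) is described in local coordinates on $Z$. First I would note that the statement is local and that away from the corner $\partial Y_0 = \bar Y \cap \{\hat\rho = 0\}$ there is nothing to prove: near a point where $\hat\rho > 0$, and near a point of fiber infinity where $\varphi_{Y_0} \neq 0$, the blow-up is a diffeomorphism and the weights $\hat\rho$, $\tilde\rho$ are bounded above and below, so both (\ref{inv_1}) and (\ref{diff_1}) reduce to the statement that $a$ is a cusp symbol of order $m$ in the sense of (\ref{defn_cu_symbol}). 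Hence it suffices to work in a neighbourhood of $\partial Y_0$, in the normal form of the lemma: $\bar Y = \{x_1 = 0\}$, $\hat\rho = \la\tilde\xi\ra^{-1}$ defines fiber infinity, and $\varphi_{Y_0} = x_1$.

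Next I would introduce the two projective charts covering the front face of the $\alpha$-blow-up. On $\{|x_1| \gtrsim \hat\rho^\alpha\}$ I use coordinates $(\tau, \hat x_1, x_1, \sigma, \omega)$ with $\sigma := \hat\rho^\alpha/x_1$; there $\tilde\rho \sim |x_1|$, the front face is $\{x_1 = 0\}$ and the lift of fiber infinity is $\{\sigma = 0\}$. On $\{\hat\rho^\alpha \gtrsim |x_1|\}$ I use $(\tau, \hat x_1, \hat\rho, S, \omega)$ with $S := x_1/\hat\rho^\alpha$; there $\tilde\rho \sim \hat\rho^\alpha = \rho$, the front face is $\{\hat\rho = 0\}$ and the lift of $Y_0$ is $\{S = 0\}$. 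From $\partial_{\tilde\xi_j}\hat\rho = -\hat\rho^2\omega_j$ (with $\omega_j := \hat\rho\tilde\xi_j$) one computes $\la\tilde\xi\ra\partial_{\tilde\xi_j}\hat\rho = -\hat\rho\omega_j$, $\la\tilde\xi\ra\partial_{\tilde\xi_j}\sigma = -\alpha\sigma\omega_j$ and $\la\tilde\xi\ra\partial_{\tilde\xi_j}S = \alpha S\omega_j$, which shows that $\partial_\tau$, $\partial_{\hat x_1}$, the rescaled fiber derivatives $\la\tilde\xi\ra\partial_{\tilde\xi_j}$ ($j = 1, \ldots, n$), and $x_1\partial_{x_1}$ and $\rho\,\partial_{x_1}$ (the last one being $\sigma\,x_1\partial_{x_1}$ in the first chart and $\partial_S$ in the second) all lift to vector fields that are smooth on $Z$ and tangent to the front face, whereas $\partial_{x_1}$ itself does not -- in the first chart it is transverse to the front face, in the second it is not even smooth on $Z$. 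I would then argue that these vector fields generate, over $\mathcal C^\infty(Z)$, the module of smooth, fiber-order-preserving vector fields on $Z$ tangent to the front face appearing in (\ref{inv_1}): for any such vector field, the components along $\partial_\tau$, $\partial_{\hat x_1}$ and $\la\tilde\xi\ra\partial_{\tilde\xi_j}$ are automatically tangent to the front face, so tangency forces the component along $\partial_{x_1}$ to be a $\mathcal C^\infty(Z)$-combination of $x_1\partial_{x_1}$ and $\rho\,\partial_{x_1}$.

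With the generators in hand, both implications are routine. For (\ref{inv_1}) $\Rightarrow$ (\ref{diff_1}) I write, in each chart, $\partial_{\tilde\xi_j} = \la\tilde\xi\ra^{-1}(\la\tilde\xi\ra\partial_{\tilde\xi_j})$ with $\la\tilde\xi\ra^{-1} \sim \hat\rho$, and $\partial_{x_1} = \tilde\rho^{-1}\,\chi\,V$ with $V$ the generator $x_1\partial_{x_1}$ in the first chart and $\rho\,\partial_{x_1}$ in the second and $\chi = \tilde\rho/x_1$, resp.\ $\chi = \tilde\rho/\rho$, bounded and smooth on $Z$; commuting the scalar factors $\la\tilde\xi\ra^{-1}$ and $\tilde\rho^{-1}$ past the remaining generators only produces terms with strictly fewer derivatives and no worse weights, which an induction on the total order absorbs, so that applying $\partial_\tau^{\mk{f}}\partial_{x_1}^{\mk{l}}\partial_{\hat x_1}^{\beta}\partial_{\tilde\xi}^{\tilde\gamma}$ costs precisely a factor $\hat\rho^{|\tilde\gamma|}\tilde\rho^{-|\mk{l}|}$ relative to the right-hand side of (\ref{inv_1}) -- which is (\ref{diff_1}). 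For the converse I expand a product of generators by the Leibniz rule: a derivative landing on a coefficient produces a function that is again smooth on $Z$, hence bounded together with all its derivatives along the generators, and a derivative landing on $a$ carries the matching weight ($\hat\rho$ for each fiber derivative, $\tilde\rho^{-1}$ for each $\partial_{x_1}$, none for $\partial_\tau$ or $\partial_{\hat x_1}$), so (\ref{diff_1}) delivers the bound $\hat\rho^{-m}\tilde\rho^{-(\tilde m - m)/\alpha}$ required by (\ref{inv_1}).

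The step that carries the actual content, and where I expect the real work to lie, is the description of the admissible module in the weighted blow-up: verifying that using $\hat\rho^\alpha/\varphi_{Y_0}$ rather than $\hat\rho/\varphi_{Y_0}$ as the front-face coordinate still makes the rescaled fiber derivatives tangent to the front face, that the only genuinely new generators are $x_1\partial_{x_1}$ and $\rho\,\partial_{x_1}$, and that the two chart-wise weights $|x_1|^{-1}$ and $\rho^{-1}$ attached to $\partial_{x_1}$ are both comparable, uniformly near the corner, to the single weight $\tilde\rho^{-1}$ occurring in (\ref{diff_1}). Everything downstream of this identification is the Leibniz-and-commutator bookkeeping indicated above.
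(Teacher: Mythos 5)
Your proposal is correct and follows essentially the same route as the paper: work in the two projective charts for the $\alpha$-weighted blow-up, identify $x_1\partial_{x_1}$ and $\rho\,\partial_{x_1}$ (together with the standard cusp generators) as the b-vector fields tangent to the front face, and prove both implications by Leibniz bookkeeping with an induction on the total order. The paper's version just carries out the chart-by-chart invariance and the inductive step more explicitly (via its $V_1,V_2$ and $K_1,K_2$), but the underlying structure and the weight accounting ($\hat\rho$ per fiber derivative, $\tilde\rho^{-1}$ per $\partial_{x_1}$) are the same as yours.
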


\begin{proof}
Without loss of generality, we assume that $\xi_{n-1}$ is large relative to the other $\xi_i$ (including $\xi_t$), and we assume $\xi_{n-1}>0$ since the case where $\xi_{n-1}<0$ can be treated in the same manner.
Thus, we take $\hat{\rho}=\xi_{n-1}^{-1}$.
We still write $a \in S_{\mathrm{cu},\alpha}^{m,\tilde{m}}(M,Y)$ as a function of $\tau,x_i,\xi_{t},\xi_i$, but one should keep in mind that its the requirement on it is \eqref{inv_1}, i.e. the conormality with respect to the front face and fiber infinity on $Z$, which has coordinates \eqref{eq: Z coordinates near Y0} near the lift of $Y_0$ and \eqref{eq: Z coordinates near fiber infinity} near the lift of fiber infinity. 

First, we show that only the case $\mk{f}=0$ need to be considered. In (\ref{inv_1}), write each $W_i$ as
\begin{align}
W_i=c_{i\tau}\partial_\tau+\bar{W}_i, \label{wi}
\end{align} 
where $c_{i\tau} \in \mathcal{C}^{\infty}(Z)$ and $\bar{W}_i\tau=0$. Smoothness on the compactified space implies $c_{i\tau}$ and all of its derivatives are bounded. Consequently, we substitute (\ref{wi}) into the left hand side of (\ref{inv_1}) and expand. The final expression is a sum of terms of the form $\partial_\tau^{\mk{f}'}\bar{W}_{i1}\bar{W}_{i2}...a$ with a bounded function coefficient in front of it (possibly with different $\mk{f}'$ for different terms). Since the right hand sides of both (\ref{diff_1}) and (\ref{inv_1}) are independent of $\tau$, and all $\partial_\tau$ can be commuted to the front, we can consider the $\mk{f}=0$ case first and then add $\partial_\tau$ to (\ref{diff_1}) or each term of the expanded (\ref{inv_1}). In the proof below, we assume $\mk{f}=0$.

Consider the region $x_1 \leq C \rho=C \hat{\rho}^{\alpha}$, on which the local coordinates are:
\begin{align*}
\tau,\sigma:=\frac{x_1}{\xi_{n-1}^{-\alpha}},x_2,x_3,...,x_{n-1},\frac{\xi_{t}}{\xi_{n-1}},\frac{\xi_1}{\xi_{n-1}},\frac{\xi_2}{\xi_{n-1}},...,\rho=\hat{\rho}^\alpha=\xi_{n-1}^{-\alpha}.
\end{align*}
We first show that (\ref{diff_1}) implies (\ref{inv_1}).
We use the notation $\hat{\xi}_j:= \frac{\xi_j}{\xi_{n-1}}, 1\leq j \leq n-2$ or $j=t$. Recalling that $\hat{\rho}=\xi_{n-1}^{-1}$, we have 
\begin{align*}
\xi_{n-1} \partial_{\xi_{n-1}}=-\hat{\rho}\partial_{\hat{\rho}}.
\end{align*} 
Combining with $x_1=\rho\sigma, \hat{\rho}=\rho^{1/\alpha}$, we can choose vector fields in \eqref{inv_1} involving $\sigma$ and $\rho$ to be:
\begin{align}
&V_1:=\partial_\sigma =  \rho \partial_{x_1} = \xi_{n-1}^{-\alpha}\partial_{x_1}, \label{dsigma} \\
&V_2:=\rho \partial_\rho =
x_1\partial_{x_1} - \frac{1}{\alpha} (\xi_t\partial_{\xi_t}+\sum_{j=1}^{n-1} \xi_j \partial_{\xi_j} ).
\label{drho}
\end{align}
We first show that symbol class defined by (\ref{diff_1}) is preserved under application of $W_i$ defined after (\ref{inv_1}). We only need to verify this for $V_1$ and $V_2$ since being preserved under application of other $W_i$ is clear from (\ref{diff_1}). For $V_1$, we need to show that
\begin{align*}
|\partial_{x_1}^\mk{l} \partial_{\hat{x}_1}^\beta \partial_{\tilde{\xi}}^{\tilde{\gamma}} (\xi_{n-1}^{-\alpha}\partial_{x_1}a(\tau,x_1,\hat{x}_1,\xi_{t},\xi))| \leq C \hat{\rho}^{-m+|{\tilde{\gamma}}|}\tilde{\rho}^{-\frac{\tilde{m}-m}{\alpha}-\mk{l}}.
\end{align*}

Applying $\partial_{\xi_{n-1}}^{\gamma_{n-1}}$, it produces $\partial_{\xi_{n-1}}^{{\gamma}_{n-1}} (\xi_{n-1}^{-\alpha}\partial_{x_1}a(\tau,x_1,\hat{x}_1,\xi_{t},\xi))
= \sum_{k=0}^{{\gamma_{n-1}}} \binom{{\gamma_{n-1}}}{k} \partial_{\xi_{n-1}}^k (\xi_{n-1}^{-\alpha}) \partial_{\xi_{n-1}}^{{\gamma_{n-1}}-k}a.$ The $k$-th term is $C \xi_{n-1}^{-\alpha-k} \partial_{\xi_{n-1}}^{{\gamma_{n-1}}-k}a$. Applying the triangle inequality, we reduce the proof to the case of a single term, and notice that any power of $\xi_{n-1}$ will commute with $\partial_{x_i}$ in the front, so the inequality is equivalent to:
\begin{align*}
 |\hat{\rho}^{\alpha+k} \partial_{x_1}^{\mk{l}+1} \partial_{\hat{x}_1}^\beta \partial_{\tilde{\xi}}^{{\tilde{\gamma}}-(0,..,0,k)} a(\tau,x_1,\hat{x}_1,\xi_{t},\xi)| \leq C \hat{\rho}^{-m+|{\tilde{\gamma}}|}\tilde{\rho}^{-\frac{\tilde{m}-m}{\alpha}-\mk{l}}.
\end{align*}
Move $\hat{\rho}$ to the right hand side, and since $\hat{\rho}^\alpha$ is equivalent to $\tilde{\rho}$ on this patch, we have:
\begin{align}
 | \partial_{x_1}^{\mk{l}+1} \partial_{\hat{x}_1}^\beta \partial_{\tilde{\xi}}^{{\tilde{\gamma}}-(0,..,0,k)} a(\tau,x_1,\hat{x}_1,\xi_{t},\xi)| \leq C \hat{\rho}^{-m+(|{\tilde{\gamma}}|-k)} \tilde{\rho}^{-\frac{\tilde{m}-m}{\alpha}-(\mk{l}+1)},
\end{align}
which is (\ref{diff_1}), with ${\tilde{\gamma}},\mk{l}$ replaced by ${\tilde{\gamma}}-(0,..,0,k),\mk{l}+1$.

For $V_2$, we consider $x_1\partial_{x_1}$ and $\xi_{j}\partial_{\xi_{j}},j=t,1,...n-1$ individually. We prove:
\begin{align}\label{05}
|\partial_{x_1}^\mk{l} \partial_{\hat{x}}^\beta 
\partial_{\tilde{\xi}}^{\tilde{\gamma}} (x_1\partial_{x_1}a(\tau,x_1,\hat{x}_1,\xi_{t},\xi))| \leq C \hat{\rho}^{-m+|{\tilde{\gamma}}|}\tilde{\rho}^{-\frac{\tilde{m}-m}{\alpha}-\mk{l}},
\end{align}
and
\begin{align}
|\partial_{x_1}^\mk{l} \partial_{\hat{x}_1}^\beta \partial_{\tilde{\xi}}^{\tilde{\gamma}} (\xi_{j} \partial_{\xi_{j}}a(\tau,x_1,\hat{x}_1,\xi_{t},\xi))| \leq C \hat{\rho}^{-m+|{\tilde{\gamma}}|}\tilde{\rho}^{-\frac{\tilde{m}-m}{\alpha}-\mk{l}}. \label{06}
\end{align}
The proof is the same as $V_1$ case, except for that we use instead  
\begin{align*}
x_1 \leq C\xi_{n-1}^{-\alpha}, |\xi_j| \leq \hat{\rho}^{-1}
\end{align*}
to bound the extra $x_1,\xi_j$ factors in \eqref{05} and \eqref{06}. 
Thus we know that the function class defined by (\ref{diff_1}) is preserved under the application of $W_i$, and in order to show (\ref{inv_1}) holds, we only need to consider the case with a single vector. When this vector is one of $\partial_{x_i},\xi_k\partial_{\xi_j}$  $i \in \{2,...,n-1\}, \;  k,j \in \{1,2,...,n-1,t\}$, the inequality is straightforward from (\ref{diff_1}) with one of $|{\tilde{\gamma}}|$ or $|\beta|$ equal to one, and $\mk{l}$ and the other being 0. When this vector is $V_1$, the bound (\ref{inv_1}) follows from the case $\mk{l}=1$ and $\beta={\tilde{\gamma}}=0$. When this vector is $V_2$, we show the bound for two terms respectively. For the first term, we use $x_1 \leq C \rho$ and apply the result proved for $V_1$. For the second term, the bound follows from the case $\mk{l}=\beta=0$, ${\tilde{\gamma}}=(0,0,..,0,1)$. Combining all cases we conclude that (\ref{inv_1}) holds.

Conversely, we take (\ref{inv_1}) as the assumption to prove (\ref{diff_1}). We apply induction on $\mk{l}+|\beta|+|{\tilde{\gamma}}|$. 
The case when $\mk{l}=|\beta|=|{\tilde{\gamma}}|=0$ is straightforward.
We consider the case $\mk{l},|\beta|$ or ${\tilde{\gamma}}_i$ increases by 1 respectively. 

When ${\tilde{\gamma}}_i$ increases, we multiply $\xi_{n-1}$ on both sides of (\ref{diff_1}), and notice that:
\begin{align*}
&\xi_{n-1} \partial^{{\gamma_{n-1}}+1}_{\xi_{n-1}}a = \partial_{\xi_{n-1}}^{{\gamma_{n-1}}}(\xi_{n-1} \partial_{\xi_{n-1}} a)-\partial_{\xi_{n-1}}^{{\gamma_{n-1}}}a,\\
&\xi_{n-1} \partial^{{\tilde{\gamma}}_i+1}_{\xi_i}a = \partial_{\xi_i}^{{\tilde{\gamma}}_i}(\xi_{n-1}\partial_{\xi_i}a) .
\end{align*}
The symbol class defined by (\ref{inv_1}) is preserved under $\xi_{n-1} \partial_{\xi_i}$, so these two equations complete the induction step for the case in which ${\tilde{\gamma}}_i$ increases (other $x$-derivatives are not written here since they commute with everything here and do not affect the result. When $\mk{l}$ increases, we rewrite $\partial_{x_1}^{\mk{l}+1}= \rho^{-1} \rho\partial_{x_1}^{\mk{l}+1}$, and factor out $\rho^{-1} \approx \tilde{\rho}^{-1}$ ($\approx$ means quantities on each side can bound each other up to a constant factor) and apply the induction hypothesis to the rest part. Precisely, if ${\gamma_{n-1}}=0$, then $\rho$ commutes with all differential operators appear: $\rho\partial^{\mk{l}+1}_{x_1} \partial_{\hat{x}_1}^\beta \partial_{\tilde{\xi}}^{{\tilde{\gamma}}} a(\tau,x_1,\hat{x}_1,\xi_{t},\xi) = \partial^{\mk{l}}_{x_1} \partial_{\hat{x}_1}^\beta \partial_{\tilde{\xi}}^{{\tilde{\gamma}}}  (\rho\partial_{x_1}a(x_1,\hat{x}_1,\xi) )$. And then use the fact that $\rho \partial_{x_1} a$ is in the same symbol class since $\rho\partial_{x_1}$ can be taken as one of $W_i$ in \eqref{inv_1}. For ${\gamma_{n-1}} \geq 1$, we have
\begin{align*}
\rho \partial_{x_1}^{\mk{l}+1} \partial_{\hat{x}_1}^\beta  \partial_{\xi_{n-1}}^{{\gamma_{n-1}}}a &=  \partial_{x_1}^{\mk{l}} \partial_{\hat{x}_1}^\beta \rho \partial_{\xi_{n-1}}^{{\gamma_{n-1}}} \partial_{x_1} a  \\
= & \partial_{x_1}^{\mk{l}} \partial_{\hat{x}_1}^\beta \xi_{n-1}^{-\alpha}   \partial_{\xi_{n-1}}^{{\gamma_{n-1}}} \partial_{x_1} a\\
= & \partial_{x_1}^{\mk{l}} \partial_{\hat{x}_1}^\beta (\xi_{n-1}^{-\alpha}\partial_{\xi_{n-1}}^{{\gamma_{n-1}}}\xi_{n-1}^{\alpha})\xi_{n-1}^{-\alpha}\partial_{x_1} a
\\= &  \partial_{x_1}^{\mk{l}} \partial_{\hat{x}_1}^\beta (\partial_{\xi_{n-1}}+\alpha\xi_{n-1}^{-1})^{{\gamma_{n-1}}} \xi_{n-1}^{-\alpha}\partial_{x_1} a.
\end{align*}
Since $S_{\mathrm{cu},\alpha}^{m,\tilde{m}}(M,Y)$ is preserved under $\xi_{n-1}^{-\alpha}\partial_{x_1}$, using the induction hypothesis, each term after expanding $(\partial_{\xi_{n-1}}+\alpha\xi_{n-1}^{-1})^{{\gamma_{n-1}}}$ satisfies (\ref{diff_1}), which completes the induction step.
When $|\beta|$ increases, we simply notice that $\partial_{x_j}, 2\leq j \leq n-1$ are tangent to the frontface and the lift of fiber infinity in $Z$, hence the result is straightforward by (\ref{inv_1}). Combining all cases above, we have finished the proof in the patch $x_1 \leq C \rho$.

Next we consider the other patch $\rho \leq C x_1$, over which the coordinate system is
\begin{align*}
\tau,\sigma':=\frac{\xi_{n-1}^{-\alpha}}{x_1},x_2,x_3,...,x_{n-1},\frac{\xi_{t}}{\xi_{n-1}},\frac{\xi_1}{\xi_{n-1}},\frac{\xi_2}{\xi_{n-1}},...,\rho:=\hat{\rho}^\alpha.
\end{align*}
Thus, we can choose vector fields in \eqref{inv_1} involving $\sigma',\rho$ to be
\begin{align*}
& K_1 = \sigma'\partial_{\sigma'}  = - x_1 \partial_{x_1},\\
& K_2 = \rho\partial_{\rho} = x_1\partial_{x_1} - \frac{1}{\alpha} (\xi_t\partial_{\xi_t}+\sum_{j=1}^{n-1} \xi_j \partial_{\xi_j} ).
\end{align*}
The argument is similar to that in the first coordinate patch. 
We first show that (\ref{diff_1}) implies (\ref{inv_1}).
We verify that the symbol class defined by (\ref{diff_1}) is preserved under $K_1,K_2$. For $K_1$, the proof is the same as $V_1$ in the first patch, just notice that now $Cx_1 \geq \rho$, so $\tilde{\rho}$ is equivalent to $x_1$. For example: $|x_1\partial_{x_1}a |\leq \hat{\rho}^{-m}\tilde{\rho}^{-\frac{\tilde{m}-m}{\alpha}}$ is equivalent to $|\partial_{x_1}a |\leq \hat{\rho}^{-m}\tilde{\rho}^{-\frac{\tilde{m}-m}{\alpha}-1}$. For $K_2$, the proof is similar to $V_2$ case in the first patch, all terms are treated similarly. The difference to the proof in the first coordinate patch is that, we use $x_1 \leq \tilde{\rho}$ to bound $x_1$ in the front of the analogue of \eqref{05}.

Now we show that (\ref{inv_1}) implies (\ref{diff_1}) in this region. In order to verify (\ref{diff_1}), we again use induction, the case in which $|\beta|$ or $|{\tilde{\gamma}}|$ increase is the same as before. When $\mk{l}$ increases, we use $\partial_{x_1} = (x_1)^{-1}(x_1 \partial_{x_1})$, since now $(x_1)^{-1} \approx \tilde{\rho}^{-1}$, and then use:
\begin{align*}
x_1\partial_{x_1}^{\mk{l}+1} \partial_{\hat{x}_1}^\beta  \partial_{\tilde{\xi}}^{{\tilde{\gamma}}}a = \partial_{x_1}^{\mk{l}} \partial_{\hat{x}_1}^\beta  \partial_{\tilde{\xi}}^{{\tilde{\gamma}}} (x_1\partial_{x_1}a) - \partial_{x_1}^{\mk{l}} \partial_{\hat{x}_1}^\beta  \partial_{\tilde{\xi}}^{{\tilde{\gamma}}}a,
\end{align*}
when $\mk{l} \geq 1$, and
\begin{align*}
x_1\partial_{x_1}\partial_{\hat{x}_1}^\beta  \partial_{\tilde{\xi}}^{{\tilde{\gamma}}}a = \partial_{\hat{x}_1}^\beta  \partial_{\tilde{\xi}}^{{\tilde{\gamma}}} (x_1\partial_{x_1}a),
\end{align*}
when $\mk{l}=0$.
Since $x_1\partial_{x_1}$ satisfies conditions on $W_i$ in \eqref{inv_1}, $x_1\partial_{x_1}a$ is in the same symbol class, hence we can apply the induction hypothesis to the right hand side and use $x_1 \approx \tilde{\rho}$ to obtain:
$$|\partial_{x_1}^{\mk{l}+1} \partial_{\hat{x}_1}^\beta  \partial_{\tilde{\xi}}^{{\tilde{\gamma}}}a| \leq C \hat{\rho}^{-m+|{\tilde{\gamma}}|}\tilde{\rho}^{-\frac{\tilde{m}-m}{\alpha}-\mk{l}-1},$$
which completes the proof.
\end{proof}

\begin{rmk}  \label{remark: alternative-proof, symbol, local}
An alternative and conceptually more transparent way to prove the lemma above is to show that the space of vector fields in Definition \ref{defn: smooth, new symbol class} is spanned by 
\begin{align}
\partial_\tau, \tilde{\rho}\partial_{x_1}, \partial_{x_i}, \hat{\rho}^{-1}\partial_{\xi_j}, \hat{\rho}^{-1}\partial_{\xi_t}  \; 2 \leq i \leq n-1, 1 \leq j \leq n-1,
\end{align}
near the front face, and use this to show that the characterization of the symbol class above is equivalent to \eqref{inv_1} in local coordinates.
\end{rmk}

Recall that the cusp symbol class $S_{\mathrm{cu}}^m(M)$ consists of conormal functions on ${}^{\mathrm{cu}}\bar T^*M$ satisfying:
\begin{align} \label{eq: smooth cusp symbol}
|\partial_\tau^{\mk{f}}\partial_{x_1}^\mk{l} \partial_{\hat{x}_1}^\beta   \partial_{\tilde{\xi}}^{\tilde{\gamma}} a(\tau,x_1,\hat{x}_1,\xi_{t},\xi)| \leq C \hat{\rho}^{-m+|{\tilde{\gamma}}|}.          
\end{align}
Since functions on ${}^{\mathrm{cu}}\bar T^*M$ that are conormal to fiber infinity are the same as functions on ${}^{\cualpha}\bar{T}^*M$ that are conormal to fiber infinity and they lift to functions on $Z$ that are conormal to the front face and fiber infinity. Comparing with (\ref{diff_1}), we have the inclusion:
\begin{coro}  \label{coro: symbol inclusion}
For $m,\tilde{m}\in \R$, $\tilde{m} \geq m$, $M,Y$ as above, we have the following relationship between symbol classes:
\begin{align}
    S_{\mathrm{cu}}^m(M) \subset S_{\mathrm{cu},\alpha}^{m,\tilde{m}}(M,Y).     \label{inclusion}
\end{align}
\end{coro}

Next we discuss the quantization procedure. Let $\dot{\mathcal{C}}^\infty(M)$ be the class of smooth functions on $M$ which vanish to infinite order at $\partial M$. For $u \in \dot{\mathcal{C}}^\infty(M)$ supported in a coordinate chart near $\{\tau=0\}$, the action of $\mathrm{Op}(a)$, the left quantization (in short, we use `quantization' below) of $a \in S_{\mathrm{cu},\alpha}^{m,\tilde{m}}(M,Y)$ (which is assumed to be supported in the same coordinate chart) is defined by:
\begin{align}
\mathrm{Op}(a)u(t,x)  := (2\pi)^{-n} \int   e^{i( (t-t')\xi_t+(x-x')\cdot\xi )}a(t^{-1},x,\xi_t,\xi)u(t',x')dt'dx'd\xi_t d\xi,               \label{eq: cusp quantization}
\end{align}
where $\xi_t$ is the variable dual to $t$. And for general $u$ and $a$, the action of $\mathrm{Op}(a)$ is defined by decomposing $u$ and $a$ using partitions of unity.
Since we use $\frac{d\tau}{\tau^2}$ in the frame of ${}^{\mathrm{cu}}\bar{T}^*M$, which is also a frame of ${}^{\cualpha}\bar{T}^*M$ and
\begin{align}
\xi_t dt + \sum_{i=1}^{n-1}\xi_idx_i = -\xi_t\frac{d\tau}{\tau^2}+\sum_{i=1}^{n-1}\xi_idx_i,
\end{align}
 $\xi_t$ is also dual to $\tau$ up to a sign. 
 We use $\Psi_{\mathrm{cu},\alpha}^{m,\tilde{m}}(M,Y)$ to denote the operator class obtained by quantizing symbols in $S_{\mathrm{cu},\alpha}^{m,\tilde{m}}(M,Y)$. The symbol class defined by (\ref{inv_1}) is globally defined, hence by the equivalence shown by Lemma \ref{lmm2}, (\ref{diff_1}) also defines a symbol class on the manifold $Z$. Next we verify that this class is closed under composition:
\begin{prop}  \label{prop: composition}
Under assumptions above: $\alpha \in (0,1)$, $Y_0$ is defined by $x_1=0$, for $A \in \Psi_{\mathrm{cu},\alpha}^{m_1,\tilde{m}_1}(M,Y), B \in \Psi_{\mathrm{cu},\alpha}^{m_2,\tilde{m}_2}(M,Y)$ with symbols $a \in S_{\mathrm{cu},\alpha}^{m_1,\tilde{m}_1}(M,Y), b \in S_{\mathrm{cu},\alpha}^{m_2,\tilde{m}_2}(M,Y)$, we have
\begin{align}
A \circ B \in \Psi_{\mathrm{cu},\alpha}^{m_1+m_2,\tilde{m}_1+\tilde{m}_2}(M,Y), 
\end{align}
and its symbol $a \circ b \in S_{\mathrm{cu},\alpha}^{m_1+m_2,\tilde{m}_1+\tilde{m}_2}(M,Y)$ has asymptotic expansion
\begin{align} \label{sym_composition}
a \circ b(z,\zeta) = \sum_{l \in \N^n} \frac{(-i)^{|l|}}{l !}\partial^l_\zeta a \partial^l_z b,  
\end{align}
where $z=(t,x_1,\ldots,x_{n-1})$ and correspondingly $\zeta=(\xi_t,\xi_1,\ldots,\xi_{n-1})$. In addition, the term with $l = (l_t,l_1,\ldots,l_{n-1}) \in \N^n$ as derivative index belongs to $S_{\mathrm{cu},\alpha}^{m_1+m_2-|l|,\tilde{m}_1+\tilde{m}_2-|l|(1-\alpha)}(M,Y)$.  \label{prop_comp}
\end{prop}
\begin{proof}
Using the same method as in the cusp calculus, or more concretely the left reduction (see \cite[Proposition~5.1]{vasy2018minicourse} in the setting of the uniform symbol class), we know that the left symbol of $A \circ B$ is given by (\ref{sym_composition}). 
Notice that, the difference with the proof in \cite{vasy2018minicourse} is that, the symbol class there incur a $\delta$-order loss whenever the symbol is differentiated in both position and momentum variables, and this necessitate $\delta \in (0,\frac{1}{2})$ for further terms in the expansion to have lower order. However, our symbol class only incurs a loss when we differentiate in $x_1$, not in any momentum variables, thus the $(1-2\delta)|l|$ improvement in order is now $(1-\alpha)|l|$, which we will explain in detail below. 
In addition, \eqref{sym_composition} is not valid when we are not using a coordinate system such that $Y_0$ is defined by one of $x$-components
, see Remark \ref{rmk: reversed inclusion}.

Assuming that both of $a$ and $b$ satisfy (\ref{diff_1}), we verify that (\ref{sym_composition}) still satisfies (\ref{diff_1}). 
The only source of potential growth (as $\tilde{\rho} \rightarrow 0$) comes from $\partial_{x_1}^{l_1}b$, which gives $\tilde{\rho}^{-l_1}$ growth (compared with $\hat{\rho}^{-m}\tilde{\rho}^{-\frac{\tilde{m}-m}{\alpha}}$). On the other hand, this term is multiplied by $\partial_{\xi_1}^{l_1}a$, which gives $\hat{\rho}^{l_1}$ decay. Concretely, the $l$-th term has orders reduced by at least $|l|,|l|(1-\alpha)$ in the first and second indices respectively compared with the typical bound for $S_{\mathrm{cu},\alpha}^{m_1+m_2,\tilde{m}_1+\tilde{m}_2}(M,Y)$, i.e. $\hat{\rho}^{-(m_1+m_2)}\tilde{\rho}^{-\frac{\tilde{m}_1+\tilde{m}_2-m_1-m_2}{\alpha}}$. 

In order to verify the symbolic property of $a \circ b$, we can apply Leibniz's rule. Each partial derivative will fall on exactly one of $\partial_\xi^la$ and $\partial_x^lb$. The bound on the right hand side of (\ref{diff_1}) with $(m,r)$ being either one of $(m_1-|l|,\tilde{m}_1)$ and $(m_2,\tilde{m}_2+\alpha l_1)$ changes in the same manner under each differentiation. Notice that $l_1\leq |l|$, hence their product satisfies (\ref{diff_1}), but with $(m_1+m_2-|l|,\tilde{m}_1+\tilde{m}_2-|l|(1-\alpha))$ as the symbol class order. 
\end{proof}

\begin{rmk} \label{rmk: reversed inclusion}
When $\tilde{m} \geq m$, a weak version of the converse of Corollary \ref{coro: symbol inclusion} is true if one uses the cusp version of H\"ormander's $(\varrho,\delta)$-class of order $\tilde{m}$ with $\varrho=1,\delta=\alpha$, which we denote by $S^{\tilde{m}}_{\rm cu; 1,\alpha}(M)$. Concretely, this is the space of functions on ${}^{\mathrm{cu}}\bar T^*M$ satisfying:
\begin{align} \label{eq: 1,alpha symbol}
|\partial_\tau^{\mk{f}} \partial_{x_1}^\mk{l} \partial_{\hat{x}_1}^\beta   \partial_{\tilde{\xi}}^{\tilde{\gamma}} a(\tau,x_1,\hat{x}_1,\xi_{t},\xi)| \leq C \hat{\rho}^{-\tilde{m}+|{\tilde{\gamma}}|-\alpha(\mk{f}+\mk{l}+|\beta|)},         
\end{align}
where $C$ depends on $\mk{f},\mk{l},\beta,\tilde{\gamma}$. Comparing this requirement and \eqref{diff_1} and notice that $\hat{\rho}^{-1} \geq \tilde{\rho}^{-\frac{1}{\alpha}}$, we have
\begin{align}
S_{\mathrm{cu},\alpha}^{m,\tilde{m}}(M,Y) \subset S^{\tilde{m}}_{\rm cu; 1,\alpha}(M).
\end{align}
And the construction for the pseudodifferential algebra can be carried out in the same manner as the one on $\mathbb{R}^n$ with $(1,\alpha)-$type symbols. However, in the expansion \eqref{sym_composition}, the term with $|l|=2$ is only $2(1-\alpha)-$order lower than the leading term in general. Consequently, in order to run a positive commutator argument, which requires the that term to be negligible compared with the sub-principal symbol in the cusp pseudodifferential algebra, we need $2(1-\alpha)>1$, which is equivalent to $\alpha<\frac{1}{2}$. For $\alpha \in [\frac{1}{2},1)$, we need to exploit the second microlocal nature (concretely, the normally hyperbolicity near the trapped set) as in Section \ref{sec: improve orders of operators} to justify that terms with $|l| \geq 2$ in \eqref{sym_composition} is harmless in the commutator argument.

In addition, this symbol class $S^{\tilde{m}}_{\rm cu; 1,\alpha}(M)$ is not invariant under pull-backs of symplectic lifts of coordinate changes. When $\alpha \in [\frac{1}{2},1)$, terms in \eqref{sym_composition} with larger $|l|$ might not have lower order if we make a change of variables since the role of $\partial_x,\partial_{\xi}$ will be mixed after that change. Thus, the left reduction above should be understood as in a single coordinate patch. 
In fact, the characterization of $S_{\mathrm{cu},\alpha}^{m,\tilde{m}}(M,Y)$ in Lemma \ref{lmm2} is not invariant under such pull-backs as well (otherwise we would have run the whole argument in Kerr(-de Sitter) spacetimes directly). However, it is preserved under (symplectic lifts of) coordinate changes preserving \eqref{eq: phi Y0 = x1}. That is to say, the characterization in Lemma \ref{lmm2} does not depend on the specific choice of the defining function of $Y_0$ as long as the coordinate system takes it as $x_1$. One can see this from the proof of Lemma \ref{lmm2}, but this can be seen in local charts as well. For example, we verify that the class defined by \eqref{diff_1} in the new coordinates $(x',\xi')$ is contained in the class defined by \eqref{diff_1} in terms of $(x,\xi)$, and the reversed inclusion holds by symmetry.
For such a symplectomorphism, we have
\begin{align}
x_1 = \tilde{c}(x') x_1',
\end{align}
where $\tilde{c},\frac{1}{\tilde{c}}$ have uniformly bounded derivatives. 
Consequently, $\partial_{x_1}$ appearing when we express $\partial_{x'}$ in terms of the old coordiantes are accompanied by a $x_1$-factor, hence not incurring loss  as in the proof of Lemma \ref{lmm2}, except for when we are expressing $\partial_{x_1'}$, which will have a term $\tilde{c}(x')\partial_{x_1}$, which incurs $\alpha$-order loss but that is allowed for $\partial_{x_1'}$.
\end{rmk}

\subsection{Sobolev spaces and operator classes}  \label{sec: Sobolev spaces and operator classes, weighted}
Recall \eqref{cucovec}, the cusp cotangent bundle is locally spanned by $\frac{d\tau}{\tau^2},dx_1,dx_2,$ $...,dx_{n-1}$, whose wedge product gives the cusp density $\nu_{\mathrm{cu}}$. $L_{\mathrm{cu}}^2(M)$ consists of functions that are supported on $\{\tau \leq 1\}$ and square integrable with respect to density $\nu_{\mathrm{cu}}$ equipped with the norm 
\begin{align*}
||u||_{L_{\mathrm{cu}}^2(M)}:=(\int_M |u|^2d\nu_{\mathrm{cu}})^{\frac{1}{2}}. 
\end{align*}
Recall that for $r\in \R$, the $r$-weighted cusp $L^2$-space is defined by
\begin{align*}
L_{\mathrm{cu}}^{2,r}(M):=\{  u \in L_{\mathrm{loc}}^2(M): \tau^{-r}u \in L_{\mathrm{cu}}^2(M) \},
\end{align*}
where $L_{\mathrm{loc}}^2(M)$ is the space of locally $L^2$-integrable functions on $M$. We use the notation $\la \cdot \ra = (1+|\cdot|^2)^{\frac{1}{2}}$. 
Define 
\begin{align*}
&D:=\Op(\hat{\rho}^{-1}),\, \la D \ra:=\Op(\la \hat{\rho}^{-1} \ra),\\
&\tilde{D}:=\Op(\tilde{\rho}^{-1}),\, \la \tilde{D} \ra:=\Op(\la \tilde{\rho}^{-1} \ra),\\
& D_{m,\tilde{m}}:= \Op(\la \hat{\rho}^{-1} \ra^m \la \tilde{\rho}^{-1} \ra^{\frac{\tilde{m}-m}{\alpha}}).
\end{align*}
For $s,\tilde{s} \geq 0$, the weighted Sobolev space can be characterized as
\begin{align}
H_{\mathrm{cu},\alpha}^{s,\tilde{s},r}(M,Y)=\{ u \in L_{\mathrm{cu}}^{2,r}(M):  D_{s,\tilde{s}}u\in L_{\mathrm{cu}}^{2,r}(M)\}. \label{defn_wsob2}
\end{align}
For $s,\tilde{s} < 0$, $H_{\mathrm{cu},\alpha}^{s,\tilde{s},r}(M,Y)$ is defined to be the dual space of $H_{\mathrm{cu},\alpha}^{-s,-\tilde{s},r}(M,Y)$, which has been defined above. For $s>0,\tilde{s}<0$, notice that $s=\frac{s}{s+|\tilde{s}|}(s+|\tilde{s}|)+\frac{|\tilde{s}|}{s+|\tilde{s}|}\times 0 , \, \tilde{s} = \frac{s}{s+|\tilde{s}|} \times 0 + \frac{|\tilde{s}|}{s+|\tilde{s}|}(-s-|\tilde{s}|) $, and so $H_{\mathrm{cu},\alpha}^{s,\tilde{s},r}(M,Y)$ can be defined by interpolating $H_{\mathrm{cu},\alpha}^{s+|\tilde{s}|,0,r}(M,Y)$ and $H_{\mathrm{cu},\alpha}^{0,-s-|\tilde{s}|,r}(M,Y)$. Similarly for the case $s<0,\tilde{s}>0$.
We equip these Sobolev spaces with norm 
\begin{align}
||u||_{s,\tilde{s},r}:=||\tau^{-r}D_{s,\tilde{s}}u||_{L_{\mathrm{cu}}^{2}(M)}. \label{wsob_norm}
\end{align}

The $\tau$-weighted versions of $S_{\mathrm{cu},\alpha}^{m,\tilde{m}}(M,Y),\Psi_{\mathrm{cu},\alpha}^{m,\tilde{m}}(M,Y)$ are defined by
\begin{align} \label{eq: weighted smooth new symbol, PsiDO}
\begin{split}
S_{\mathrm{cu},\alpha}^{m,\tilde{m},r}(M,Y):=\tau^{-r}S_{\mathrm{cu},\alpha}^{m,\tilde{m}}(M,Y)=\{ \tau^{-r}a: a \in  S_{\mathrm{cu},\alpha}^{m,\tilde{m}}(M,Y)\},\\
\Psi_{\mathrm{cu},\alpha}^{m,\tilde{m},r}(M,Y):=\tau^{-r}\Psi_{\mathrm{cu},\alpha}^{m,\tilde{m}}(M,Y)=\{\tau^{-r}A: A \in \Psi_{\mathrm{cu},\alpha}^{m,\tilde{m}}(M,Y) \}.
\end{split}
\end{align}
The weighted generalization of Proposition \ref{prop_comp} holds: 
\begin{prop}  \label{prop: composition weighted}
If $A \in \Psi_{\mathrm{cu},\alpha}^{m_1,\tilde{m_1},r_1}(M,Y), \, B \in \Psi_{\mathrm{cu},\alpha}^{m_2,\tilde{m_2},r_2}(M,Y)$, then $A \circ B \in \Psi_{\mathrm{cu},\alpha}^{m_1+m_2,\tilde{m_1}+\tilde{m_2},r_1+r_2}(M,Y)$.  In addition, we still have the asymptotic expansion of the full symbol as in \eqref{sym_composition} and now the term with $l = (l_t,l_1,\ldots,l_{n-1}) \in \N^n$ as derivative index belongs to $S_{\mathrm{cu},\alpha}^{m_1+m_2-|l|,\tilde{m}_1+\tilde{m}_2-|l|(1-\alpha),r_1+r_2}(M,Y)$. 
\end{prop}
This can be proved by writing $A=\tau^{-r_1}\tilde{A},B=\tilde{B}\tau^{-r_1}$ with $\tilde{A} \in \Psi_{\mathrm{cu},\alpha}^{m_1,\tilde{m_1}}(M,Y), \, \tilde{B} \in \Psi_{\mathrm{cu},\alpha}^{m_2,\tilde{m_2}}(M,Y)$ and then apply Proposition \ref{prop_comp}.

\subsection{The principal symbol}
\label{sec_principal_symbol}
\begin{defn}
The \textit{principal symbol} of $A = \mathrm{Op}(a) \in \Psi_{\mathrm{cu},\alpha}^{m,\tilde{m},r}(M,Y)$, denoted by $\sigma_{m,\tilde{m},r}(A)$ is the equivalent class $[a]$ of $a$ in $S_{\mathrm{cu},\alpha}^{m,\tilde{m},r}/S_{\mathrm{cu},\alpha}^{m-1,\tilde{m}-(1-\alpha),r}$.  
\end{defn}
In later arguments, we also call $a$ the principal symbol of $A$ if $a$ is a representative of $A$'s principal symbol. We emphasize that the principal part is defined modulo symbols which are only $(1-\alpha)$ orders weaker in the second index, and the decay order remains the same. An important result we need is the following proposition about the principal symbol of commutators.
\begin{prop}
If $A \in \Psi_{\mathrm{cu},\alpha}^{m,\tilde{m},r}(M,Y),\, B \in \Psi_{\mathrm{cu},\alpha}^{m',\tilde{m}',r'}(M,Y)$, then $[A,B] \in \Psi_{\mathrm{cu},\alpha}^{m+m'-1,\tilde{m}+\tilde{m}'-(1-\alpha),r+r'}(M,Y)$, with principal symbol
\begin{align}
\sigma_{m+m'-1,\tilde{m}+\tilde{m}'-(1-\alpha),r+r'}([A,B]) = -iH_ab, \label{commutator}
\end{align}
where $[a] = \sigma_{m,\tilde{m},r}(a),[b]=\sigma_{m',\tilde{m}',r'}(B)$.
\end{prop}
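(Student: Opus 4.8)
The strategy is to reduce the commutator statement to the composition law of Proposition~\ref{prop_wcomp} together with the asymptotic expansion~(\ref{sym_composition}) from Proposition~\ref{prop_comp}, just as one proves the analogous fact in the ordinary cusp (or standard) pseudodifferential calculus. First I would reduce to the unweighted case: writing $A=\tau^{-r}A_0$ and $B=\tau^{-r'}B_0$ with $A_0\in\Psi_{\mathrm{cu},\alpha}^{m,\tilde m}(M,Y)$, $B_0\in\Psi_{\mathrm{cu},\alpha}^{m',\tilde m'}(M,Y)$, one has $[A,B]=\tau^{-r-r'}[A_0,B_0]$ up to commutators of $\tau^{-r}$ with $A_0$ or $B_0$; since $\tau$ is a smooth function on $M$ that is annihilated by the fiber-homogeneous vector fields, $[\tau^{-r},A_0]$ lies in $\Psi_{\mathrm{cu},\alpha}^{m-1,\tilde m-(1-\alpha)}(M,Y)$ with one extra order of $\tau^{-r}$ decay absorbed, and likewise for $B_0$; these contributions are lower order and do not affect the principal symbol. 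So it suffices to treat $r=r'=0$.

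Next, by Proposition~\ref{prop_comp}, $A_0\circ B_0\in\Psi_{\mathrm{cu},\alpha}^{m+m',\tilde m+\tilde m'}(M,Y)$ with left symbol $a\circ b=\sum_{l}\frac{(-i)^{|l|}}{l!}\partial_\eta^l a\,\partial_x^l b$, the term indexed by $l$ lying in $S_{\mathrm{cu},\alpha}^{m+m'-|l|,\tilde m+\tilde m'-|l|(1-\alpha)}(M,Y)$; similarly $B_0\circ A_0$ has symbol $b\circ a=\sum_l\frac{(-i)^{|l|}}{l!}\partial_\eta^l b\,\partial_x^l a$. Subtracting, the $l=0$ terms $ab$ and $ba$ cancel, and the $|l|=1$ terms give
\begin{align*}
\sum_{j=1}^n (-i)\bigl(\partial_{\eta_j}a\,\partial_{x_j}b-\partial_{\eta_j}b\,\partial_{x_j}a\bigr)=-i\{a,b\}=-iH_ab,
\end{align*}
modulo $S_{\mathrm{cu},\alpha}^{m+m'-2,\tilde m+\tilde m'-2(1-\alpha)}(M,Y)$, which is contained in $S_{\mathrm{cu},\alpha}^{m+m'-2,\tilde m+\tilde m'-2(1-\alpha)}\subset S_{\mathrm{cu},\alpha}^{(m+m'-1)-1,(\tilde m+\tilde m'-(1-\alpha))-(1-\alpha)}$ and hence is subprincipal in $\Psi_{\mathrm{cu},\alpha}^{m+m'-1,\tilde m+\tilde m'-(1-\alpha)}(M,Y)$. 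Thus $[A_0,B_0]\in\Psi_{\mathrm{cu},\alpha}^{m+m'-1,\tilde m+\tilde m'-(1-\alpha)}(M,Y)$ and its principal symbol is $-iH_ab$. I should also check that the Hamilton vector field $H_a$, applied to $b$, indeed lands in the claimed symbol space: $H_a$ differentiates $b$ once in $x$ (costing $\tilde\rho^{-1}$, i.e.\ $(1-\alpha)$ in the second index after accounting for the compensating $\hat\rho$-gain from the accompanying $\partial_\eta a$) and once in $\eta$ (costing $\hat\rho$, i.e.\ one order in the first index), consistent with $\partial_{\eta_j}a\,\partial_{x_j}b\in S_{\mathrm{cu},\alpha}^{m+m'-1,\tilde m+\tilde m'-(1-\alpha)}(M,Y)$ by the bookkeeping already carried out in the proof of Proposition~\ref{prop_comp}.

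The one point that needs genuine care — the main obstacle — is the well-definedness of the principal symbol assignment and of $H_ab$ as an element of the quotient $S_{\mathrm{cu},\alpha}^{m+m'-1,\tilde m+\tilde m'-(1-\alpha)}/S_{\mathrm{cu},\alpha}^{m+m'-2,\tilde m+\tilde m'-2(1-\alpha)}$: one must verify that replacing $a$ by $a+a'$ with $a'\in S_{\mathrm{cu},\alpha}^{m-1,\tilde m-(1-\alpha)}$ changes $H_ab$ only by an element of $S_{\mathrm{cu},\alpha}^{m+m'-2,\tilde m+\tilde m'-2(1-\alpha)}$, and symmetrically in $b$ — this is where the precise $(1-\alpha)$-shift in the definition of the principal symbol is essential, and it follows from the same Leibniz-rule bookkeeping as above, since differentiating a symbol of order $(m-1,\tilde m-(1-\alpha))$ against a symbol of order $(m',\tilde m')$ produces exactly order $(m+m'-2,\tilde m+\tilde m'-2(1-\alpha))$. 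Everything else is a routine transcription of the standard argument, using that the left-quantization is a linear bijection $S_{\mathrm{cu},\alpha}^{m,\tilde m}/S_{\mathrm{cu},\alpha}^{-\infty,-\infty}\to\Psi_{\mathrm{cu},\alpha}^{m,\tilde m}/\Psi_{\mathrm{cu},\alpha}^{-\infty,-\infty}$, which is already implicit in the construction preceding this proposition.
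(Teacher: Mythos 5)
Your argument is correct and follows essentially the same route as the paper: the paper's own proof is a two-sentence remark that the claim follows from the symbolic expansion in Proposition \ref{prop_comp}, with the zeroth-order terms of $AB$ and $BA$ cancelling and the $|l|=1$ terms giving the Poisson bracket. You spell this out (including the reduction to $r=r'=0$, the verification that $\partial_{\eta_j}a\,\partial_{x_j}b$ lands in $S_{\mathrm{cu},\alpha}^{m+m'-1,\tilde m+\tilde m'-(1-\alpha)}$, and the well-definedness of the class of $H_ab$ in the quotient with the $(1-\alpha)$-shift), but the underlying mechanism is identical.
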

\begin{proof}
This follows from the symbolic expansion given in the proof of Proposition \ref{prop_comp}. The principal symbols of $AB$ and $BA$ coincide, and going one order further gives (\ref{commutator}).
\end{proof}
\subsection{Wavefront Sets and Ellipticity}
Symbol classes with infinite indices are defined by:
\begin{align*}
&S_{\mathrm{cu},\alpha}^{\infty,\infty,r}(M,Y):=\bigcup_{l_1,l_2\in \Z} S_{\mathrm{cu},\alpha}^{l_1,l_2,r}(M,Y),\\
&S_{\mathrm{cu},\alpha}^{\infty,l_2,r}(M,Y):=\bigcup_{l_1 \in \Z} S_{\mathrm{cu},\alpha}^{l_1,l_2,r}(M,Y),\\
&S_{\mathrm{cu},\alpha}^{l_1,\infty,r}(M,Y):=\bigcup_{l_2 \in \Z} S_{\mathrm{cu},\alpha}^{l_1,l_2,r}(M,Y).
\end{align*} 
And replace union by intersection when we replace $\infty$ by $-\infty$: 
\begin{align*}
&S_{\mathrm{cu},\alpha}^{-\infty,-\infty,r}(M,Y):=\bigcap_{l_1,l_2\in \Z}S_{\mathrm{cu},\alpha}^{l_1,l_2,r}(M,Y),\\
&S_{\mathrm{cu},\alpha}^{-\infty,l_2,r}(M,Y):=\bigcap_{l_1 \in \Z}S_{\mathrm{cu},\alpha}^{l_1,l_2,r}(M,Y),\\
&S_{\mathrm{cu},\alpha}^{l_1,-\infty,r}(M,Y):=\bigcap_{l_2 \in \Z} S_{\mathrm{cu},\alpha}^{l_1,l_2,r}(M,Y).
\end{align*}
Similar notations apply to operator classes with $S_{\mathrm{cu},\alpha}^{\cdot,\cdot,\cdot}(M,Y)$ replaced by $\Psi_{\mathrm{cu},\alpha}^{\cdot,\cdot,\cdot}(M,Y)$, and also for Sobolev spaces with $S$ replaced by $H$
and $\pm \infty$ exchanged. When we use $-\infty$ as an order of the Sobolev norm, we mean this estimate 
holds for $-N$ with arbitrarily large $N \in \R^+$.

For $a\in S_{\mathrm{cu},\alpha}^{m,\tilde{m},r}(M,Y)$, we define its essential support $\text{ess supp}(a)$ by defining its complement: 
\begin{defn}
For $\mk{z} \in Z$, we say $\mk{z} \notin \text{ess supp}(a)$ if there exist $\chi_{\mk{z}} \in C_c^\infty(Z)$ being identically 1 near $\mk{z}$ such that $\chi_{\mk{z}}a \in S_{\mathrm{cu},\alpha}^{-\infty,-\infty,r}(M,Y)$. 

For $A=\mathrm{Op}(a)$, we define its (cusp) wavefront set by $\WF_{\mathrm{cu},\alpha}'(A):=\text{ess supp}(a)$. 
\end{defn}
Next we define the ellipticity of $S_{\mathrm{cu},\alpha}^{m,\tilde{m},r}(M,Y)$ and $\Psi_{\mathrm{cu},\alpha}^{m,\tilde{m},r}(M,Y)$ and give the parametrix construction. Then we prove elliptic estimates after showing the boundedness between Sobolev spaces.
\begin{defn}
For $a \in S_{\mathrm{cu},\alpha}^{m,\tilde{m},r}(M,Y)$, we say that $a$ is elliptic at $\mk{z} \in \partial Z$ if there is a neighborhood of $\mk{z}$ in $Z$ on which $a$ satisfies
\begin{align} \label{eq: elliptic definition}
|a| \geq C \tau^{-r}\hat{\rho}^{-m}\tilde{\rho}^{-\frac{\tilde{m}-m}{\alpha}}. 
\end{align}
$a$ is said to be elliptic on $\mathcal{U}$ if it is elliptic on each point of $\,\mathcal{U}$. $A=\mathrm{Op}(a)$ is said to be elliptic at a point or on an open set if and only if $a$ is elliptic at that point or on that open set. The elliptic set of $A$ (resp. $a$) is denoted by $\Ell(A)$ (resp. $\Ell(a)$).
\end{defn}
The wavefront set of a distribution is defined as follows:
\begin{defn}
For $u \in H_{\mathrm{cu},\alpha}^{-\infty,-\infty,r}(M,Y)$, we say $\mk{z} \notin \WF_{\mathrm{cu},\alpha}^{s,\tilde{s},r}(u)$ if and only if there exists $A \in \Psi_{\mathrm{cu},\alpha}^{s,\tilde{s},r}(M,Y)$ which is elliptic at $\mk{z}$ such that $Au \in L_{\mathrm{cu}}^2(M)$.
\end{defn}
The parametrix construction using a Neumann series in classical microlocal analysis generalizes to our situation directly.
\begin{prop}
Suppose $A \in \Psi_{\mathrm{cu},\alpha}^{m,\tilde{m},r}(M,Y)$ is elliptic at $\mk{z} \in \partial Z$, then there exist $B \in  \Psi_{\mathrm{cu},\alpha}^{-m,-\tilde{m},-r}(M,Y),E \in \Psi_{\mathrm{cu},\alpha}^{0,0,0}(M,Y)$ such that $\mk{z}\notin \WF_{\mathrm{cu},\alpha}'(E)$, and following identity holds:
\begin{align}
 B \circ A = \mathrm{Id}+E.  \label{parametrix}
\end{align}
In particular, when $A$ is elliptic everywhere, then we can find $B$ so that $E \in \Psi_{\mathrm{cu},\alpha}^{-\infty,-\infty,r}(M,Y)$.
\end{prop}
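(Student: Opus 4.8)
The plan is to mimic the standard Neumann series construction, being careful that the two-index symbol orders behave correctly under the composition law of Proposition \ref{prop_comp} (in its weighted form, Proposition \ref{prop_wcomp}). First I would choose a microlocal cutoff: since $A = \mathrm{Op}(a)$ is elliptic at $\mk{z}$, pick $\chi \in C_c^\infty(Z)$ supported in the neighborhood where $|a| \geq C\tau^r \hat{\rho}^{-m}\tilde{\rho}^{-(\tilde{m}-m)/\alpha}$ and equal to $1$ near $\mk{z}$. Then $b_0 := \chi \cdot a^{-1}$ is well defined; the key check is that $b_0 \in S_{\mathrm{cu},\alpha}^{-m,-\tilde{m},-r}(M,Y)$, which follows from the lower bound on $|a|$ together with the fact that derivatives of $a^{-1}$ are, by the quotient rule and the symbolic estimates \eqref{diff_1} for $a$, controlled by the appropriate powers of $\hat\rho$ and $\tilde\rho$ — exactly the invariance-under-$W_i$ structure of the symbol class. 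Set $B_0 := \mathrm{Op}(b_0) \in \Psi_{\mathrm{cu},\alpha}^{-m,-\tilde{m},-r}(M,Y)$.

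Next, by Proposition \ref{prop_wcomp} together with the symbolic expansion \eqref{sym_composition}, we have $B_0 \circ A = \mathrm{Id} + R_1$ where $R_1 \in \Psi_{\mathrm{cu},\alpha}^{-1,-(1-\alpha),0}(M,Y)$: the principal symbol of $B_0 A$ is $b_0 a = \chi$, which equals $1$ near $\mk{z}$, so the full symbol of $B_0 A$ differs from $\mathrm{Id}$ by a term that is order $-\infty$ near $\mk{z}$ and order $(-1,-(1-\alpha))$ globally. I would record that $\mk{z} \notin \WF_{\mathrm{cu},\alpha}'(R_1)$. Now iterate asymptotically: one wants $B \sim \big(\sum_{k\geq 0} (-R_1)^k\big)\circ B_0$. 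Since $R_1^k \in \Psi_{\mathrm{cu},\alpha}^{-k,-k(1-\alpha),0}(M,Y)$ by repeated application of Proposition \ref{prop_wcomp}, both indices decrease, and one can Borel-sum (asymptotically complete) the symbols of $(-R_1)^k B_0$ to obtain $B \in \Psi_{\mathrm{cu},\alpha}^{-m,-\tilde{m},-r}(M,Y)$ with $B \circ A = \mathrm{Id} + E$, where $E = $ (tail of the series) $\in \Psi_{\mathrm{cu},\alpha}^{-\infty,-\infty,0}(M,Y)$ microlocally near $\mk{z}$ — i.e.\ $\mk{z} \notin \WF_{\mathrm{cu},\alpha}'(E)$ — and $E \in \Psi_{\mathrm{cu},\alpha}^{0,0,0}(M,Y)$ globally. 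When $a$ is elliptic everywhere we take $\chi \equiv 1$, $b_0 = a^{-1}$ globally, $R_1 \in \Psi_{\mathrm{cu},\alpha}^{-1,-(1-\alpha),0}$ with no exceptional support, and the same asymptotic summation yields $E \in \Psi_{\mathrm{cu},\alpha}^{-\infty,-\infty,0}(M,Y)$; the weight $r$ in the statement should be $0$ here, as $B\circ A$ has weight $(-r)+r=0$.

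The main obstacle — really the only nonroutine point — is verifying that the asymptotic summation $\sum_k (-R_1)^k$ makes sense in this two-index calculus, i.e.\ that one can find a single symbol whose expansion has the prescribed terms in $S_{\mathrm{cu},\alpha}^{-k,-k(1-\alpha),0}(M,Y)$. Because both orders strictly decrease with $k$ (the first by $1$, the second by $1-\alpha > 0$ since $0<\alpha<1$), the standard Borel-type construction on the blown-up manifold $Z$ applies verbatim: one rescales the $k$-th term by a cutoff in $\hat\rho$ supported where $\hat\rho$ is small, chosen so that the sum converges with all the estimates \eqref{diff_1}, and the tail lies in the intersection $\bigcap_{l_1,l_2} S_{\mathrm{cu},\alpha}^{l_1,l_2,0}$. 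The rest is bookkeeping with Propositions \ref{prop_comp}, \ref{prop_wcomp} and the definition of $\WF_{\mathrm{cu},\alpha}'$.
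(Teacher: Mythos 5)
Your overall architecture — form the microlocal reciprocal $b_0 = \chi\, a^{-1}$, run a Neumann-type iteration, and Borel-sum on the blown-up manifold $Z$ using the fact that under each extra composition both symbol orders drop (by $1$ and by $1-\alpha>0$) — is exactly the classical construction the paper invokes when it declares that the Neumann-series parametrix ``generalizes to our situation directly,'' so you and the paper are on the same route. You are also right that the weight index of $E$ in the globally elliptic case should be $0$, not $r$, since $B\circ A$ has weight $(-r)+r = 0$; that is a typo in the statement.

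There is, however, a concrete bookkeeping error that makes the argument as literally written fail, and it should be repaired. With $b_0 = \chi\, a^{-1}$ the principal symbol of $B_0 A$ is $\chi$, so the error $R_1 := B_0 A - \mathrm{Id}$ has principal symbol $\chi - 1$, which is a nonzero order-$(0,0,0)$ symbol wherever $\chi \neq 1$. Thus $R_1 \in \Psi_{\mathrm{cu},\alpha}^{0,0,0}(M,Y)$ globally, \emph{not} $\Psi_{\mathrm{cu},\alpha}^{-1,-(1-\alpha),0}(M,Y)$, and it is only on the set $\{\chi=1\}$ (containing $\mathfrak{z}$) that the error drops to order $(-1,-(1-\alpha),0)$; your sentence states these two behaviours in reverse, and the ensuing claim $\mathfrak{z}\notin\WF'_{\mathrm{cu},\alpha}(R_1)$ is premature, because before any iteration $R_1$ is merely one order lower near $\mathfrak{z}$, not smoothing there. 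Consequently the terms $(-R_1)^k$ do not have decreasing orders globally, and a Borel summation cannot be applied to them as written. The standard fix is either to split $R_1 = \mathrm{Op}(\chi-1) + R_1'$ with $R_1' \in \Psi_{\mathrm{cu},\alpha}^{-1,-(1-\alpha),0}(M,Y)$ and $\mathfrak{z}\notin\WF'_{\mathrm{cu},\alpha}(\mathrm{Op}(\chi-1))$, iterate only on $R_1'$, and carry the off-$\mathfrak{z}$ contribution forward into $E$; or, more cleanly, to run the construction inductively with a shrinking chain of cutoffs $\chi_0 \succ \chi_1 \succ \cdots$, all identically $1$ near $\mathfrak{z}$ and supported in the elliptic set, choosing $b_j \in S_{\mathrm{cu},\alpha}^{-m-j,\,-\tilde m - j(1-\alpha),\,-r}(M,Y)$ to cancel the current leading error on $\{\chi_j = 1\}$. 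With either repair, the Borel summation proceeds as you intend, the accumulated remainder $E$ stays globally $\Psi_{\mathrm{cu},\alpha}^{0,0,0}(M,Y)$ (the $\mathrm{Op}(\chi_j - 1)$ pieces never improve away from $\mathfrak{z}$, and there is no reason they should, since $A$ need not be elliptic there), and $\mathfrak{z}\notin\WF'_{\mathrm{cu},\alpha}(E)$.
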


\subsection{Mapping properties}
Next we state mapping properties of our operator class, which are analogous to that in previous calculi. We first give a square root construction and reduce the general boundedness to the $L_{\mathrm{cu}}^2$ boundedness, and then prove the $L_{\mathrm{cu}}^2$ boundedness using this square root construction.
\begin{lmm}
Suppose $A \in \Psi_{\mathrm{cu},\alpha}^{0,0,0}(M,Y)$ is a symmetric elliptic operator whose principal symbol has a representative $a \in S^{0,0,0}_{\mathrm{cu}}(M,Y)$ which is lower bounded by a positive constant, i.e. $a \geq c>0$, then there exists a symmetric operator $B \in \Psi_{\mathrm{cu},\alpha}^{0,0,0}(M,Y)$ such that $A=B^2+E$, $E \in \Psi_{\mathrm{cu},\alpha}^{-\infty,-\infty,0}(M,Y)$.
\end{lmm}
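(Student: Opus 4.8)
The plan is to construct $B$ as a parametrix-improved square root, built from an asymptotic Neumann-type series at the symbol level and then corrected iteratively to arbitrarily high order. First I would set $b_0 := \sqrt{a} \in S^{0,0,0}_{\mathrm{cu},\alpha}(M,Y)$; this is a legitimate symbol because $a \geq c > 0$ and the square root of a strictly positive symbol is again a symbol (applying the chain rule, each derivative of $\sqrt{a}$ is a polynomial in derivatives of $a$ divided by powers of $\sqrt{a} \geq \sqrt{c}$, so the estimates (\ref{diff_1}) are preserved with the same orders). Let $B_0 := \Op(b_0)$, which we may take self-adjoint by replacing it with $\tfrac{1}{2}(B_0 + B_0^*)$; since $B_0$ is real-valued to leading order, $B_0^* - B_0 \in \Psi_{\mathrm{cu},\alpha}^{-1,-(1-\alpha),0}(M,Y)$ by the composition/adjoint calculus of Proposition \ref{prop_comp}, so this symmetrization does not disturb the principal symbol.

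Next I would run the iteration. By Proposition \ref{prop_comp}, $B_0^2 \in \Psi_{\mathrm{cu},\alpha}^{0,0,0}(M,Y)$ with $\sigma_{0,0,0}(B_0^2) = b_0^2 = a = \sigma_{0,0,0}(A)$, so $R_1 := A - B_0^2 \in \Psi_{\mathrm{cu},\alpha}^{-1,-(1-\alpha),0}(M,Y)$ and is self-adjoint. Suppose inductively that we have a self-adjoint $B_{k-1} = \Op(b_{k-1})$ with $b_{k-1} \in S^{0,0,0}_{\mathrm{cu},\alpha}(M,Y)$ and $R_k := A - B_{k-1}^2 \in \Psi_{\mathrm{cu},\alpha}^{-k,-k(1-\alpha),0}(M,Y)$ self-adjoint, with symbol $r_k \in S^{-k,-k(1-\alpha),0}_{\mathrm{cu},\alpha}(M,Y)$. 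I would set the correction symbol $c_k := \tfrac{1}{2} b_0^{-1} r_k \in S^{-k,-k(1-\alpha),0}_{\mathrm{cu},\alpha}(M,Y)$ (again using $b_0 \geq \sqrt{c} > 0$ so $b_0^{-1}$ is a $(0,0,0)$-symbol), let $C_k := \Op(c_k)$ symmetrized, and put $B_k := B_{k-1} + C_k$. Then
\begin{align*}
A - B_k^2 = R_k - (B_{k-1}C_k + C_k B_{k-1}) - C_k^2.
\end{align*}
The principal symbol of $B_{k-1}C_k + C_k B_{k-1}$ at order $(-k, -k(1-\alpha))$ is $2 b_0 c_k = r_k$, which cancels $R_k$'s leading part, while $C_k^2 \in \Psi_{\mathrm{cu},\alpha}^{-2k,-2k(1-\alpha),0}(M,Y)$; hence $R_{k+1} := A - B_k^2 \in \Psi_{\mathrm{cu},\alpha}^{-(k+1),-(k+1)(1-\alpha),0}(M,Y)$, self-adjoint, closing the induction.

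Finally I would take an asymptotic sum. The correction symbols $c_k \in S^{-k,-k(1-\alpha),0}_{\mathrm{cu},\alpha}(M,Y)$ have orders decreasing in both indices, so by a standard Borel/asymptotic-summation argument in the symbol class (the same construction used for the parametrix in the proposition preceding this lemma) there exists $b \in S^{0,0,0}_{\mathrm{cu},\alpha}(M,Y)$ with $b \sim b_0 + \sum_{k \geq 1} c_k$, meaning $b - (b_0 + \sum_{k=1}^{N} c_k) \in S^{-(N+1),-(N+1)(1-\alpha),0}_{\mathrm{cu},\alpha}(M,Y)$ for every $N$. Setting $B := \Op(b)$, symmetrized to be self-adjoint, we get $A - B^2 \in \Psi_{\mathrm{cu},\alpha}^{-N,-N(1-\alpha),0}(M,Y)$ for all $N$, i.e. $E := A - B^2 \in \Psi_{\mathrm{cu},\alpha}^{-\infty,-\infty,0}(M,Y)$, which is the claim. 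The main obstacle — really the only nonroutine point — is verifying that the square root and reciprocal of a strictly positive symbol stay in the class $S^{0,0,0}_{\mathrm{cu},\alpha}(M,Y)$ despite the two-index, blown-up structure; this is handled by the local characterization in Lemma \ref{lmm2}, since Faà di Bruno applied to (\ref{diff_1}) shows that every $x_1$-derivative costs exactly one factor of $\tilde\rho^{-1}$ and every fiber derivative gains one factor of $\hat\rho$, and these bookkeeping rules are exactly preserved under composition with the smooth functions $t \mapsto \sqrt{t}$ and $t \mapsto t^{-1}$ on the range $[c,\infty)$. Everything else is the standard iterative square-root construction transported verbatim into this calculus.
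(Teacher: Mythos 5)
Your proof is correct and follows essentially the same route the paper takes: the paper's proof is a one-line reference to Lemma 5.7 of Vasy's minicourse (the standard iterative square-root construction), together with the observation that the inductive gain at each step is $(1, 1-\alpha)$ rather than $1$ in the second index, which is precisely the step-size $R_k \in \Psi^{-k,-k(1-\alpha),0}_{\mathrm{cu},\alpha}$ you track, and that all that matters is that both gains are strictly positive. Your explicit verification that $\sqrt{a}$ and $a^{-1}$ remain in $S^{0,0,0}_{\mathrm{cu},\alpha}$ via the Faà di Bruno bookkeeping against (\ref{diff_1}), and the Borel summation at the end, are exactly the details the paper leaves implicit.
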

\begin{proof}
The proof is the same as Lemma 5.7 of \cite{vasy2018minicourse}. The only difference is that the gain of the error term in each inductive step is 1 and $1-\alpha$ in the first and second index now. But this does not essentially change the proof as long as we have positive gains in these two indices in each step.
\end{proof}

\begin{prop}
Suppose $A\in \Psi_{\mathrm{cu},\alpha}^{s,\tilde{s},r'}(M,Y)$, then for $m,\tilde{m},r' \in \R$, $A$ is a bounded linear operator from $H_{\mathrm{cu},\alpha}^{m,\tilde{m},r}(M,Y)$ to $H_{\mathrm{cu},\alpha}^{m-s,\tilde{m}-\tilde{s},r-r'}(M,Y)$.
\end{prop}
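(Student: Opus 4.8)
The plan is to reduce everything to $L^2_{\mathrm{cu}}(M)$ boundedness of operators in $\Psi^{0,0,0}_{\mathrm{cu},\alpha}(M,Y)$, via conjugation by the weight/order operators $D_{s,\tilde s}$ and $\tau^{-r}$ that define the Sobolev norms. Concretely, let $u \in H^{m,\tilde m,r}_{\mathrm{cu},\alpha}(M,Y)$, so that by definition $\tau^{-r}D_{m,\tilde m}u \in L^2_{\mathrm{cu}}(M)$. To estimate $\|Au\|_{m-s,\tilde m-\tilde s,r-r'}$ we must show $\tau^{-(r-r')}D_{m-s,\tilde m-\tilde s}\,A\,u \in L^2_{\mathrm{cu}}(M)$ with norm controlled by $\|u\|_{m,\tilde m,r}$. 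Writing $Au = \bigl(\tau^{-(r-r')}D_{m-s,\tilde m-\tilde s}\,A\,D_{m,\tilde m}^{-1}\tau^{r}\bigr)\bigl(\tau^{-r}D_{m,\tilde m}u\bigr)$, it suffices to check that the conjugated operator $Q := \tau^{-(r-r')}D_{m-s,\tilde m-\tilde s}\,A\,D_{m,\tilde m}^{-1}\tau^{r}$ lies in $\Psi^{0,0,0}_{\mathrm{cu},\alpha}(M,Y)$. This is where the composition calculus does the work: $\tau^{r}$ and $\tau^{-r'}$ are weights and move freely, $D_{m,\tilde m}^{-1}$ is (up to smoothing error, by the elliptic parametrix construction of the preceding proposition) an operator in $\Psi^{-m,-\tilde m,0}_{\mathrm{cu},\alpha}(M,Y)$, $A \in \Psi^{s,\tilde s,r'}_{\mathrm{cu},\alpha}(M,Y)$, and $D_{m-s,\tilde m-\tilde s} \in \Psi^{m-s,\tilde m-\tilde s,0}_{\mathrm{cu},\alpha}(M,Y)$; by the weighted composition law (Proposition \ref{prop_wcomp}) the orders add to $(0,0,0)$. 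Smoothing remainders $E \in \Psi^{-\infty,-\infty,\cdot}_{\mathrm{cu},\alpha}(M,Y)$ map into $L^2_{\mathrm{cu}}$ trivially, so they are harmless.

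It therefore remains to prove the base case: every $B \in \Psi^{0,0,0}_{\mathrm{cu},\alpha}(M,Y)$ is bounded on $L^2_{\mathrm{cu}}(M)$. For this I would use the standard H\"ormander square-root (Cotlar--Stein style) argument exactly as in the cusp calculus. Fix a large constant $C_0$ with $C_0^2 > \sup |b|^2$ (possible since $b \in S^{0,0,0}_{\mathrm{cu},\alpha}$ is bounded, being of order $0$ in both the homogeneous and the blow-up sense), and consider $C_0^2 \mathrm{Id} - B^*B$. Its principal symbol has a representative bounded below by a positive constant, so by the square-root lemma just proved in the excerpt there is a self-adjoint $G \in \Psi^{0,0,0}_{\mathrm{cu},\alpha}(M,Y)$ with $C_0^2\mathrm{Id} - B^*B = G^2 + E$, $E \in \Psi^{-\infty,-\infty,0}_{\mathrm{cu},\alpha}(M,Y)$. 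Pairing with $u \in \dot{\mathcal C}^\infty(M)$ gives
\begin{align*}
\|Bu\|_{L^2_{\mathrm{cu}}}^2 = C_0^2\|u\|_{L^2_{\mathrm{cu}}}^2 - \|Gu\|_{L^2_{\mathrm{cu}}}^2 - \langle Eu,u\rangle \le C_0^2\|u\|_{L^2_{\mathrm{cu}}}^2 + |\langle Eu,u\rangle|,
\end{align*}
and $E$ being of order $(-\infty,-\infty,0)$ is bounded on $L^2_{\mathrm{cu}}$ (indeed it even gains regularity), so $|\langle Eu,u\rangle| \le C\|u\|_{L^2_{\mathrm{cu}}}^2$; density of $\dot{\mathcal C}^\infty(M)$ in $L^2_{\mathrm{cu}}(M)$ finishes the bound. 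One must check $G^2 \ge 0$ as an operator, which holds because $G$ is self-adjoint, and that $B^*B \in \Psi^{0,0,0}_{\mathrm{cu},\alpha}$, which is again the composition law together with the fact (provable by the same conjugation argument, or directly from the symbol expansion) that adjoints preserve the class.

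The main obstacle I anticipate is bookkeeping rather than conceptual: one must make sure the conjugated operator $Q$ really is honestly in $\Psi^{0,0,0}_{\mathrm{cu},\alpha}(M,Y)$ and not merely in some larger class, i.e. that the two indices truly cancel and no hidden loss of the form $\tilde\rho^{-\varepsilon}$ is incurred when inverting $D_{m,\tilde m}$. This is guaranteed by the elliptic parametrix proposition (the inverse of $D_{m,\tilde m}$ modulo smoothing has the exactly opposite orders $(-m,-\tilde m,0)$) and by the sharp form of the composition law in Proposition \ref{prop_comp}, where the subprincipal terms lose only $(1,1-\alpha)$ per order and hence stay within the class; I would point to these explicitly. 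A secondary point is the definition of $H^{s,\tilde s,r}_{\mathrm{cu},\alpha}$ for mixed-sign or fractional $(s,\tilde s)$ via interpolation and duality: for those cases the result follows by interpolating (resp. dualizing) the estimates already obtained for non-negative integer orders, using that the adjoint of $A \in \Psi^{s,\tilde s,r'}_{\mathrm{cu},\alpha}$ lies in $\Psi^{s,\tilde s,r'}_{\mathrm{cu},\alpha}$ and that interpolation of the weighted $L^2_{\mathrm{cu}}$-based scale behaves as expected, so I would dispatch these at the end in a sentence each.
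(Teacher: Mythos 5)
Your overall strategy matches the paper's: conjugate by the order/weight operators to reduce everything to showing that $\Psi^{0,0,0}_{\mathrm{cu},\alpha}(M,Y)$ acts boundedly on $L^2_{\mathrm{cu}}(M)$, and then use the square-root construction $C_0^2\mathrm{Id} - B^*B = G^2 + E$ with $E$ smoothing to reduce the $L^2$ bound to that of a smoothing remainder. This is precisely the paper's route (following the cusp-calculus version of the H\"ormander square-root argument). So far, so good.

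The place where you hand-wave is precisely the place where the paper actually does work: you assert twice that operators $E \in \Psi^{-\infty,-\infty,0}_{\mathrm{cu},\alpha}(M,Y)$ ``map into $L^2_{\mathrm{cu}}$ trivially'' and are ``bounded on $L^2_{\mathrm{cu}}$ (indeed it even gains regularity),'' but the latter is circular --- the claim that smoothing operators gain $L^2$-based regularity is exactly what the whole proposition is trying to establish, and the base of the induction cannot presuppose it. On a closed manifold ``order $-\infty$'' immediately gives a bounded kernel and hence an $L^2$ bound, but here $M$ is noncompact (a cylinder compactified at the cusp end) and the blown-up symbol class introduces the extra scale $\tilde\rho$; none of this makes the kernel bound automatic. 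The paper closes precisely this gap by writing the Schwartz kernel as $K_E(z,z') = \mathcal{F}^{-1}_{\tilde\eta}\bigl(e(z,\tilde\eta)\bigr)(z,z-z')$, using the symbol estimates (\ref{diff_1}) with arbitrarily negative $(m,\tilde m)$ and high $\tilde\gamma$-derivatives to get $|K_E(z,z')| \le C_N\langle z-z'\rangle^{-N}$ for $N>n$, and then verifying the Schur test conditions (\ref{schur}). You should either carry out this kernel estimate or at least flag it as the remaining step, rather than dismissing it. A secondary small point: your $Q$ reduces cleanly to $\Psi^{0,0,0}_{\mathrm{cu},\alpha}$ only because the composition law has no loss in total order, which you correctly point to; that part of your bookkeeping is fine.
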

\begin{proof}

According to (\ref{wsob_norm}), $D_{s,\tilde{s},r} := D_{s,\tilde{s}}\tau^{-r}$ 
is an isometry mapping $H_{\mathrm{cu},\alpha}^{s,\tilde{s},r}(M,Y)$ to $L_{\mathrm{cu}}^2(M)$ 
with right inverse $D'_{-s,-\tilde{s},r}:=\tau^r \la \tilde{D} \ra^{-\tilde{s}}\la D \ra^{-s}$, 
which is an isometry as well. So the claim $A \in \mathcal{L}(H_{\mathrm{cu},\alpha}^{m,\tilde{m},r}(M,Y),H_{\mathrm{cu},\alpha}^{m-s,\tilde{m}-\tilde{s},r-r'}(M,Y))$ can be reduced to the claim that $\tilde{A}:=D_{m-s,\tilde{m}-\tilde{s},-(r-r')} A D'_{-m,-\tilde{m},r} \in \Psi_{\mathrm{cu},\alpha}^{0,0,0}(M,Y)$  is a bounded map from $L_{\mathrm{cu}}^2(M)$ to $L_{\mathrm{cu}}^2(M)$. To be more concrete, we write $A$ as
\begin{align}
A=  D'_{-(m-s),-(\tilde{m-\tilde{s}}),-(r-r')} (D_{m-s,\tilde{m}-\tilde{s},(r-r')} A D'_{-m,-\tilde{m},-r}) D_{m,\tilde{m},r},
\end{align}  
where two operators outside the bracket are isometries between weighted Sobolev spaces with appropriate indices. The graphic illustration of this conjugation process is given below.\\
\begin{center}
\begin{tikzcd}
H_{\mathrm{cu},\alpha}^{m,\tilde{m},r}(M,Y) \arrow[r, "A"] 
& H_{\mathrm{cu},\alpha}^{m-s,\tilde{m}-\tilde{s},r-r'}(M,Y) \arrow[d, "D_{m-s,\tilde{m}-\tilde{s},(r-r')}"] \\
L_{\mathrm{cu}}^2(M) \arrow[r,"\tilde{A}"] \arrow[u,"D'_{-m,-\tilde{m},-r}"]
& L_{\mathrm{cu}}^2(M)
\end{tikzcd}
\end{center}
So we only need to show that for any $\tilde{A} = \mathrm{Op}(\tilde{a}) \in \Psi_{\mathrm{cu},\alpha}^{0,0,0}(M,Y)$, we have $\tilde{A} \in \mathcal{L}(L_{\mathrm{cu}}^2(M,Y),L_{\mathrm{cu}}^2(M,Y))$. We apply the proof of Proposition 5.9 of \cite{vasy2018minicourse} to reduce to the proof of the $L_{\mathrm{cu}}^2(M) \rightarrow L_{\mathrm{cu}}^2(M)$ boundedness of $E =\mathrm{Op}(e) \in \Psi_{\mathrm{cu},\alpha}^{-\infty,-\infty,0}(M,Y)$. The modification needed is replacing $S_{\infty,\delta}^{0,0},\Psi_{\infty,\delta}^{0,0},S_{\infty,\delta}^{-1+2\delta,0}$ there by $S_{\mathrm{cu},\alpha}^{0,0,0},\Psi_{\mathrm{cu},\alpha}^{0,0,0},S_{\mathrm{cu},\alpha}^{-1,-(1-\alpha),0}$ respectively. For a complete statement of the classical Schur's lemma, we refer readers to the Lemma in \cite[Appendix~A.1]{grafakos2014modern}. Our final task is to verify the conditions needed for applying Schur's lemma. Let $K_E(z,z')$ be the Schwartz kernel of $E$, where $z=(t,x),z'=(t',x')$. Then we need to show
\begin{align}
\begin{split}
&\sup_{z} \int |K_E(z,z')|d\nu_{\mathrm{cu}}(z') < \infty,\\
&\sup_{z'} \int |K_E(z,z')|d\nu_{\mathrm{cu}}(z) < \infty.              \label{schur}
\end{split}
\end{align}  
By (\ref{eq: cusp quantization}), we have
\begin{align*}
K_E(z,z')= \mathcal{F}^{-1}_{\tilde{\eta}}(e(z,\tilde{\eta}))(z,z-z').
\end{align*}
Using (\ref{diff_1}) with large (in absolute value) negative $m,\tilde{m}$ and large (in each component) positive $\tilde{\gamma}$, we know that we can find $N_E>n$, constant $C_{N_E}$ such that
\begin{align*}
|K_E(z,z')| \leq C_{N_e}\la z-z' \ra^{-N_E},
\end{align*}
which is sufficient for (\ref{schur}).
\end{proof}

Recalling (\ref{parametrix}) and the boundedness of $B$ there as a map from $H_{\mathrm{cu},\alpha}^{m_1-m,\tilde{m_1}-\tilde{m},r-r'}(M,Y)$ to $H_{\mathrm{cu},\alpha}^{m_1,\tilde{m_1},r}(M,Y)$, we obtain the elliptic estimate:
\begin{prop}\label{prop: elliptic estimate}
Suppose $A \in \Psi_{\mathrm{cu},\alpha}^{m,\tilde{m},r'}(M,Y)$ is elliptic, then $\forall N \in \R$, we have
\begin{align}
||u||_{s,\tilde{s},r} \lesssim   ||Au||_{s-m,\tilde{s}-\tilde{m},r-r'}+||u||_{-N,-N,r}     .        \label{est-ell}
\end{align}
When $A$ is not globally elliptic, for $B \in \Psi_{\mathrm{cu},\alpha}^{0,0,0}(M,Y)$ such that $\WF_{\cu,\alpha}'(B) \subset \Ell(A)$, then (\ref{est-ell}) continues to hold with the left hand side replaced by $||Bu||_{s,\tilde{s},r}$.
\end{prop}

\subsection{G\r{a}rding's inequality} \label{sec: Garding's inequality}
Next we prove G\r{a}rding's inequality, which exploits bounds on symbols, for our symbols and operator classes.
\begin{lmm} \label{lmm1}
Let $B,B' \in \Psi_{\mathrm{cu},\alpha}^{s,\tilde{s},r}(M,Y)$ with $\WF_{\mathrm{cu},\alpha}'(B') \subset \mathrm{Ell}(B)$, and suppose that their rescaled symbols $b = \tau^r\hat{\rho}^s \tilde{\rho}^{\frac{\tilde{s}-s}{\alpha}} \sigma_{s,\tilde{s},r}(B), b' = \tau^r\hat{\rho}^s \tilde{\rho}^{\frac{\tilde{s}-s}{\alpha}} \sigma_{s,\tilde{s},r}(B') $ satisfy $|b'| \leq b$ on $\WF_{\mathrm{cu},\alpha}'(B')$, then for any $\delta>0$, there exists a constant $C$ such that:
\begin{align*}
||B'u||_{L^2_{\cu}} \leq (1+\delta)||Bu||_{L^2_{\cu}} + C ||u||_{s-\frac{1}{2}, \tilde{s} - \frac{1-\alpha}{2},r}.
\end{align*}  
\end{lmm}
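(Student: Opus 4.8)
The plan is to run the classical square-root (Friedrichs) argument for the sharp G\r{a}rding inequality inside the present calculus, the one structural subtlety being that the square-root Lemma above applies to \emph{elliptic} symbols only; so before invoking it one must factor $B'$ through $B$ by a microlocal parametrix living over $\Ell(B)$, which is exactly what the hypothesis $\WF_{\mathrm{cu},\alpha}'(B')\subset\Ell(B)$ permits. First I would reduce to the model case in which both operators have order $(0,0,0)$ and the estimate is taken in $L_{\mathrm{cu}}^2(M)$: after conjugating $B$ and $B'$ by the order-adjusting isometries $D_{\bullet}$ of the mapping-properties subsection and transporting the Sobolev norms correspondingly, one may assume $\hat B,\hat B'\in\Psi_{\mathrm{cu},\alpha}^{0,0,0}(M,Y)$. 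Since the conjugating factors are elliptic with scalar principal symbol, this preserves $\WF_{\mathrm{cu},\alpha}'$ and $\Ell$ and rescales $b,b'$ by a common positive factor, so $\WF_{\mathrm{cu},\alpha}'(\hat B')\subset\Ell(\hat B)$ and $|\sigma(\hat B')|\le\sigma(\hat B)$ on $\WF_{\mathrm{cu},\alpha}'(\hat B')$ persist, while the commutator terms produced are one order lower in the first two indices and hence, by Proposition~\ref{prop_wcomp} and the standing convention that everything is supported in $\{\tau\le1\}$, are absorbed into the error $C\|u\|_{s-\frac{1}{2},\tilde{s}-\frac{1-\alpha}{2},r}$.

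Next I would factor through $\hat B$. Because $\WF_{\mathrm{cu},\alpha}'(\hat B')$ is a closed, hence compact, subset of the compact manifold $Z$ contained in $\Ell(\hat B)$, covering it by finitely many neighborhoods on which the elliptic parametrix construction applies and gluing the local parametrices yields $\hat B^{-1}\in\Psi_{\mathrm{cu},\alpha}^{0,0,0}(M,Y)$ with $\hat B^{-1}\hat B=\Id+E$ and $\WF_{\mathrm{cu},\alpha}'(E)\cap\WF_{\mathrm{cu},\alpha}'(\hat B')=\emptyset$. Then $\hat B'=\mathcal C\hat B-\hat B'E$ with $\mathcal C:=\hat B'\hat B^{-1}\in\Psi_{\mathrm{cu},\alpha}^{0,0,0}(M,Y)$ and $\hat B'E\in\Psi_{\mathrm{cu},\alpha}^{-\infty,-\infty,0}(M,Y)$ residual, and $\WF_{\mathrm{cu},\alpha}'(\mathcal C)\subset\WF_{\mathrm{cu},\alpha}'(\hat B')$, where $|\sigma(\mathcal C)|=|\sigma(\hat B')|/\sigma(\hat B)\le1$ because $|\sigma(\hat B')|\le\sigma(\hat B)$ and $\sigma(\hat B)$ is bounded below there. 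Thus it suffices to prove the following sub-contraction bound: if $\mathcal C\in\Psi_{\mathrm{cu},\alpha}^{0,0,0}(M,Y)$ and $|\sigma(\mathcal C)|\le1$ on $\WF_{\mathrm{cu},\alpha}'(\mathcal C)$, then for every $N$ there is $C_N$ with $\|\mathcal Cw\|_{L_{\mathrm{cu}}^2(M)}\le(1+\delta)\|w\|_{L_{\mathrm{cu}}^2(M)}+C_N\|w\|_{-N,-N,0}$; applying it with $w=\hat Bu$ (after Step 1's transport) and feeding back through the reduction gives the theorem, the residual and very negative order pieces being swallowed by $C\|u\|_{s-\frac{1}{2},\tilde{s}-\frac{1-\alpha}{2},r}$.

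For the sub-contraction bound I would first microlocalize: by continuity of $\sigma(\mathcal C)$ on $Z$ and compactness of $\WF_{\mathrm{cu},\alpha}'(\mathcal C)$ there is a neighborhood $U$ of $\WF_{\mathrm{cu},\alpha}'(\mathcal C)$ on which $|\sigma(\mathcal C)|\le1+\tfrac{\delta}{6}$; choosing $\Lambda_0\in\Psi_{\mathrm{cu},\alpha}^{0,0,0}(M,Y)$ with $0\le\sigma(\Lambda_0)\le1$, $\sigma(\Lambda_0)\equiv1$ near $\WF_{\mathrm{cu},\alpha}'(\mathcal C)$ and $\supp(\sigma(\Lambda_0))\subset U$, and setting $\mathcal C_1:=\Lambda_0\mathcal C$, the operator $\mathcal C-\mathcal C_1$ is residual and the representative $\sigma(\Lambda_0)\sigma(\mathcal C)$ of $\sigma(\mathcal C_1)$ is bounded by $1+\tfrac{\delta}{6}$ on all of $Z$. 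Then
\begin{align*}
R:=(1+\delta)^2\,\Id-\mathcal C_1^{*}\mathcal C_1\in\Psi_{\mathrm{cu},\alpha}^{0,0,0}(M,Y)
\end{align*}
is self-adjoint with a representative $(1+\delta)^2-|\sigma(\mathcal C_1)|^2\ge(1+\delta)^2-(1+\tfrac{\delta}{6})^2>0$ of its principal symbol, so the square-root Lemma gives a self-adjoint $S\in\Psi_{\mathrm{cu},\alpha}^{0,0,0}(M,Y)$ with $R=S^2+E'$ and $E'\in\Psi_{\mathrm{cu},\alpha}^{-\infty,-\infty,0}(M,Y)$, whence
\begin{align*}
(1+\delta)^2\|w\|_{L_{\mathrm{cu}}^2(M)}^2-\|\mathcal C_1w\|_{L_{\mathrm{cu}}^2(M)}^2=\langle Rw,w\rangle=\|Sw\|_{L_{\mathrm{cu}}^2(M)}^2+\langle E'w,w\rangle\ge-C_N\|w\|_{-N,-N,0}^2,
\end{align*}
using self-adjointness, the pairing of $H_{\mathrm{cu},\alpha}^{N,N,0}(M,Y)$ with $H_{\mathrm{cu},\alpha}^{-N,-N,0}(M,Y)$, and the mapping property of the residual operator $E'$. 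Rearranging, taking square roots via $\sqrt{x^2+y^2}\le x+y$ for $x,y\ge0$, and adding back $\|(\mathcal C-\mathcal C_1)w\|_{L_{\mathrm{cu}}^2(M)}\le C_N\|w\|_{-N,-N,0}$ gives the sub-contraction bound.

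The hard part, I expect, is bookkeeping rather than anything conceptual: making the globalization of the parametrix precise — checking that the glued error $E$ is microlocally disjoint from $\WF_{\mathrm{cu},\alpha}'(\hat B')$ and that $\mathcal C=\hat B'\hat B^{-1}$ inherits the symbol bound with the stated representative — and tracking every $\tau$-weighted and lower-order error produced by the conjugations in Step 1 and by the parametrix in Step 2 so that each is dominated by $C\|u\|_{s-\frac{1}{2},\tilde{s}-\frac{1-\alpha}{2},r}$; for this one uses the mapping properties of the calculus together with the support condition $\{\tau\le1\}$.
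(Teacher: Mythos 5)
Your proposal is correct, but it takes a genuinely different route from the paper. The paper's proof is a one-shot symbolic square-root argument: form the self-adjoint operator $(1+\delta)^2B^*B-(B')^*B'\in\Psi_{\mathrm{cu},\alpha}^{2s,2\tilde s,2r}(M,Y)$, observe its principal symbol $(1+\delta)^2|\sigma(B)|^2-|\sigma(B')|^2$ is positive near $\WF_{\mathrm{cu},\alpha}'(B')$, take a smooth real symbolic square root $e$ (extended by $(1+\delta)\sigma(B)$ away from $\WF_{\mathrm{cu},\alpha}'(B')$), quantize to $E$, set $K:=(1+\delta)^2B^*B-(B')^*B'-E^*E\in\Psi_{\mathrm{cu},\alpha}^{2s-1,2\tilde s-(1-\alpha),2r}(M,Y)$, and conclude by pairing $\langle Ku,u\rangle$. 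In contrast, you first normalize to order $(0,0,0)$ by conjugating with the $D_\bullet$ isometries, then factor $\hat B'=\mathcal C\hat B-\hat B'E$ through a microlocal parametrix on $\Ell(\hat B)\supset\WF_{\mathrm{cu},\alpha}'(\hat B')$ to isolate a genuinely order-$(0,0,0)$ sub-contraction $\mathcal C$ with $|\sigma(\mathcal C)|\le1$, and only then run the Friedrichs step on $(1+\delta)^2\Id-\mathcal C_1^*\mathcal C_1$ by invoking the paper's operator-level square-root Lemma directly (which requires its input to be elliptic with positive lower-bounded symbol — precisely what the parametrix factorization and the $\Lambda_0$ truncation arrange). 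The two proofs rest on the same basic positivity idea, but you pay an extra layer (conjugation, parametrix, microlocal truncation) to get an unconditionally elliptic situation. What that buys you is that the square-root is applied exactly within the hypotheses of the stated square-root Lemma, and you never have to patch or extend a symbolic square root across the region where $\sigma(B)$ may vanish or change sign, a small sore point in the paper's terser argument where the transition between $\sqrt{(1+\delta)^2|\sigma(B)|^2-|\sigma(B')|^2}$ and $(1+\delta)\sigma(B)$ is asserted but not justified. The trade-off is that your error terms come from several sources (conjugation commutators, the residual $\hat B'E$, the truncation residual, the square-root residual) and tracking their orders back into $\|u\|_{s-\frac12,\tilde s-\frac{1-\alpha}{2},r}$ is a larger bookkeeping surface, though, as you note, the conjugation errors are a full $(1,1-\alpha)$ order lower so they comfortably fit. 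Two items worth making explicit if you flesh this out: (i) the gluing of the local parametrix over the compact set $\WF_{\mathrm{cu},\alpha}'(\hat B')$ so that $\WF_{\mathrm{cu},\alpha}'(E)\cap\WF_{\mathrm{cu},\alpha}'(\hat B')=\emptyset$ (the paper's parametrix proposition is stated pointwise, so you should say what "gluing" means); and (ii) the claim that the commutator corrections from the $D_\bullet$ conjugation are "one order lower in the first two indices" should read "one order lower in the first index and $1-\alpha$ lower in the second," consistent with the calculus's commutator proposition — this is harmless for the stated error bound but should be stated correctly.
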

\begin{proof}
We only need to prove the case $r=0$, since we can replace $u$ by $\tau^{-r}u$. Consider $(1+\delta)^2B^*B - (B')^*B' \in \Psi_{\mathrm{cu},\alpha}^{2s,2\tilde{s},2r}(M,Y)$, whose principal symbol is always strictly positive near $\WF_{\mathrm{cu},\alpha}'(B')$ , hence it has a smooth real square root $e \in S_{\mathrm{cu},\alpha}^{s,\tilde{s},r}(M,Y)$. (Away from $\WF_{\mathrm{cu},\alpha}'(B')$, we just set it to be $(1+\delta) \sigma(B)$ ). Then $K:=(1+\delta)^2 B^*B - (B')^*B' - E^*E \in \Psi_{\mathrm{cu},\alpha}^{2s-1,2\tilde{s}-(1-\alpha),2r}(X)$. Then we apply $K$ to $u$ and pair with $u$ to obtain the desired inequality. 
\end{proof}

\subsection{The uniform pseudodifferential algebra and cusp-conormal functions}
\label{sec: cusp-conormal symbols and functions}
Next we introduce the cusp-conormal function class characterizing perturbations of Kerr-(de Sitter) spacetimes which are natural for our machinery.
\begin{defn} \label{defn: cusp-conomral}
The cusp-conormal functions are functions remain bounded under iterated application of cusp vector fields:
\begin{align}\label{eq: defn_cusp_conormal}
\mathcal{A}_{\cu} = \{ u \in L^\infty (M): Au \in L^\infty(M), \, \forall A \in \mathrm{Diff}_{\cu}(M) \},
\end{align}
where $\mathrm{Diff}_{\cu}(M) = \bigcup_{m \in \N} \mathrm{Diff}_{\cu}^m(M)$ with $\mathrm{Diff}_{\cu}^m(M)$ being the space of $m$-th order cusp differential operators defined after \eqref{cusp_frame}. 
For $\beta_T>0$, its weighted version:
\begin{align} \label{defn_cusp_conormal_weighted}
\mathcal{A}_{\cu}^{\beta_T}(M):= \tau^{\beta_T} \mathcal{A}_{\cu}(M),
\end{align}
consists of cusp-conormal functions that are decaying to order at least ${\beta_T}$. 
\end{defn}
Let $S^2\,{}^{\rm cu}T^*M$ be the symmetric second tensor power of ${}^{\rm cu}T^*M$. Then we denote the vector bundle with the same local frame as $S^2\,{}^{\rm cu}T^*M$ but with coefficients in $\mathcal{A}_{\cu}^{\beta_T}(M)$ after trivialization by $\mathcal{A}_{\rm cu}^{\beta_T}(M;S^2\,{}^{\rm cu}T^*M)$.

The symbol and operator classes that are closely related to $\mathcal{A}_{\cu}^{\beta_T}(M)$ are the \emph{uniform symbol class} and the \emph{uniform pseudodifferential algebra}. Concretely, suppose $(t,x_1,...,x_{n-1})$ are coordinates on $M^\circ$ and $\tilde{\xi}=(\xi_t,\xi_1,...,\xi_{n-1})$ are corresponding momentum variables on $T^*M^\circ$, then the $m$-th order uniform symbol class, denoted by $S_{\infty}^m(M)$, is the space of functions on $T^*M^\circ$ such that
\begin{align} \label{eq: conormal cusp symbol}
|\partial_t^{\mk{f}} \partial_{x}^\beta   \partial_{\tilde{\xi}}^{\tilde{\gamma}} a(t,x_1,\hat{x}_1,\xi_{t},\xi)| \leq C (1+|\xi_t|^2+|\xi|^2)^{\frac{m-|{\tilde{\gamma}}|}{2}}.          
\end{align}
The difference with \eqref{eq: smooth cusp symbol} is that we use $\partial_t$ instead of $\partial_\tau$. In particular, symbols in $S_{\infty}^m(M)$ are not required to be smooth down to $\tau = 0$. Instead they are required to remain bounded under iterative application of cusp vector fields.
The action of the left quantization of $a \in S_{\infty}^m(M)$ supported in a single coordinate patch, which we still denote by $\mathrm{Op}(a)$, on functions $u$ supported in the same coordinate patch is defined by \eqref{eq: cusp quantization} with $a(t^{-1},x,\xi_t,\xi)$ replaced by $a(t,x,\xi_t,\xi)$. And it is defined using a partition of unity for general $a$ and $u$.
This defines an operator class $\Psi_{\infty}^m(M)$ and by definition we have the inclusion 
\begin{align*}
\Psi_{\mathrm{cu}}^m(M) \subset \Psi_{\infty}^m(M).
\end{align*}
These operator classes form a filtered algebra:
\begin{align*}
\Psi_{\infty}^{m_1} \circ \Psi_{\infty}^{m_2} \subset \Psi_{\infty}^{m_1+m_2}.
\end{align*}
Similar to the extension from $\Psi_{\mathrm{cu}}^m(M)$ to $\Psi_{\infty}^m(M)$, we also define the uniform (or conormal) version of our new pseudodifferential algebra when the defining function of $Y_0$ is $x_1$.
\begin{defn}
Let $M^\circ, Y, Y_0$ be as in Section \ref{sec: new symbol class} and the defining function of $Y_0$ is $x_1$, then we define $S_{\infty,\alpha}^{m,\tilde{m}}(M,Y)$ to be the space of functions on $T^*M^\circ$ such that
\begin{align} \label{eq: conormal new symbol}
|\partial_t^{\mk{f}}\partial_{x_1}^\mk{l} \partial_{\hat{x}_1}^\beta   \partial_{\tilde{\xi}}^{\tilde{\gamma}} a(t,x_1,\hat{x}_1,\xi_{t},\xi)| \leq C \hat{\rho}^{-m+|{\tilde{\gamma}}|}(x_1^2+\hat{\rho}^{2\alpha})^{-\frac{\tilde{m}-m}{\alpha}-\mk{l}},
\end{align}
where $\hat{\rho}=(1+|\xi_t|^2+|\xi|^2)^{-1/2}$ still denotes the defining function of fiber infinity. 

For $a \in S_{\infty,\alpha}^{m,\tilde{m}}(M,Y)$, its left quantization $\mathrm{Op}(a)$ is defined to act as in \eqref{eq: cusp quantization} for $u$ supported in a single coordinate patch and defined using a partition of unity for general $u$. The space of all such $\mathrm{Op}(a)$ is denoted by $\Psi_{\infty,\alpha}^{m,\tilde{m}}(M,Y)$. Their weighted version $S_{\infty,\alpha}^{m,\tilde{m},r}(M,Y),\Psi_{\infty,\alpha}^{m,\tilde{m},r}(M,Y)$ are defined by
\begin{align}
\begin{split}
S_{\infty,\alpha}^{m,\tilde{m},r}(M,Y):=t^{r}S_{\infty,\alpha}^{m,\tilde{m}}(M,Y)=\{ t^{r}a: a \in  S_{\infty,\alpha}^{m,\tilde{m}}(M,Y)\},\\
\Psi_{\infty,\alpha}^{m,\tilde{m},r}(M,Y):=t^{r}\Psi_{\infty,\alpha}^{m,\tilde{m}}(M,Y)=\{t^{r}A: A \in \Psi_{\infty,\alpha}^{m,\tilde{m}}(M,Y) \}.
\end{split}
\end{align}
\end{defn}

We notice that the only subtlety of $S^{m,\tilde{m}}_{\mathrm{cu},\alpha}(M,Y)$ compared with $S^m_{\mathrm{cu}}(M)$ is in terms of the regularity in $x_1$. More concretely, we relax the boundedness under iterative application of $\partial_{x_1}$ to the boundedness under iterative application of $\tilde{\rho}\partial_{x_1}$ (see Remark \ref{remark: alternative-proof, symbol, local}). Both $\partial_t$ and $\partial_\tau$ commute with these two vector fields, thus properties in Section \ref{sec: new symbol class} to Section \ref{sec: Garding's inequality} can be proved in the same way with all $S^{*,*,*}_{\mathrm{cu},\alpha}(M,Y)$ and $\Psi^{*,*,*}_{\mathrm{cu},\alpha}(M,Y)$ (with $*$ being their orders appeared in proofs) replaced by $S^{*,*,*}_{\infty,\alpha}(M,Y)$ and $\Psi^{*,*,*}_{\infty,\alpha}(M,Y)$ respectively.

\section{Statement of the Theorem}
\label{setup}
In this section, we state assumptions that we need for our microlocal estimates. For more detailed background, please refer to \cite{hirsch2006invariant} and \cite{wunsch2011resolvent}. The definitions of the space model $X$ and the spacetime $M$ are the same as in Section \ref{sec_microlocal_analysis}.

\subsection{Assumptions near the trapped set}
\label{sec_assumptions}
We first consider $P \in \Psi_{\mathrm{cu}}^{m}(M)$ with real principal symbol $p= \sigma^m_{\mathrm{cu}}(P)$ and characteristic set $\Sigma:=p^{-1}(0) \subset \prescript{\mathrm{cu}}{}{T}^*M \backslash o$, where $o$ is the zero section of $\prescript{\mathrm{cu}}{}{T}^*M$. We use $p_0:=p|_{\tau=0}$ to denote the restriction of the principal to the boundary. We also use $p_0$ to denote the stationary extension of $p_0$ itself: $p_0(t,x,\xi_t,\xi):=p_0(x,\xi_t,\xi)$. 

We make following assumptions (cf. assumptions (P.1)-(P.6) of \cite{hintz2021normally}, all `assumptions' with indices mentioned in latter sections are assumptions listed here) near the trapped set $\Gamma \subset \Sigma \cap \prescript{\mathrm{cu}}{}{S}^*_{\partial M}M $:
\begin{enumerate}
\item $dp_0 \neq 0$ on $\Sigma$ near $\Gamma$.  \label{assumption1}
\item The defining function of fiber infinity $\hat{\rho}$ is homogeneous of degree -1 and $H_{p_0}\hat{\rho}=0$ near $\Gamma$. \label{assumption2}
\item \label{assumption3} The rescaled Hamilton vector field is $\textbf{H}_{p_0}:=\hat{\rho}^{m-1}H_{p_0} \in \mathcal{V}_{\mathrm{cu}}(\prescript{\mathrm{cu}}{}{S}^*M )$. It is tangent to $\Gamma$, and satisfies:
\begin{align*} 
\inf_\Gamma \textbf{H}_{p_0}t>0.
\end{align*}
\item The stable and unstable manifolds at time infinity $\bar{\Gamma}^{u/s}$ are smooth orientable codimension 1 submanifolds of $\Sigma \cap{}^{\mathrm{cu}}S^*_{\partial M}M$ near $\Gamma$. They intersect transversally at $\Gamma$. $\textbf{H}_{p_0}$ is tangent to $\Sigma \cap {}^{\mathrm{cu}}S_{\partial M}^*M$. \label{assumption4}
\item There exists local defining functions $\bar{\phi}^{u/s} \in \mathcal{C}^\infty({}^{\mathrm{cu}}S^*_{\partial M}M)$ of $\bar{\Gamma}^{u/s}$ in a neighborhood of $\Gamma$ in $\Sigma$ as submanifolds of $\Sigma \cap{}^{\mathrm{cu}}S^*_{\partial M}M$ such that \label{assumption5}
\begin{align} 
\textbf{H}_{p_0}\bar{\phi}^u=-\bar{w}^u\bar{\phi}^u,\quad \textbf{H}_{p_0}\bar{\phi}^s=\bar{w}^s\bar{\phi}^s,\label{eq: barphi u,s assumptions1} \\
 \hat{\rho}^{-1}H_{\bar{\phi}^u}\bar{\phi}^s=\hat{\rho}^{-1}\{\bar{\phi}^u,\bar{\phi}^s\}>0, \label{eq: barphi u,s assumptions2}
\end{align}
where $\bar{\phi}^{u/s}$ are also considered as functions on ${}^{\mathrm{cu}}\bar{T}^*M\backslash o$ by stationary homogeneous degree 0 extension. And we assume that
\begin{align}    \label{eq: nu min definition}
\nu_{\min}:=\min\{\inf_\Gamma \bar{w}^u,\inf_\Gamma \bar{w}^s\}>0.
\end{align}      \label{p5}
\item There exist smooth submanifolds $\Gamma^{u/s}$ of $\Sigma$ such that $\Gamma^{u/s} \cap {}^{\mathrm{cu}}S_{\partial M}^*M=\bar{\Gamma}^{u/s}$ and $\textbf{H}_p:=\hat{\rho}^{m-1}H_p$ is tangent to $\Gamma^{u/s}$. There exist defining functions $\phi^{u/s} \in \mathcal{C}^\infty({}^{\mathrm{cu}}S^*M)$ of $\Gamma^{u/s}$ in $\Sigma$ such that
\begin{align}
\phi^{u/s}-\bar{\phi}^{u/s} \in \tau \mathcal{C}^\infty({}^{\mathrm{cu}}S^*M).
\end{align}   \label{p6}
 \label{assumption6}
 \item \label{assumption7} There is a coordinate system that is valid in a neighborhood of the trapped set $\Gamma$ in ${}^{\mathrm{cu}}\bar{T}^*M\backslash o$ such that $\bar{\phi}^u$ above, the defining function of the stationary extension of $\bar{\Gamma}^u$, can be taken as the first coordinate on $X$:
 $\bar{\phi}^u=x_1$. 
\end{enumerate}
Assumptions \eqref{assumption1}-\eqref{assumption6} encode the nature of the normally hyperbolic trapping in our application to the Kerr(-de Sitter) spacetime. On the other hand, assumption \eqref{assumption7} is made so that our construction of the pseudodifferential algebra in Section \ref{sec_microlocal_analysis} applies. This property is not satisfied by asymptotically Kerr(-de Sitter) spacetimes in general, but we will reduce to this setting in Section \ref{application}. 

Assumption (\ref{p5}) and (\ref{assumption6}) imply that the defining function in (\ref{assumption6}) satisfies:
\begin{align}
\textbf{H}_p\phi^u = -w^u\phi^u, \quad \textbf{H}_p\phi^s = w^s\phi^s  \text{ on } \Sigma,   \label{hpphi}
\end{align}
where $w^{u/s}-\bar{w}^{u/s} \in \tau \mathcal{C}^\infty({}^{\mathrm{cu}}S^*M)$.

We need to relax the assumption on $P$ and $\Gamma^{u/s}$ when we are considering conormal perturbations of Kerr(-de Sitter) spacetimes . In this setting, assumption 
\begin{align} \label{eq: P, with uniform PsiDO perturbation}
P \in \Psi_{\mathrm{cu}}^m(M)+\tau^{\beta_T}\Psi_{\infty}^m(M), \beta_T>0.
\end{align}
Correspondingly, in the same manner as \cite[Assumption~(P.6')]{hintz2021normally}, 
assumption \eqref{assumption6} is relaxed to:
\begin{enumerate}[label=(\ref*{assumption6}'),ref=\ref*{assumption6}']
\item \label{assumption6'}
There exist subsets $\Gamma^{u/s} \subset \Sigma$, whose intersections with $S^*M^\circ$ are smooth submanifolds of $S^*M^\circ$ in $\tau>0$, such that
\begin{align*}
\Gamma^{u/s} \cap {}^{\mathrm{cu}}S^*_{\partial M}M = \bar{\Gamma}^{u/s},
\end{align*}
and so that $\textbf{H}_p=\hat{\rho}^{m-1}H_p$ is tangent to $\Gamma^{u/s}$ when $\tau>0$. There exist functions $\phi^{u/s} \in \mathcal{C}^\infty(S^*M^\circ)$ such that (with $\bar{\phi}^{u/s},\bar{w}^{u/s}$ defined as in assumption \eqref{assumption5})
\begin{align*}
\Gamma^{u/s} = \Sigma \cap (\phi^{u/s})^{-1}(0) \text{ near } \Gamma,  \;
\phi^{u/s} - \bar{\phi}^{u/s} \in \mathcal{A}_{\mathrm{cu}}^{\beta_T}({}^{\mathrm{cu}}S^*M).
\end{align*}
In addition, \eqref{hpphi} still holds, but with $w^{u/s}-\bar{w}^{u/s} \in \mathcal{A}_{\mathrm{cu}}^{\beta_T}({}^{\mathrm{cu}}S^*M)$.
\end{enumerate}

We use 
\begin{align} \label{eq: tilde wu ws definition}
\tilde{w}^{u/s} : = \frac{(\phi^{u/s})^2}{(\phi^{u/s})^2 + |\hat{\rho}|^{2\alpha}}w^{u/s} 
\end{align}
to denote the counterpart of $w^{u/s}$ for $\tilde{\rho} = (\phi_{Y_0}^2 + \rho^2)^{1/2} =(x_1^2+\hat{\rho}^{2\alpha})^{1/2}$, which satisfies
\begin{align*}
\textbf{H}_p \tilde{\rho} = -\tilde{w}^u \tilde{\rho} + \alpha \frac{|\hat{\rho}|^{2\alpha}}{\tilde{\rho}}\hat{\rho}^{-1}\textbf{H}_p\hat{\rho},
\end{align*}
with a similar equation for $\tilde{w}^s$ but without the negative sign.  

We apply the construction in Section \ref{sec_microlocal_analysis} to our current setting with $Y=\Gamma^u$, and all other notations play the same role as in Section \ref{sec_microlocal_analysis}. We still use $\Gamma^{u/s},\Gamma$ to denote their lifts to $Z$.

\begin{figure}[h] 
\centering
\includegraphics[scale=0.3]{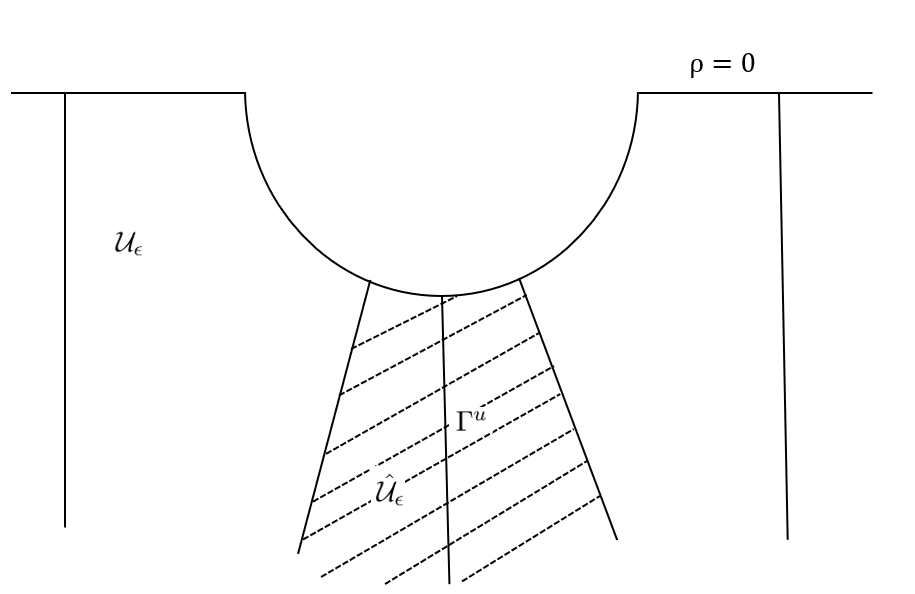}
\caption{Regions defined in \eqref{regions}. Directions parametrized by $\tau,\tb{p},\phi^s$ are suppressed, but remember that we localize to where they are small as well. The shaded region is $\hat{\mathcal{U}}_\epsilon$, and the region between two vertical lines is $\mathcal{U}_\epsilon$.}
\label{fig: regions}
\end{figure}

Let $\epsilon$ be a positive constant, we introduce following regions on $Z$ to facilitate our discussion: 
\begin{align}
\begin{split}
& \mathcal{U}_{\epsilon}:=\{ \tau,|\phi^u|,|\phi^s|,|\textbf{p}| < \epsilon \}, \quad \bar{\mathcal{U}}_{\epsilon}=\mathcal{U}_{\epsilon} \cap \{\tau=0\}, \\
& \hat{\mathcal{U}}_{\epsilon}:=\{ \tau,|\phi^u|,|\frac{\phi^u}{\rho}|,|\phi^s|,|\textbf{p}|< \epsilon \},
\end{split}
\label{regions}
\end{align}
where $\textbf{p}:=\hat{\rho}^mp$ is the normalized symbol of $p$, see Figure \ref{fig: regions} for an illustration. 

The normalized subprincipal symbol of $P$ is:
\begin{align}\label{subps}
{\bf p}_1 :=\hat{\rho}^{m-1} \sigma^{m-1}_{\cu}(\frac{1}{2i}(P-P^*)).    
\end{align}
We assume that
\begin{align}  \label{eq: skew p1 bound}
\sup_{\Gamma} {\bf p}_1 < \frac{1}{2} \nu_{\min},
\end{align}
where $\nu_{\min}$ is defined in \eqref{eq: nu min definition}.

For purposes that will be clear in our positive commutator argument, we take a constant $\lambda \in (0,1)$ such that
\begin{align} \label{eq: lambda condition}
0< \lambda < 1 - \frac{ \sup_{\Gamma} {\bf p}_1}{\nu_{\rm min}},
\end{align}
which exists because of \eqref{eq: skew p1 bound}.
This implies that for $\eta_1$ sufficiently small we have
\begin{align} \label{eq: lambda condition, p1'}
-\textbf{p}_1' - \lambda \tilde{w}^u > 0 \; \text{ on $\mathcal{U}_{2\eta_1}$},
\end{align}
where $\textbf{p}_1'=\textbf{p}_1-w^u$. This is because \eqref{eq: lambda condition, p1'} is equivalent to
\begin{align*} 
-\textbf{p}_1 +w^u - \lambda \tilde{w}^u > 0 \; \text{ on $ \mathcal{U}_{2\eta_1}$},
\end{align*}
Since $0 \leq \tilde{w}^u \leq w^u$ and $\lambda>0$, we only need
\begin{align*} 
-\textbf{p}_1  +(1- \lambda) w^u > 0 \; \text{ on $\mathcal{U}_{2\eta_1}$},
\end{align*}
which holds for $\eta_1$ sufficiently small by the definition of $\nu_{\min}$ and \eqref{eq: lambda condition}. 

\subsection{Improve orders of operators}  \label{sec: improve orders of operators}
Under assumptions above, in particular assumption \eqref{assumption5}-\eqref{assumption7}: the normally hyperbolicity and $\bar{\phi}^u=x_1$ is the first `spatial' coordinate,  
the symbolic expansion of commutators we use in fact have lower order than the order we can count from the general property of the symbol class we constructed in Section \ref{sec_microlocal_analysis}.
Consider $P \in \Psi_{\cu}^{m_1,r_1}(M), A \in \Psi_{\cu,\alpha}^{m_2,\tilde{m}_2,r_2}(M,\Gamma^u)$, 
and view $P$ as an operator lifted to $\Psi_{\cu,\alpha}^{m_1,m_1,r_1}(M,\Gamma^u)$.
Recalling Proposition \ref{prop: composition weighted}, the full symbolic expansion of $[{P},A]$ is:  
\begin{align}
\sum_{\alpha \in \N^n}  \frac{1}{\alpha!} (\partial_\zeta^{\alpha} p D_x^\alpha a -  D_x^\alpha p \partial_\zeta^{\alpha} a),  \label{exp}
\end{align}
where terms with $|\alpha|=k$ lies in $S_{\cu,\alpha}^{m_1+m_2-k,\tilde{m}_1+\tilde{m}_2-k(1-\alpha),r_1+r_2}$. But in fact the proof of Proposition \ref{prop_comp} gave us more concrete characterization of the orders of terms.
Since $P$ is a cusp operator lifted to be an operator in $\Psi_{\cu,\alpha}^{m_1,m_1,r}(M,\Gamma^u)$, the orders of its derivatives are the same as that of cusp symbols.
The only loss in (\ref{exp}) making orders of terms higher
than that of classical results (e.g. the symbolic expansion for cusp pseudodifferential algebra) is the $x_1$-derivatives of $a$.
Writing the local coordinates as $(x,\zeta)$,
consider a typical term $\partial^{k}_{\zeta_1}D_1{p} \partial^k_{x_1}D_2a$, where $D_1$ is an operator differentiating with respect to other momentum variables other than $\zeta_1$,
while $D_2$ is differentiating with respect to spatial variables corresponding to momentum variables in $D_1$.
This term is in $S_{\cu,\alpha}^{m_1+m_2-k-k',m_1+\tilde{m}_2-k(1-\alpha)-k',r_1+r_2}(M,\Gamma^u)$, where $k'$ is the differential 
order of $D_2$ (and $D_1$ as well). 

Suppose $D_1,D_2$ are of order at least $1$, which means $k' \geq 1$, then the term is in $S_{\cu,\alpha}^{m_1+m_2-k-1,m_1+\tilde{m}_2-k(1-\alpha)-1,r_1+r_2}(M,\Gamma^u)$, which, for $k \geq 1$, is contained in $S_{\cu,\alpha}^{m_1+m_2-2,m_1+\tilde{m}_2-2+\alpha,r_1+r_2}(M,\Gamma^u)$.
Then we consider the case where $D_1,D_2$ are vacuum, i.e., $k'=0$.
On the other hand, by the property of normally hyperbolic trapping, we have
\begin{align}  \label{improve_eq1}
\begin{split}
\partial_{\zeta_1}^kp = & - \partial_{\zeta_1}^{k-1}H_{x_1}p\\
= &  \partial_{\zeta_1}^{k-1}H_px_1\\
= & \partial_{\zeta_1}^{k-1}\hat{\rho}^{-(m_1-1)}\textbf{H}_px_1\\
= & \partial_{\zeta_1}^{k-1}(-\hat{\rho}^{-(m_1-1)}w^ux_1+r^u\hat{\rho}p).
\end{split}
\end{align}

When $k=1$, the proof is finished. When $k \geq 2$, we repeat this process for $\partial^{k-1}_{\zeta_1}(\hat{\rho}^{-(m_1-1)}r^up)$. 
Whenever $\partial_{\zeta_1}$ hits $p$, an $x_1$-factor is produced. The only term that remains without an $x_1$-factor is the one where all derivatives fall on other factors, hence it has a $p$-factor.
The sum of all those terms, multiplied by $\partial_{x_1}^ka$, has the form
\begin{align}
\label{expansion_error}
x_1a_1+a_2p,
\end{align}
where $a_1 \in S_{\cu,\alpha}^{m_1+m_2-k,m_1+\tilde{m}_2-k(1-\alpha),r_1+r_2}(M,\Gamma^u), a_2 \in S_{\cu,\alpha}^{m_2-k,\tilde{m}_2-k(1-\alpha),r_2}(M,\Gamma^u)$.
Since $x_1 \in S_{\cu,\alpha}^{0,-\alpha,0}$, we have $x_1a_1 \in S_{\cu,\alpha}^{m_1+m_2-k,m_1+\tilde{m}_2-k(1-\alpha)-\alpha,r_1+r_2}(M,\Gamma^u)$. In particular, when $k=1$, this means that the principal part of the commutator $[P,A]$, given by $H_pa$ is in $S_{\cu,\alpha}^{m_1+m_2-1,m_1+\tilde{m}_2-1,r_1+r_2}(M,\Gamma^u)$, up to an error term $a_2p$. All remaining terms in the expansion in \eqref{exp} have strictly lower orders in the second index since
\begin{align}
-k(1-\alpha)-\alpha<-1 \text{ for } k \geq 2,
\end{align}
which is equivalent to $\alpha<1$. We summarize discussion above as following lemma,
Let $\tilde{a}_1$ be $x_1$ times the sum of $a_1$ above, and $\tilde{a}_2$ be the sum of $a_2$ above, all rescaled by a power of $-i$ as in (\ref{exp}). 
\begin{lmm}
\label{lmm_improve_order}
Suppose $P \in \Psi_{\cu}^{m_1,r_1}(M), A \in \Psi_{\cu,\alpha}^{m_2,\tilde{m}_2,r_2}(M,\Gamma^u)$, and $P$
satisfies assumptions listed in Section \ref{sec_assumptions},
then the full left symbol of $[P,A]$
is given by
\begin{align}
-iH_pa+\tilde{a}_1+\tilde{a}_2p,
\end{align}
where $a_1 \in S_{\cu,\alpha}^{m_1+m_2-2,m_1+\tilde{m}_2-(2-\alpha),r_1+r_2}(M,\Gamma^u), a_2 \in S_{\cu,\alpha}^{m_2-1,\tilde{m}_2-(1-\alpha),r_2}(M,\Gamma^u)$. And the same result holds with all symbol classes and operator classes are replaced by $S^{*,*,*}_{\infty,\alpha}(M,\Gamma^u). \Psi^{*,*,*}_{\infty,\alpha}(M,\Gamma^u)$ with the same orders as above.
\end{lmm}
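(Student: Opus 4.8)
The plan is to unpack the commutator's symbolic expansion from Proposition \ref{prop_comp} term by term, and to use the dynamical structure of the trapping (assumption (\ref{assumption6}), specifically the relation $\textbf{H}_p\phi^u = -w^u\phi^u$) to extract an extra factor of $x_1 = \phi^u$ from every $\partial_{\zeta_1}$ applied to $p$. Indeed, all of the reasoning is already laid out in the discussion preceding the lemma statement; the proof is essentially a bookkeeping consolidation of that discussion, so I would write it as follows.

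First I would recall that, by Proposition \ref{prop_comp} applied to $P$ viewed in $\Psi_{\cu,\alpha}^{m_1,0,r_1}(M,\Gamma^u)$ and $A\in\Psi_{\cu,\alpha}^{m_2,\tilde m_2,r_2}(M,\Gamma^u)$, the full left symbol of $[P,A]$ is the asymptotic sum $\sum_{\beta\in\N^n}\frac{(-i)^{|\beta|}}{\beta!}\bigl(\partial_\zeta^\beta p\,D_x^\beta a - D_x^\beta p\,\partial_\zeta^\beta a\bigr)$, with the $|\beta|=k$ term a priori in $S_{\cu,\alpha}^{m_1+m_2-k,\tilde m_2-k(1-\alpha),r_1+r_2}(M,\Gamma^u)$. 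Since $P$ is a cusp operator (no blow-up), its $x$- and $\zeta$-derivatives lose no order in the second (tilde) index; hence the only mechanism by which a term of (\ref{exp}) can fail to have strictly lower second-index order than the principal term is an $x_1$-derivative on $a$. I would then split each summand according to how many non-$x_1$ spatial derivatives $D_2$ (equivalently non-$\zeta_1$ momentum derivatives $D_1$) it carries: if that count $k'\ge 1$, the term already lies in $S_{\cu,\alpha}^{m_1+m_2-k-1,\tilde m_2-k(1-\alpha)-1,r_1+r_2}\subset S_{\cu,\alpha}^{m_1+m_2-2,\tilde m_2-(2-\alpha),r_1+r_2}$ (using $k\ge 1$ and $\alpha<1$), so it is absorbed into $\tilde a_1$.

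The main content is the $k'=0$ case: terms of the form $c_k\,\partial_{\zeta_1}^k p\cdot\partial_{x_1}^k a$. Here I would iterate the identity (\ref{improve_eq1}), namely $\partial_{\zeta_1}p = -H_{x_1}p = H_p x_1 = \hat\rho^{-(m_1-1)}\textbf{H}_p x_1 = -\hat\rho^{-(m_1-1)}w^u x_1 + r^u\hat\rho p$, applying $\partial_{\zeta_1}$ repeatedly and peeling off, at each stage, the term where a derivative hits $x_1$ (producing a factor of $x_1$ and lowering the differential order by one) versus the term where all remaining derivatives fall on $w^u$, $\hat\rho$, or $r^u$ (which retains either an explicit $x_1$ factor or a $p$ factor). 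After $k$ steps this expresses $\partial_{\zeta_1}^k p$, hence $c_k\,\partial_{\zeta_1}^k p\cdot\partial_{x_1}^k a$, in the form $x_1 a_1^{(k)} + a_2^{(k)} p$ with $a_1^{(k)}\in S_{\cu,\alpha}^{m_1+m_2-k,m_1+m_2-k(1-\alpha),r_1+r_2}$ and $a_2^{(k)}\in S_{\cu,\alpha}^{m_2-k,m_2-k(1-\alpha),r_2}$. Since $x_1=\phi^u\in S_{\cu,\alpha}^{0,-\alpha,0}(M,\Gamma^u)$, the first piece gains an extra $-\alpha$ in the second index, landing in $S_{\cu,\alpha}^{m_1+m_2-k,\tilde m_2-k(1-\alpha)-\alpha,r_1+r_2}$; for $k\ge 2$ one checks $-k(1-\alpha)-\alpha<-1$ precisely because $\alpha<1$, so these too are absorbed into $\tilde a_1$ (the residual $a_2^{(k)}p$ pieces into $\tilde a_2 p$, noting the worst second-index order for $a_2$ occurs at $k=1$, giving $S_{\cu,\alpha}^{m_2-1,m_2-(1-\alpha),r_2}$). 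The $k=1$ term contributes exactly the principal symbol $-iH_p a$ modulo $x_1 a_1^{(1)}$ (which is in $S_{\cu,\alpha}^{m_1+m_2-1,\tilde m_2-1,r_1+r_2}$, i.e.\ the expected principal order) and $a_2^{(1)}p$; collecting everything and setting $\tilde a_1 = \sum_k (-i)^k x_1 a_1^{(k)}/k!$ (with the $k=1$ piece kept separately inside $-iH_pa$'s error as part of $\tilde a_1$ as indicated in the paragraph before the lemma) and $\tilde a_2 = \sum_k (-i)^k a_2^{(k)}/k!$ yields the claimed decomposition $-iH_pa + \tilde a_1 + \tilde a_2 p$ with the stated orders. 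The one point requiring a little care — and the place I would be most careful writing up — is the asymptotic-summation/Borel step: since the orders of the error terms decrease in \emph{both} the first index ($k\to\infty$) and are uniformly bounded in the second index by $-1-\epsilon_0$ for some $\epsilon_0>0$ once $k\ge 2$, the sums defining $\tilde a_1,\tilde a_2$ converge asymptotically to honest symbols in the stated classes by the usual Borel summation argument in $S_{\cu,\alpha}$, which is implicit in the composition formula of Proposition \ref{prop_comp} but should be invoked explicitly here.
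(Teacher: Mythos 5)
Your proposal is correct and takes essentially the same route as the paper, which in effect proves the lemma through the discussion immediately above its statement: split the expansion (\ref{exp}) by the count $k'$ of non-$x_1$ spatial derivatives on $a$, absorb the $k'\geq 1$ terms directly, and for $k'=0$ iterate (\ref{improve_eq1}) to extract $x_1$- and $p$-factors, finishing with $x_1\in S_{\cu,\alpha}^{0,-\alpha,0}$ and $-k(1-\alpha)-\alpha<-1$ for $k\geq 2$, $\alpha<1$. Two small remarks: the phrase ``a derivative hits $x_1$'' should read ``hits $p$,'' since it is each $\partial_{\zeta_1}$ landing on a $p$-factor that, via the dynamical identity, manufactures a new $x_1$; and your explicit note on the asymptotic/Borel-summation step is a welcome precision that the paper leaves implicit in Proposition \ref{prop_comp}.
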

\begin{rmk} \label{rmk: extend alpha range}
Without the discussion in this section and this Lemma, the main result would hold only for $0<\alpha<\frac{1}{2}$. This condition comes from the requirement that the sub-principal part of commutators, which comes from the sub-sub-principal part in the asymptotic expansion of the full symbol of products $PA,AP$, should have orders lower than that of the principal part of the commutator. But discussion above exploiting the normally hyperbolicity, in particular \eqref{improve_eq1}, extends the result to the full range $0<\alpha<1$.
\end{rmk}

\subsection{The microlocal estimate}
The following theorem is the technical core of our paper. Its proof is the content of Section \ref{sec_commutator} below. 
It does not apply to the wave equation on Kerr(-de Sitter) spacetimes directly since we need assumptions we recall below, but we will reduce that to our current setting in Section \ref{application} via conjugating by a Fourier integral operator.
Throughout this section and Section \ref{sec_commutator}, we assume that the operator $P$ satisfies \eqref{eq: skew p1 bound} and either assumptions \eqref{assumption1}-\eqref{assumption7} made in Section \ref{sec_assumptions} or  
the same set of assumptions with assumption \eqref{assumption6}
replaced by \eqref{assumption6'}. In particular, the stationary extension of $\bar{\Gamma}^u=\Gamma^u \cap \{\tau=0\}$ has defining function $\bar{\phi}^u=x_1$ and the sub-principal symbol of $P$ satisfies:
\begin{align*}
\sup_{\Gamma} {\bf p}_1 < \frac{1}{2} \nu_{\min} .
\end{align*}
We will first prove our estimate under assumptions \eqref{assumption1}-\eqref{assumption7} and indicate changes needed when assumption \eqref{assumption6} is replaced by \eqref{assumption6'}. 

\begin{thm} 
\label{thm: mirolocal estimate}
For $P$ satisfies conditions mentioned above and any $s,N\in \R$, $0<\alpha<1$, $\lambda \in (0,1)$ satisfying (\ref{eq: lambda condition}), there exists $B \in \Psi_{\mathrm{cu},\alpha}^{0,0,r}(M,\Gamma^u)$ which is  elliptic on $\hat{\mathcal{U}}_{\epsilon_0}$ and the front face (in particular it is elliptic on $\Gamma$), and $B_1,G_0 \in \Psi_{\mathrm{cu},\alpha}^{0,0,r}(M,\Gamma^u)$ with $\WF_{\mathrm{cu},\alpha}'(B_1) \cap  \bar{\Gamma}^u = \emptyset,\WF_{\mathrm{cu},\alpha}'(G_0)$ contained in a fixed neighborhood of $\Gamma$ and $G_0$ is elliptic near $\Gamma$ , such that:
\begin{align}
\begin{split}
||B v||_{s,s,0} \lesssim & ||B_1v||_{s+1-\lambda \alpha, s,0} + ||G_0Pv||_{s-m+2-\lambda\alpha,s-m+2-\alpha,0}  + ||v||_{-N,-N,0}. \label{main}
\end{split}  
\end{align}
\end{thm}

\begin{rmk} \label{rmk: lambda, loss explanation}
$\lambda \in (0,1)$ is the relative order on the front face, introduced when we construct the commutator. When $P$ is symmetric up to the sub-leading order, that is $P-P^* \in \Psi^{m-2}_{\mathrm{cu}}(M)$, then $\sup_\Gamma {\bf p}_1= 0$. We then take $\lambda$ satisfying \eqref{eq: lambda condition} to be close to 1. The orders that capture the main feature of this estimate are the second order of $||B v||_{s,s,0}$ and the first order of $||B_1v||_{s+1-\lambda \alpha, s,0}$, hence choosing $\alpha,\lambda$ close to 1 will make the loss of propagation $1-\lambda\alpha$ close to 0.
\end{rmk}

\begin{rmk} \label{rmk: Gamma s case}
Notice that $\Gamma^s$ also satisfies conditions on $Y$ in Section \ref{sec_microlocal_analysis}, thus we can also consider $S_{\cu,\alpha}^{*,*,*}(M,\Gamma^s),\Psi_{\cu,\alpha}^{*,*,*}(M,\Gamma^s)$ (and also their uniform version) and prove a similar estimate with $P$ replaced by $P^*$. Now ${\tb p}_1$ switches sign and we use the backward Hamilton flow instead, thus we still use \eqref{eq: skew p1 bound}\eqref{eq: lambda condition} as bounds on ${\tb p}_1$ and $\lambda$ and \eqref{eq: lambda condition, p1'} becomes $-{\tb p}_1' - \lambda \tilde{w}^s >0$ (with $\tilde{w}^s$ in \eqref{eq: tilde wu ws definition}). The main difference being that, terms arising from differentiating time cut-offs now have favorable sign and can be discarded or combined into the left hand sides of estimates. Thus we don't need a priori control on $\supp (\chi_2^T)'$ and we can assume $\WF_{\cu,\alpha}'(B_1)$ being disjoint from $\Gamma^s$ instead of only $\bar{\Gamma}^s$.
\end{rmk}

\section{Positive commutator argument}
\label{sec_commutator}
We start our positive commutator argument. In proofs below, we first assume that the priori $-N$ order regularity of $v$ is high enough to justify integration by parts and pairing in our proof. Specifically, $-N=s-\frac{1}{2}$ is sufficient. Later, we use a regularization argument to justify the general $N$ case.

We briefly sketch our proof below. Let
\begin{align} \label{eq: Phi u definition}
\Phi^u:= \mathrm{Op}(\phi^u) \in \Psi^{0,-\alpha,0}_{\mathrm{cu},\alpha}(M,\Gamma^u)
\end{align} 
be the quantization of the defining function of $\Gamma^u$. Near $\Gamma$, it is just a multiplication operator acting by multiplying $x_1$ under assumption \eqref{assumption7}.  
\begin{enumerate}
\item In Section \ref{sec_step1}, we use the energy (i.e. a suitable weighted cusp Sobolev norm) of $v$ away from the front face to control the energy of $v$ near the boundary of the front face (i.e. near corner meeting the lift of fiber infinity). 

\item In Section \ref{sec_step2.1}, we propagate this control along the interior of the front face, thus obtaining an estimate for the weak localization $\Phi^u v$ of $v$ away from the unstable trapped set.

\item In Section \ref{sec_step2.2}, we prove a propagation estimate along the unstable direction by means of a real principal type propagation argument for $\Phi^u$ (given the control on $\Phi^u v$ obtained in the previous step).

\item In Section \ref{sec_step2.3}, we prove a propagation estimate for $P$, which exploits the unstable nature of the trapped set.

\item In Section \ref{sec_Combine}, we combine all ingredients to prove Theorem \ref{thm: mirolocal estimate}, assuming that $v$ is sufficiently regular.

\item In Section \ref{sec_regularization}, we apply a regularization argument to remove the assumption that $v$ is sufficiently regular in order to prove Theorem \ref{thm: mirolocal estimate} as stated. 

\item In Section \ref{sec: relaxing regularity}, we describe modifications needed when we relax assumption \eqref{assumption6} to assumption \eqref{assumption6'}.   
\end{enumerate}
The proofs in Section \ref{sec_step2.2} and \ref{sec_step2.3} are very similar to those in \cite[Section~3]{hintz2021normally}. Novelties in our work are the estimates in 4.1 and 4.2, in which we take full advantage of our refined cusp-pseudodifferential algebra.

\subsection{Propagation near the fiber infinity side of the corner}
\label{sec_step1}
In this step, we use the energy on the region away from the front face ($B_1$-term below) to control the energy near the boundary of the front face ($B_0$-term below). 

We first consider a simple estimate. Since $\textbf{H}_p \hat{\rho} = o(\hat{\rho})$ as $\tau \to 0$ by assumption \eqref{assumption2}, terms involving it are negligible. By (\ref{hpphi}), we can find $\textbf{r}^u \in \mathcal{C}^\infty({}^{\mathrm{cu}}S^*M)$ such that 
\begin{align*}
\textbf{H}_p\phi^u = -w^u \phi^u+\textbf{r}^u\textbf{p},
\end{align*}
with $\textbf{H}_p = \hat{\rho}^{m-1}H_p$.  For $[P,\Phi^u]$, we write
\begin{align}
& [P,\Phi^u] = i W^u \Phi^u + R_1P+R_2, \label{[p,phi]} \\
& R_1 \in  \Psi_{\mathrm{cu},\alpha}^{-1,-1,0}(M,\Gamma^u), R_2 \in  \Psi_{\mathrm{cu},\alpha}^{m-2,m-2,0}(M,\Gamma^u),  \label{R1R2}
\end{align}
where $W^u \in \Psi_{\mathrm{cu},\alpha}^{m-1,m-1,0}(M,\Gamma^u)$ is the quantization of $\hat{\rho}^{-(m-1)}w^u$. Although generally for operators in this pseudodifferential algebra, the subprincipal part is $1-\alpha$ order lower on the front face, now all those operators are lifted from the unblown up manifolds, and therefore the asymptotic expansions of their compositions behave the same as in the cusp calculus. Hence the subprincipal symbols are 1 order lower on both boundary faces, which explains the orders of $R_1,R_2$ above. 

Applying both sides of (\ref{[p,phi]}) to $v$, and noticing that $Pv=f$, we have
\begin{align*}
P'v' = f',
\end{align*}
where
\begin{align*}
P': = P-iW^u, v' = \Phi^u v, f' = (\Phi^u + R_1)Pv +R_2v.
\end{align*}
Thus we know for $G \in \Psi_{\mathrm{cu},\alpha}^{0,0,r}(M,\Gamma^u)$, we have
\begin{align*}
||Gf'||_{s-m+2, s-m+2+\lambda \alpha,0} \lesssim & ||G (\Phi^u+R_1) Pv||_{s-m+2,s-m+2+\lambda \alpha,0} 
\\& + ||GR_2v||_{s-m+2,s-m+2+\lambda \alpha,0}.
\end{align*}
Combining this with Proposition \ref{prop: elliptic estimate}, memberships of operators in \eqref{eq: Phi u definition}\eqref{R1R2} and mapping properties of pseudodifferential operators, we have:
\begin{prop}
With $\Phi^u,P,R_1,R_2$ defined as above, then for $G,\tilde{G} \in \Psi_{\mathrm{cu},\alpha}^{0,0,r}(M,\Gamma^u)$ with $\WF_{\mathrm{cu},\alpha}'(G) \subset \mathrm{Ell}(\tilde{G})$, we have:
\begin{align}
\begin{split}
||Gf'||_{s-m+2, s-m+2+\lambda \alpha,0} \lesssim & ||\tilde{G} Pv||_{s-m+2,s-m+2+\lambda\alpha-\alpha,0} + ||\tilde{G}v||_{s,s+\lambda \alpha,0} 
\\ &+ ||v||_{s-1,s+\lambda\alpha-(1-\alpha),r}.
\end{split}
\end{align}
\label{p1}
\end{prop}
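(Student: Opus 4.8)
The plan is to unwind the definitions of $f'$, $P'$ and $v'$ from the preceding setup and estimate each constituent by the mapping properties of the calculus together with the composition law (Proposition~\ref{prop_wcomp}) and the improved commutator orders (Lemma~\ref{lmm_improve_order}). Recall $f' = (\Phi^u + R_1)Pv + R_2 v$, so by the triangle inequality it suffices to bound $\|G(\Phi^u+R_1)Pv\|_{s-m+2,s-m+2+\lambda\alpha,0}$ and $\|GR_2v\|_{s-m+2,s-m+2+\lambda\alpha,0}$ separately; this is the first displayed inequality in the statement. For the first term: since $\WF'_{\mathrm{cu},\alpha}(G)\subset \mathrm{Ell}(\tilde G)$, the microlocal parametrix/elliptic estimate (Proposition~\ref{ell}, in its localized form) lets me replace $G$ by $\tilde G$ modulo a smoothing error absorbable into $\|v\|_{s-1,\dots}$, and $\Phi^u+R_1\in\Psi_{\mathrm{cu},\alpha}^{0,0,0}(M,\Gamma^u)$ maps $H_{\mathrm{cu},\alpha}^{s-m+2,s-m+2+\lambda\alpha,0}$ to itself; for the second term, $R_2\in\Psi_{\mathrm{cu},\alpha}^{m-2,m-2,0}(M,\Gamma^u)$ so $R_2$ maps $H_{\mathrm{cu},\alpha}^{s,s+\lambda\alpha-(m-2)+(m-2),\dots}$—more carefully, $\tilde G R_2 v$ is controlled by $\|\tilde G v\|_{s-m+2+(m-2),\,(s-m+2+\lambda\alpha)+(m-2),0} = \|\tilde G v\|_{s,s+\lambda\alpha,0}$ plus a lower-order $\|v\|$ remainder, which is exactly the right-hand side.

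The bookkeeping step worth flagging is the \emph{second} index. The operators $\Phi^u$, $R_1$, $R_2$ are all lifts of cusp operators from the unblown-up manifold, so — as emphasized in the paragraph after \eqref{[p,phi]} — their composition with $P$ and their mapping properties in the second (front-face) index behave as they do in the plain cusp calculus: the relevant orders are $(-1,-1,0)$ for $R_1$ and $(m-2,m-2,0)$ for $R_2$, not the generic $(-1,-(1-\alpha),0)$ and $(m-2,m-2+\alpha,0)$ one would read off blindly from Proposition~\ref{prop_comp}. This is precisely what makes the second index on the left, $s-m+2+\lambda\alpha$, come out consistently: applying $R_2$ to an $H$-space with second index $s+\lambda\alpha$ and subtracting $m-2$ from both indices lands in second index $s-m+2+\lambda\alpha$, matching the claimed left-hand side; similarly $\Phi^u+R_1$ preserves that index, and the slack allowing the $-\alpha$ improvement in the $Pv$ term on the final line ($s-m+2+\lambda\alpha-\alpha$) comes from the inclusion $\Psi_{\mathrm{cu}}^{\cdot}\subset\Psi_{\mathrm{cu},\alpha}^{\cdot,\cdot-\alpha}$-type gain together with $\Phi^u=\Op(\phi^u)$ vanishing on $\Gamma^u$, i.e. $\phi^u=x_1\in S^{0,-\alpha,0}_{\mathrm{cu},\alpha}$, so that $\Phi^u Pv$ gains a factor of $\tilde\rho^{-\alpha}$ in the norm, equivalently the norm of $Pv$ needed drops by $\alpha$ in the second index.

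The last ingredient is the weight-$r$ term $\|v\|_{s-1,s+\lambda-(1-\alpha),r}$: this collects (i) the smoothing errors from the elliptic localization of $G$ against $\tilde G$ and from the parametrix identity \eqref{parametrix}, and (ii) the subprincipal contributions — one full order lower on \emph{both} faces because, again, $R_1,R_2$ are cusp lifts — which is why the first index drops to $s-1$ and the second to $s+\lambda-(1-\alpha)$ (here $s+\lambda\alpha$ minus $(1-\alpha)$, after rescaling $\lambda\alpha$ against the weight, matching the Gårding-type half-order loss pattern of Lemma~\ref{lmm1} applied with these indices). Assembling: $\|Gf'\|_{s-m+2,s-m+2+\lambda\alpha,0}$ is bounded by $\|\tilde G(\Phi^u+R_1)Pv\|$ + $\|\tilde G R_2 v\|$ + the $\|v\|_{s-1,\dots,r}$ remainder (first inequality), and then bounding the first two by $\|\tilde G Pv\|_{s-m+2,s-m+2+\lambda\alpha-\alpha,0}$ and $\|\tilde G v\|_{s,s+\lambda\alpha,0}$ respectively (second inequality). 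I expect the only genuine subtlety — as opposed to routine application of composition and mapping properties — to be making the second-index accounting airtight: one must consistently invoke that $\Phi^u, R_1, R_2$ are unblown-up lifts so their front-face orders match their fiber-infinity orders, and track where the extra $-\alpha$ gain on the $Pv$ term legitimately comes from (the vanishing of $\phi^u$ at $\Gamma^u$); everything else is a direct citation of Propositions~\ref{prop_wcomp}, \ref{ell} and the boundedness proposition.
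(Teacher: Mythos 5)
Your proposal is correct and follows the same (barely-stated) route the paper uses: decompose $f'=(\Phi^u+R_1)Pv+R_2v$, apply the triangle inequality for the first inequality, then invoke the $L^2$-boundedness/mapping properties (and the microlocal elliptic estimate to switch $G$ for $\tilde G$) for the second. You also correctly identify the two genuine subtleties: that $R_1\in\Psi^{-1,-1,0}_{\cu,\alpha}$ and $R_2\in\Psi^{m-2,m-2,0}_{\cu,\alpha}$ have cusp-lift orders (one order lower on \emph{both} faces, not $1-\alpha$ on the front face), and that the $-\alpha$ gain in $\|\tilde G Pv\|_{s-m+2,s-m+2+\lambda\alpha-\alpha,0}$ comes from $\phi^u=x_1\in S^{0,-\alpha,0}_{\cu,\alpha}$ vanishing on $\Gamma^u$, i.e.\ $\Phi^u\in\Psi^{0,-\alpha,0}_{\cu,\alpha}$.

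Two small points worth cleaning up. First, your opening assertion that ``$\Phi^u+R_1\in\Psi^{0,0,0}_{\cu,\alpha}$ maps $H^{s-m+2,s-m+2+\lambda\alpha,0}_{\cu,\alpha}$ to itself'' is true but does \emph{not} by itself yield the $-\alpha$ improvement in the last line; you should use $\Phi^u\in\Psi^{0,-\alpha,0}_{\cu,\alpha}$ from the start (and note separately that $R_1Pv\in H^{s-m+1,s-m+1+\lambda\alpha}$ is strictly lower order since $\alpha<1$, hence absorbable). Second, the explanation of the remainder $\|v\|_{s-1,\cdot,r}$ via ``the G\aa rding-type half-order loss pattern of Lemma~\ref{lmm1}'' and ``rescaling $\lambda\alpha$ against the weight'' is extraneous and somewhat confused: this term is simply the subprincipal remainder from the symbolic expansion in the compositions $\tilde G\circ(\Phi^u+R_1)$, $\tilde G\circ R_2$ (one order lower on the fiber face, $1-\alpha$ lower on the front face) plus the smoothing microlocalization error, not a G\aa rding-type loss.
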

There is no restriction on $\lambda$ here, but in latter steps it needs to satisfy (\ref{eq: lambda condition}). Next, we state the main estimate of this step:
\begin{prop}
There exist operators $B_0,\tilde{G} \in \Psi_{\mathrm{cu},\alpha}^{0,0,r}(M,\Gamma^u),B_1 \in \Psi_{\mathrm{cu},\alpha}^{0,-\infty,r}(M,\Gamma^u)$, $B^{ff} \in \Psi_{\mathrm{cu},\alpha}^{-\infty,0,r}(M,\Gamma^u)$ and constants $C_1,\epsilon_1,\eta_1>0$ (in particular, $\eta_1$ is sufficiently small so that \eqref{eq: lambda condition, p1'} holds), 
with $\WF_{\mathrm{cu},\alpha}'(\tilde{G}),\WF_{\mathrm{cu},\alpha}'(B_0),\WF_{\mathrm{cu},\alpha}'(B_1)$ contained in a fixed neighborhood of $\Gamma$, $B_0$ being elliptic on $\{0 \leq |\frac{\rho}{\phi^u}| \leq C_1 \} \cap \mathcal{U}_{\eta_1} $, $\WF_{\mathrm{cu},\alpha}'(B_1)$ being disjoint from both the lift of $\bar{\Gamma}^u$ and the intersection of the front face with $\{\tau=0\}$, $\WF_{\mathrm{cu},\alpha}'(B^{ff}) \subset \{ C_1 \leq \frac{\rho}{\phi^u} \leq 2C_1  \} \cap \mathcal{U}_{2\eta_1}$, $\tilde{G}$ being elliptc on wavefront sets of $B_0,B_1,B^{ff}$, 
such that for any $s, N_1 \in \R$, $0<\alpha<1$, $\lambda$ satisfying (\ref{eq: lambda condition}), we have:
\begin{align}
\begin{split}
 & ||B_0  v||_{s,s+\lambda\alpha-\alpha,0}  + ||B^{ff}v||_{s,s+\lambda\alpha-\alpha,0} \\
& \lesssim || B_1 v ||_{s,-N_1,0} + ||\tilde{G} Pv||_{s-m+1,s-m+1 + \lambda\alpha -\alpha,0} + ||\tilde{G}v||_{s-1,s-1 + \lambda \alpha,0} 
\\& + || v||_{s-\frac{3}{2} , s-\frac{3}{2}+\lambda\alpha+\alpha,r}.
\end{split} \label{est1_prop}
\end{align}
In particular, we can take $-N_1=s$.
\label{step1}
\end{prop}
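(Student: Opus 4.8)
The plan is to prove \eqref{est1_prop} by a positive commutator argument on the blown-up manifold $Z$, with a commutant from $\Psi_{\mathrm{cu},\alpha}^{\cdot,\cdot,\cdot}(M,\Gamma^u)$, driven by the monotonicity built into the trapping. The geometric input, from \eqref{hpphi} and the paragraph containing $[P,\Phi^u]$, is that near $\Gamma$ one has $\textbf{H}_p\phi^u=-w^u\phi^u+\textbf{r}^u\textbf{p}$ with $w^u>0$ and, by the transport identity recorded before \eqref{subps}, $\textbf{H}_p\tilde\rho=-\tilde w^u\tilde\rho+o(\tilde\rho)$ with $\tilde w^u=\frac{(\phi^u)^2}{(\phi^u)^2+|\hat\rho|^{2\alpha}}w^u\ge 0$; hence both $\phi^u$ and the front-face defining function $\tilde\rho$ strictly decrease along the flow, which transports regularity from the region where $\WF_{\mathrm{cu},\alpha}'(B_1)$ will be placed---away from $\Gamma^u$ and from the front face---into $\Gamma$, the corner and the front face (the elliptic set of $B_0$ and the support of $B^{ff}$). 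I would run the argument not on $v$ but on the conjugated data of Proposition \ref{p1}, namely $v'=\Phi^u v$, $P'=P-iW^u$, $f'=(\Phi^u+R_1)Pv+R_2v$: this is harmless because $\Phi^u$ is elliptic off $\Gamma^u$, and it replaces the subprincipal symbol $\textbf{p}_1$ by $\textbf{p}_1'=\textbf{p}_1-w^u$ in the budget, which is exactly what \eqref{inq2} is tailored to.

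For the commutant take $A=\Op(a)$ with $a\in S_{\mathrm{cu},\alpha}^{\mu_1,\mu_2,r}(M,\Gamma^u)$ real, supported in a small neighbourhood of $\Gamma$ in $Z$ contained in $M_{0,2C_1}\cap\mathcal U_{2\eta_1}$, of the schematic form
\begin{align*}
a=\tau^{-r}\hat\rho^{-\mu_1}\tilde\rho^{-\mu_2}\,\chi_\Sigma(\textbf{p})\,\chi_s(|\phi^s|/\eta_1)\,\psi(\phi^u,\tilde\rho),
\end{align*}
where $\chi_\Sigma$ localises near $\Sigma$, $\chi_s$ cuts to $|\phi^s|\lesssim\eta_1$, the front-face weight $\tilde\rho^{-\mu_2}$ carries the relative order $\mu_2=\lambda$ of \eqref{inq2} (the ``relative order on the front face'' of the Remark after Theorem \ref{thm_kds_est}), and $\psi$ is a cutoff equal to $1$ near $\Gamma$---in particular on $M_{0,C_1}\cap\mathcal U_{\eta_1}$ and on the front-face component through $\Gamma$---with $\textbf{H}_p\psi$ supported in $\{|\phi^u|,\tilde\rho\gtrsim\text{const}\}$, disjoint from $\Gamma^u$ and from the front face. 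The orders $\mu_1,\mu_2$ are then pinned down by matching: taking account of the orders shifted in forming the commutator and the $\pm\alpha$ shift produced by $\Phi^u$ (recall $\phi^u=x_1\in S_{\mathrm{cu},\alpha}^{0,-\alpha,0}$), they are fixed so that $B_0,B^{ff},B_1$ and $\tilde G$ land in exactly the Sobolev spaces written in \eqref{est1_prop}.

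The core is the identity
\begin{align*}
\langle i[P',A^*A]v',v'\rangle=2\,\mathrm{Im}\langle f',A^*Av'\rangle+\langle i(P'-P^{\prime*})A^*Av',v'\rangle .
\end{align*}
By Lemma \ref{lmm_improve_order}---essential precisely because $\alpha<1$, as it shows the subprincipal part of $[P',A^*A]$ is a \emph{full} order lower on both faces rather than only $1-\alpha$ lower on the front face---the rescaled principal symbol of the left side minus the last term on the right equals $\hat\rho^{-(m-1)}a^2\big(2\mu_2\tilde w^u+2\textbf{p}_1'+2\textbf{H}_p\psi/\psi\big)$ modulo strictly lower order, modulo terms proportional to $\textbf{p}$ (from $\textbf{H}_p$ hitting $\chi_\Sigma(\textbf{p})$ and from $\textbf{H}_p\textbf{p}=O(\textbf{p}\,o(1))$, absorbed by the elliptic estimate off $\Sigma$), and modulo $\textbf{H}_p\hat\rho=o(1)$ and $\textbf{H}_p\tau=O(\tau)$ contributions; the $\chi_s$-derivative term has a favourable sign away from $\Gamma^s$ and is discarded. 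With $\mu_2=\lambda$, inequality \eqref{inq2} gives $-2\mu_2\tilde w^u-2\textbf{p}_1'>2c$ on $\supp a$, so $-\big(2\mu_2\tilde w^u+2\textbf{p}_1'+2\textbf{H}_p\psi/\psi\big)\ge b_0^2+(b^{ff})^2-b_1^2-e$ with $b_0,b^{ff}$ elliptic on $M_{0,C_1}\cap\mathcal U_{\eta_1}$ and on the front face, $b_1$ supported where $\textbf{H}_p\psi\ne 0$ (inside $\WF_{\mathrm{cu},\alpha}'(B_1)$), and $e$ lower order. Then sharp G\r{a}rding (Lemma \ref{lmm1}), Cauchy--Schwarz with a small parameter on $2\,\mathrm{Im}\langle f',A^*Av'\rangle$ (absorbing $\varepsilon\|Av'\|$ into the main term and estimating $\|Af'\|$ via $f'=(\Phi^u+R_1)Pv+R_2v$ and the gains $R_1\in\Psi_{\mathrm{cu},\alpha}^{-1,-1,0}$, $R_2\in\Psi_{\mathrm{cu},\alpha}^{m-2,m-2,0}$ of Proposition \ref{p1}, producing the $c^{-1}\|\tilde GPv\|$ and $\|\tilde Gv\|$ terms), and translating back through $v'=\Phi^u v$ using the ellipticity of $\Phi^u$ off $\Gamma^u$, yields \eqref{est1_prop}; the remaining errors---the $\tilde a_2\,p$ term of Lemma \ref{lmm_improve_order}, the symbol-calculus remainders, the $o(1)$'s---are all bounded by $\|\tilde Gv\|_{s-1,s-1+\lambda\alpha,0}$ and $\|v\|_{s-\frac32,s-\frac32+\lambda\alpha+\alpha,r}$.

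The main obstacle is keeping the two boundary orders in balance simultaneously: the front-face weight contribution $\lambda\tilde w^u$ must fit strictly inside the budget $-\textbf{p}_1'$ (generically $\approx w^u$) left after the subprincipal terms---this is what \eqref{inq2} encodes, and where $\lambda\in(0,1)$ and small $c>0$ enter---while every error of the two-index calculus must be strictly lower order on \emph{both} faces, which on the front face needs $\alpha<1$ together with Lemma \ref{lmm_improve_order} (without it only $0<\alpha<\tfrac12$ is reached). The a priori hypothesis---it suffices to take $-N=s-\tfrac12$ to justify the pairings and integrations by parts above---is then removed by the regulariser argument of Section \ref{sec_regularization}, which is why one may take $-N=s$.
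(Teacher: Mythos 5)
Your overall strategy matches the paper's: a positive commutator argument on $Z$ using the conjugated picture $v'=\Phi^u v$, $P'=P-iW^u$ from Proposition~\ref{p1}, a $\tilde\rho^{-\lambda}$ weight in the commutant pegged to \eqref{inq2}, Lemma~\ref{lmm_improve_order} to keep the symbolic remainders a full order lower for all $\alpha<1$, and the regularization of Section~\ref{sec_regularization}. The broad outline and most of the order bookkeeping are right.

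There is, however, a genuine gap in your treatment of the front-face cutoff, which is precisely the device that produces $B^{ff}$. You require $\mathrm{supp}\,a\subset M_{0,2C_1}\cap\mathcal U_{2\eta_1}$, yet simultaneously assert that $\textbf{H}_p\psi$ is supported ``disjoint from $\Gamma^u$ and from the front face.'' These two statements cannot both hold. The restriction to $M_{0,2C_1}=\{\rho/\phi^u\le 2C_1\}$ is forced on you: $\Phi^u$ is elliptic as an element of $\Psi^{-\alpha,-\alpha,0}_{\mathrm{cu},\alpha}(M,\Gamma^u)$ only where $\phi^u\gtrsim\rho$, so the final conversion from $\|B_0\Phi^u v\|$ to $\|B_0 v\|$ requires the commutant to live inside $M_{0,2C_1}$. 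But any cutoff in $\rho/\phi^u$ has derivative supported in a band $\{C_1\le\rho/\phi^u\le 2C_1\}$ that meets the front face. This is not cosmetic: the paper's factor $\chi^{ff}(\rho/\phi^u)$, a decreasing function pitted against $\textbf{H}_p(\rho/\phi^u)=(\rho/\phi^u)\bigl(w^u+\alpha\hat\rho^{-1}\textbf{H}_p\hat\rho\bigr)>0$ near $\Gamma$, contributes a term $-(b^{ff})^2$ of \emph{favorable} sign in \eqref{sym1}, and that is where the $B^{ff}$-term of \eqref{est1_prop}, elliptic on the transition band of the front face, comes from. You list $(b^{ff})^2$ on the controlled side of your symbol identity, but given your stated support property of $\textbf{H}_p\psi$ your commutant produces no such term, and you have not checked the sign of the derivative that would produce it. Distinguishing the $\chi^{ff}(\rho/\phi^u)$ factor (good-sign derivative, landing in $B^{ff}$ near the front face) from the $\chi^u((\phi^u)^2)$ factor (bad-sign derivative, landing in $B_1$ away from $\Gamma^u$ and the front face) is the new geometric input beyond the usual trapping commutator, and your account conflates them into a single $\psi$.

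A secondary omission: a single pairing yields an error $\|G_1'v'\|_{s+\bar\alpha,\,s+\lambda\alpha+\frac{1+\alpha}{2},0}$ with $\bar\alpha=\max\{\alpha,\tfrac12\}$, which is \emph{not} yet bounded by $\|\tilde Gv\|_{s-1,s-1+\lambda\alpha,0}$. One must iterate the estimate, gaining $1-\bar\alpha$ in the first index and $\tfrac{1-\alpha}{2}$ in the second per pass, to reach the Sobolev orders in \eqref{est1_prop}; your write-up presents the final error order as if it falls out of a single commutator application.
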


\begin{proof}
The normalized subprincipal part of $P'$ is
\begin{align}
\textbf{p}_1':= \hat{\rho}^{m-1}\sigma( \frac{1}{2i}(P'-(P')^*) ) = \textbf{p}_1 - w^u.   \label{01}
\end{align}
Our commutant is
\begin{align*} 
\check{a} = \tau^{-r} \tilde{\rho}^{-\lambda} \hat{\rho}^{-s-1+ \frac{m-1}{2}} \chi^{ff}(\frac{\rho}{\phi^u}) \chi^{\inf}(\hat{\rho}\tilde{\rho})\chi^u( (\phi^u)^2 ) \chi^{s} ( (\phi^s)^2 ) \chi_T(\tau) \chi_\Sigma (\textbf{p}),
\end{align*}
where $\chi^{\inf},\chi_T,\chi_\Sigma$ are chosen to be identically 1 on $[-\frac{\eta_1}{2},\eta_1]$,  decreasing on $[0,\infty)$, supported on $[-\eta_1,2\eta_1]$; $\chi^{u},\chi^{s}$ are chosen to be identically 1 on $[-(\frac{\eta_1}{2})^2,\eta_1^2]$,  decreasing on $[0,\infty)$, supported on $[-\eta_1^2,(2\eta_1)^2]$; $\chi^{ff}$ is identically 1 on $[0,C_1]$, decreasing on $[0,\infty)$, supported on $[-C_1,2C_1]$. Since $W^u \in \Psi_{\mathrm{cu},\alpha}^{m-1,m-1,0}(M,\Gamma^u)$, the principal symbols and corresponding Hamilton vector fields of $P'$ and $P$ are the same. Recalling $\rho=\hat\rho^\alpha$, we compute:
\begin{align*}
H_p( \frac{\hat{\rho}^\alpha}{\phi^u} ) & = (\phi^u)^{-1}\alpha \hat{\rho}^{\alpha-1} H_p \hat{\rho} -\hat{\rho}^\alpha \frac{-w^u\phi^u \hat{\rho}^{-(m-1)} + \hat{\rho}^{-(m-1)}\tb{r}^u\tb{p}}{(\phi^u)^2}  \\
  & = \frac{\hat{\rho}^\alpha}{\phi^u}(\alpha\hat{\rho}^{-1}H_p\hat{\rho}+w^u\hat{\rho}^{-(m-1)}+ \hat{\rho}^{-(m-1)} \frac{\tb{r}^u}{\phi^u} \tb{p} ).
\end{align*}
Now take $c$ such that 
\begin{align}  \label{eq: c condition 1}
0< c < \inf_{\mathcal{U}_{2\eta_1}}(-\textbf{p}_1'-\lambda \tilde{w}^u),
\end{align}
which is possible due to \eqref{eq: lambda condition, p1'}. 
Then we have
\begin{align}
\begin{split}
\check{a}H_p \check{a} + \hat{\rho}^{-(m-1)}\textbf{p}_1'\check{a}^2 = & -c \hat{\rho}^{-(m-1)} \check{a}^2 -(\hat{\rho}^{-s-1 } \tilde{\rho}^{-\lambda} b_0)^2  - (  \hat{\rho}^{-s-1}  \tilde{\rho}^{-\lambda} b^s)^2  \\& 
+  (\hat{\rho}^{-s-1}  \tilde{\rho}^{-\lambda}b^u)^2 + (\hat{\rho}^{-s-1}  \tilde{\rho}^{-\lambda}b_T)^2 + hp + e^{\inf} 
\\ &-( \hat{\rho}^{-s-1} \tilde{\rho}^{-\lambda} b^{ff})^2,  \label{sym1}
\end{split}
\end{align}
where $\chi = \chi^{ff} \chi^{\inf} \chi^u \chi^s \chi_T \chi_\Sigma$, and $\hat{\chi}^i$ means the product without $\chi^i, i = ff, \inf ,u,s,T,\Sigma$. $e^{\inf}, b^{ff}$ are terms arising from differentiating $\chi^{\inf},\chi^{ff}$:
\begin{align*}
e^{\inf} & = \tau^{-2r} \tilde{\rho}^{-2\lambda} \hat{\rho}^{-2s-2}(\hat{\chi}^{\inf})^2 \chi^{\inf}(\chi^{\inf})' (\hat{\rho} \textbf{H}_p \tilde{\rho} + \tilde{\rho}\textbf{H}_p\hat{\rho}),\\
b^{ff}  & = \tau^{-r} \hat{\chi}^{ff} (-\chi^{ff} (\chi^{ff})'  (\frac{\hat{\rho}^\alpha}{\phi^u})(\alpha\hat{\rho}^{-1}\textbf{H}_p\hat{\rho}+w^u) )^{1/2}.  
\end{align*}
Since $w^u>0$ near $\Gamma^u$ and $(\chi^{ff})'<0$, we can choose $\chi^{ff}$ so that the square root in $b^{ff}$ is well-defined and smooth. Recall that $\tilde{w}^{u/s} = \frac{(\phi^{u/s})^2}{(\phi^{u/s})^2 + |\hat{\rho}|^{2\alpha}}w^{u/s}$ and $\tilde{\rho}$ satisfies
\begin{align}
\tilde{\rho}^{-1}\textbf{H}_p \tilde{\rho} = -\tilde{w}^u + \alpha \frac{|\hat{\rho}|^{2\alpha}}{\tilde{\rho}^2}\hat{\rho}^{-1}\textbf{H}_p\hat{\rho}. \label{09}
\end{align}
$b_0,b^{u/s},b_T,h$ are defined by
\begin{align}
\label{symbols_step1}
\begin{split}
b_0  = & \tau^{-r} \chi (-\textbf{p}_1'-c + r\tau (\tau^{-2}\textbf{H}_p \tau ) + (s+1- \frac{m-1}{2}) (\hat{\rho}^{-1} \textbf{H}_p \hat{\rho} )  \\& + \lambda \tilde{\rho}^{-1} \textbf{H}_p \tilde{\rho})^{1/2}
\\ = & \tau^{-r} \chi (-\textbf{p}_1'-c + r\tau (\tau^{-2}\textbf{H}_p \tau ) + (s+1- \frac{m-1}{2}+\lambda \alpha (\frac{\rho}{\tilde{\rho}})^2 ) (\hat{\rho}^{-1} \textbf{H}_p \hat{\rho} ) 
\\& - \lambda \tilde{w}^u)^{1/2},\\
b^{u/s} = & \tau^{-r} \hat{\chi}^{u/s} \phi^{u/s} \sqrt{-2w^{u/s}\chi^{u/s}(\chi^{u/s})'}\\
b_T = & \tau^{-r} \hat{\chi}_T \sqrt{\tau \chi_T \chi_T'(\tau^{-2}\textbf{H}_p\tau)},\\
h = & 2\tau^{-2r} \tilde{\rho}^{-2\lambda} \hat{\rho}^{-2s-2+m} \chi ( \hat{\chi}^u (\chi^u)'  \phi^u \textbf{r}^u + \hat{\chi}^s (\chi^s)'\phi^s \textbf{r}^s + m\hat{\chi}_\Sigma \chi_\Sigma' (\hat{\rho}^{-1}\textbf{H}_p \hat{\rho}) ) \\
&+ 2\tau^{-2r} \tilde{\rho}^{-2\lambda} \hat{\rho}^{-2s-2+m-\alpha} \chi \hat{\chi}^{ff} ( \chi^{ff})' (\frac{\hat{\rho}^\alpha}{\phi^u})^2\tb{r}^u.
\end{split}
\end{align}
Recall that $\tau^{-2}\textbf{H}_p\tau = -\textbf{H}_pt < 0$, and it will be bounded if we localize to the region where $\tau$ is small and near $\Gamma$, since $\Gamma$ is compact. So $r\tau(\tau^{-2}\textbf{H}_p \tau ) \rightarrow 0$ as $\tau \rightarrow 0$. Combining \eqref{eq: c condition 1} and the fact that $\hat{\rho}^{-1}\textbf{H}_p\hat{\rho} = o(1)$ as $\tau \rightarrow 0$ due to assumption \eqref{assumption2}, we know that all square roots above are well-defined and smooth if we choose $\chi_T$ to be supported sufficiently close to $\{\tau=0\}$.

Quantize both sides of (\ref{sym1}), and apply it to $v'$, then pair it with $v'$.  We use $T_{j}$ to denote the quantization of $\hat{\rho}^{-j}$ and $F_{j}$ to denote the quantization of $\tilde{\rho}^{-j}$, and for other parts we just use upper case letters to denote the quantization of the corresponding symbol represented by lower case letters. 

 We get, with $A:=\check{A}^*\check{A}, \, \check{A} \in \Psi_{\mathrm{cu},\alpha}^{s+1-\frac{m-1}{2},s+1-\frac{m-1}{2}+\lambda\alpha,r}(M,\Gamma^u)$:
\begin{align}
\begin{split}
\mathrm{Im} \la f' , Av' \ra  &  =   \la (\frac{1}{2}i[P',A] + \frac{P'-(P')^*}{2i}A)v',v' \ra \\
                            &  = \la (- c \la (T_{(m-1)/2}\check{A})^*(T_{(m-1)/2}\check{A}) - (T_{s+1}F_{\lambda}B_0)^*(T_{s+1}F_{\lambda}B_0) \\
                            &  + (T_{s+1}F_{\lambda}B^u)^*(T_{s+1}F_{\lambda}B^u) + (T_{s+1}F_{\lambda}B_T)^*(T_{s+1}F_{\lambda}B_T) +   H^*P' \\
                            & +E + E^{\inf}-(T_{s+1}F_{\lambda}B^{ff})^*(T_{s+1}F_{\lambda}B^{ff}) )      v',v' \ra,
\end{split} \label{10}
\end{align}
where $E \in \Psi_{\mathrm{cu},\alpha}^{2s+1,2s+2\lambda\alpha+1+\alpha,2r}(M,\Gamma^u)$ is the error term introduced because (\ref{sym1}) concerns only principal symbols. 
It collects terms of order lower than the principal part in (\ref{sym1}). They are generated from the full symbol expansion of composition of $P'A,AP',(P')^*A$, hence the highest order term among them is $2$ order lower in the first index and $2-\alpha$ order lower (applying Lemma \ref{lmm_improve_order}, where the $a_2$-term is absorbed into the $h$-term (and $H$-term after quantization) in (\ref{symbols_step1})) in the second index compared with the product $P'A$. Here the order refers to the sum of the orders of two operators in a pairing. 
And $B_0,B^u,B_T,B^{ff} \in \Psi_{\mathrm{cu},\alpha}^{0,0,r}(M,\Gamma^u), H \in \Psi_{\mathrm{cu},\alpha}^{2s+2-m+\alpha,2s+2-m+\alpha+2\lambda \alpha,2r}(M,\Gamma^u)$. 

Next we investigate properties of operators above through analyzing their symbols. $B_0$ is elliptic on $\mathcal{U}_{\eta_1}$. We can choose $\chi^u$ so that $(\chi^u)'$ vanishes identically when $(\phi^u)^2 \in [0,\eta_1^2]$ to ensure that $\WF_{\mathrm{cu},\alpha}'(B^u) \cap \mathcal{U}_{\eta_1} = \emptyset$. Because of the $\chi_T'$ factor, which vanishes identically near $\{\tau=0\}$, we know $\WF_{\mathrm{cu},\alpha}'(B_T) \cap \{\tau=0\} = \emptyset$. By the support condition of $\chi^{ff}$ and its derivative, we know: $\WF_{\mathrm{cu},\alpha}'(B^{ff}) \subset \{ C_1 \leq \frac{\rho}{\phi^u} \leq 2C_1  \} \cap \mathcal{U}_{2\eta_1}$. On the other hand, for the left hand side of (\ref{10}) we have:
\begin{align}
\begin{split}
\mathrm{Im} \la f' , Av' \ra & \leq |\la f',Av' \ra | = |\la P'v',\check{A}^*\check{A}v' \ra| = |\la \check{A}P'v',\check{A}v' \ra| \\
 & \leq  c||\check{A}v'||^2_{ \frac{m-1}{2},\frac{m-1}{2},0} + \frac{1}{2c}||\check{A}P'v'||_{ -\frac{m-1}{2} , -\frac{m-1}{2},0}^2 
 \end{split} \label{pair1}
\end{align}

Combining boundedness of $\hat{\rho}^{-1}\textbf{H}_p\hat{\rho}$ and (\ref{09}), we know that $\tilde{\rho}^{-1}\textbf{H}_p\tilde{\rho}$ is also bounded. Consequently $\hat{\rho} \textbf{H}_p \tilde{\rho} + \tilde{\rho}\textbf{H}_p\hat{\rho} = \hat{\rho}\tilde{\rho}(\tilde{\rho}^{-1}\textbf{H}_p\tilde{\rho} + \hat{\rho}^{-1}\textbf{H}_p\hat{\rho})\in S_{\mathrm{cu},\alpha}^{-1,-1-\alpha,0}(M,\Gamma^u)$. So we have $E^{\inf} \in  \Psi_{\mathrm{cu},\alpha}^{2s+1,2s+1 + 2\lambda \alpha-\alpha,2r}(M,\Gamma^u)$.
Combining these inequalities, we have
\begin{align}  \label{n3.32}
\begin{split}
& c||\check{A} v'||^2_{\frac{m-1}{2},\frac{m-1}{2},0} + ||B_0 v'||^2_{s+1,s+1+\lambda\alpha,0} + ||B^s v'||_{s+1,s+1+\lambda\alpha,0}^2  \\
& +  ||B^{ff}v'||^2_{s+1 ,s+1+\lambda\alpha,0}
\\ & \leq  ||B^u v'||^2_{s+1,s+1+\lambda \alpha,0}  + ||B_Tv'||_{s+1,s+1+\lambda \alpha,0}^2 + |\la P'v',Hv' \ra|\\ &+ (||G_1'v'||^2_{s+ \frac{1}{2} , s+\lambda\alpha+\frac{1+\alpha}{2} ,0}  + C||v'||^2_{s- \frac{1}{2} , s+\lambda \alpha - \frac{1}{2} +\frac{3}{2}\alpha ,r})  + c||\check{A}v'||^2_{ \frac{m-1}{2},\frac{m-1}{2},0} +
\\ & \frac{1}{2c}||\check{A}P'v'||_{ -\frac{m-1}{2} , -\frac{m-1}{2},0},  
\end{split} 
\end{align}
where $G_1' \in \Psi_{\cu,\alpha}^{0,0,0}(M,\Gamma^u)$ is elliptic on $\WF'_{\cu,\alpha}(E)$ and $\WF'_{\cu,\alpha}(E^{\inf})$, hence terms in the bracket controll $\la Ev',v' \ra$ and $\la E^{\inf} v' , v' \ra$.

Use Cauchy-Schwartz inequality to control $|\la P'v',Hv' \ra |$. Because of the $\chi$ factor in $h$, $Hv'$ is microlocalized near the support of $\check{a}$ as well, using $G_1'$ as the microlocalizer again and enlarge its wavefront set if necessary, we obtain
\begin{align}
\begin{split}
|\la P'v',Hv' \ra |  \lesssim & ||G_1'P'v'||^2_{s-m+2,s-m+2+\lambda\alpha,0} + ||Hv'||^2_{-(s-m+2),-(s-m+2+\lambda\alpha),0} \\
& + ||v'||^2_{-N,-N,r} \\
 \lesssim & ||G_1'P'v'||^2_{s-m+2,s-m+2+\lambda\alpha,0} + ||G_1' v'||^2_{s+\alpha,s+\alpha+\lambda\alpha,0} \\ & + ||v'||^2_{-N,-N,r}.
\end{split}  \label{11}
\end{align}
Recalling the discussion following (\ref{10}), we obtain $(\WF_{\mathrm{cu},\alpha}'(B^u) \cup \WF_{\mathrm{cu},\alpha}'(B_T)) \cap \bar{\mathcal{U}}_{\eta_1}=\emptyset$. Combine the first two terms on the right hand side of (\ref{n3.32}) and control them by $B_1\in \Psi_{\mathrm{cu},\alpha}^{0,0,r}(M,\Gamma^u)$ microlocalized in a neighborhood of $\Gamma$ but away from $\Gamma$ itself. They satisfy $\WF_{\mathrm{cu},\alpha}'(B^u) \cup \WF_{\mathrm{cu},\alpha}'(B_T) \subset \Ell(B_1)$ and $\WF_{\mathrm{cu},\alpha}'(B_1) \cap \bar{\mathcal{U}}_{\eta_1} = \emptyset$. In particular, $B_1 \in \Psi_{\mathrm{cu},\alpha}^{0,-\infty,r}(M,\Gamma^u)$. 
Since we can choose $G_1'$ to be elliptic on $\WF'_{\cu,\alpha}(A)$,
 $||\check{A}P'v'||_{ -\frac{m-1}{2} , -\frac{m-1}{2},0}$ is controlled 
by the norm $||G_1'P'v'||_{s-m+2,s-m+2+\lambda\alpha,0}=||G_1'f'||_{s-m+2,s-m+2+\lambda\alpha,0}$ 
using the mapping property of $\check{A}$ and the elliptic estimate. 
Since $0<\alpha<1$, we can combine $||G_1'v'||_{s+ \frac{1}{2} , s + \lambda \alpha + \frac{1+\alpha}{2} ,0}$ and $||G_1' v'||_{s+\alpha,s+\alpha+\lambda\alpha,0} $ to be $||G_1' v'||_{s+\bar{\alpha},s+\lambda\alpha+\frac{1+\alpha}{2},0}$, where $\bar{\alpha}:=\max \{\alpha,\frac{1}{2}\}$. Combining discussion above and \eqref{n3.32}, we obtain
\begin{align}
\begin{split}
& ||B_0 \Phi^u v||_{s+1,s+1+\lambda \alpha,0} +  ||B^{ff}\Phi^uv||_{s+1 ,s+1+\lambda\alpha,0}  \\
\lesssim &  || B_1 \Phi^u v ||_{s+1,s+1+\lambda \alpha,0} +  ||G_1'\Phi^u v||_{s+\bar{\alpha}, s+\lambda\alpha + \frac{1+\alpha}{2},0} \\ 
&+ ||\Phi^u v||_{s-\frac{1}{2},s+\lambda\alpha - \frac{1}{2}+2\alpha,r}    + ||G_1'f'||_{s-m+2,s-m+2+\lambda \alpha,0} ,
\end{split}
\end{align}
where $G_1' \in \Psi_{\mathrm{cu},\alpha}^{0,0,r}(M,\Gamma^u)$ is microlocalized near $\mathrm{supp}\,\check{a}$. Since the support conditions of $B_0$ and $G_1'$ are the same (possibly with different bounds on $\frac{\phi^u}{\rho}$), we can iterate this estimate to control the $||G_1' v'||_{s+\bar{\alpha},s+\lambda\alpha+\frac{1+\alpha}{2},0}$ term at the cost of enlarging the wavefront set of $G_1'$. In each iteration we can improve the first index by $1-\bar{\alpha}$ and the second index by $\frac{1-\alpha}{2}$. So after finite iterations, we can
absorb this term into the $\Phi^uv$-term.
We denote the new operator playing the role of $G_1'$ in the last iteration by $\tilde{G}$. Then we apply Proposition \ref{p1} to control $||\tilde{G}f'||_{s-m+2,s-m+2+\lambda\alpha,0}$ and use the mapping property of $\Phi^u \in \Psi_{\mathrm{cu},\alpha}^{0,-\alpha,r}(M,\Gamma^u)$ to control norms of $\Phi^uv$:
\begin{align}  \label{est1.1}
\begin{split}
& ||B_0 \Phi^u v||_{s+1,s+1+\lambda\alpha,0} +  ||B^{ff}\Phi^uv||_{s+1,s+1+\lambda\alpha,0} \\
\lesssim & || B_1 \Phi^u v ||_{s+1,s+1+\lambda \alpha,0} + ||\tilde{G} Pv||_{s-m+2,s-m+2 + \lambda\alpha -\alpha}  \\
&+ ||\tilde{G}v||_{s,s + \lambda \alpha,0} + ||v||_{s-\frac{1}{2} , s+ \lambda \alpha - \frac{1}{2}+\alpha,r}, 
\end{split}
\end{align}
where $\lambda$ satisfies (\ref{eq: lambda condition}), and $\WF_{\mathrm{cu},\alpha}'(G_1')\subset \Ell(\tilde{G})$. In particular, we can require $\WF_{\mathrm{cu},\alpha}'(\tilde{G}) \subset \{ \frac{\rho}{\phi^u} \leq 3C_1 \} \cap \mathcal{U}_{3\eta_1}$.   (\ref{est1.1}) will be the same as (\ref{est1_prop}) if we can replace $\Phi^uv$ by $v$, which is what we proceed to show next. Recall that $B_1 \in \Psi_{\mathrm{cu},\alpha}^{0,-\infty,r}(M,\Gamma^u)$, hence the second order of the $B_1$-term can be taken to be any real number.
 
Recall that $\phi^u \geq C_1 \rho$ on the wavefront set of $B_0$, hence $\Phi^u$ is elliptic near $\WF_{\mathrm{cu},\alpha}'(B_0)$ as an operator in $\Psi_{\mathrm{cu},\alpha}^{-\alpha,-\alpha,0}(M,\Gamma^u)$, and we can write $B_0 \Phi^u = \Phi^u B_0 + [B_0,\Phi^u]$ with the commutator term having lower order, and similarly for $B_1,B^{ff}$, with commutator terms controlled by $||\tilde{G}v||_{s,s + \lambda \alpha,0}$, 
hence conclude from above estimate:
\begin{align*}
\begin{split}
& ||B_0  v||_{s+1,s+1+\lambda\alpha-\alpha,0} +  ||B^{ff}v||_{s+1,s+1+\lambda\alpha-\alpha,0} 
\\& \lesssim || B_1 v ||_{s+1,s+1+\lambda \alpha -\alpha,0} + ||\tilde{G} Pv||_{s-m+2,s-m+2 + \lambda\alpha -\alpha,0}  \\
&+ ||\tilde{G}v||_{s,s + \lambda \alpha,0} + || v||_{s-\frac{1}{2} , s+ \lambda \alpha - \frac{1}{2}+\alpha,r},
\end{split}
\end{align*}
This implies, if we replace $s+1$ by $s$, and recall that $B_1 \in \Psi_{\mathrm{cu},\alpha}^{0,-\infty,r}(M,\Gamma^u)$:
\begin{align}
\begin{split}
&||B_0  v||_{s,s+\lambda\alpha-\alpha,0}  + ||B^{ff}v||_{s,s+\lambda\alpha-\alpha,0} \\
\lesssim & || B_1 v ||_{s,-N_1,0} + ||\tilde{G} Pv||_{s-m+1,s-m+1 + \lambda\alpha -\alpha,0}  \\
&+ ||\tilde{G}v||_{s-1,s-1 + \lambda \alpha,0} + ||v||_{s-\frac{3}{2} , s-\frac{3}{2}+ \lambda \alpha + \alpha,r} , 
\end{split} \label{est1}
\end{align}
for any $N_1 \in \R$. The constant implicitly included in `$\lesssim$' depends on $N_1$ as well. The effect of $B^{ff}$ term here is that we can extend the region where we have control to the area near the boundary of $\supp (\chi^{ff})$,  
 where $(\chi^{ff})'$ has a lower bound but $\chi^{ff}$ is almost vanishing.
\end{proof}

\begin{rmk}
The constant in (\ref{est1_prop}) depends on how close the wavefront set of $B_0$ is to the lift of $\Gamma^u$. We consider the $B_0$-term first. What affects the constant in the estimate is, as we approach the lift of $\Gamma^u$, the ellipticity of $\Phi^u$ is becoming weaker and weaker and we do not have a uniform lower bound of its principal symbol. The constant in the elliptic estimate is proportional to the reciprocal of the lower bound of the symbol. Recall that $\WF_{\mathrm{cu},\alpha}'(B^0) \subset \{ \phi^u \geq C_1^{-1}\rho \}$, hence the way that we can `push' the estimate near $\Gamma^u$ is letting $C_1 \rightarrow \infty$ and the constant is proportional to $C_1$.  The term $||B^{ff}v'||_{s+1,s+1+\lambda \alpha,0}$ is treated in a similar manner. And a more accurate version of (\ref{est1}) is
\begin{align}
\begin{split}
& ||B_0  v||_{s,s+\lambda\alpha-\alpha,0} + ||B^{ff} v||_{s,s+\lambda \alpha-\alpha,0} \\
\lesssim & C_1 ( || B_1 v ||_{s,-N,0} + ||\tilde{G} Pv||_{s-m+1,s-m+1 + \lambda\alpha -\alpha,0}  \\
&+ ||\tilde{G}v||_{s-1,s-1 + \lambda \alpha,0} + ||v||_{s-\frac{3}{2} , s-\frac{3}{2}+ \lambda \alpha + \alpha,r}),
\end{split} 
\end{align}
where $C_1$ comes from $\WF_{\mathrm{cu},\alpha}'(B^0) \subset \{ \phi^u \geq C_1^{-1}\rho \}$ and the implicit constant in the inequality is independent of $C_1$.
\end{rmk}

\subsection{Propagation near the \texorpdfstring{$\Gamma^u$}{Gamma u} side of the corner}
\label{sec_step2.1}
In this step, our positive commutator argument propagates the control we obtained above along the interior of the front face.
\begin{prop} 
There exist operators $\tilde{B}_0,\tilde{G}_1 \in \Psi_{\mathrm{cu},\alpha}^{0,0,r}(M,\Gamma^u),\tilde{B}_1 \in \Psi_{\mathrm{cu},\alpha}^{0,-\infty,r}(M,\Gamma^u)$ and a small constant $\epsilon_0>0$, with $\WF_{\mathrm{cu},\alpha}'(\tilde{B}_0),\WF_{\mathrm{cu},\alpha}'(\tilde{G}_1),$ $\WF_{\mathrm{cu},\alpha}'(\tilde{B}_1)$ contained in a fixed neighborhood of $\Gamma$, $\tilde{B}_0$ being elliptic on $\hat{\mathcal{U}}_{2\epsilon_0}$ defined in (\ref{regions}), $\WF_{\mathrm{cu},\alpha}'(\tilde{B}_1) \cap \{ \frac{|\phi^u|}{|\hat{\rho}|^\alpha} < \epsilon_0 , \tau=0 \} = \emptyset$, so that:
\begin{align}
\begin{split}
||\tilde{B}_0 \Phi^u v||_{s+1,s+1,0} \lesssim & ||\tilde{B}_1\Phi^u v||_{s+1,s+1,0}+||\tilde{G}_1Pv||_{s-m+2,s-m+2,0} 
\\& + ||\tilde{G}_1v||_{s,s,0} + ||v||_{s-\frac{1}{2},s-\frac{1}{2}+2\alpha,r},
\end{split} \label{est2.1}
\end{align}
\label{step2.1}
\end{prop}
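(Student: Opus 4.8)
The plan is to run the same kind of multi-stage positive commutator argument as in the proof of Proposition \ref{step1}, now with the variable $\tilde{\phi}^u := \phi^u/|\hat{\rho}|^\alpha$ in the role that $\rho/\phi^u$ plays there: Proposition \ref{step1} delivers control on the corner region $\{\tilde{\phi}^u \gtrsim 1\}$ (the fiber-infinity side), and here I want the complementary piece $\{|\tilde{\phi}^u| \lesssim \epsilon_0\}$ (the $\Gamma^u$ side), which contains $\hat{\mathcal{U}}_{2\epsilon_0}$. First I would conjugate by $\Phi^u$ exactly as before: writing $[P,\Phi^u] = iW^u\Phi^u + R_1 P + R_2$ with $R_1 \in \Psi_{\mathrm{cu},\alpha}^{-1,-1,0}(M,\Gamma^u)$, $R_2 \in \Psi_{\mathrm{cu},\alpha}^{m-2,m-2,0}(M,\Gamma^u)$, I get $P'v' = f'$ with $P' = P - iW^u$, $v' = \Phi^u v$, $f' = (\Phi^u + R_1)Pv + R_2 v$, so it suffices to estimate $\tilde{B}_0 v'$. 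Since $\Phi^u$ fails to be elliptic near $\WF_{\mathrm{cu},\alpha}'(\tilde{B}_0)$ — there $|\phi^u| \ll \rho$ — the conclusion stays phrased for $\Phi^u v$ rather than $v$. The commutant I would take is
\[
\check{a} = \tau^{-r}\,\hat{\rho}^{-(s+1)+\frac{m-1}{2}}\,\chi^{ff}\big((\tilde{\phi}^u)^2\big)\,\chi^{u}\big((\phi^u)^2\big)\,\chi^{s}\big((\phi^s)^2\big)\,\chi_T(\tau)\,\chi_\Sigma(\textbf{p}),
\]
with $\chi^{ff}$ identically $1$ on $[0,\epsilon_0]$, decreasing, supported in $[0,2\epsilon_0]$, and the remaining cutoffs as in the proof of Proposition \ref{step1}. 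The point is that there is no $\tilde{\rho}^{-\lambda}$-factor, so $\check{a} \in S_{\mathrm{cu},\alpha}^{s+1-\frac{m-1}{2},\,s+1-\frac{m-1}{2},\,r}(M,\Gamma^u)$ has equal first and second orders, matching those appearing in (\ref{est2.1}).

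Next I would compute $H_p\tilde{\phi}^u$ from $\textbf{H}_p\phi^u = -w^u\phi^u + \textbf{r}^u\textbf{p}$ and $\hat{\rho}^{-1}\textbf{H}_p\hat{\rho} = o(1)$ near $\tau = 0$; this shows that the weight multiplying $\tilde{\phi}^u$ is, near $\Gamma \cap \{\tau=0\}$, as close to $w^u > 0$ as one wishes. The resulting symbolic identity has the shape
\begin{align*}
\check{a} H_p \check{a} + \hat{\rho}^{-(m-1)}\textbf{p}_1'\,\check{a}^2 &= -c\,\hat{\rho}^{-(m-1)}\check{a}^2 - (\hat{\rho}^{-(s+1)}b_0)^2 - (\hat{\rho}^{-(s+1)}b^s)^2 \\
&\quad + (\hat{\rho}^{-(s+1)}b^u)^2 + (\hat{\rho}^{-(s+1)}b_T)^2 + (\hat{\rho}^{-(s+1)}b^{ff})^2 + hp,
\end{align*}
where $\textbf{p}_1' = \textbf{p}_1 - w^u$; the coercive symbol $b_0^2$ is proportional to $\chi^2\big(-\textbf{p}_1' - c + (\text{bounded }\hat{\rho}^{-1}\textbf{H}_p\hat{\rho}\text{-terms})\big)$ and is positive for $c$ small by (\ref{inq2}) — and, crucially, no extra threshold condition on $s$ arises here, precisely because there is no $\tilde{\rho}$-weight and $\hat{\rho}^{-1}\textbf{H}_p\hat{\rho}$ is $o(1)$ near $\tau = 0$; the influx symbol $b^{ff}$, coming from $(\chi^{ff})'$, is supported in $\{\epsilon_0 \le |\tilde{\phi}^u| \le 2\epsilon_0\}$, i.e. away from the $\Gamma^u$-end of the front face; the symbols $b^u, b^s, b_T$, coming from $(\chi^u)', (\chi^s)', \chi_T'$, are supported away from $\bar{\mathcal{U}}_{\eta_1}$ resp. away from $\{\tau=0\}$, exactly as in the proof of Proposition \ref{step1}; and $hp$ collects the $\textbf{p}$-proportional remainders. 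I would then quantize, apply to $v'$ and pair with $v'$ as in (\ref{10})--(\ref{pair1}); Lemma \ref{lmm_improve_order} shows that the sub-principal part of $[P',A]$ has order strictly below that of the principal term $-iH_p\check{a}^2$ in \emph{both} indices — this is where $\alpha < 1$, rather than merely $\alpha < \tfrac12$, is used, as explained in the Remark after Lemma \ref{lmm_improve_order} — while the $\tilde{a}_2 p$-term produced there is absorbed into the $hp$-term (the $HP'$-term after quantization). Bounding $|\la f', Av'\ra| \le c\|\check{A}v'\|^2 + \tfrac{1}{2c}\|\check{A}P'v'\|^2$, one absorbs the $c$-terms and controls $\|\check{A}P'v'\|$ by $\|\tilde{G}_1 f'\|$ via ellipticity and the mapping property of $\check{A}$.

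Finally I would control the $b^{ff}, b^u, b^s, b_T$-terms by $\tilde{B}_1$, taking $\WF_{\mathrm{cu},\alpha}'(\tilde{B}_1)$ large enough to contain $\{\epsilon_0 \le |\tilde{\phi}^u| \le 2\epsilon_0\}$ together with the wavefront sets of $B^u, B^s, B_T$ while remaining disjoint from $\{|\tilde{\phi}^u| < \epsilon_0, \tau = 0\}$; then iterate the resulting estimate, gaining $1 - \bar{\alpha}$ in the first index and $\tfrac{1-\alpha}{2}$ in the second at each step (with $\bar{\alpha} = \max\{\alpha, \tfrac12\}$), so as to absorb the lower-order remainders that arise from working only to principal order; and then use Proposition \ref{p1} and the mapping property of $\Phi^u \in \Psi_{\mathrm{cu},\alpha}^{0,-\alpha,r}(M,\Gamma^u)$ to pass from $f', v'$ back to $Pv, v$, which gives (\ref{est2.1}) with the error term $\|v\|_{s-\frac{1}{2},\,s-\frac{1}{2}+2\alpha,\,r}$. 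I expect the main obstacle to be the same one as in the proof of Proposition \ref{step1}: keeping the sub-principal part of the commutator strictly below its principal part in \emph{both} orders simultaneously, which is exactly what forces the order-$\alpha$ blow-up underlying $\tilde{\phi}^u$ and the use of Lemma \ref{lmm_improve_order}, and which is why $\alpha < 1$ is needed — together with verifying, through the blow-down map $Z \to {}^{\mathrm{cu}}\bar{T}^*M$, that the normally hyperbolic structure (assumptions (\ref{p5})--(\ref{assumption6})) makes $b_0$ coercive near $\Gamma \cap \{\tau=0\}$ while confining the influx term $b^{ff}$ to the $\epsilon_0$-shell where $\tilde{B}_1$ provides control.
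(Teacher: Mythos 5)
Your proposal follows essentially the same route as the paper: conjugate by $\Phi^u$ to pass to $P' = P - iW^u$ and $v' = \Phi^u v$, run a positive commutator with a commutant localized in $\tilde{\phi}^u = \phi^u/|\hat{\rho}|^\alpha$ (and $\phi^s$, $\tau$, $\textbf{p}$) with \emph{no} $\tilde{\rho}^{-\lambda}$ factor so that the two orders coincide, invoke Lemma \ref{lmm_improve_order} so the subprincipal remainder is strictly lower in both indices (hence $\alpha<1$ and not $\alpha<1/2$), and note that near $\tau=0$ the $\hat\rho^{-1}\mathbf{H}_p\hat\rho$ contributions are $o(1)$ so $b_0$ is coercive with no threshold on $s$. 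The only deviation is your extra $\chi^u((\phi^u)^2)$ factor alongside $\chi^{ff}((\tilde{\phi}^u)^2)$ — the paper's commutant carries a single cutoff $\chi_1^u(\tilde{\phi}^u)$ playing the role of your $\chi^{ff}$, and no separate $\chi^u$ — but the $(\chi^u)'$-term you generate is supported where $\hat\rho$ is bounded away from $0$ and hence is smoothing, so the redundancy is harmless.
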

\begin{proof}
We choose the commutant
\begin{align*}
a = \check{a}^2, \,  \check{a} =  \tau^{-r} \hat{\rho}^{-s-1+(m-1)/2} \chi_1^u(\frac{\phi^u}{|\hat{\rho}|^\alpha})\chi_1^s( (\phi^s)^2 ) \chi_1^T(\tau) \chi_1^\Sigma (\textbf{p})  ,    
\end{align*}
where $\textbf{p} = \hat{\rho}^m p$, and $\chi_1^{i}(\cdot),i=u,T,\Sigma$ are identically $1$ on $[-2\epsilon_0,2\epsilon_0]$, monotonically decreasing to 0 when the variable goes from $2\epsilon_0$ to $3\epsilon_0$, identically 0 on $[3\epsilon_0,\infty)$, and extended to $\R$ in an even manner. $\chi_1^s$ is identically $1$ on $[-(2\epsilon_0)^2,(2\epsilon_0)^2]$, monotonically decreasing to 0 when the variable goes from $(2\epsilon_0)^2$ to $(3\epsilon_0)^2$, identically 0 on $[(3\epsilon_0)^2,\infty)$, and extended to $\R$ in an even manner.
Then we consider the pairing
\begin{align*}
\mathrm{Im} \la f' , Av' \ra  &  =   \la (\frac{1}{2}i[P',A] + \frac{P'-(P')^*}{2i}A)v',v' \ra .
\end{align*}

Recalling \eqref{eq: skew p1 bound}, which shows $\inf_{\mathcal{U}_{3\epsilon_0}} -{\tb p}_1'>\frac{1}{2}\nu_{\min}>0$ for $\epsilon_0$ sufficiently small. Then we we choose $c$, which can be different $c$ in the previous subsection, such that
\begin{align}  \label{eq: c condition, 2}
 0< c <  \inf_{\mathcal{U}_{3\epsilon_0}} -{\tb p}_1'.
\end{align}
Defining $\chi_1:=\chi_1^u \chi_1^s \chi_1^T \chi_1^\Sigma$ and defining $\hat{\chi}_1^i$ to be the product without $\chi_1^i$, then the principal symbol of the operator on the right hand side is
\begin{align}
\begin{split}
\check{a}H_p \check{a} + \hat{\rho}^{-m+1}\textbf{p}_1'\check{a}^2 = & -c \hat{\rho}^{-m+1}\check{a}^2- (\hat{\rho}^{-s-1}\tilde{b}_0)^2 - (\hat{\rho}^{-s-1}\tilde{b}^s)^2 \\ 
& + (\hat{\rho}^{-s-1}\tilde{b}^u)^2 + (\hat{\rho}^{-s-1}\tilde{b}_T) + hp,
\end{split}   \label{2.1}
\end{align}
where
\begin{align*}
\tilde{b}_0  & = \tau^{-r} \chi_1 (-\textbf{p}_1'-c + r\tau (\tau^{-2}\textbf{H}_p \tau ) + (s+1- \frac{m-1}{2})\hat{\rho}^{-1} \textbf{H}_p \hat{\rho}  )^{1/2},\\
\tilde{b}^s  & = \tau^{-r} \hat{\chi}_1^s \phi^s \sqrt{ - 2w^s \chi^s (\chi^s)'}.
\end{align*}
We can arrange that terms in the bracket in $\tilde{b}_0$ all together is positive when we use $\chi_T$ to localize to the region on which $\tau$ is small and use the fact $\hat{\rho}^{-1}\textbf{H}_p\hat{\rho} = o(1)$ as $\tau \rightarrow 0$ (again by assumption \eqref{assumption2}). We compute the derivative of $\phi^u/|\hat{\rho}|^\alpha$:
\begin{align*}
\textbf{H}_p(\phi^u/|\hat{\rho}|^\alpha) = -(w^u\phi^u + \tb{r}^u \textbf{p})|\hat{\rho}|^{-\alpha} - \alpha \phi^u |\hat{\rho}|^{-\alpha-1}\textbf{H}_p\hat{\rho}.
\end{align*}
We have
\begin{align*}
& \tilde{b}^u =  \tau^{-r} \hat{\chi}_1^u ( (-w^u\phi^u |\hat{\rho}|^{-\alpha} - \alpha (\phi^u |\hat{\rho}|^{-\alpha}) \hat{\rho}^{-1}\textbf{H}_p\hat{\rho}) \chi^u (\chi^u)' )^{1/2},\\
& \tilde{b}_T = \tau^{-r} \hat{\chi}_1^T ( \tau \chi_1^T (\chi_1^T)'( \tau^{-2}\textbf{H}_p\tau) )^{1/2},\\
& h = 2 \tau^{-2r} \hat{\rho}^{-2s-2+m} \chi_1 ( \hat{\chi}_1^u(\chi_1^u)' \tb{r}^u  \hat{\rho}^{-\alpha}+  \hat{\chi}_1^s(\chi_1^s)' \tb{r}^s   + m \hat{\chi}_1^\Sigma (\chi_1^\Sigma)' (\hat{\rho}^{-1} \textbf{H}_{p} \hat{\rho}) ).
\end{align*}
We can choose $\chi_1^u$ so that $\phi^u \hat{\rho}^{-\alpha}$ is small enough on $\mathrm{supp} \check{a}$, and notice that $|\tilde{\rho}^{-1}\phi^u| \leq 1,\, |\tilde{\rho}^{-1}\hat{\rho}| \leq 1$, and $\textbf{H}_p\hat{\rho} = o(\hat{\rho})$ as $\tau \rightarrow 0$. 
Evaluating $\mathrm{Im} \la f' , Av' \ra $ and using (\ref{2.1}), we know
\begin{align}
\begin{split}
\mathrm{Im} \la f' , Av' \ra  &  =   \la (\frac{1}{2}i[P',A] + \frac{P'-(P')^*}{2i}A)v',v' \ra \\
&  = \la (- c \la (T_{(m-1)/2}\check{A})^*(T_{(m-1)/2}\check{A}) - (T_{s+1}\tilde{B}_0)^*(T_{s+1}\tilde{B}_0) \\
                            &  + (T_{s+1}\tilde{B}^u)^*(T_{s+1}\tilde{B}^u) + (T_{s+1}\tilde{B}_T)^*(T_{s+1}\tilde{B}_T) +   H^*P'+E)      v',v' \ra,   
\end{split}
\label{pair2.1}
\end{align}
with $\check{A} \in \Psi_{\mathrm{cu},\alpha}^{s+1-(m-1)/2,s+1-(m-1)/2,r}(M,\Gamma^u),  \tilde{B}_0, \tilde{B}^u, \tilde{B}_T \in  \Psi_{\mathrm{cu},\alpha}^{0,0,r}(M,\Gamma^u),$ $H^* \in \Psi_{\mathrm{cu},\alpha}^{2s+2-m+\alpha,2s+2-m+\alpha,2r}(M,\Gamma^u), E \in \Psi_{\mathrm{cu},\alpha}^{2s+1,2s+1+\alpha,2r}(M,\Gamma^u)$. $E$ is the error term introduced because (\ref{2.1}) concerns only principal symbols and apply Lemma \ref{lmm_improve_order} when we count orders and the $a_2$-part in that Lemma is absorbed in $h$. We estimate the left hand side by
\begin{align*}
\mathrm{Im} \la f' , Av' \ra & \leq |\la f',Av' \ra | = |\la P'v',\check{A}^*\check{A}v' \ra| = |\la \check{A}P'v',\check{A}v' \ra| \\
 & \leq  c||\check{A}v'||^2_{\frac{m-1}{2},\frac{m-1}{2},0} + \frac{1}{2c}||\check{A}P'v'||_{ -\frac{m-1}{2} , -\frac{m-1}{2},0}^2.
\end{align*}
Combining above equations and inequalities, the estimate we obtain is
\begin{align}
\begin{split} 
& c||\check{A}v'||_{(m-1)/2,(m-1)/2,0}^2+ ||\tilde{B}_0 v'||_{s+1,s+1}^2+||\tilde{B}^sv'||^2_{s+1,s+1,0} \\
 \leq & ||\tilde{B}^u v'||_{s+1,s+1,0}^2 + ||\tilde{B}_T v'||_{s+1,s+1,0} + | \la P'v',Hv' \ra | +  (||G_1'v'||^2_{s+ \frac{1}{2} , s+\lambda\alpha+\frac{1+\alpha}{2} ,0}  
 \\&+ C||v'||^2_{s- \frac{1}{2} , s+\lambda \alpha - \frac{1}{2} +\frac{3}{2}\alpha ,r}) +c||\check{A}v'||_{(m-1)/2,(m-1)/2,0}^2 \\
& +\frac{1}{2c}||\check{A}P'v'||_{-(m-1)/2,-(m-1)/2,0}^2.
\end{split}  \label{n3.32'}
\end{align}
Terms in the bracket are similar to terms in the bracket in (\ref{n3.32}). The Sobolev order of the leading part of terms in the bracket is 2 orders lower in the first index and $2(1-\alpha)$ order lower in the second index compared with the product $P'A$. Applying Cauchy-Schwartz inequality to $| \la P'v',Hv' \ra |$, since $\supp h \subset \supp \check{a}$, we can add a microlocalizer at the cost of introducing a lower order error term:
\begin{align*}\
|\la P'v',Hv' \ra| \lesssim &||\tilde{G}_1P'v'||^2_{s-m+2,s-m+2,0} + ||Hv'||^2_{-(s-m+2),-(s-m+2),-r}
\\ &+||v||_{s+\alpha-\frac{1}{2},s+\alpha-\frac{1}{2},r}.
\end{align*}
Since $H \in \Psi_{\mathrm{cu},\alpha}^{2s+2-m+\alpha,2s+2-m+\alpha,2r}(M,\Gamma^u)$, we know 
\begin{align*}
||Hv'||_{-(s-m+2),-(s-m+2),-r} \lesssim ||\tilde{G}_1v'||_{s+\alpha,s+\alpha,0}
\end{align*}
for $\tilde{G}_1 \in \Psi_{\cu,\alpha}^{0,0,r}(M,\Gamma^u)$ that is elliptic on $\WF'_{\cu,\alpha}(H)$. The terms $c||\check{A}v'||_{(m-1)/2,(m-1)/2,0}^2$ on both sides of (\ref{n3.32'}) cancel out. Dropping the $\tilde{B}^s$-term on the left hand side of (\ref{n3.32'}), we have
\begin{align*}
||\tilde{B}_0 \Phi^u v||_{s+1,s+1,0}  \lesssim & || \tilde{B}_1 \Phi^u v||_{s+1,s+1,0} + ||\tilde{G}_1f'||_{s-m+2,s-m+2,0}  \\& + ||\tilde{G}_1\Phi^u v||_{s+\frac{1}{2},s+\frac{1}{2}+\alpha,0} + C ||\Phi^uv||_{s-\frac{1}{2},s-\frac{1}{2}+2\alpha,r},
\end{align*}
where $\tilde{B}_1 \in \Psi_{\mathrm{cu},\alpha}^{0,0,r}(M,\Gamma^u)$ is elliptic on $\WF_{\mathrm{cu},\alpha}'(B^u)\cup \WF_{\mathrm{cu},\alpha}'(B_T)$. We iterate this estimate to control $||\tilde{G}_1\Phi^u v||_{s+\frac{1}{2},s+\frac{1}{2}+\alpha,0}$. Then apply Proposition \ref{p1} to estimate $\tilde{G}_1f'$-term:
\begin{align*}
||\tilde{B}_0 \Phi^u v||_{s+1,s+1,0} \lesssim  & ||\tilde{B}_1\Phi^u v||_{s+1,s+1,0} + ||\tilde{G}_1Pv||_{s-m+2,s-m+2,0}  \\ &+ ||\tilde{G}_1v||_{s,s,0} + ||v||_{s-\frac{1}{2},s-\frac{1}{2}+2\alpha,r},
\end{align*}
where the wavefront set of $\tilde{G}_1$ is enlarged when we iterate the estimate. Our operators satisfy: $\tilde{B}_0$ is elliptic on $\hat{\mathcal{U}}_{2\epsilon_0}$, $\WF_{\mathrm{cu},\alpha}'(\tilde{B}_1) \cap \{ |\phi^u|/|\hat{\rho}|^\alpha < \epsilon_0 , \tau=0 \} = \emptyset$.
\end{proof}

\subsection{Propagation to the trapped set}
\label{sec_step2.2}
In this step we consider the dynamics of $\tilde{\phi}^u:=\phi^u/|\hat{\rho}|^\alpha$, and denote its quantization by $\tilde{\Phi}^u \in \Psi_{\mathrm{cu},\alpha}^{\alpha,0,0}(M,\Gamma^u)$. The estimate we obtain in this step is to control the energy on $\Gamma$ (i.e., the $B_\delta$-term in \eqref{est6-prop}) by the energy on the region away from $\Gamma^s$ (i.e., the $E_\delta$-term). The regions of microlocalization depend on a parameter $\delta \in (0,\frac{\epsilon_0}{3})$ to be specified later (see the discussion before \eqref{est8}). The reason that we keep track of this $\delta^{1/2}$-factor is that we will absorb this $E_\delta v$-term by a bootstrapping argument in combination with \eqref{est2.3} below.
\begin{figure}[H]
\centering
\includegraphics[scale = 0.4]{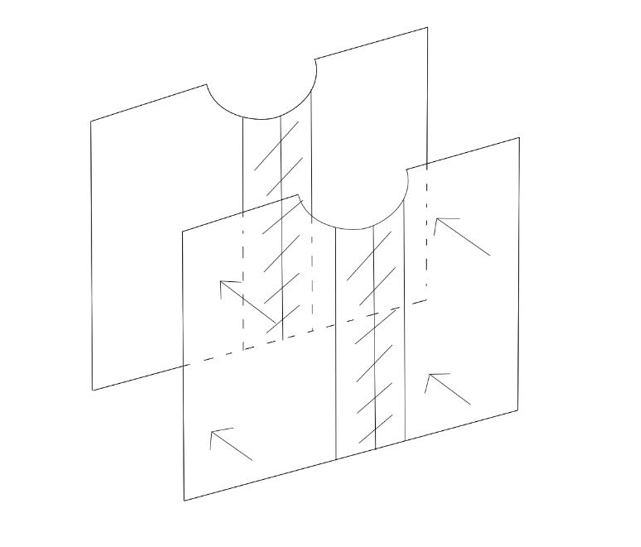}
\caption{The propagation in Proposition \ref{propstep2.2}. 
The plane that is more inward is $\Gamma^s$, the unstable manifold; while the outward plane is its translation, $\{\phi^s=\delta\}$.
Arrows indicate the direction from where we assume a priori regularities to where we conclude regularities.}
\label{fig_step2.2} 
\end{figure}

\begin{prop}
There exist $B_\delta,E_\delta,B_2,\tilde{G}_2 \in \Psi^{0,0,r}(M,\Gamma^u)$ with $B_\delta$ being elliptic on $\hat{\mathcal{U}}_{\epsilon_0}\cap \{|\phi^s| \leq 3\delta \}$ and $\WF'_{\cu,\alpha}(B_\delta) \subset \hat{\mathcal{U}}_{2\epsilon_0} \cap \{|\phi^s|\leq 4\delta\}$, $\WF_{\cu,\alpha}'(E_\delta)$ characterized by \eqref{edelta} below, 
$\WF_{\cu,\alpha}'(\tilde{G}_2) \subset \hat{\mathcal{U}}_{6\epsilon_0} \cap \{|\phi^s| \leq 5\delta\}$,
$\WF_{\cu,\alpha}'(B_2) \subset \hat{\mathcal{U}}_{2\epsilon_0} \backslash \hat{\mathcal{U}}_{\epsilon_0}$ (in particular, $\WF_{\cu,\alpha}'(B_2)$ is away from $\bar{\Gamma}^u$), and constants $C,\tilde{C}$ independent of $\delta$ such that
\begin{align} \label{est6-prop}
\begin{split}
||B_\delta v||_{s,s,0} \leq & C( \delta^{1/2}||E_\delta v||_{s,s,0} + ||B_2 v||_{s+1-\alpha,s+1-\alpha,0}   
\\ &+ ||\tilde{G}_2Pv||_{s-\alpha -m+2,s-\alpha-m+2,0} + \tilde{C} ||v||_{s-\frac{1}{2},s -\frac{1-\alpha}{2},r}).
\end{split}
\end{align}
\label{propstep2.2}
\end{prop}
\begin{proof}
We use a new commutant
\begin{align}
a= \check{a}^2 , \, \check{a} = \tau^{-r} \hat{\rho}^{-s-1/2} \psi_2(\phi^s)\chi_2^u(\phi^u/|\hat{\rho}|^\alpha) \chi_2^T(\tau)\chi_2^\Sigma(\textbf{p}), \label{2.2.1}
\end{align}
where $\chi_2^u,\chi_2^T,\chi_2^\Sigma$ are identically 1 on $[-\epsilon_0,\epsilon_0]$ and supported in $[-2\epsilon_0,2\epsilon_0]$. Notice that
\begin{align*}
H_{\phi^u/|\hat{\rho}|^{\alpha}}  &= \phi^u H_{|\hat{\rho}|^{-\alpha}} + |\hat{\rho}|^{-\alpha} H_{\phi^u}\\
& = -\alpha \phi^u |\hat{\rho}|^{-\alpha-1}H_{\hat{\rho}} + |\hat{\rho}|^{-\alpha} H_{\phi^u},
\end{align*}
we compute the principal symbol of $\frac{i}{2} [\tilde{\Phi}^u,A]$ as two parts corresponding to two terms above:
\begin{align}
\check{a}H_{\tilde{\phi}^u} \check{a} =   -\alpha \tilde{\phi}^u \check{a}  \hat{\rho}^{-1} H_{\hat{\rho}} \check{a} + \hat{\rho}^{-\alpha} \check{a} H_{\phi^u} \check{a} .  \label{sym3}
\end{align}
Recalling our assumptions on $\phi^{u/s}$, we can choose $C_\phi>1$ such that
\begin{align}
C_\phi^{-1} \leq \tb{H}_{\phi^u}\phi^s \leq C_\phi \text{\quad on \quad} \hat{\mathcal{U}}_{3\epsilon_0} , \label{cphi}
\end{align}
where $\tb{H}_{\phi^u}: = \hat{\rho}^{-1}H_{\phi^u}$ is the normalized Hamilton vector field associated with $\phi^u$. We recall the construction of the cutoff with respect to $\phi^s$ in step 2 of the proof of Theorem 3.9 of \cite{hintz2021normally}, i.e. we choose smooth $\psi_2$ such that: $\supp \psi_2 \subset [-4\delta,2\epsilon_0],0\leq \psi_2 \leq 1$, $\psi_2' \geq 0$ on $(-\infty,0]$, $\psi_2(-3\delta) \geq \frac{1}{4}$, $\psi_2'\geq \frac{1}{12\delta}$ on $[-3\delta,3\delta]$, $\psi_2'\geq -\frac{1}{\epsilon_0}$ on $[0,\infty)$, $\psi_2 \leq 12\delta \psi_2'$ on $[-4\delta,3\delta]$. The last inequality automatically holds on $[-3\delta,3\delta]$ by constant bounds on each side respectively, and we can choose appropriate $\psi_2$ to extend it to $[-4\delta,3\delta]$. 
Then decompose $\psi_2$ as
\begin{align*}
\psi_2^2 = \psi_{2-}^2 + \psi_{2+}^2,
\end{align*}
where $\psi_{2-}=\psi_2$ on $(-\infty,3\delta]$ and $\supp \psi_{2-} \subset [-4\delta,4\delta]$, and $\supp \psi_{2+} \subset(3\delta,2\epsilon_0]$.
Define $a_\pm$ to be symbols obtained from $a$ by replacing $\psi_2$ by $\psi_{2+}$ and $\psi_{2-}$ respectively. Hence we have $\check{a}^2 = \check{a}_+^2 + \check{a}_-^2$, and $e_3 = -\alpha \tilde{\phi}^u \hat{\rho}^{-1} H_{\hat{\rho}} \check{a}^2 = -\frac{1}{2} \alpha \tilde{\phi}^u \hat{\rho}^{-1} H_{\hat{\rho}} (\check{a}_+^2 + \check{a}_-^2)$. Consequently, we have
\begin{align}
 \psi_2 \psi_2'  = \frac{1}{24\delta}\psi_{2-}^2 + \frac{1}{24\delta}\tilde{b}_\delta^2 -\tilde{e},  \label{2.3}
\end{align}
where $\supp \tilde{e} \subset (3\delta,2\epsilon_0],|\tilde{e}| \leq \epsilon_0^{-1},\tilde{b}_\delta \geq 0, \supp \tilde{b}_\delta \subset [-4\delta,4\delta]$ and 
\begin{align}
\tilde{b}_\delta \geq \frac{1}{4} \text{ on } [-3\delta,3\delta].  \label{bdelta0}
\end{align}
We consider two terms in (\ref{sym3}) separately. For the second term, using \eqref{2.3}, we have:
\begin{align}
\begin{split}
\check{a}H_{\phi^u}\check{a} = & \frac{1}{48C_\phi\delta} \hat{\rho}\check{a}^2 + (\hat{\rho}^{-s}\tilde{b}_-)^2+ \frac{1}{24C_\phi\delta}(\hat{\rho}^{-s}b_\delta)^2 \\
&+ \hat{\rho}^{-2s}e_1 - \hat{\rho}^{-2s}e_2,
\end{split}
\label{sym2}
\end{align}
where terms introduced by differentiating $\chi_2^u,\chi_2^T,\chi_2^\Sigma$ are included in $e_1$:
\begin{align*}
b_- = & \tau^{-r}\psi_{2-} \chi_2^u \chi_2^T \chi_2^\Sigma ( \frac{1}{24\delta}(\textbf{H}_{\phi^u}\phi^s -\frac{1}{2}C_{\phi}^{-1}) -r\tau (\tau^{-2}\textbf{H}_{\phi^u}\tau) \\
&- (s+1/2)( \hat{\rho}^{-1}H_{\phi^u}\hat{\rho} ))^{1/2},\\
b_\delta = & \tau^{-r} \chi_2^u \chi_2^T \chi_2^\Sigma \tilde{b}_\delta \sqrt{C_\phi \textbf{H}_{\phi^u}\phi^s},\\
e_1 = &  \tau^{-2r}\psi_2^2 (\tau^2 (\chi_2^u)^2\chi_2^T(\chi_2^T)'(\chi_2^\Sigma)^2 \tau^{-2}\textbf{H}_{\phi^u}\tau +   \tau^{-2r}(\chi^u)^2 \chi_T^2 \chi_\Sigma (\chi_2^\Sigma)'(\textbf{H}_{\phi^u}\textbf{p}))\\
& +  \tau^{-2r} (\chi_2^T)^2 (\chi_2^\Sigma)^2 \chi_2^u (\chi_2^u)' (\phi^u/|\hat{\rho}|^\alpha) \hat{\rho}^{-1}\textbf{H}_{\phi^u} \hat{\rho},\\
e_2 = & \tau^{-2r} (\chi_2^u)^2 (\chi_2^T)^2 \chi_\Sigma^2 ( (\textbf{H}_{\phi^u}\phi^s)\tilde{e} +  \tau^{-2r}\psi_{2+}^2 ( r\tau(\tau^{-2}\textbf{H}_{\phi^u}\tau ) + (s+\frac{1}{2}) ( \hat{\rho}^{-1} \textbf{H}_{\phi^u} \hat{\rho}) ) ).
\end{align*} 
 Combining properties of $\tilde{e},\psi_{2+},\chi_2^u,\chi_2^T,\chi_2^\Sigma$, we know that $e_1,e_2$ satisfy
\begin{align}
\begin{split} 
& \supp e_1 \cap \hat{\mathcal{U}}_{\epsilon_0} = \emptyset, \\
& \supp e_2 \subset \{ \phi^s>3\delta \}, \, |e_2| \leq C_\phi\epsilon_0^{-1} + C    \label{e1e2},
\end{split}
\end{align}
where $C$ is independent of $\delta,\epsilon_0$. $b_\delta$ satisfies
\begin{align}
\tau^r b_\delta \geq \frac{1}{4} \text{ \, on }\quad \hat{\mathcal{U}}_{\epsilon_0} \cap \{ |\phi^s|\leq 3\delta \}  \label{bdelta}
\end{align}
by (\ref{cphi}) and (\ref{bdelta0}).

Although $\tilde{\phi}^u\in S_{\mathrm{cu},\alpha}^{\alpha,0,0}(M,\Gamma^u)$, $\chi_2^u$ is supported on the region where $\frac{\phi^u}{\rho}$ is bounded, so $\chi^u_2 \tilde{\phi^u} \in S_{\mathrm{cu},\alpha}^{0,0,0}(M,\Gamma^u)$. Consequently $e_1 \in S_{\mathrm{cu},\alpha}^{0,0,2r}(M,\Gamma^u)$. 

Next we consider the first term in (\ref{sym3}). Define $e_3: =  -\alpha \tilde{\phi}^u \check{a}  \hat{\rho}^{-1} H_{\hat{\rho}} \check{a}$.
Recalling the definition of $\check{a}$ in (\ref{2.2.1}), factors of terms in $e_3$ involving differentiation are: $\hat{\rho}^{-1}H_{\hat{\rho}}\phi^s,\, \hat{\rho}^{-1}H_{\hat{\rho}} \phi^s, \, \hat{\rho}^{-1}H_{\hat{\rho}}\tau, \, \hat{\rho}^{-1}H_{\hat{\rho}}p$, all of which are bounded quantities. In addition, because of the $\chi^u$ factor in $\check{a}$, $\tilde{\phi}^u$ is bounded on $\WF_{\mathrm{cu},\alpha}'(E_3)$, so $e_3$ quantizes to be $E_3 \in \Psi_{\mathrm{cu},\alpha}^{2s,2s,2r}(M,\Gamma^u)$, which introduces another term $|\la E_3 v,v \ra|$.  We define
\begin{align*}
e_{31} = \frac{1}{2}\hat{\rho}^{-\frac{1+\alpha}{2}} H_{\hat{\rho}} \check{a}, \quad e_{32} = \check{a} \hat{\rho}^{-\frac{1-\alpha}{2}},
\end{align*}
and we have $e_3 = e_{31}e_{32}\tilde{\phi}^u$. And define the version truncated by $\psi_{2\pm}$ as:
\begin{align*}
e_{31 \pm} = \frac{1}{2}\hat{\rho}^{-\frac{1+\alpha}{2}} H_{\hat{\rho}} \check{a}_\pm, \quad e_{32\pm} = \check{a}_\pm \hat{\rho}^{-\frac{1-\alpha}{2}}.
\end{align*}
We define $E_{31},E_{32},E_{31\pm},E_{32\pm}$ to be operators with principal symbols denoted by corresponding lower-case letters. In addition, we require them to be symmetric (if not, replace $E_{ij}$ by $\frac{1}{2}(E_{ij}+E_{ij}^*)$). Thus $E_{31\pm} \in \Psi_{\mathrm{cu},\alpha}^{s-1+\frac{\alpha}{2},s-1+\frac{\alpha}{2},r}(M,\Gamma^u), \, E_{32\pm} \in \Psi_{\mathrm{cu},\alpha}^{s+ 1 - \frac{\alpha}{2} , s+ 1 - \frac{\alpha}{2},r}(M,\Gamma^u) $.
 Hence we have $E_3 = (E_{31+}^*E_{32+}+E_{31-}^*E_{32-})\tilde{\Phi}^u$ and:
\begin{align*}
 |\la E_3 v,v \ra | & =|\la (E_{31+}^*E_{32+}+E_{31-}^*E_{32-}) \tilde{\Phi}^u v, v \ra |\\
                    & \leq | \la E_{32+}\tilde{\Phi}^u v , E_{31+}v \ra| + | \la E_{32-}\tilde{\Phi}^u v , E_{31-}v \ra| \\
                    & \leq ||E_{32+}\tilde{\Phi}^uv||_{0,0,0}^2 + ||E_{31+}v||_{0,0,0}^2 + ||E_{32-}\tilde{\Phi}^uv||_{0,0,0}^2 + ||E_{31-}v||_{0,0,0}^2
\end{align*}
We can control $E_{31\pm}$-term up to a constant by $||v||_{s-1+\frac{\alpha}{2},s-1+\frac{\alpha}{2},r}$, and control $E_{32\pm}$-term by $||\check{A}\tilde{\Phi}^u v||_{\frac{1-\alpha}{2},\frac{1-\alpha}{2},0}$ by mapping properties and counting the order of operators, where we have used the fact that $e_{32\pm}= \check{a}_\pm \hat{\rho}^{-\frac{1-\alpha}{2}}$, hence they are just order-shifted version of each other, i.e. $||E_{32\pm}\tilde{\Phi}^uv||_{L_{\mathrm{cu}}^2}$ is equivalent to $||\check{A}\tilde{\Phi}^uv||_{\frac{1-\alpha}{2},\frac{1-\alpha}{2},0}$. To summarize, we have
\begin{align}
\begin{split}
 |\la E_3 v,v \ra| & \leq ||E_{32+}\tilde{\Phi}^uv||_{0,0,0}^2 + ||E_{31+}v||_{0,0,0}^2 + ||E_{32-}\tilde{\Phi}^uv||_{0,0,0}^2 + ||E_{31-}v||_{0,0,0}^2\\
                   & \lesssim  ||\check{A}_+\tilde{\Phi}^u v||_{\frac{1-\alpha}{2},\frac{1-\alpha}{2},0} + ||\check{A}_-\tilde{\Phi}^u v||_{\frac{1-\alpha}{2},\frac{1-\alpha}{2},0} + ||v||_{s-1+\frac{\alpha}{2},s-1+\frac{\alpha}{2},r}
\end{split}
\label{est7}
\end{align}
Now we evaluate the pairing $\mathrm{Im} \la \tilde{\Phi}^u v, Av \ra = \la \frac{i}{2} [\tilde{\Phi}^u,A]v,v \ra$. First we quantize both sides of (\ref{sym3}) and then apply it to $v$ and pair with $v$. We have $\check{A},\check{A}_\pm\in \Psi_{\mathrm{cu},\alpha}^{s+1/2,s+1/2,r}(M,\Gamma^u),\, B_-,B_\delta \in \Psi_{\mathrm{cu},\alpha}^{0,0,r}(M,\Gamma^u)$, $E_1,E_2 \in \Psi_{\mathrm{cu},\alpha}^{0,0,2r}(M,\Gamma^u)$. Setting $\tilde{v}:= \tilde{\Phi}^uv$ to simplify notations, we get:
\begin{align*}
& \frac{1}{48C_\phi \delta}||\check{A}_- v||^2_{\frac{-1+\alpha}{2},\frac{-1+\alpha}{2},0} + ||B_-v||^2_{s+\frac{\alpha}{2},s+\frac{\alpha}{2},0} + \frac{1}{24C_\phi\delta}||B_\delta v||_{s+\frac{\alpha}{2},s+\frac{\alpha}{2},0}^2\\
\leq & |\la E_1 v, v \ra| + | \la E_2v,v \ra| + | + |\la \check{A}_- \tilde{v},\check{A}_-v \ra| + |\la \check{A}_+ \tilde{v},\check{A}_+v \ra|  \\
& + \tilde{C}||v||_{s +\frac{\alpha - 1 }{2} ,s +\frac{\alpha}{2} -\frac{1-\alpha}{2},r}^2 + |\la E_3 v,v\ra| \\
\leq & |\la E_1 v, v \ra| + | \la E_2v,v \ra| + |\la E_3v,v \ra|   + \frac{1}{48C_\phi \delta}||\check{A}_- v||_{\frac{-1+\alpha}{2},\frac{-1+\alpha}{2},0}^2 \\
&+ 24C_\phi\delta||\check{A}_- \tilde{v} ||_{\frac{1-\alpha}{2},\frac{1-\alpha}{2},0}^2  + \epsilon_0 ||\check{A}_+ \tilde{\Phi}^u v||_{\frac{1-\alpha}{2},\frac{1-\alpha}{2},0}^2 + \frac{1}{2\epsilon_0} ||\check{A}_+v ||_{\frac{\alpha-1}{2},\frac{\alpha-1}{2},0}^2 \\& + \tilde{C}||v||_{s +\frac{\alpha - 1 }{2} ,s +\frac{\alpha}{2} -\frac{1-\alpha}{2},r }^2\\
\lesssim  & |\la E_1 v, v \ra| + | \la E_2v,v \ra|  + \frac{1}{48C_\phi \delta}||\check{A}_- v||_{\frac{-1+\alpha}{2},\frac{-1+\alpha}{2},0}^2 + 24C_\phi\delta||\check{A}_- \tilde{v} ||_{\frac{1-\alpha}{2},\frac{1-\alpha}{2},0}^2 \\
& + \epsilon_0 ||\check{A}_+ \tilde{\Phi}^u v||_{\frac{1-\alpha}{2},\frac{1-\alpha}{2},0}^2 + \frac{1}{2\epsilon_0} ||\check{A}_+v ||_{\frac{\alpha-1}{2},\frac{\alpha-1}{2},0}^2 + \tilde{C}||v||_{s +\frac{\alpha - 1 }{2} ,s +\frac{\alpha}{2}-\frac{1-\alpha}{2},r}^2.
\end{align*}
$\tilde{C}||v||_{s +\frac{\alpha-1}{2},s +\frac{\alpha}{2} -\frac{1-\alpha}{2},r}=\tilde{C}||v||_{s-\frac{1}{2}+\frac{\alpha}{2} ,s-\frac{1}{2}+\alpha,r}$ arises because (\ref{sym3}) concerns only principal symbols.
In the last step, we used (\ref{est7}). Multiplying $24C_\phi \delta$ on both sides and then taking square root, applying Lemma \ref{lmm1},  we obtain:
\begin{align}
\begin{split}
||B_\delta v||_{s + \frac{\alpha}{2} ,s + \frac{\alpha}{2},0} \leq & C( \sqrt{\delta/\epsilon_0}||\tilde{E}_\delta v||_{s +\frac{\alpha}{2},s + \frac{\alpha}{2},0} + ||B_2v||_{s + \frac{\alpha}{2},s + \frac{\alpha}{2} ,0} + (\delta + \sqrt{\delta \epsilon_0}) \\&||\tilde{G}_2\tilde{\Phi}^uv||_{s+1-\frac{\alpha}{2},s+1-\frac{\alpha}{2},0}  + \tilde{C}||v||_{s-\frac{1}{2}+\frac{\alpha}{2} ,s-\frac{1}{2}+\alpha,r}),
\end{split}  \label{est4}
\end{align}
where $\tilde{G}_2$ is microlocalized near $\supp \check{a}$, which contains $\Gamma$. The term $\delta ||\tilde{G}_2\tilde{\Phi}^uv||_{s+1-\frac{\alpha}{2},s+1-\frac{\alpha}{2}}$ controls $24C_\phi\delta||\check{A}_- \tilde{v} ||_{\frac{1-\alpha}{2},\frac{1-\alpha}{2},0}$ and the term $\sqrt{\epsilon_0\delta} ||\tilde{G}_2\tilde{\Phi}v||_{s+1-\frac{\alpha}{2},s+1-\frac{\alpha}{2},0}$ controls $\epsilon_0 ||\check{A}_+ \tilde{\Phi}^u v||_{\frac{1-\alpha}{2},\frac{1-\alpha}{2},0}$. 
Let $\tilde{E}_\delta$ be an operaotr in $\Psi_{\mathrm{cu},\alpha}^{0,0,r}(M,\Gamma^u)$ that controls the $E_2v$-term and $\check{A}_+v$-term up to a constant independent of $\delta$ by the elliptic estimate. In particular, its principal symbol $\tilde{e}_\delta$ satisfies
\begin{align} \label{eq: tidlde E condition}
|\tau^r\tilde{e}_\delta| \leq 1 \text{ and } \WF'_{\mathrm{cu}}(\tilde{E}_\delta) \subset \{ \phi^s>3\delta \}.  
\end{align}
$B_2$ is chosen to control $|\la E_1 v, v \ra|$. $B_2$ satisfies
\begin{align}
\WF'_{\cu,\alpha}(B_2) \cap \hat{\mathcal{U}}_{\epsilon_0} = \emptyset,  \label{B1}
\end{align}
which is possible by (\ref{e1e2}).

Next we estimate $(\delta + \sqrt{\delta \epsilon_0}) || \tilde{G}_2\tilde{\Phi}^u v||_{s+1-\frac{\alpha}{2},s+1-\frac{\alpha}{2},0}$, which is equivalent to $(\delta + \sqrt{\delta \epsilon_0}) || \tilde{G}_2\Phi^u v||_{s+1+\frac{\alpha}{2},s+1+\frac{\alpha}{2},0}$. We control this term using (\ref{est2.1}). Enlarging the wavefront set of $\tilde{G}_2$ if necessary, this results in an error term $(\delta+\sqrt{\delta \epsilon_0})||\tilde{G}_2v||_{s +\frac{\alpha}{2} ,s + \frac{\alpha}{2},0}$. 
By Lemma \ref{lmm1} and (\ref{bdelta}), we know
\begin{align}
\begin{split}
||\tilde{G}_2v||_{s+\frac{\alpha}{2},s+\frac{\alpha}{2},0} \leq & 2||B_\delta v||_{s + \frac{\alpha}{2},s + \frac{\alpha}{2},0} + 2||E_\delta v||_{s + \frac{\alpha}{2},s + \frac{\alpha}{2},0} + \tilde{C}||B_2v||_{s + \frac{\alpha}{2} ,s + \frac{\alpha}{2},0}  \\ &+ \tilde{C}||v||_{s-\frac{1}{2}+\frac{\alpha}{2} ,s-\frac{1}{2}+\alpha,0}\\
=  &  2||B_\delta v||_{s+\frac{\alpha}{2},s + \frac{\alpha}{2},0} + 2||E_\delta v||_{s + \frac{\alpha}{2},s + \frac{\alpha}{2},0} + \tilde{C}||B_2v||_{s + \frac{\alpha}{2} ,s + \frac{\alpha}{2},0} \\& +\tilde{C}||v||_{s-\frac{1}{2}+\frac{\alpha}{2} ,s-\frac{1}{2}+\alpha,r},
\end{split} \label{G2}
\end{align}
where $E_\delta$ satisfies
\begin{align}
\begin{split}
& |\tau^r\sigma(E_\delta)| \leq 1, \, \tau^r\sigma(E_\delta) = 1 \text{ on } \{3\delta \leq  \phi^s \leq 4\epsilon_0 , |\tau| \leq \epsilon_0, |\tilde{\phi}^u| \leq \epsilon_0, |\textbf{p}| \leq \epsilon_0  \},\\
& \WF_{\mathrm{cu},\alpha}'(E_\delta) \subset \{ \frac{5}{2}\delta \leq \phi^s \leq 5\epsilon_0, |\tau| \leq 2\epsilon_0, |\tilde{\phi}^u| \leq 2\epsilon_0 , |\textbf{p}| \leq 2\epsilon_0 \}.
\end{split} \label{edelta}
\end{align}
Substitute (\ref{G2}) in (\ref{est4}), and choose $\delta$ small enough so that the $B_\delta$-term on the right hand side can be absorbed by the $B_\delta$-term on the left hand side and fix $\epsilon_0$, we get: 
\begin{align*}
\begin{split}
||B_\delta v||_{s + \frac{\alpha}{2} ,s + \frac{\alpha}{2},0} \leq & C( \delta^{1/2}||E_\delta v||_{s +\frac{\alpha}{2},s+ \frac{\alpha}{2},0} + ||B_2 v||_{s+1-\frac{\alpha}{2},s+1-\frac{\alpha}{2},0}
\\& + ||\tilde{G}_2Pv||_{s-\frac{\alpha}{2}-m+2,s-\frac{\alpha}{2}-m+2,0} +  \tilde{C}||v||_{s-\frac{1}{2}+\frac{\alpha}{2} ,s-\frac{1}{2}+\alpha,r}  ).
\end{split}
\end{align*}
After an overall $\frac{\alpha}{2}$ shift of differential orders, we can rewrite this as:
\begin{align}
\begin{split}
||B_\delta v||_{s,s,0} \leq & C( \delta^{1/2}||E_\delta v||_{s,s,0} + ||B_2 v||_{s+1-\alpha,s+1-\alpha,0} 
\\& + ||\tilde{G}_2Pv||_{s-\alpha -m+2,s-\alpha-m+2,0}  +  \tilde{C} ||v||_{s-\frac{1}{2},s -\frac{1-\alpha}{2},r}).
\end{split}
\label{est6}
\end{align}
\end{proof}

\subsection{Propagation using \texorpdfstring{$H_p$-}{Hp-}flow}
\label{sec_step2.3}
The goal of this part is to control $||E_\delta v||_{s,s,0}$ by  $||B_\delta v||_{s,s,0}$ and get a `reversed' version of (\ref{est6}), using the propagation estimate of $H_p$ again. 
The crutial point is that this control is up to a $\delta^{-\beta}$-factor with $\beta<\frac{1}{2}$, which allows us to absorb this $||B_\delta v||_{s,s,0}$-term when we combine with \eqref{est6-prop}. This strategy is adapted from \cite[Section~3B2]{hintz2021normally}.

Take $\beta$ such that
\begin{align} \label{eq: beta condition}
\max\{0, \frac{\sup_\Gamma {\tb p}_1}{\nu_{\min}} \} < \beta < \frac{1}{2}, 
\end{align}
and this is possible because of \eqref{eq: skew p1 bound}. 

\begin{figure}[H]
\centering
\includegraphics[scale = 0.4]{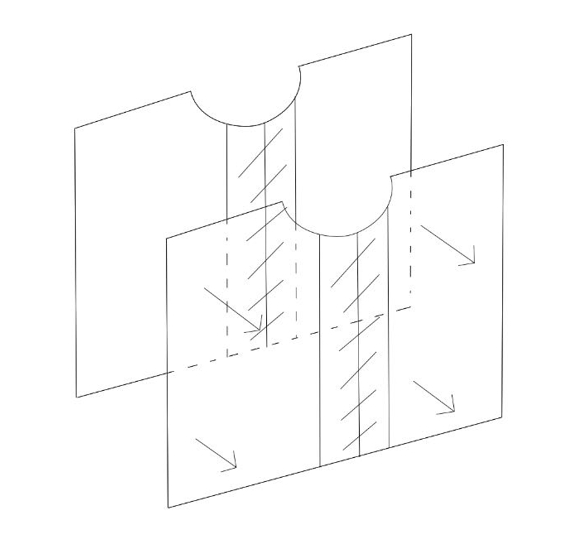}
\caption{The propagation in Proposition \ref{prop: E delta controlled by B delta}. Arrows indicate the direction from where we assume a priori regularities to where we conclude regularities.}
\label{fig_step2.3} 
\end{figure}

\begin{prop}  \label{prop: E delta controlled by B delta}
For $\beta$ satisfying \eqref{eq: beta condition} and $\delta$ sufficiently small, there exist $\tilde{B}_1,\tilde{G}_3 \in \Psi_{\mathrm{cu},\alpha}^{0,0,r}(M,\Gamma^u)$  with  $\WF_{\mathrm{cu},\alpha}'(\tilde{B}_1) \subset \hat{\mathcal{U}}_{7\epsilon_0}\backslash\hat{\mathcal{U}}_{\frac{\epsilon_0}{4}},\tilde{G}_3$ microlocalized near $\Gamma$, such that
\begin{align}
\begin{split}
||E_\delta v||_{s,s,0} \leq &C( (||B_\delta v||_{s,s,0} + || \tilde{B}_1 v||_{s,s,0}+ ||v||_{s-1/2,s-(1-\alpha)/2,r}) \\
&+ \sqrt{\tilde{C}}(||\tilde{G}_3Pv||_{s-m+1,s-m+1,0} + ||v||_{s-1/2,s-(1-\alpha)/2,r})),
\end{split}
\label{est2.3}
\end{align}
where $C$ is independent of $\delta$, $E_\delta,B_\delta$ are the same as in Proposition \ref{propstep2.2}.
\end{prop}
\begin{proof}
Consider the commutant
\begin{align*}
a = \check{a}^2, \quad \check{a} = \tau^{-r} \hat{\rho}^{-s+(m-1)/2}|\phi^s|^{-\beta} \psi_3(\log |\phi^s/\delta| )\chi_{3}^u(\phi^u/|\hat{\rho}|^\alpha) \chi_{3}^T(\tau) \chi_{3}^\Sigma(\textbf{p}),
\end{align*}
where $\delta \in (0,1)$ is typically a small parameter. The same as in the previous step, whenever companied with $\chi_3^u$ or $(\chi_3^u)'$, $\frac{\phi^u}{\hat{\rho}^\alpha}$ is effectively a symbol in $S_{\mathrm{cu},\alpha}^{0,0,r}(M,\Gamma^u)$. For the cut-off $\psi_3$, we can arrange
\begin{align*}
\psi_3 \psi_3' =  -\tilde{b}_l^2 + \tilde{e},
\end{align*}
where $\tilde{e}$ is supported in $\{ \delta \leq  |\phi^s| \leq \frac{9}{4} \delta\}$ and it satisfies $|\tilde{e}| \leq 1$. $\tilde{b}_l \geq 0$, and it is supported in  $\{\frac{3}{2}\delta \leq |\phi^s| < 6\epsilon_0 \delta^{-1}\}$. And according to the support of $\tilde{b}_l$ and $\tilde{e}$, we use a partition $\psi_3^2= (\psi_{3-})^2 + (\psi_{3+})^2$, where $\psi_{3+} = \psi_3$ on $\{\frac{9}{4}\delta \leq |\phi^s| < 6\epsilon_0 \delta^{-1}\}$, $\mathrm{supp} \psi_{3-} \subset [0,\log \frac{9}{4})$, $0 \leq \psi_{3\pm} \leq 1$. Next we consider the pairing
\begin{align} \label{eq: paring, step 2.3}
\mathrm{Im}\la Pv,Av \ra = \la (\frac{i}{2}[P,A]+\frac{P-P^*}{2i}A)v,v\ra.
\end{align}
For the left hand side, we have
\begin{align*}
\mathrm{Im}\la Pv,Av \ra \leq | \mathrm{Im}\la Pv,Av \ra | \leq |\la Pv,Av \ra |.
\end{align*}
For the right hand side, we take $c$ such that
\begin{align}   \label{eq: c condition, step 2.3}
0< c < \beta \nu_{\min} - \sup_{\Gamma}{\tb p}_1,
\end{align}
which exists due to \eqref{eq: beta condition}, and write its principal symbol as
\begin{align}
\begin{split}
\check{a}H_p \check{a} + \hat{\rho}^{-m+1}\textbf{p}_1 \check{a}^2 = & -c \hat{\rho}^{-m+1}\check{a}^2 - ( \hat{\rho}^{-s} b_{l,-})^2 - (\hat{\rho}^{-s}b_{l,+})^2+f_1 \\&+f_2+hp, 
\end{split}
\label{sym4}
\end{align}
where
\begin{align}
\begin{split}
b_{l,\pm}  =& \tau^{-r}|\phi^s|^{-\beta} \psi_3^\pm \chi_{3}^u \chi_{3}^T \chi_{3}^\Sigma (\beta w^s - \textbf{p}_1 - c + r\tau (\tau^{-2}\textbf{H}_p \tau) 
\\& + \tau^{-r}(s- (m-1)/2)(\hat{\rho}^{-1} \textbf{H}_p \hat{\rho}) + w^s \tilde{b}_l^2 )^{1/2}\\
f_ 2  = & \tau^{-2r} \hat{\rho}^{-2s} |\phi^s|^{-2\beta} w^s \tilde{e} (\chi_{3}^u)^2 (\chi_{3}^T)^2 (\chi_{3}^\Sigma)^2,\\
h =& \tau^{-2r} \hat{\rho}^{-2s+m} |\phi^s|^{-2\beta} \psi_3^2 (\chi_{3}^u)^2 (\chi_{3}^T)^2 ( -\beta \textbf{r}^s + \frac{\psi_3'\textbf{r}^s}{\psi_3\phi^s} + m(\hat{\rho}^{-1}\textbf{H}_p\hat{\rho} \chi_{3}^\Sigma (\chi_{3}^\Sigma)') ).\\
f_1 = & \tau^{-2r}\hat{\rho}^{-2s}|\phi_3^s|^{-2\beta}\psi_3^2(\chi_{3}^\Sigma)^2 ( (-(w^u\phi^u + r^u \textbf{p})|\hat{\rho}|^{-\alpha} - \alpha \phi^u |\hat{\rho}|^{-\alpha-1}\textbf{H}_p\hat{\rho} ) 
\\& (\chi_{3}^u)'\chi_{3}^u (\chi_{3}^T)^2 + ( \tau^{-2}\textbf{H}_p \tau)\tau^2 (\chi_{3}^u)^2 \chi_{3}^T(\chi_{3}^T)').
\end{split}
\end{align}
The expression that we are taking square root of in $b_{l,\pm}$ is positive for $\epsilon_0,\delta$ sufficiently small by \eqref{eq: c condition, step 2.3}.
In the calculation of $f_1$, we used
\begin{align*}
\textbf{H}_p(\phi^u/|\hat{\rho}|^\alpha) = -(w^u\phi^u + r^u \textbf{p})|\hat{\rho}|^{-\alpha} - \alpha \phi^u |\hat{\rho}|^{-\alpha-1}\textbf{H}_p\hat{\rho}.
\end{align*}

$b_{l,+}$ is the main term, giving control over the region away from $\Gamma^s$. $f_i$ are `error terms'. On the support of $f_1$, which is away from $\bar{\Gamma}^u$, we have a priori control (because when $\tau$ and $\phi^u/|\hat{\rho}|^\alpha$ are close enough to 0, $(\chi^u)'$ and $\chi_T'$ will vanish, hence $f_1$ is supported away from these regions). Recall that $\supp \tilde{e}(\cdot) \subset [0,\log(\frac{9}{4}))$, hence $\supp f_2 \subset  \{\delta \leq |\phi^s| \leq \frac{9}{4}\delta\}$. $b_{l,+}$ and $f_2$  satisfy bounds:
\begin{align}
\begin{split}
& \tau^rb_{l,+} \geq c' \quad \text{ on } \quad \hat{\mathcal{U}}_{3\epsilon_0} \cap \{ \frac{9}{4}\delta \leq |\phi|^s \leq 5\epsilon_0\},     \\
& |f_2| \lesssim \tau^{-2r} \hat{\rho}^{-2s} \delta^{-2\beta}.
\end{split}
\label{sym5}
\end{align}

Quantizing both sides of (\ref{sym4}), applying them to $v$ and pairing with $v$, we have: 
\begin{align}
\begin{split}
& c||\check{A}v||^2_{(m-1)/2,(m-1)/2,0} + ||B_{l,+}v||_{s,s,0}^2 + ||B_{l,-}||_{s,s,0}^2\\
\leq & |\la F_1 v,v \ra| + |\la F_2 v,v \ra| + |\la Pv, Hv \ra| + |\la \check{A} Pv, \check{A} v \ra| + 
\tilde{C} ||v||_{s-1/2,s-\frac{1-\alpha}{2},r}\\
\leq & |\la F_1 v,v \ra| + |\la F_2 v,v \ra| + ||\tilde{G}_3Pv||_{s-m+1,s-m+1,0}^2 + \tilde{C} ||\tilde{G}_3v||_{s-1,s-1,0}^2
\\&+c||\check{A}v||^2_{(m-1)/2,(m-1)/2,0} + \frac{1}{2c}||\check{A}Pv||_{-(m-1)/2,-(m-1)/2,0}^2 
\\&+ \tilde{C}||v||_{s-1/2,s-\frac{1-\alpha}{2},r}^2,
\end{split}  \label{est2}
\end{align}
where $\tilde{G}_3\in \Psi_{\mathrm{cu},\alpha}^{0,0,r}(M,\Gamma^u)$ is the microlocalizer that is elliptic near $\supp \check{a}$ with $\WF_{\mathrm{cu},\alpha}'(\tilde{G}_3)$ contained in a neighborhood of $\supp\check{a}$. $\tilde{C} ||v||_{s-1/2,s-\frac{1-\alpha}{2},r}$ arises because (\ref{sym4}) concerns only principal symbols. 
We apply Lemma \ref{lmm_improve_order} when we count orders and the $a_2$-term in that Lemma is absorbed into $f_1$-term (and $F_1$-term after quantization) here. Due to the $\psi_3$ factor, $|\phi^s|^{-\beta} \lesssim \delta^{-\beta}$ on $\supp \check{a}$ and this estimate gives an upper bound of $\check{a}$ as well. 

The $\check{A}v$-terms on both sides cancel each other. The $\tilde{G}_3v$-term and $\tilde{G}_3Pv$-term are introduced to control $|\la Pv, Hv \ra|$. Use $\tilde{B}_1 \in \Psi_{\mathrm{cu},\alpha}^{0,0,r}(M,\Gamma^u)$ which is elliptic on $\hat{\mathcal{U}}_{6\epsilon_0}\backslash \hat{\mathcal{U}}_{\frac{\epsilon_0}{2}}$ and $\WF_{\mathrm{cu},\alpha}'(\tilde{B}_1) \subset \hat{\mathcal{U}}_{7\epsilon_0}\backslash\hat{\mathcal{U}}_{\frac{\epsilon_0}{4}}$ to control the errors slightly away from $\bar{\Gamma}^u$. As we have mentioned, the main term being controlled is $B_{l,+}$-term, which is elliptic on a region that is near $\bar{\Gamma}^u$ but away from $\Gamma^s$. By our construction, $\WF_{\mathrm{cu},\alpha}'(E_\delta) \subset \mathrm{Ell}(B_\delta)$. 
For $F_2$-term, since $\hat{\mathcal{U}}_{\epsilon_0}\cap \{|\phi|^s \leq 3\delta\} \subset \mathrm{Ell}(B_\delta)$ and $\hat{\mathcal{U}}_{6\epsilon_0}\backslash \hat{\mathcal{U}}_{\frac{\epsilon_0}{2}} \subset \mathrm{Ell}(\tilde{B}_1)$, we know $\WF_{\mathrm{cu},\alpha}'(F_2) \subset \mathrm{Ell}(B_\delta) \cup \mathrm{Ell}(\tilde{B}_1)$.  Consequently, we can control $|\la F_2 v,v \ra | $ by $\delta^{-2\beta}(||B_\delta v||_{s,s,0}^2+||\tilde{B}_1v||_{s,s,0}+ ||v||_{s-1/2,s-(1-\alpha)/2,r})$. Substitute this into (\ref{est2}) and take square root on both sides, and we have:
\begin{align*}
\begin{split}
||E_\delta v||_{s,s,0} \leq &C( \delta^{-\beta}(||B_\delta v||_{s,s,0} + || \tilde{B}_1 v||_{s,s,0}+ ||v||_{s-1/2,s-(1-\alpha)/2,r}) \\
&+ \sqrt{\tilde{C}}(||\tilde{G}_3Pv||_{s-m+1,s-m+1,0} + ||v||_{s-1/2,s-(1-\alpha)/2,r})),
\end{split}
\end{align*}
where $C$ is independent of $\delta$.
\end{proof}
\subsection{Combining Estimates}
\label{sec_Combine}
Combine (\ref{est2.3}) with (\ref{est6}), we obtain an estimate with leading term $\delta^{1/2-\beta}||B_\delta v||_{s,s}$. Since $\beta<\frac{1}{2}$ (recalling \eqref{eq: beta condition}), by choosing $\delta$ small enough, we can absorb this term into the left hand side and get:
\begin{align}
\begin{split}
||B_\delta v||_{s,s,0} \lesssim &||\tilde{B}_1v||_{s+1-\alpha,s+1-\alpha,0} + ||\tilde{G}_2Pv||_{s-m+2-\alpha, s-m+2-\alpha,0} 
\\& + ||v||_{s-1/2,s-(1-\alpha)/2,r}, 
\end{split}
\label{est8}
\end{align}
with following properties:  $B_\delta$ is elliptic on (the lift of) $\Gamma$, $\WF_{\mathrm{cu},\alpha}'(\tilde{B}_1) \subset \hat{\mathcal{U}}_{7\epsilon_0}\backslash\hat{\mathcal{U}}_{\frac{\epsilon_0}{4}}$. In addition, we enlarge $\Ell(\tilde{G}_2)$ and $\WF_{\mathrm{cu},\alpha}'(\tilde{G}_2)$ to absorb the $\tilde{G}_3$-term. Now combine (\ref{est1_prop}) and (\ref{est8}), and let $\tilde{B}_1$ in (\ref{est8}) play the role of $B_0$ in (\ref{est1_prop}). Since the order now is $s+1-\alpha$ instead of $s+\lambda\alpha-\alpha$, so the order needs to be shifted by $1-\lambda \alpha$. For $\tilde{B}_1$-term, we localize inside the front face by inserting another cutoff, i.e., we use
\begin{align*}
||\tilde{B}_1  v||_{s+1-\lambda\alpha,s+1-\alpha,0}  \lesssim & || B_1 v ||_{s+1-\lambda \alpha,-\infty,0} + ||\tilde{G} Pv||_{s-m+2-\lambda\alpha,s-m+2 -\alpha,0}  \\
&+ ||\tilde{G}v||_{s-\lambda \alpha,s,0} + ||v||_{s-\frac{1}{2} -\lambda \alpha, s-\frac{1}{2} +\alpha,r} ,
\end{align*}
where these operators satisfy: $\mathcal{\hat{\mathcal{U}}}_{6\epsilon_0}\backslash \mathcal{\hat{\mathcal{U}}}_{\epsilon_0/2} \subset \Ell(\tilde{B}_1) \subset \WF_{\mathrm{cu},\alpha}'(\tilde{B_1}) \subset \mathcal{\hat{\mathcal{U}}}_{7\epsilon_0}\backslash \mathcal{\hat{\mathcal{U}}}_{\epsilon_0/4}$, $\WF_{\mathrm{cu},\alpha}'(\tilde{G}) \subset \{ |\frac{\rho}{\phi^u}| \leq 3C_1 \} \cap \mathcal{U}_{3\eta_1} = \{|\frac{\phi^u}{\rho}| \geq (3C_1)^{-1} \} \cap \mathcal{U}_{3\eta_1}$, and $\WF_{\mathrm{cu},\alpha}'(B_1) \cap \bar{\mathcal{U}}_{\eta_1} = \emptyset$. 
In particular, if we choose $C_1,\eta_1,\epsilon_0$ so that $(3C_1)^{-1}>\frac{\epsilon_0}{2},3\eta_1<6\epsilon_0$, then $\WF_{\mathrm{cu},\alpha}'(\tilde{G})\subset \Ell(\tilde{B}_1)$ and we can iterate to improve this error term. Concretely, apply the same estimate to $\tilde{G}v$ with $s$ replaced by $s-(1-\alpha)$, and then repeat. The only cost this iteration might cause is the microlocal error introduced when we apply elliptic estimate to $\tilde{B}_1$, the microlocal error and $\tilde{G}v$-term with one order lower norm can both be absorbed into the last error term $||v||_{s-\frac{1}{2} -\lambda \alpha , s-\frac{1}{2} + \alpha,r}$:
\begin{align*}
||\tilde{B}_1  v||_{s+1-\lambda\alpha,s+1-\alpha,0} \lesssim & || B_1 v ||_{s+1-\lambda \alpha,-\infty,0} + ||\tilde{G} Pv||_{s-m+2-\lambda\alpha,s-m+2 -\alpha,0}  \\
&+ ||v||_{s-\frac{1}{2}-\lambda \alpha, s-\frac{1}{2}+\alpha,r} .
\end{align*}
Substitute this estimate into (\ref{est8}) and we get
\begin{align*}
||B_\delta v||_{s,s,0} \lesssim &||B_1v||_{s+1-\lambda \alpha,-\infty,0} + ||\tilde{G}Pv||_{s-m+2-\lambda\alpha,s-m+2-\alpha,0} \\& + ||\tilde{G}_3Pv||_{s-m+2-\alpha ,s-m+2 - \alpha,0}+||v||_{s-\frac{1}{2}, s-\frac{1}{2}+\alpha,r}.
\end{align*}
$\tilde{G}_3$ is microlocalized in a neighborhood of $\Gamma$ but $\WF_{\mathrm{cu},\alpha}'(\tilde{G}_3) \cap \Gamma^s = \emptyset$. $\tilde{G}_3Pv$ and $\tilde{G}Pv$-terms can be combined together using a $G_0$ obtained by enlarging their wavefront set. And then we iterate to improve the last error term to obtain
\begin{align*}
\begin{split}
||B_\delta v||_{s,s,0} \lesssim &||B_1v||_{s+1-\lambda \alpha, s,0} + ||G_0Pv||_{s-m+2-\lambda\alpha,s-m+2-\alpha,0}  + ||v||_{-N,-N,r},  
\end{split}  
\end{align*}
where $B_\delta$ is elliptic on  $\Gamma$, and $\WF_{\mathrm{cu},\alpha}'(B_1) \cap \bar{\Gamma}^u =\emptyset$, and $\lambda$ satisfies (\ref{eq: lambda condition}). 

\begin{rmk}  \label{rmk: make Ell(B) larger}
Notice that the estimate keeps to hold when we add intermediate terms in the proof to the left hand side. Concretely, in terms of notations in this chapter, $||\tilde{B}_1  v||_{s+1-\lambda\alpha,s+1-\alpha,0}$, $||B_1v||_{s+1-\lambda \alpha, s,0}$. Equivalently we have
\begin{align}
\begin{split}
||B v||_{s,s,0} \lesssim &||B_1v||_{s+1-\lambda \alpha, s,0} + ||G_0Pv||_{s-m+2-\lambda\alpha,s-m+2-\alpha,0}  + ||v||_{-N,-N,r},  \label{est_thm}
\end{split}  
\end{align}
where $B \in \Psi_{\mathrm{cu},\alpha}^{0,0,r}(M,\Gamma^u)$ is elliptic on both $\hat{\mathcal{U}}_{\epsilon_0}$ and the front face.
\end{rmk}

\subsection{Regularization}
\label{sec_regularization}
Only assuming that the right hand side of (\ref{main}) is finite is not sufficient to guarantee that each pairing in our positive commutator is finite and integrations by parts are legal. Potentially some terms in equations (e.g., (\ref{10}) and (\ref{pair1})) are not finite with only $(-N,-N)$ order priori control of $v$. In this section we justify pairing and integration by parts in our positive commutator argument by a regularization argument for $-N=s-\frac{1}{2}$ first and then for general $N$ by induction. Starting with Proposition \ref{step1}, we replace $\check{a}$ by:
\begin{align}
\check{a}_{\eta_r} :=      \mk{t}_{\eta_r}^2 \check{a},            \label{reg_sym_1}
\end{align}
where $\mk{t}_{\eta_r}=(1+\eta_r \hat{\rho}^{-1})^{-1},\eta_r>0$. In (\ref{sym1}), this new $\mk{t}_{\eta_r}^2$ factor introduces an extra term given by the $H_p$-derivative falling on $\mk{t}_{\eta_r}$. Direct computation shows
\begin{align} \label{eq: Hp regularizer}
H_p(\mk{t}_{\eta_r}^4) = 4\mk{t}_{\eta_r}^3H_p\mk{t}_{\eta_r} = 4\frac{\eta_r}{\eta_r+\hat{\rho}}\mk{t}_{\eta_r}^4(\hat{\rho}^{-1}H_p\hat{\rho}).
\end{align}

Now since $|\frac{\eta_r}{\eta_r+\hat{\rho}}|\leq 1$, $\hat{\rho}^{-1}H_p\hat{\rho} \rightarrow 0$ as $\tau \rightarrow 0$, hence this term can be made small when we localize near $\tau=0$. Consequently, this term can be absorbed into the $b_0$-term (notice that all terms have an extra $\mk{t}_{\eta_r}^4$-factor now). For $\eta_r>0$, we know that $a_{\eta_r}=\check{a}_{\eta_r}^2$ is a symbol of 4 order less in both the indices associated with the fiber infinity and the front face compared with $a=\check{a}^2$. To be concrete, $a \in S_{\mathrm{cu},\alpha}^{2s+2-(m-1),2s+2-(m-1)+2\lambda\alpha,2r}(M,\Gamma^u),a_{\eta_r} \in S_{\mathrm{cu},\alpha}^{2s-m-1,2s-m-1+2\lambda\alpha,2r}(M,\Gamma^u)$. Thus assuming $-N=s-\frac{1}{2}$ regularity of $v$ in a priori, all pairing and integration by parts are justified and we obtain an estimate similar to (\ref{est1}), but with $B_0,B^{ff},...$ replaced by $B_{0,\eta_r},B^{ff}_{\eta_r},...$, which are obtained by quantizing the symbol of the same lowercase letter (without $\eta_r$) with an extra $\mk{t}_{\eta_r}^2$ factor (except for, as aforementioned, $b_{0,\eta_r}$ is also used to absorb the term introduced by $H_p(\mk{t}_{\eta_r}^4)$). Finally we let $\eta_r \rightarrow 0$ and apply the weak-* compactness argument, see for example Section 5.4.4 of \cite{vasy2018minicourse}, we obtain the estimate above (\ref{est1}) without $\eta_r$ and $B_0v \in H_{\mathrm{cu},\alpha}^{s+1,s+1+\lambda \alpha-\alpha,r}(M,\Gamma^u)$. The regularization arguments for step Section \ref{sec_step2.1}-\ref{sec_step2.3} are similar. We conduct the argument for in Section \ref{sec_step2.1} individually but regularize the argument in Section \ref{sec_step2.2}, \ref{sec_step2.3} and \ref{sec_Combine} together to obtain (\ref{est_thm}) by sending the regularization parameter to 0. And then the general $N>-(s-\frac{1}{2})$ case follows by induction. For $N<-(s-\frac{1}{2})$, the estimate holds automatically by our initial case $N=-(s-\frac{1}{2})$.

\subsection{Relaxing the regularity requirements}
\label{sec: relaxing regularity}
Now we describe modifications needed in the proof when assumption \eqref{assumption6} is replaced by assumption \eqref{assumption6'}. 
We choose the same commutants in each step, but now since $P$ takes the form \eqref{eq: P, with uniform PsiDO perturbation}, operators (resp symbols) appeared positive commutator arguments above will have an extra error term in $\tau^{\beta_T}\Psi_{\infty,\alpha}^{*,*,*}$ (resp. $\tau^{\beta_T}S_{\infty,\alpha}^{*,*,*}$). For an operator $A = A_0+\tilde{A}$, $A_0 \in \Psi_{\cu,\alpha}^{m,\tilde{m},r},\tilde{A} \in \tau^{\beta_T}\Psi_{\infty,\alpha}^{m,\tilde{m},r}$, then we define the elliptic set of $A$ at $\tau=0$ to be $\Ell(A_0) \cap \{\tau = 0\}$, then the proof of microlocal elliptic estimates (inverting the principal symbol of $A$ up to a lower order error) holds by the same proof, which means operators with positive symbols give microlocal control of weighted Sobolev norms just as in the smooth setting.


\section{Application to Kerr(-de Sitter) spacetimes and its perturbations}
\label{application}
\subsection{Kerr(-de Sitter) spacetime and its perturbations}
\label{kds_intro}
In this section, we consider Kerr(-de Sitter) spacetimes parameterized by the black hole mass $\mathfrak{m}$ and the angular momentum $\mathfrak{a} \in \R$ and its perturbations. We assume the black hole is subextremal in the sense that
\begin{align*}
\Delta(r) = (r^2+\mk{a}^2)(1-\frac{\Lambda r^2}{3})-2\mk{m}r
\end{align*}
has four distinct real roots
\begin{align*}
r_-<r_C<r_e<r_c.
\end{align*}
We point out here that some authors use the condition $|\mathfrak{a}|<\mathfrak{m}$ to define the subextremal property, which is slightly stronger than the distinct root condition. See \cite{sarp2011kds} for more details.

Recall that in the Boyer-Lindquist coordinates, the Kerr(-de Sitter) spacetime is given by (\ref{kds_manifold}). 
We use $\varphi \in \mathbb{S}^1,\theta \in [0,\pi]$ as spherical coordinates on $\mathbb{S}^2$. And $M^\circ$ is equipped with the metric $g_{\mk{m},\mk{a}}$ given by (\ref{kds_metric}).

As discussed after (\ref{Mdef}), we use $M= (M^\circ \sqcup ([0,\infty)_\tau \times X))/\sim$ to denote the spacetime that is compactified at the time infinity. Since (\ref{kds_metric}) is independent of $t$, it naturally extends to a metric on $M$. In order to distinguish variable names from those used on our model $\tilde{M}$ in following sections, variables on $T^*M$ are denoted as $(x_M,\xi_M)$. The singularity of (\ref{kds_metric}) at horizons $\{r=r_e,r_c\}$ can be resolved by a change of coordinates, see \cite{sarp2011kds} for more detailed discussion. In our coordinate system, the dual metric is given by
\begin{align}
\begin{split}
G_{\mk{m},\mk{a}} &= G_r+G_\theta,\\
(r^2+\mk{a}^2\cos^2\theta)G_r& = \Delta(r)\xi_r^2-\frac{(1+\frac{\Lambda \mk{a}^2}{3})^2}{\Delta(r)}( (r^2+\mk{a}^2)\xi_t+\mk{a}\xi_\varphi )^2\\
(r^2+\mk{a}^2\cos^2\theta)G_\theta &=\Delta_\theta \xi_\theta^2+\frac{(1+\frac{\Lambda \mk{a}^2}{3})^2}{\Delta_\theta \sin^2\theta}(\mk{a}\sin^2\theta \xi_t+\xi_\varphi)^2.
\end{split} \label{defn_G}
\end{align}
The singularities of $g$ and $G$ at $\{\theta=0,\pi\}$ can be resolved by coordinate change, we refer to \cite[Section~3.1]{dy15} for more detailed discussion. 
Applying the analytic framework we developed, we take
\begin{align}
p_{\mk{m},\mk{a}}=(r^2+\mk{a}^2\cos^2\theta)G_{\mk{m},\mk{a}}.  \label{psymbol}
\end{align}
And other notations in propagation estimates are inherited as well.

\subsection{Defining functions of the unstable and stable manifolds}
In this section, we characterize the trapping phenomena in exact Kerr(-de Sitter) spacetimes using the combination of \cite[Theorem~3.2]{petersen2024wave} (for Kerr-de Sitter case) and \cite[Proposition~3.5]{dy15} (for Kerr case) restated as Proposition \ref{prop: trappedset}. 
\begin{prop} \label{prop: trappedset}
For $(\xi_t,\xi_\varphi) \in \R^2 \backslash \{(0,0)\}$, define:
\begin{align*}
F_{\xi_t,\xi_\varphi}(r):= \frac{1}{\Delta(r)}( (r^2+\mk{a}^2)\xi_t+\mk{a}\xi_\varphi )^2.
\end{align*}
\begin{enumerate}
\item Then either:
\begin{itemize}
	\item $F_{\xi_t,\xi_\varphi}$ vanishes at $r_e$ or $r_c$ and has no critical point in $(r_e,r_c)$. 
	\item $F_{\xi_t,\xi_\varphi}$ has exactly one critical point $r_{\xi_t,\xi_\varphi} \in (r_e,r_c)$ and $F_{\xi_t,\xi_\varphi}''(r_{\xi_t,\xi_\varphi})>0$.\\ 
\end{itemize}
\item $F_{\xi_t,\xi_\varphi}>0$ on $\Sigma$.\\
\item The trapped set in ${M}$ is:
\begin{align}  \label{eq: Gamma 0}
\Gamma_0 := \bigcup_{ (\xi_t,\xi_\varphi) \in \R^2 \backslash \{(0,0)\} }\Gamma_{\xi_t,\xi_\varphi},
\end{align}
where $\Gamma_{\xi_t,\xi_\varphi}:=\{ \xi_r=r-r_{\xi_t,\xi_\varphi}=p_{\mk{m},\mk{a}}=0 \}$.
\item $\Gamma_0$ is a smooth connected 5-dimensional submanifold of $T^*{M}$ with defining function $\xi_r,r-r_{\xi_t,\xi_\varphi},p_{\mk{m},\mk{a}}$.
\item The trapping of the flow of $\frac{1}{H_{p_{\mk{m},\mk{a}}}t}H_{p_{\mk{m},\mk{a}}}$ in any subextremal Kerr(-de Sitter) spacetime is eventually absolutely r-normally hyperbolic for every r in the sense of \cite{wunsch2011resolvent}. The unstable(u) and stable(s) manifolds are smooth manifold given by:
\begin{align} \label{eq: definition, Gamma u/s 0}
\Gamma^{u/s}_0:= \{{\varphi}^{u/s} =0 \} \cap \Sigma,
\end{align}
where ${\varphi}^{u/s}=\xi_r \mp  \sgn(r-r_{\xi_t,\xi_\varphi})(1+\hat{\alpha})\sqrt{\frac{F_{\xi_t,\xi_\varphi}(r)-F_{\xi_t,\xi_\varphi}(r_{\xi_t,\xi_\varphi})}{\Delta(r)}}$ and $\hat{\alpha}=\frac{\Lambda \mk{a}^2}{3}$.
\end{enumerate}
\begin{rmk}
Coupling with $\sgn(r-r_{\xi_t,\xi_\varphi})$, the square root function in the definition of $\Gamma^{u/s}$ is smooth because at $r_{\xi_t,\xi_\varphi}$, $F_{\xi_t,\xi_\varphi}'(r)=0$ and $F_{\xi_t,\xi_\varphi}(r)-F_{\xi_t,\xi_\varphi}(r_{\xi_t,\xi_\varphi})$ vanishes quadratically at $r_{\xi_t,\xi_\varphi}$.  
\end{rmk}
\label{prop_trap}
\end{prop}
Next we verify that the rescaled version of $\varphi^{u/s}$ above are exactly the defining functions that characterize the normally hyperbolic trapping properties after rescaling them to be homogeneous degree $0$: 
\begin{align}
\label{eq: defn hat varphiu/s}
\hat{\varphi}^u:=(\xi_t^2+\xi_{\varphi}^2)^{-1/2}\varphi^u,\, \hat{\varphi}^s:=(\xi_t^2+\xi_{\varphi}^2)^{-1/2}\varphi^s.
\end{align}
Recalling the expressions in \eqref{defn_G}, $(\xi_t^2+\xi_{\varphi}^2)^{1/2}$ dominates all momentum variables near the characteristic set. 

\begin{prop}
$\hat{\varphi}^{u/s}$ defined in \eqref{eq: defn hat varphiu/s} satisfy assumptions in Section \ref{sec_assumptions} with $\phi^{u/s}$ replaced by $\hat{\varphi}^{u/s}$.
Concretely, they satisfy \eqref{hpphi} and their restrictions to $\tau=0$ satisfy \eqref{eq: barphi u,s assumptions1}\eqref{eq: barphi u,s assumptions2}.
\label{prop_hpphi}
\end{prop}
\begin{proof} We verify the property in \eqref{hpphi} first and this implies the statement about \eqref{eq: barphi u,s assumptions1} directly. Since $\partial_t p_{\mk{m},\mk{a}} = \partial_\varphi p_{\mk{m},\mk{a}} = 0$, we know $\textbf{H}_{p_{\mk{m},\mk{a}}} \xi_t=\textbf{H}_{p_{\mk{m},\mk{a}}} \xi_\varphi =0$, thus any factor as a function of $\xi_t,\xi_\varphi$ commutes with $\textbf{H}_{p_{\mk{m},\mk{a}}}$ and we consider $\varphi^{u/s}$ instead, i.e., ignore the $\xi_t$ power in front of $\varphi^u$. Recalling that
\begin{align*}
p_{\mk{m},\mk{a}}(x,\xi) = & (r^2+\mk{a}^2 \cos^2\theta)G_{\mk{m},\mk{a}}(x,\xi)\\
         = & \Delta(r)\xi_r^2+(1+\hat{\alpha} \cos^2\theta)\xi_\theta^2+\frac{(1+\hat{\alpha})^2}{(1+\hat{\alpha}\cos^2\theta)\sin^2\theta}(\mk{a}\xi_t\sin^2\theta+\xi_{\varphi})^2 \\
         &-\frac{(1+\hat{\alpha})^2}{\Delta(r)}( (r^2+\mk{a}^2)\xi_t+\mk{a}\xi_\varphi)^2,
\end{align*}
thus $\partial_tp_{\mk{m},\mk{a}}=\partial_{\varphi}p_{\mk{m},\mk{a}}=0$ and
\begin{align*}
&H_{p_{\mk{m},\mk{a}}}\xi_t=H_{p_{\mk{m},\mk{a}}}\xi_\varphi=H_{p_{\mk{m},\mk{a}}}r_{\xi_t,\xi_\varphi}=0,\\
& H_{p_{\mk{m},\mk{a}}} \varphi^{u/s} = H_{p_{\mk{m},\mk{a}}}\xi_r \mp \sgn(r-r_{\xi_{\xi_t,\xi_\varphi}})(1+\hat{\alpha})H_{p_{\mk{m},\mk{a}}}(\sqrt{\frac{F_{\xi_t,\xi_\varphi}(r)-F_{\xi_t,\xi_\varphi}(r_{\xi_t,\xi_\varphi})}{\Delta(r)}}),\\
& H_{p_{\mk{m},\mk{a}}}r=-H_rp_{\mk{m},\mk{a}}=\partial_{\xi_r}p_{\mk{m},\mk{a}}=2\Delta(r)\xi_r,\\
& H_{p_{\mk{m},\mk{a}}}\xi_r= -H_{\xi_r}p_{\mk{m},\mk{a}}=-\partial_r p_{\mk{m},\mk{a}} =-\Delta'(r)\xi_r^2+(1+\hat{\alpha})^2F_{\xi_t,\xi_\varphi}'(r),\\
&H_{p_{\mk{m},\mk{a}}}(\sqrt{\frac{F_{\xi_t,\xi_\varphi}(r)-F_{\xi_t,\xi_\varphi}(r_{\xi_t,\xi_\varphi})}{\Delta(r)}}) 
\\ =&  \frac{1}{2} ( F'_{\xi_t,\xi_\varphi}(r)  ( ( F_{\xi_t,\xi_\varphi}(r)-F_{\xi_t,\xi_\varphi}(r_{\xi_t,\xi_\varphi}) )\Delta(r) )^{-1/2} \\
 & - ( F_{\xi_t,\xi_\varphi}(r)-F_{\xi_t,\xi_\varphi}(r_{\xi_t,\xi_\varphi}) )^{1/2} \Delta(r)^{-3/2}\Delta'(r)) H_{p_{\mk{m},\mk{a}}}r,  
\end{align*}

\begin{align*}
H_{p_{\mk{m},\mk{a}}} \varphi^{u/s} =& H_{p_{\mk{m},\mk{a}}}\xi_r \mp \sgn(r-r_{\xi_{\xi_t,\xi_\varphi}})(1+\hat{\alpha})H_{p_{\mk{m},\mk{a}}}(\sqrt{\frac{F_{\xi_t,\xi_\varphi}(r)-F_{\xi_t,\xi_\varphi}(r_{\xi_t,\xi_\varphi})}{\Delta(r)}})\\
=& -\Delta'(r)\xi_r^2+(1+\hat{\alpha})^2F'_{\xi_t,\xi_\varphi}(r)\mp \sgn(r-r_{\xi_{\xi_t,\xi_\varphi}})(1+\hat{\alpha}) 
\\& \times \frac{1}{2} ( F'_{\xi_t,\xi_\varphi}(r)  ( ( F_{\xi_t,\xi_\varphi}(r) -F_{\xi_t,\xi_\varphi}(r_{\xi_t,\xi_\varphi}) ) \Delta(r) )^{-1/2} \\&- ( F_{\xi_t,\xi_\varphi}(r)-F_{\xi_t,\xi_\varphi}(r_{\xi_t,\xi_\varphi}) )^{1/2} \Delta(r)^{-3/2}\Delta'(r)) H_{p_{\mk{m},\mk{a}}}r \\
= &  -\Delta'(r)\xi_r^2+(1+\hat{\alpha})^2F'_{\xi_t,\xi_\varphi}(r)\mp \sgn(r-r_{\xi_{\xi_t,\xi_\varphi}})(1+\hat{\alpha}) 
\\& \times ( F'_{\xi_t,\xi_\varphi}(r)  ( ( F_{\xi_t,\xi_\varphi}(r)-F_{\xi_t,\xi_\varphi}(r_{\xi_t,\xi_\varphi}) ) \Delta(r) )^{-1/2} \\&- ( F_{\xi_t,\xi_\varphi}(r)-F_{\xi_t,\xi_\varphi}(r_{\xi_t,\xi_\varphi}) )^{1/2} \Delta(r)^{-3/2}\Delta'(r)) \Delta(r)\xi_r \\
= &  -\Delta'(r)\xi_r^2+(1+\hat{\alpha})^2F'_{\xi_t,\xi_\varphi}(r)\mp \sgn(r-r_{\xi_{\xi_t,\xi_\varphi}})(1+\hat{\alpha}) 
\\& ( F'_{\xi_t,\xi_\varphi}(r)   ( F_{\xi_t,\xi_\varphi}(r) -F_{\xi_t,\xi_\varphi}(r_{\xi_t,\xi_\varphi}))^{-1/2}
 \Delta^{1/2}(r) 
 \\&- ( F_{\xi_t,\xi_\varphi}(r)-F_{\xi_t,\xi_\varphi}(r_{\xi_t,\xi_\varphi}) )^{1/2} \Delta(r)^{-1/2}\Delta'(r))\xi_r \\
=&(\xi_r\mp (1+\hat{\alpha}) S_{\xi_t,\xi_\varphi}(r)) \times (-\Delta'(r)\xi_r\mp \frac{(1+\hat{\alpha})F'_{\xi_t,\xi_\varphi}(r)}{S_{\xi_t,\xi_\varphi}(r)}).
\end{align*}
where
\begin{align} \label{eq: S ksit, ksiphi definition}
S_{\xi_t,\xi_\varphi}(r) = \sgn(r-r_{\xi_t,\xi_\varphi})\sqrt{\frac{F_{\xi_t,\xi_\varphi}(r)-F_{\xi_t,\xi_\varphi}(r_{\xi_t,\xi_\varphi})}{\Delta(r)}},
\end{align}
which is a monotonically increasing smooth function with inverse function $S_{\xi_t,\xi_\varphi}^{-1}$ when we restrict $r$ close enough to $r_{\xi_t,\xi_\varphi}$. Next we show that
\begin{align*}
\frac{(1+\hat{\alpha})F'_{\xi_t,\xi_\varphi}(r)}{S_{\xi_t,\xi_\varphi}(r)}
\end{align*}
is lower bounded. By the characterization of $F_{\xi_t,\xi_\varphi}$ in Proposition \ref{prop: trappedset}, we can set
\begin{align*}
& F_{\xi_t,\xi_\varphi}'(r) = (r-r_{\xi_t,\xi_\varphi})f_{\xi_t,\xi_\varphi}(r),
\end{align*}
where $c_f(\xi_t^2+\xi_\varphi^2) \leq f_{\xi_t,\xi_\varphi}(r) \leq C_f(\xi_t^2+\xi_\varphi^2)$ with $c_f,C_f>0$ when $r$ is close to $r_{\xi_t,\xi_\varphi}$. The upper bound follows fron the smoothness of $F_{\xi_t,\xi_\varphi}'$. On the other hand, $c_f$ is obtained by considering the homogeneous degree of $F_{\xi_t,\xi_\varphi}$ and hence we can restrict $(\xi_t,\xi_\varphi)$ to a sphere, which is compact. If such $c_f$ does not exist, then by compactness argument we can find $(\xi_t,\xi_\varphi)\in \mathbb{S}^2$ such that $f_{\xi_t,\xi_\varphi}(r_{\xi_t,\xi_\varphi})=0$, contradicting the simplicity of this critical point. Similarly for the argument about $\tilde{f}_{\xi_t,\xi_\varphi}$ below. Since $r_{\xi_t,\xi_\varphi}$ is a critical point of $F_{\xi_t,\xi_\varphi}$ with $F_{\xi_t,\xi_\varphi}''(r)>0$, then we can assume 
\begin{align*}
F_{\xi_t,\xi_\varphi}(r)-F_{\xi_t,\xi_\varphi}(r_{\xi_t,\xi_\varphi})=(r-r_{\xi_t,\xi_\varphi})^2\tilde{f}_{\xi_t,\xi_\varphi}(r),
\end{align*}
with $c_{\tilde{f}}(\xi_t^2+\xi_\varphi^2)\leq \tilde{f}_{\xi_t,\xi_\varphi}(r) \leq C_{\tilde{f}}(\xi_t^2+\xi_\varphi^2)$ near $r_{\xi_t,\xi_\varphi}$. Consequently
\begin{align}
S_{\xi_t,\xi_\varphi}(r)  = (r-r_{\xi_t,\xi_\varphi}) \tilde{f}_{\xi_t,\xi_\varphi}^{1/2}(r) \Delta(r)^{-1/2}.  \label{Sdecomp}
\end{align}
Thus 
\begin{align*}
\frac{(1+\hat{\alpha})F'_{\xi_t,\xi_\varphi}(r)}{S_{\xi_t,\xi_\varphi}(r)} = (1+\hat{\alpha})f_{\xi_t,\xi_\varphi}(r) \tilde{f}_{\xi_t,\xi_\varphi}^{-1/2}\Delta(r)^{1/2}
\end{align*}
is lower bounded by a positive constant multiple of $(\xi_t^2+\xi_\varphi^2)^{1/2}$ near $r=r_{\xi_t,\xi_\varphi}$. In particular, using the characterization of $\Gamma$ in Proposition \ref{prop: trappedset}, we can choose a neighborhood of $\Gamma$ on which $\xi_r$ is small. Thus the sign of $(-\Delta'(r)\xi_r\mp \frac{(1+\hat{\alpha})F'_{\xi_t,\xi_\varphi}(r)}{S_{\xi_t,\xi_\varphi}(r)})$ is $\mp$ for $\varphi^{u/s}$ respectively and  $(-\Delta'(r)\xi_r\mp \frac{(1+\hat{\alpha})F'_{\xi_t,\xi_\varphi}(r)}{S_{\xi_t,\xi_\varphi}(r)})$ is a multiple of $(\xi_t^2+\xi_\varphi^2)^{1/2}$ bounded away from $0$, which verifies (\ref{hpphi}) after rescaling by $(\xi_t^2+\xi_\varphi^2)^{-1/2}$.

Next we verify \eqref{eq: barphi u,s assumptions2}. Since $\varphi^{u/s}=\xi_r \mp (1+\hat{\alpha})S_{\xi_t,\xi_\varphi}(r)$, we only need to verify
\begin{align*}
(\xi_t^2+\xi_\varphi^2)^{-1/2} H_{\xi_r}S_{\xi_t,\xi_{\varphi}}(r) > 0
\end{align*}
near $\Gamma$. This is equivalent to 
\begin{align} \label{eq: partial r S}
\partial_r((\xi_t^2+\xi_\varphi^2)^{-1/2}S_{\xi_t,\xi_{\varphi}}(r))>0. 
\end{align}
This holds because $S_{\xi_t,\xi_{\varphi}}(r)$ vanishes simply (with respect to $r$) by Proposition \ref{prop: trappedset} and it has positive coefficient in front of $(r-r_{\xi_t,\xi_\varphi})$ if we Taylor expand it to the first order, thus \eqref{eq: partial r S} holds.

\end{proof}
\subsection{Dynamics under perturbation}
\label{sec: dynamics under perturbation}
Next we consider perturbations of the Kerr(-de Sitter) metric in the sense of \cite[Section~4]{hintz2021normally} and recall results therein. 
\begin{defn} \label{defn: asymptotic KdS}
Let $\mathcal{A}_{\rm cu}^{\beta_T}(M;S^2\,{}^{\rm cu}T^*M)$ be the class of cusp-conomral symmetric 2-tensors with decay order $\beta_T>0$ defined in Section \ref{sec: cusp-conormal symbols and functions}, 
then a metric $g$ is called an $\mathcal{A}_{\cu}^{\beta_T}$-asymptotically  Kerr(-de Sitter) metric if
\begin{align}
\label{asymptotic_kds_metric}
g= g_{\mk{m},\mk{a}}+\tilde{g},
\end{align}
where $g_{\mk{m},\mk{a}}$ is given by (\ref{kds_metric}) and $\tilde{g} \in \mathcal{A}_{\rm cu}^{\beta_T}(M;S^2\,{}^{\rm cu}T^*M)$. And we denote its dual metric function by $G$. We say $g$ is subextremal if $g_{\mk{m},\mk{a}}$ is so. This is well-defined since $g_{\mk{m},\mk{a}}$ in
(\ref{asymptotic_kds_metric}) is unique when $\beta_T>0$.
\end{defn}

Let $g$ be an $\mathcal{A}_{\cu}^{\beta_T}$-asymptotically Kerr(-de Sitter) metric defined in  Definition \ref{defn: asymptotic KdS}, we consider
\begin{align}
p = (r^2+\mk{a}^2\cos^2\theta)G    \label{psymbol_perturbed}
\end{align}
and $P \in \Psi_{\cu}^2(M)+\tau^{\beta_T}\Psi_{\infty}^2(M)$ with principal symbol $p$. In particular, $(r^2+\mk{a}^2\cos^2\theta)\Box_g$, which is a smooth (and uniformly lower and upper bounded) multiple of $\Box_g$ satisfies this.

The goal of the rest of this subsection is to recall the structure of the unstable/stable manifolds of the flow of $\mathsf{H}=(H_pt)^{-1}H_p$ given in \cite[Section~4]{hintz2021normally}. 
Notice that our $p$ differs with the dual metric function $G$ by a factor $(r^2+\mk{a}^2\cos \theta^2)$, which rescales the Hamilton flow by this factor when restricted to the characteristic set. Since this factor is smooth and bounded away from $0$ and from above, hence does not affect conclusions about the dynamics.

The characteristic set $p^{-1}(0) \subset {}^{\cu}T^*M$ is denoted by $\Sigma$ and that of the exact Kerr-(de Sitter) metric is denoted by $\Sigma_{\mk{m},\mk{a}}$.  Then we denote their restriction to the sphere bundle in the interior by 
\begin{align}
\mk{M}:=\Sigma \cap {}^{\cu}S^*M, \quad \mk{M}_0 =\Sigma_{\mk{m},\mk{a}} \cap {}^{\cu}S^*M.
\end{align}
We also denote $\mk{X}:=\Sigma_{\mk{m},\mk{a}} \cap {}^{\cu}S^*_{\partial M}M$, and we have $\mk{M}_0=[0,\infty)_\tau\times\mk{X}$. 
Then we define $\mathcal{A}_{\cu}(\mk{M}_0)$, the space of conormal functions over $\mk{M}_0$ to be 
\begin{align}\label{eq: defn, M0 cusp-conormal}
\mathcal{A}_{\cu}(\mk{M}_0) = \{ u \in L^\infty (\mk{M}_0): Au \in L^\infty(\mk{M}_0), \, \forall A \in \mathrm{Diff}_{\cu}(\mk{M}_0) \},
\end{align}
where $\mathrm{Diff}_{\cu}(\mk{M}_0)$ is the space of products of vector fields that are spanned over $\mathcal{C}^\infty(\mk{M}_0)$ by stationary extensions of smooth vector fields on $\mk{X}$ and $\tau^2\partial_\tau$.
Then we define the space of weighted cusp-conormal functions on $\mk{M}_0$ to be:
\begin{align} 
\mathcal{A}_{\cu}^{\beta_T}(\mk{M}_0) = \tau^{\beta_T} \mathcal{A}_{\cu}(\mk{M}_0).
\end{align}
For $\Gamma^{u/s}_0$ defined in \eqref{eq: definition, Gamma u/s 0}, we define $\mathcal{A}_{\cu}^{\beta_T}(\Gamma^{u/s}_0)$ in the same manner. 

For $\mk{q} \in \mk{X}$, let $\mk{U} \subset \mk{X}$ be an open neighborhood of it in $\mk{X}$, then let $\mk{U} \times (-1,1)$ be a tubular neighborhood of $\mk{U}$  in ${}^{\cu}S^*_{\partial M}M$. Extend this (by taking product with $[0,\infty)_\tau$) to a tubular neighborhood $([0,\infty)_\tau \times \mk{U}) \times (-1,1)$ of $[0,\infty)_\tau \times \mk{U} \subset \mk{M}_0$ in ${}^{\cu}S^*M$.

Finally we define graphs of functions in $\mathcal{A}_{\cu}^{\beta_T}(\mk{M}_0)$ (resp. $\mathcal{A}_{\cu}^{\beta_T}(\Gamma^{u/s}_0)$) as the set 
(only take the part near $\tau=0$, so that the function value is in $(-1,1)$) with the last component in the tubular neighborhood of $\mk{M}_0$ (resp. $\Gamma^{u/s}_0$) being the function value. Now we are ready to restate results in \cite[Section~4]{hintz2021normally}:
\begin{thm} \label{thm: structure of perturbed dynamics}
\cite[Theorem~4.3, Lemma~4.6]{hintz2021normally} 
Let $\Gamma = \Gamma_0 \cap \{ \tau=0 \}$ (with $\Gamma_0$ defined in \eqref{eq: Gamma 0}) be the trapped set and time infinity and $\mathsf{H}=(H_pt)^{-1}H_p$ be the rescaled Hamilton vector field that has unit speed in $t$. 
Then in a neighborhood of $\Gamma$, we have:
\begin{enumerate}
\item $\mk{M}$ is the graph of a function in $\mathcal{A}_{\cu}^{\beta_T}(\mk{M}_0)$ over $\mk{M}_0$.

\item There exists sets $\Gamma^{u/s}$, to which $\mathsf{H}$ is tangent, that are graphs of functions in $\mathcal{A}_{\cu}^{\beta_T}(\Gamma^{u/s}_0)$ over $\Gamma^{u/s}_0$. In particular, $\Gamma^{u/s} \cap \{\tau = 0\} = \Gamma^{u/s}_0 \cap \{\tau = 0\}$ and $\Gamma^{u/s}_0$ are the stationary extensions of $\Gamma^{u/s} \cap \{\tau = 0\}$.

\item With $\mathcal{A}_{\cu}^{\beta_T}({}^{\cu}S^*M)$ defined similar to Definition \ref{eq: defn, M0 cusp-conormal}, we have defining functions $\phi^{u/s} \in (\mathcal{C}^\infty+\mathcal{A}_{\cu}^{\beta_T})({}^{\cu}S^*M)$ of $\Gamma^{u/s}$ in $\Sigma \cap {}^{\cu}S^*M$ that satisfies assumption \eqref{assumption1}-\eqref{assumption5},\eqref{assumption6'}.
\end{enumerate}

\end{thm}

\begin{rmk}
As pointed out in the proof of \cite[Theorem~4.3]{hintz2021normally}, $\Gamma^s$ is unique while $\Gamma^u$ is not necessarily unique.
The stated results above also used \cite[Proposition~4.1]{hintz2021normally} for the exact Kerr metric, but that and the exact Kerr-de Sitter case have been included in Proposition \ref{prop: trappedset}.
\end{rmk}

\subsection{Construction of the symplectomorphism}
\label{sec: SP construction}
The pseudodifferential algebra we constructed exploits the fact that the defining functions of the stationary extension of the unstable manifold is `$x_1$'.

In this section we construct a homogeneous symplectomorphism mapping $\mathcal{U}$, a conic neighborhood of $\Gamma$, to a conic set in ${}^{\mathrm{cu}}T^*\tilde{M}$. The key property of it is that it sends $\Gamma^{u}_0$, the unstable manifold on the exact Kerr(-de Sitter) spacetime, which is also the stationary extension of $\Gamma^u \cap \{\tau=0\}$, to $\{x_1=0\}$, and we will use this to reduce estimates on asymptotically Kerr(-de Sitter) spacetimes to the model case satisfying this property. 

Define our `model manifold' to be
\begin{align}
\tilde{M}^\circ:=\R_{x_1} \times \mathbb{S}^2 \times \R_{x_4}. \label{model_defn}
\end{align}
Set $\tilde{X}:=\R_{x_1} \times \mathbb{S}^2$ and use $ x_2 \in [0,\pi], x_3 \in \mathbb{S}^1$ as spherical coordinates.
Similar to how we obtain $M$ from $M^\circ$, we define $\tilde{M}$ to be $\tilde{M}^\circ$ compactified by attaching the hypersurface $\{x_4=\infty\}$. Formally, using $x=(x_1,x_2,x_3,x_4)$ as coordinates on $\tilde{M}^\circ$, we set
\begin{align*}
\tilde{M}:= (\tilde{M}^\circ \sqcup (\tilde{X} \times [0,\infty)_{\tilde{\tau}}))/\sim,
\end{align*}
where $\sim$ is the identification: $(x_1,x_2,x_3,x_4) \sim (x_1,x_2,x_3,\tilde{\tau}=x_4^{-1})$. And ${}^{\mathrm{cu}}T^*\tilde{M}$ is equipped with the natural symplectic structure, extending the one on $T^*\tilde{M}^\circ$.



\begin{thm}
There exists a homogeneous symplectomorphism $\Sp$ from $\mathcal{U}$ to ${}^{\mathrm{cu}}T^*\tilde{M}$, such that $\Sp(\Gamma_0^u \cap \mathcal{U}) = \{x_1=0\} \cap \Sp(\mathcal{U}),\, \Sp(\Gamma_0^s \cap \mathcal{U}) = \{\xi_1=0\}\cap\Sp(\mathcal{U})$.
In addition, $\Sp$ preserves the time infinity in the sense that $\Sp(\{\tau=0\})\subset \{\tilde{\tau}=0\}$.
\label{thm_Sp}
\end{thm}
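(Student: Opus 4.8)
The plan is to realize $\Sp$ as a homogeneous Darboux (canonical) coordinate system on a conic neighborhood $\mathcal{U}$ of $\Gamma$, chosen so that its first position function is a homogeneous degree-$0$ defining function of $\Gamma^u$ and its first momentum function vanishes exactly on $\Gamma^s$; the $t$-translation symmetry of the Kerr(-de Sitter) structure is what lets one pin down the coordinate conjugate to $\xi_t$ and thereby control the behaviour at $\tau=0$. Since $g_{\mk{m},\mk{a}}$, and hence the symbol $p=(r^2+\mk{a}^2\cos^2\theta)G$ of Section \ref{application}, is independent of $t$, the function $\xi_t$ is conserved along the $H_p$-flow, and I will set $\xi_4:=\xi_t$. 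The functions used below --- starting with $x_1:=\hat\varphi^u=\xi_t^{-1}\varphi^u$ from Proposition \ref{prop_hpphi}, which is homogeneous of degree $0$ and, since $\xi_t>0$ near $\Gamma$, a defining function of $\Gamma^u$ --- are all $t$-independent, so for them the Poisson bracket on $^{\cu}T^*M$ reduces to the spatial one with $\xi_t$ entering only as a parameter, and in particular $\{x_1,\xi_t\}=0$ automatically.

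By assumptions \ref{assumption4}--\ref{assumption5} (verified in the present setting, cf.\ Propositions \ref{prop_trap} and \ref{prop_hpphi}) one has $\{x_1,\varphi^s\}=\xi_t^{-1}\{\varphi^u,\varphi^s\}\neq0$ on $\Gamma$, so the Hamilton vector field $H_{x_1}$ is transverse to $\Gamma^s=\{\varphi^s=0\}$ in a conic neighborhood of $\Gamma$. I then define $\xi_1$ to be the $H_{x_1}$-flow time measured from $\Gamma^s$, i.e.\ $\xi_1(q)=\sigma$ where $\exp(-\sigma H_{x_1})(q)\in\Gamma^s$. This makes $\{x_1,\xi_1\}=H_{x_1}\xi_1\equiv1$ near $\Gamma$; it makes $\xi_1$ homogeneous of degree $1$, since the Hamilton field of a degree-$0$ function is homogeneous of degree $+1$ with respect to the fiber dilation and $\Gamma^s$ is conic; and it makes $\xi_1$ vanish simply and only on $\Gamma^s$ near $\Gamma$, so $\xi_1$ is a legitimate defining function of $\Gamma^s$. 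One also has $\{\xi_1,\xi_t\}=0$ by $t$-independence.

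The triple $x_1,\xi_1,\xi_t$ now satisfies $\{x_1,\xi_1\}=1$ with all other pairings zero, i.e.\ it is partial canonical data, and I complete it to a homogeneous Darboux coordinate system $(x_1,x_2,x_3,x_4;\xi_1,\xi_2,\xi_3,\xi_4)$ on a conic neighborhood $\mathcal{U}$ of $\Gamma$ by the (homogeneous) Carath\'eodory--Jacobi--Lie completion theorem: first adjoin $(x_2,x_3,\xi_2,\xi_3)$, with $x_i$ of degree $0$ and $\xi_i$ of degree $1$, to a Darboux chart for the symplectic slice $\{x_1=\xi_1=0\}$, then produce $x_4$ conjugate to $\xi_4=\xi_t$ and Poisson-commuting with $x_1,\ldots,\xi_3$. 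Concretely one takes $x_4=t+G$, where $G$ is a function of $x_1,x_2,x_3,\xi_1,\xi_2,\xi_3,\xi_t$ whose first partials are prescribed by the conditions $\{x_4,x_i\}=\{x_4,\xi_i\}=0$; the integrability conditions for this linear system reduce to identities among partials of coordinate functions, so a homogeneous degree-$0$ solution $G$ exists locally, and it extends smoothly to $\{\tau=0\}$ because $x_1,\ldots,\xi_3,\xi_t$ do. Hence $x_4=t+O(1)$ near $\Gamma$, so $\tilde\tau:=x_4^{-1}=\tau/(1+\tau G)$ is smooth up to $\{\tau=0\}$ and vanishes there; this is precisely the mechanism by which $\Sp$ will preserve time infinity.

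Finally I set $\Sp:=(x_1,x_2,x_3,x_4;\xi_1,\xi_2,\xi_3,\xi_4)$, regarded as a map into $^{\cu}T^*\tilde M$ after post-composing with a symplectic lift of a diffeomorphism carrying the $(x_2,x_3)$-range into a spherical coordinate patch of $\mathbb{S}^2$ (a purely local change, harmless after shrinking $\mathcal{U}$). By construction $\Sp$ is a diffeomorphism of $\mathcal{U}$ onto an open conic subset of $^{\cu}T^*\tilde M$ (inverse function theorem, the coordinates being functionally independent near $\Gamma$), homogeneous of degree $1$ (positions degree $0$, momenta degree $1$), symplectic (a canonical coordinate system), sends $\Gamma^u=\{x_1=0\}\cap\mathcal{U}$ to $\{x_1=0\}$ and $\Gamma^s=\{\xi_1=0\}\cap\mathcal{U}$ to $\{\xi_1=0\}$, and satisfies $\Sp(\{\tau=0\})\subset\{\tilde\tau=0\}$. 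The main obstacle is the completion step: it must be carried out simultaneously in the conic category and compatibly with the cusp boundary, i.e.\ ensuring the auxiliary coordinates (above all $x_4$) extend smoothly across $\{\tau=0\}$ with $\tilde\tau$ a genuine boundary defining function --- which is exactly why one insists on $x_4=t+(\text{bounded, }t\text{-independent})$ and uses the conserved $\xi_t$ as the fixed conjugate momentum. A secondary point, already handled above, is that the flow-out construction defines $\Gamma^s$ on the nose (not merely to leading order), which follows from the uniform transversality of $H_{x_1}$ to $\Gamma^s$ near $\Gamma$.
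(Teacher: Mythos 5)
Your opening moves coincide with the paper's: you take $x_1=\hat\varphi^u=\xi_t^{-1}\varphi^u$, define $\xi_1$ as the $H_{x_1}$-flow time to $\Gamma^s$ (the paper's $T^s$, which it even computes in closed form as $\xi_t(r-S_{\xi_t,\xi_\varphi}^{-1}(-\varphi^u/2(1+\hat\alpha)))$), set $\xi_4=\xi_t$, and look for $x_4=t+(\text{bounded, }t\text{-independent})$ so that $\tilde\tau=x_4^{-1}$ is a boundary defining function; all of that is sound and is exactly how Theorem \ref{thm_Sp} is proved.

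The genuine gap is in the completion step. You invoke a local Carath\'eodory--Jacobi--Lie completion and then say the identification of the $(x_2,x_3)$-range with a spherical patch of $\mathbb{S}^2$ is ``a purely local change, harmless after shrinking $\mathcal{U}$.'' But $\mathcal{U}$ cannot be shrunk: it must remain a conic neighborhood of the entire trapped set $\Gamma$, whose base projection contains full $\varphi$-circles (the metric is $\varphi$-independent, so $\varphi$ is free on $\Gamma$) and, for suitable $(\xi_t,\xi_\varphi)$, comes arbitrarily close to the poles $\theta=0,\pi$ where spherical coordinates degenerate. A local Darboux chart therefore does not produce the single map into $^{\mathrm{cu}}T^*\tilde M$ with $\tilde M^\circ=\R\times\mathbb{S}^2\times\R$ that the theorem asserts, and gluing local charts is precisely the nontrivial content. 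The paper resolves this by fixing $x_2=\theta,\ \xi_2=\xi_\theta,\ \xi_3=\xi_\varphi,\ \xi_4=\xi_t$ and solving for the corrections $x_3=\varphi+X_3$, $x_4=t+X_4$ from the overdetermined Hamilton-flow system together with the homogeneity constraint $V_DX_3=V_DX_4=0$, verifying the integrability conditions via Jacobi's identity and Euler's theorem (H\"ormander, Corollary C.1.2), and then checking the global issues explicitly: $X_3$ is independent of $\varphi$ and $t$, so $x_3$ is well defined modulo $2\pi$ and the map is injective, and the degenerate circles $\theta=0,\pi$ are sent to the degenerate circles $x_2=0,\pi$, so $\Sp$ is smooth across them. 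None of this is supplied by the black-box ``homogeneous CJL'' statement you cite (which is not a standard result in the conic-plus-cusp-boundary category); your assertion that ``the integrability conditions reduce to identities'' is exactly what must be, and in the paper is, verified, including the homogeneity equations that force $X_4$ to be bounded so that time infinity is preserved. So the architecture of your proof is right for $x_1,\xi_1,\xi_4,x_4$, but the $\mathbb{S}^2$/periodicity bookkeeping and the integrability verification are missing, and the ``shrink $\mathcal{U}$'' escape is not available.
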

\begin{proof}
We define $\Sp$ on the coordinate patch $\R_t \times X$ and then show that $|t-x_4|$ is bounded, so that the sets $\{t=\infty\}$ and $\{x_4=\infty\}$ coincide. Taking $t\to\infty$ gives the extension of $\Sp$ down to $\{\tau=0\} \subset{}^{\mathrm{cu}}T^*M$.
For $z_0 = (\tau=t^{-1},r,\varphi,\theta) \in \mathcal{U}$, we define $\Sp(z_0)$ by defining its components $(x_1,x_2,x_3,x_4,\xi_1,\xi_2,\xi_3,\xi_4)$. $\Sp$ is a homogeneous symplectomorphism if and only if 
\begin{align}
\begin{split}
& \{\Sp^*(x_i),\Sp^*(x_i)\}=\delta_{ij},\, \{\Sp^*(\xi_i),\Sp^*(\xi_j)\}=\delta_{ij},\\
& \{\Sp^*(x_i),\Sp^*(\xi_j)\}=0,\, \tilde{G}_{t_d} \circ \Sp =\Sp \circ G_{t_d},\label{symp_condition}
\end{split}
\end{align}
where the Poisson brackets are with respect to the natural symplectic structure on ${}^{\mathrm{cu}}T^*M$. $G_{t_d}$, resp. $\tilde{G}_{t_d}$ are dilating fiber parts by $t_d \in \R$ on ${}^{\mathrm{cu}}T^*M$, resp. ${}^{\mathrm{cu}}T^*\tilde{M}$. In the following discussion, we use $x_i,\xi_j$ to donote their pull-back by $\Sp$ when there is no confusion. Define $x_1$ by
\begin{align*}
x_1:=\hat{\varphi}^u = (\xi_t^2+\xi_\varphi^2)^{-1/2}(\xi_r - \sgn(r-r_{\xi_t,\xi_\varphi})(1+\hat{\alpha})\sqrt{ \frac{F_{\xi_t,\xi_\varphi}(r)-F_{\xi_t,\xi_\varphi}(r_{\xi_t,\xi_\varphi})}{\Delta(r)} } ),
\end{align*}
which is the normalized defining function of $\Gamma_0^u$. 

Since $H_{\hat{\varphi}^u}\hat{\varphi}^s>0$ in a neighborhood $\mathcal{U}$ of $\Gamma$ by Proposition \ref{prop_hpphi}, we know that starting from each point $q=(t,r,\theta,\varphi,\xi_t,\xi_r,\xi_\theta,\xi_\varphi)$ in $\mathcal{U}$, the $H_{\hat{\varphi}^u}$-flow  has exactly one intersection with $\{\hat{\varphi}^s=0\}$ and denote it by $q_0=(t_0,r_0,\theta,\varphi_0,\xi_{t},\xi_{r0},\xi_{\theta},\xi_{\varphi})$, where we used the fact that $\xi_t,\xi_\theta,\xi_\varphi,\theta$ are constants along $H_{\hat{\varphi}^u}$-flow.
Define $\xi_1$ to be the time needed for traveling from $p$ to $p_0$ along the $H_{\hat{\varphi}^u}$-flow:
\begin{align*}
\xi_1:=T^s(q)=\frac{r-r_0}{H_{\hat{\varphi}^u}r}=(\xi_t^2+\xi_\varphi^2)^{1/2}(r-r_0).
\end{align*}
Since $\xi_1$ is defined as travel time to $\Gamma_0^s$, hence $\Gamma_0^s$ is
sent to $\{\xi_1=0\}$.
Since $\hat{\varphi}^u$ is preserved by $H_{\hat{\varphi}^u}$-flow, we know
\begin{align}
\xi_{r0}-\xi_r =(1+\hat{\alpha})S_{\xi_t,\xi_\varphi}(r_0)-(1+\hat{\alpha})S_{\xi_t,\xi_\varphi}(r). \label{xi1}
\end{align}
Since $p_0\in \Gamma_0^s$, we know
\begin{align*}
\xi_{r0}=-(1+\hat{\alpha})S_{\xi_t,\xi_\varphi}(r_0),
\end{align*}
where $S_{\xi_t,\xi_\varphi}$ is defined in \eqref{eq: S ksit, ksiphi definition}. 
Thus 
\begin{align}
2\xi_{r_0} = \xi_r - (1+\hat{\alpha})S_{\xi_t,\xi_\varphi}(r) = {\varphi}^u(q). \label{xi2}
\end{align}
Combining (\ref{xi1}) and (\ref{xi2}), we obtain
\begin{align*}
S_{\xi_t,\xi_\varphi}(r_0) & =\frac{\xi_{r0}-\xi_r}{1+\hat{\alpha}} + S_{\xi_t,\xi_\varphi}(r)\\
                  & =\frac{\xi_r - (1+\hat{\alpha})S_{\xi_t,\xi_\varphi}(r)}{2(1+\hat{\alpha})}-\frac{\xi_r}{1+\hat{\alpha}}+S_{\xi_t,\xi_\varphi}(r)\\
                  & = -\frac{\xi_r - (1+\hat{\alpha})S_{\xi_t,\xi_\varphi}(r)}{2(1+\hat{\alpha})}.
\end{align*}
Thus
\begin{align}
r_0 = S_{\xi_t,\xi_\varphi}^{-1}(-\frac{{\varphi}^u(q)}{2(1+\hat{\alpha})}).            \label{r0}
\end{align}
Consequently
\begin{align}
T^s(q)=(\xi_t^2+\xi_\varphi^2)^{1/2}(r-S_{\xi_t,\xi_\varphi}^{-1}(-\frac{{\varphi}^u(q)}{2(1+\hat{\alpha})}) ).  \label{Ts}
\end{align}
By considering what coordinates $x_1=\hat{\varphi}^{u}$ and $\xi_1=T^s(q)$ depend on (namely, $r,\xi_t,\xi_\varphi,\xi_r$), we can choose
\begin{align*}
x_2=\theta,\xi_2=\xi_\theta, \xi_3=\xi_\varphi,\xi_4=\xi_t.
\end{align*}
So $\theta=0,\pi$ are sent to $x_2=0,\pi$ respectively. Those $\mathbb{S}^1_\varphi$ degenerate to a single point in $M$ correspond to those $\mathbb{S}^1_{x_3}$ degenerate to a single point in $\tilde{M}$, thus $\Sp$ is smooth and well defined on $M$. Next we try to find $X_3$ so that
\begin{align*}
x_3=\varphi+X_3(x,\xi)
\end{align*}
satisfies (\ref{symp_condition}), where we are interpreting $\mathbb{S}^1_\varphi$ as $\R_\varphi/2\pi \Z$ and $x_3$ is also parametrizing $\mathbb{S}^1_{x_3}$, i.e. $\R_{x_3}/2\pi\Z$. If two points have the same $x_3$-coordinates, but with different $\varphi,X_3$ respectively, then their $x_1=\hat{\varphi}^u,\xi_1=T^s$ coordinates are different as well because of (\ref{X3}) below. On the other hand, when we choose two different representatives of $\varphi$ for the same point, the $x_3$ components of their image, which is given by $\varphi+X_3$, differ by a multiple of $2\pi$, hence correspond to the same point in $\R_{x_3}/2\pi\Z$. Thus this map is well-defined and injective.
Using (\ref{symp_condition}), $\{\xi_\theta,X_3\}=\{\theta,X_3\}=\{\xi_t,X_3\}=0$ implies that $X_3$ is independent of $\theta,\xi_\theta,t$. $\{\xi_\varphi,\varphi+X_3\}=1$ implies $\{\xi_\varphi,X_3\}$, i.e. $X_3$ is independent of $\varphi$. By remaining equations: $\{\hat{\varphi}^u,\varphi+X_3\}=0,\{T^s,\varphi+X_3\}=0$, the ODE system that $X_3$ satisfies is
\begin{align}
\begin{cases}
H_{T^s}X_3=-\partial_{\xi_\varphi}T^s(q) \\
H_{\hat{\varphi}^u}X_3=-\partial_{\xi_\varphi}(\hat{\varphi}^u)\\
V_DX_3=0,  \label{X3}
\end{cases}
\end{align}
where $V_D=\xi_r\partial_{\xi_r}+\xi_\varphi\partial_{\xi_\varphi}+\xi_t\partial_{\xi_t}$, in which we neglected $\xi_\theta\partial_{\xi_\theta}$ component since it is not involved in the defining function of $\Gamma_0^{u/s}$ and hence not involved in our discussion.

Next we apply \cite[Corollary~C.1.2]{hormander2007analysis}. 
Applying equation $(21.1.6)'''$ of \cite{hormander2007analysis}, we obtain
\begin{align*}
& [H_{\hat{\varphi}^u},V_D]=H_{\hat{\varphi}^u},\\
& [H_{T^s},V_D]=0.
\end{align*}
Combining with the fact that $H_\theta,H_{\xi_\theta},H_{\hat{\varphi}^u},H_{T^s},H_{\xi_\varphi}$ commute with each other, the condition (C.1.2) is satisfied. Next we verify (C.1.4) there:
\begin{align*}
H_{T^s}(\partial_{\xi_\varphi}\hat{\varphi}^u)-H_{\hat{\varphi}^u}(\partial_{\xi_\varphi}T^s)=0,
\end{align*}
which is equivalent to, using $H_\varphi=-\partial_{\xi_\varphi}$,
\begin{align*}
\partial_{\xi_\varphi}H_{T^s}\hat{\varphi}^u+[H_{T^s},-H_\varphi]\hat{\varphi}^u=\partial_{\xi_\varphi}H_{\hat{\varphi}^u}T^s+[H_{\hat{\varphi}^u},-H_\varphi]T^s.
\end{align*}
Since $H_{\hat{\varphi}^u}T^s=1$ by the definition of $T^s$, the first terms on both sides vanish, and this is equivalent to
\begin{align*}
H_{\{T^s,\varphi\}}\hat{\varphi}^u+H_{\{\varphi,\hat{\varphi}^u\}}T^s =0.
\end{align*}
Using Poisson brackets, this is
\begin{align*}
\{\{T^s,\varphi\},\hat{\varphi}^u\}+\{\{\varphi,\hat{\varphi}^u\},T^s\}=0.
\end{align*}
Since $\{ \{\hat{\varphi}^u,T^s \}   ,\varphi\} = \{1,\varphi\}=0$, above equation holds by Jacobi's identity.
Equations in (C.1.4) involving $V_D$ are 
\begin{align*}
& H_{T^s}0-V_D(-\partial_{\xi_\varphi}T^s)=0,\\
& H_{\hat{\varphi}^u}0-V_D(-\partial_{\xi_\varphi}\hat{\varphi}^u)=-\partial_{\xi_\varphi}\hat{\varphi}^u,
\end{align*}
both of which follow from the fact that $\partial_{\xi_\varphi}T^s$ and $\partial_{\xi_\varphi}\varphi$ are homogeneous function of degree 0 and -1 respectively and Euler's Theorem on homogeneous functions. Thus, we can solve the ODE of $X_3$ with prescribed initial condition on $\Theta_1:=\mathcal{U}\cap\{T^s=\hat{\varphi}^u=0, t=t_1,\theta=\theta_1,\varphi=\varphi_1,\xi_\theta=\xi_{\theta 1},|\xi|=1  \}$, where $t_1,\xi_{\theta 1},\varphi_1,\theta_1$ are fixed constants and $|\xi_{\theta 1}|<1$. By Proposition \ref{prop: trappedset}, $T^s=\hat{\varphi}^u=0$ is equivalent to $r=r_{\xi_t,\xi_\varphi},\xi_r=0$, thus $\Theta_1$ is parametrized by $\xi_\varphi$. We set $X_3=0$ on this codimension 7 smooth submanifold.

Using the same method, we construct $X_4$ in $x_4=t+X_4$ such that
\begin{align*}
\begin{cases}
H_{T^s}X_4=-\partial_{\xi_t}T^s(q) \\
H_{\hat{\varphi}^u}X_4=-\partial_{\xi_t}(\hat{\varphi}^u)\\
H_{x_3}X_4=-\partial_{\xi_t}x_3,\\
V_DX_4=0,
\end{cases}
\end{align*}
where other commutation relations are already satisfied. Condition (C.1.2) and (C.1.4) in \cite{hormander2007analysis} are verified in the same manner. We take the first and the third equations as examples. We need to verify
\begin{align*}
H_{x_3}\partial_{\xi_t}T^s(q)-H_{T^s}\partial_{\xi_t}x_3=0.
\end{align*}
This is equivalent to 
\begin{align*}
\{ \{T^s,t\},x_3\}+\{ \{t,x_3\},T^s\}=0,
\end{align*}
which follows from $\{ \{x_3,T^s\},t\}=\{0,T^s\}=0$ and Jacobi's identity. The rest conditions involving $V_D$ are verified in the same manner as when we solve $X_3$. And we can assign initial value of $X_4$ at $\Theta_2:=\mathcal{U}\cap\{\xi_r=0,r=r_{\xi_t,\xi_\varphi}, t=t_1,\theta=\theta_1,\varphi=\varphi_1,\xi_\theta=\xi_{\theta 1},\xi_\varphi=\xi_{\varphi 1},|\xi|=1  \}$ with $|\xi_{\varphi 1}|^2+|\xi_{\theta 1}|^2<1$, which is a single point. We set $X_4=0$ at $\Theta_2$.
Finally, we show that $X_4=x_4-t$ is bounded. Since $X_4$ is homogeneous of degree 0, we consider its value on the unit sphere bundle. Then the variables in the ODE from which we obtain $X_4$ are varying over a bounded region and the right hand sides of those ODEs are bounded as well, thus $X_4$ is bounded.
\end{proof}
Then as discussed before, for asymptotically Kerr(-de Sittter) spacetimes, we have:
\begin{coro}
The homogeneous symplectomorphism $\Sp$ constructed in Theorem \ref{thm_Sp} sends the stationary extension of $\Gamma^{u} \cap \{\tau=0\}$ to $\{x_1=0\}$, and the stationary extension of $\Gamma^{s} \cap \{\tau = 0\}$ to $\{\xi_1=0\}$.
\label{coro_Sp}
\end{coro}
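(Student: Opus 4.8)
The plan is to deduce Corollary~\ref{coro_Sp} directly from Theorem~\ref{thm_Sp} together with the structural stability of the normally hyperbolic trapping under $\mathcal{A}_{\cu}^\alpha$-perturbations established in \cite[Theorem~2.3,2.6,4.3]{hintz2021normally}. Theorem~\ref{thm_Sp} already treats the exact Kerr(-de Sitter) symbol $p_{\mk{m},\mk{a}}$: the homogeneous symplectomorphism $\Sp$ sends $\Gamma^u \cap \mathcal{U}$ to $\{x_1 = 0\} \cap \Sp(\mathcal{U})$ and $\Gamma^s \cap \mathcal{U}$ to $\{\xi_1 = 0\} \cap \Sp(\mathcal{U})$, where $\Gamma^{u/s}$ are cut out by the $t$-independent functions $\varphi^{u/s}$ of Proposition~\ref{prop_trap}, and moreover $\Sp(\{\tau = 0\}) \subset \{\tilde{\tau} = 0\}$. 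The only additional input needed is that the unstable/stable manifolds $\Gamma_g^{u/s}$ associated with the perturbed symbol $p$ in (\ref{psymbol_perturbed}) coincide with $\Gamma^{u/s}$ after restriction to $\{\tau = 0\}$ and passage to stationary extensions.

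First I would record that, by the cited results of \cite{hintz2021normally}, the $H_p$-flow is again normally hyperbolically trapped, with smooth unstable and stable manifolds $\Gamma_g^{u/s}$, and near $\bar{\Gamma}^{u/s}$ each $\Gamma_g^{u/s}$ is the graph of an $\mathcal{A}_{\cu}^\alpha$-function over $\Gamma^{u/s}$. Since $\alpha > 0$ and $\mathcal{A}_{\cu}^\alpha(M) = \tau^\alpha \mathcal{A}_{\cu}(M)$, every such function vanishes at $\{\tau = 0\}$; hence restricting the graph to $\{\tau = 0\}$ yields $\Gamma_g^{u/s} \cap \{\tau = 0\} = \Gamma^{u/s} \cap \{\tau = 0\} = \bar{\Gamma}^{u/s}$. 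Equivalently, $\tilde{g} \in \mathcal{A}_{\cu}^\alpha$ forces $g|_{\tau = 0} = g_{\mk{m},\mk{a}}|_{\tau = 0}$, hence $p|_{\tau = 0} = p_{\mk{m},\mk{a}}|_{\tau = 0}$, so the defining functions of $\bar{\Gamma}^{u/s}$ — which depend only on the boundary symbol — are unchanged by the perturbation.

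Next, because $\varphi^{u/s}$ are $t$-independent, $\Gamma^{u/s}$ is itself the stationary extension $[0,\infty)_\tau \times \bar{\Gamma}^{u/s}$ of $\bar{\Gamma}^{u/s}$. Combining this with the previous step, the stationary extension of $\Gamma_g^{u} \cap \{\tau = 0\}$ equals $\Gamma^{u}$, and similarly the stationary extension of $\Gamma_g^{s} \cap \{\tau = 0\}$ equals $\Gamma^{s}$. Feeding this back into Theorem~\ref{thm_Sp} shows that $\Sp$ maps the former to $\{x_1 = 0\} \cap \Sp(\mathcal{U})$ and the latter to $\{\xi_1 = 0\} \cap \Sp(\mathcal{U})$, which is precisely the assertion; the compatibility $\Sp(\{\tau = 0\}) \subset \{\tilde{\tau} = 0\}$ from Theorem~\ref{thm_Sp} ensures that the time-infinity boundary faces on the two sides are matched correctly.

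Beyond Theorem~\ref{thm_Sp} and the perturbation theory of \cite{hintz2021normally}, there is no genuine obstacle here — the corollary is essentially a bookkeeping step that transfers the time-infinity structure through $\Sp$. The one point demanding a little care is the passage from ``$\Gamma_g^{u/s}$ is an $\mathcal{A}_{\cu}^\alpha$-graph over $\Gamma^{u/s}$'' to the equality of the $\{\tau = 0\}$ traces and thence of the stationary extensions; this is exactly where the hypothesis $\alpha > 0$ is used, and it is also what makes $\Sp^*(x_1)$ a legitimate defining function of the stationary extension of $\Gamma_g^{u} \cap \{\tau = 0\}$ in the sense required by the calculus of Section~\ref{sec_microlocal_analysis}, so that Theorem~\ref{thm_kds_est} applies to $P$ with symbol $p$.
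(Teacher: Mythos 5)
Your proposal is correct and follows essentially the same route as the paper: invoke the cited Hintz results to get $\Gamma_g^{u/s}$ as $\mathcal{A}_{\cu}^\alpha$-graphs over $\Gamma^{u/s}$, use $\alpha>0$ to conclude the graph functions vanish at $\tau=0$ so $\Gamma_g^{u/s}\cap\{\tau=0\}=\bar\Gamma^{u/s}$, observe the $t$-independence of $\varphi^{u/s}$ makes $\Gamma^{u/s}$ the stationary extension of $\bar\Gamma^{u/s}$, and then feed this into Theorem~\ref{thm_Sp}. Your writeup makes the role of $\alpha>0$ slightly more explicit than the paper's own terse discussion, but the argument is identical in substance.
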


\subsection{Quantize \texorpdfstring{$\Sp$}{Sp}}
Next we apply the Egorov's theorem of conjugating pseudodifferential operators by multi-valued Fourier integral operators to reduce our estimates to the model case. 
The key observation is that, although the Fourier integral operator $T$ itself, which is locally defined, has 
an obstruction to be glued to a global Fourier integral operator, but this obstruction is a transition factor of the form $e^{i\alpha_{ij}}$,
and hence always cancels out if we conjugate by $T$, i.e., multiplying $T,T^*$ simultaneously.
We use a finite open cover $\{\mathcal{U}_j\}_{j\in J}$ of $\mathcal{U}$ so that $\mathcal{U}_i \cap \mathcal{U}_j$ is contractible for $i,j \in J$ and $\{(\pi_M(\mathcal{U}_j),\varphi_j)\}$ is an atlas of $M$. We use $J=\{1,2\}$, which is possible for our $M$. Here we allow half spaces in the model of charts, so that the compactified $\R_t$ only need one chart to cover and the 2 charts are needed for the $\mathbb{S}^2$ component.

We denote the coordinates on $\mathcal{U}_j$ by $(y,\eta)=(t_y,r_y,\varphi_y,\theta_y,\eta_t,\eta_r,\eta_\varphi,\eta_\theta)$ and the coordinates on $\Sp(\mathcal{U}_j)$ by $(x,\zeta)=(x_1,x_2,x_3,x_4=\tilde{\tau}^{-1},\zeta_1,\zeta_2,\zeta_3,\zeta_4)$, which is valid down to $\tau=0$ when we consider oscillatory integral expression of $T_j$.
We apply \cite[Theorem~10.1]{grigis1994microlocal}. The original proof given is valid for conjugating classical pseudodifferential operators, we will verify that it is valid for cusp calculus in our setting as well after verifying the condition needed to quantize $\Sp|_{\mathcal{U}_j}$ in that theorem.

\begin{prop}  \label{prop: conjugate operator}
Let $\{\chi_j\}_{j\in J}$ be a partition of unity subordinate to $\{\pi_M(\mathcal{U}_j)\}_{j\in J}$, shrinking $\mathcal{U}_j$ if necessary, we can choose a family of Fourier integral operators $T_j$ associated to $\Sp|_{\mathcal{U}_j}$ such that
\begin{enumerate}  \label{Tj_property}
	\item $T_j^*T_j-\Id_M \in \Psi_{\mathrm{cu}}^{-\infty,0}(M)\text{ microlocally on } \mathcal{U}_j,\, T_jT_j^*-\Id_{\tilde{M}} \in \Psi_{\mathrm{cu}}^{-\infty,0}(\tilde{M})$ microlocally on $\Sp(\mathcal{U}_j)$. $T_j$ is elliptic on $\mathcal{U}_j$. \label{property1}
	\item \label{property2} For $\tb{A} \in \Psi_{\mathrm{cu}}^{m,r}(M)$ with principal symbol $\tb{a}$ and $\WF(a) \subset \mathcal{U}_j$, $\tb{B}:=T_j\tb{A}T_j^* \in \Psi_{\mathrm{cu}}^{m,r}(\tilde{M})$ has principal symbol
\begin{align}
\tb{b}=(\Sp^{-1})^*(\tb{a})=\tb{a}\circ(\Sp^{-1}).  \label{b-symbol}
\end{align}
If instead $\tb{A} \in \Psi_{\infty}^{m,r}(M)$, then $\tb{B}:=T_j\tb{A}T_j^* \in \Psi_{\infty}^{m,r}(\tilde{M})$ and its principal symbol is still \eqref{b-symbol}.

    \item \label{property3} $T_k^*T_j-c_{kj}e^{i\alpha_{kj}}\in \Psi_{cu}^{-\infty,0}(M)$ on $\mathcal{U}_j\cap \mathcal{U}_k$, where $c_{kj},\alpha_{kj}\in \R$.
\end{enumerate}
Define the global Fourier integral operator $T$ by 
\begin{align*}
Tu = T_1\chi_1u+c_{21}e^{i\alpha_{21}}T_2\chi_2u ,\quad u \in D'(M).
\end{align*}
Then for $P \in \Psi_{\cu}^{m,0}(M)$, define the glued conjugated $P$ by:
\begin{align}
\tilde{P}:=\sum_{j \in J} T_j(\chi_jP)T_j^*,  \label{Ptilde}
\end{align}
we have $\tilde{P} \in \Psi_{\cu}^{m,0}(\tilde{M})$ and
\begin{align}
\tilde{P}T=TP+R,  \label{PT_eq}
\end{align}
where $R$ is smoothing in the sense that it is a sum of smooth multiples of $T_jR_j$ with $R_j \in \Psi_{\rm cu}^{-\infty,0}(M)$. 

If instead we have $P \in \Psi_{\cu}^{m,0}(M)+\tau^{\beta_T}\Psi_{\infty}^{m,0}(M)$, then with $\tilde{P}$ defined in the same manner above but now
$\tilde{P} \in \Psi_{\cu}^{m}(\tilde{M})+\tilde{\tau}^{\beta_T}\Psi_{\infty}^{m,0}(\tilde{M})$, and \eqref{PT_eq} still holds with $R$ being a a sum of smooth multiples of $T_jR_j$ with 
$R_j \in \Psi_{\cu}^{-\infty,0}(M)+\tau^{\beta_T}\Psi_{\infty}^{-\infty,0}(M)$.
\end{prop}

\begin{proof}
We first prove the theorem for $\tb{A} \in \Psi_{\cu}^{m,r}(M)$ and $P \in \Psi^{m,0}_{\cu}(M)$, and describe modifications needed for $\tb{A} \in \Psi_{\infty}^{m,r}(M)$ and $P$ has a perturbation term in $\tau^{\beta_T}\Psi^{m,0}_\infty(M)$.

The condition we need to verify in order to apply \cite[Theorem~10.1]{grigis1994microlocal} is that $G(\Sp) \subset{}^{\mathrm{cu}}T^*M \times {}^{\mathrm{cu}}T^*\tilde{M}$, the graph of $\Sp$, has a generating function 
in the sense of \cite{grigis1994microlocal}, which means there exists $\varphi(z,{\eta})$ such that $\Sp$
has the form
\begin{align} \label{defn_kds_generating_function}
(\varphi'_{{\eta}}(x,{\eta}),{\eta})  \mapsto  (x,\varphi'_x(x,{\eta})) .
\end{align} 
By \cite[Theorem~5.3]{grigis1994microlocal}, we need to verify that, the projection $\pi_{\Sp}$ from $G(\Sp)$ to $M \times \R^4$, i.e., the $(\tau,r,\theta,\varphi,\zeta_1,\zeta_2,\zeta_3,\zeta_4)$ part is a diffeomorphism. Although the result of Theorem 5.3 there is local, but the proof only relies on the fact that closed differential forms are locally exact, which is true on any contractible region, thus its proof gives a generating function that is `global' on $\mathcal{U} \times {}^{\mathrm{cu}}T^*\tilde{M}$. Since $\zeta_2=\zeta_\theta,\zeta_3=\zeta_\varphi,\zeta_4=\zeta_t=-\zeta_\tau$, and the projection from $G(\Sp)$ to ${}^{\mathrm{cu}}T^*M$, i.e., the $(\tau,r,\theta,\varphi,\zeta_t,\zeta_r,\zeta_\theta,\zeta_\varphi)$ part is a diffeomorphism, we need to verify
\begin{align*}
\frac{\partial \zeta_1}{\partial \zeta_r} \neq 0,
\end{align*}
which is sufficient for $\pi_{\Sp}$ to have full rank. This is straightforward by $\zeta_1=(\zeta_t^2+\zeta_\varphi^2)^{1/2}(r-S_{\zeta_t,\zeta_\varphi}^{-1}(-\frac{{\varphi}^u(q)}{2(1+\hat{\alpha})}))$, ${\varphi}^u=\zeta_r-S_{\zeta_t,\zeta_\varphi}(r)$ and $(S_{\zeta_t,\zeta_\varphi}^{-1})' \neq 0$, where the last condition holds because by (\ref{Sdecomp}), when $r$ is close to $r_{\zeta_t,\zeta_\varphi}$, $S'_{\zeta_t,\zeta_\varphi}(r)$ is continuous and positive, thus $(S_{\zeta_t,\zeta_\varphi}^{-1})'$ is continuous and positive by the inverse function theorem. 

Consequently, we can apply \cite[Theorem~10.1]{grigis1994microlocal} (assuming that it holds for the cusp calculus for the moment), 
which is stated for a small conic region, but its proof goes through on the region where the generating function is valid, hence on $G(\Sp) \subset \mathcal{U} \times {}^{\mathrm{cu}}T^*\tilde{M}$. We obtain a Fourier integral operator $T_j:\, D'(\pi_M(\mathcal{U}_j)) \rightarrow D'(\pi_{\tilde{M}}(\Sp(\mathcal{U}_j)))$ quantizing $\Sp|_{\mathcal{U}_j}$, being elliptic on and microlocally unitary on $\mathcal{U}_j$ in the sense that
\begin{align*}
T_j^*T_j-\Id_M \in \Psi_{\mathrm{cu}}^{-\infty,0}(M)\text{ on } \mathcal{U}_j,\, T_jT_j^*-\Id_{\tilde{M}} \in \Psi_{\mathrm{cu}}^{-\infty,0}(\tilde{M}) \text{ on } \Sp(\mathcal{U}_j).
\end{align*}
The unitary property is not included in \cite{grigis1994microlocal}, but as stated before (2.7) of \cite{hitrik2004non}, we can achieve this by adding a smooth factor to the amplitude. In addition, for $\tb{A} \in \Psi_{\mathrm{cu}}^{m,r}(M)$ with principal symbol $\tb{a}$ and $\WF'(\tb{A}) \subset \mathcal{U}_j$, we know $\tb{B}:=T_j\tb{A}T_j^*\in \Psi_{\mathrm{cu}}^{m,r}(\tilde{M})$. And it has the same symbol modulo $S_{\mathrm{cu}}^{m-2,r}(M)$:
\begin{align}
\tb{b}=(\Sp^{-1})^*(\tb{a})=\tb{a}\circ(\Sp^{-1}) \, \mathrm{mod} \, S_{\mathrm{cu}}^{m-2,r}(M),  \label{improved_egorov}
\end{align}
which is the improved Egorov's property, i.e., their symbols coincide up to the sub-principal level 
and this comes from the argument in \cite[Section~2]{hitrik2004non}. Although the argument there is for the semiclassical case, but the part starting from equation (2.6) there goes through for classical case as well if one understands $O(h^\mu)$ terms as being $\mu$ orders lower in the cusp differential order sense, and if one drops all factors of $h^{-1}$ in the phases. The fact we use is that differences between oscillatory integrals in different patches are purely imaginary. Thus, when we go back to the same point following a closed loop, the product of all these factors, which is the obstruction to compatibility, is a purely imaginary factor. This factor is cancelled when we apply both the Fourier integral operator and its adjoint.

We mention that this property is also discussed in \cite{silva2007accuracy}, and the global version is mentioned in remark i) in Section 1.2. 

Now we verify that \cite[Theorem~10.1]{grigis1994microlocal} applies to the cusp calculus as well. 
In each $\mathcal{U}_j$, we choose $T_j$ of the form
\begin{align} \label{T_integral_form}
T_ju(x) = \int e^{i(\varphi(x,{\eta})- {y} \cdot {\eta})}
t_j(x,y,{\eta})u(z)dyd{\eta} +Ku,
\end{align}
where $K$ is a smoothing operator, and $T_j^*$ has a similar expression. 

Under composition, for $\tb{A} \in \Psi_{\mathrm{cu}}^{m,r}(M)$ with left symbol $\tb{a}$ and $\WF'(\tb{A}) \subset \mathcal{U}_j$, $T\tb{A}T^*$ has an oscillatory integral representation (see \cite[page~109]{grigis1994microlocal})
\begin{align} \label{TAT_integral_form} 
\begin{split}
(T_j\tb{A}T_j^*u)(x) = \int & e^{i(\varphi(x,\eta)-x' \cdot \eta+ x'\cdot \theta-\varphi(y,\theta))} t_j(x,x',\eta)
\tb{a}(y,\varphi'_x(x,\eta))\\
& t^*(x',y,\theta)u(y)dx'd\eta dyd\theta +Ru, \quad R \in \Psi^{-\infty,0}_{\cu,\alpha}(\tilde{M}),
\end{split}
\end{align}
for $u \in \mathcal{C}^\infty(\mathcal{U}_j)$ and we used
$({y}',{\eta}')$ to denote a point on ${}^{\mathrm{cu}}T^*\tilde{M}$.
Applying the stationary phase lemma, its leading contribution 
is from the non-degenerate critical point
$\eta = \theta, x'=\varphi'_\eta(x,\eta)$, and we have
\begin{align*}
(T_j\tb{A}T_j^*u)(x) =  \int e^{i(\varphi(x,\theta) - \varphi(y,\theta))}
c(x,y,\theta) u(y)dyd\theta+Ru,
\end{align*}
where $R \in \Psi^{-\infty,0}_{\cu,\alpha}(\tilde{M})$ and $c(x,y,\theta)$ is a sum of derivatives of $t_j(x,\varphi'_\eta(x,\eta),\theta) 
\tb{a}(y,\varphi'_x(x,\eta))t_j^*(\varphi'_\eta(x,\eta),y,\theta)$
with respect to $(x',\eta)$, multiplied by factors involving derivatives of $\varphi(x,\eta)-x' \cdot \eta+ x'\cdot \theta-\varphi(y,\theta)$
with respect to $(x',\eta)$.
Notice that,
\begin{align*}
\varphi(x,\theta) - \varphi(y,\theta)=(x-y)\Xi(x,y,\theta),
\end{align*}
where $\Xi$ is smooth and homogeneous of degree 1 with respect to $\theta$, and so does its inverse. 
After a change of variable, we have
\begin{align} \label{eq: TjATj, after reduction}
(T_j\tb{A}T_j^*u)(x) = \int e^{i(x-y) \cdot \xi}
\tilde{c}(x,y,\xi)
u(y) |\det \frac{\partial \Xi}{\partial \theta}|^{-1} dyd\xi + Ru,
\end{align}
where $R \in \Psi^{-\infty,0}_{\cu,\alpha}(\tilde{M})$ and $\tilde{c}$ is determined by $\tilde{c}(x,y,\Xi(x,y,\theta))=c(x,y,\theta)$. This has the form of a cusp pseudodifferential operator, as long as we verify that $c(x,{y},\eta)|_{x={y}}$ is a cusp symbol.
This is because  $c$ is a sum of derivatives of $t(x,\varphi'_\eta(x,\eta),\theta) \tb{a}(y,\varphi'_x(x,\eta))t^*(\varphi'_\eta(x,\eta),y,\theta)$ with respect to $(x',\eta)$, which is a cusp symbol if we choose $t_j$ to be smooth in $\tau=t^{-1}$, multiplied by factors involving derivatives of $\varphi(x,\eta)-x' \cdot \eta+ x'\cdot \theta-\varphi(y,\theta)$ with respect to $(x',\eta)$.
By the remark at the end of \cite[Section~9]{grigis1994microlocal}, the generating function is
\begin{align*}
\varphi(x,\eta) = y\cdot \eta|_{G(\Sp)}=\pi_y(\pi_{x,\eta}^{-1}(x,\eta))\cdot \eta.
\end{align*}
Notice that the only part in $y$ that involves $t$ is $x_4=t+X_4$, with $X_4$ being independent of $t$, thus all these extra factors arising from applying the method of stationary phase are bounded under iterated application of $t^2\partial_t$, which is the desired property for a cusp symbol (the boundedness when taking other derivatives is clear).

Let $\{\chi_j\}_{j\in J}$ be a partition of unity subordinate to $\{\pi_M(\mathcal{U}_j)\}_{j\in J}$, shrinking $\mathcal{U}_j$ if necessary, we can choose a family of Fourier integral operators $T_j$ associated to $\Sp|_{\mathcal{U}_j}$ satisfying properties listed in Proposition \ref{Tj_property}.
Define the global Fourier integral operator $T$ by 
\begin{align*}
Tu = T_1\chi_1u+c_{21}e^{i\alpha_{21}}T_2\chi_2u ,\quad u \in D'(M).
\end{align*}
By Property (\ref{property3}) in the statement of the proposition, the two terms here are microlocally equal on $\mathcal{U}_j\cap\mathcal{U}_k$, and since $T_j$ is elliptic on $\mathcal{U}_j$, $T$ is elliptic on $\mathcal{U}$. Notice that $T_1^*T_2T_2^*T_1 = \Id_M$ microlocally on $\mathcal{U}_1\cap\mathcal{U}_2$, thus we know
\begin{align*}
c_{12}=c_{21}^{-1},\alpha_{12}=-\alpha_{21}.
\end{align*}
Direct computation shows
\begin{align*}
T^*T-\Id_M \in \Psi_{\mathrm{cu}}^{-\infty,0}(M)\text{ on } \mathcal{U},\, TT^*-\Id_{\tilde{M}} \in \Psi_{\mathrm{cu}}^{-\infty,0}(\tilde{M}) \text{ on } \Sp(\mathcal{U}).
\end{align*}

Next we prove \eqref{PT_eq} when $P \in \Psi_{\cu}^{m,0}(M)$. For $\tilde{P}=\sum_{j \in J} T_j(\chi_jP)T_j^*$, we have
\begin{align*}
\tilde{P}Tu = & (T_1\chi_1PT_1^*+T_2\chi_2PT_2^*)(T_1\chi_1u+c_{21}e^{i\alpha_{21}}T_2\chi_2)u\\
            = & (T_1\chi_1P\chi_1+T_2\chi_2Pc_{21}e^{i\alpha_{21}}\chi_1+T_1\chi_1PT_1^*c_{21}e^{i\alpha_{21}}T_2\chi_2+ \\
            & c_{21}e^{i\alpha_{21}}T_2\chi_2P\chi_2)u+Ru.
\end{align*}
Since the third term has both $\chi_1,\chi_2$ factor, and on $\mathcal{U}_1\cap\mathcal{U}_2$ we have $T_1^*T_2=c_{12}e^{i\alpha_{12}}+R$. So 
\begin{align*}
T_1\chi_1PT_1^*c_{21}e^{i\alpha_{21}}T_2\chi_2u=T_1\chi_1Pc_{21}c{12}e^{i(\alpha_{21}+\alpha_{12})}\chi_2u=T_1\chi_1P\chi_2u.
\end{align*}
We have 
\begin{align}
\begin{split}
\tilde{P}Tu = & (T_1\chi_1P\chi_1+T_2\chi_2Pc_{21}e^{i\alpha_{21}}\chi_1+T_1\chi_1P\chi_2+c_{21}e^{i\alpha_{21}}T_2\chi_2P\chi_2)u \\
&+Ru\\
            = & (T_1\chi_1+c_{21}e^{i\alpha_{21}}T_2\chi_2)Pu+Ru\\
            = & TPu+Ru,
\end{split}
\label{PTcommute}
\end{align}
where $R$ is a sum of smooth multiples of $T_jR_j$, $R_j \in \Psi_{\cu}^{-\infty,0}(M)$. $R$ has this form because this error is introduced when we replace $T_j^*T_j$ by $\Id_M$ or $T_k^*T_j$ by $c_{kj}e^{i\alpha_{kj}}$ and then compose with factors in the front.
The fact $\tilde{P} \in \Psi_{\cu}^{m,0}(\tilde{M})$ follows from part \eqref{property2} of this proposition. $R$ representing the microlocal errors in different steps may represent different operators, but all of them are infinitely smoothing operators and do not affect estimates.

Next, we consider property \eqref{property2} for $\tb{A} \in \Psi_{\infty}^{m,r}(M)$. The proof is essentially the same, the assumption and conclusion on variables other than $t$ remain the same. For $t$-direction, the only difference being that $\tb{a}$ now is only assumed to be uniformly bounded in $t$ and bounded under iterative of $\partial_t$ instead of $t^2\partial_t$. But we also only need the same property for $\tilde{c}$ in \eqref{eq: TjATj, after reduction}. Since properties of other factors remain the same, thus it is bounded under iterative application of $\partial_t$.

When $P \in \Psi_{\cu}^{m,0}(M)+\tau^{\beta_T}\Psi_{\infty}^{m,0}(M)$, we write it as $P_0+P_1$ with $P_0 \in \Psi_{\cu}^{m,0}(M), P_1 \in \tau^{\beta_T}\Psi_{\infty}^{m,0}(M)$, and then we apply part \eqref{property2} of this proposition to $P_0,\tilde{P}$ respectively to obtain $\tilde{P} \in \Psi_{\cu}^{m,0}(\tilde{M})+ \tilde{\tau}^{\beta_T}\Psi_{\infty}^{m,0}(\tilde{M})$, and similarly for the error term.
\end{proof}

\subsection{Properties of the conjugated operator}
In this part we verify that $\tilde{P}$ in Proposition \ref{prop: conjugate operator} fits into the framework in Section \ref{sec_microlocal_analysis} to Section \ref{sec_commutator}. In particular, the microlocal estimate in Section \ref{sec_commutator} holds with $P$ replaced by $\tilde{P}$ and $v$ replaced by $Tv$. 

First, we prove that weighted Sobolev spaces are preserved under $T$. When an inequality involves norms on both $M$ and $\tilde{M}$, we 
write $H_{\mathrm{cu}}^{s}(\tilde{M}), H_{\mathrm{cu}}^{s}(M)$ to index the norm instead of only writing the orders.
\begin{prop} \label{prop_decay}
Suppose $u \in \tau^\mu H_{\mathrm{cu}}^{s}(M)$ and $\supp u \subset \pi_M(\mathcal{U})$, then $Tu \in \tilde{\tau}^\mu H_{\mathrm{cu}}^s(\tilde{M})$. And conversely if $Tu \in \tilde{\tau}^\mu H_{\mathrm{cu}}^s(\tilde{M})$, then  $u \in \tau^\mu H_{\mathrm{cu}}^{s}(M)$ and 
\begin{align}
\begin{split}
& ||\tilde{\tau}^{-\mu}Tu||_{H_{\mathrm{cu}}^{s}(\tilde{M})} \lesssim ||\tau^{-\mu}u||_{H_{\mathrm{cu}}^s(M)}, \\
& ||\tau^{-\mu}u||_{H_{\mathrm{cu}}^{s}(M)} \lesssim ||\tilde{\tau}^{-\mu}Tu||_{H_{\mathrm{cu}}^s(\tilde{M})}+||\tau^{-\mu}u||_{H_{\mathrm{cu}}^{-N}(M)}. 
\end{split}
\label{est_decay}
\end{align}
Also, we have 
\begin{align}
||\tilde{\tau}^{-\mu}T_ju||_{H_{\mathrm{cu}}^{s}(\tilde{M})} \lesssim ||\tau^{-\mu}u||_{H_{\mathrm{cu}}^s(M)},
\end{align}
for $j=1,2$.
\end{prop}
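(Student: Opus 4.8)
The plan is to reduce both inequalities in (\ref{est_decay}) to the $L^2_{\cu}$--mapping properties of $T$ and $T^*$ and the near--unitarity statements $T^*T-\Id_M\in\Psi_{\cu}^{-\infty,0}(M)$ (on $\mathcal{U}$), $TT^*-\Id_{\tilde M}\in\Psi_{\cu}^{-\infty,0}(\tilde M)$ (on $\Sp(\mathcal{U})$) from the previous subsection, using the Egorov property (Property (\ref{property2}) of Proposition \ref{Tj_property}) to promote $L^2_{\cu}$ to $H^s_{\cu}$.

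First I would remove the weight. By Theorem \ref{thm_Sp} we have $\Sp^*(x_4)=t+X_4$ with $X_4$ homogeneous of degree $0$ and bounded, so $\Sp^*(\tilde\tau^{-\mu})=\tau^{-\mu}(1+\tau X_4)^{\mu}=:\tau^{-\mu}\omega$, where $\omega$ is a positive cusp--conormal function, bounded with bounded inverse near $\{\tau=0\}$ (and both weights are trivial away from $\{\tau=0\}$). Hence the twisted operator $T_\mu:=\tilde\tau^{-\mu}T\tau^{\mu}$ has Schwartz kernel $\tilde\tau(x)^{-\mu}\tau(y)^{\mu}K_T(x,y)$; on the support of $K_T$ the prefactor $\tilde\tau(x)^{-\mu}\tau(y)^{\mu}$ is, after using the phase to eliminate one variable, an amplitude of order $0$ that together with all its cusp derivatives is bounded (this is exactly where boundedness of $X_4=x_4-t$ is used). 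Therefore $T_\mu$ is again a cusp Fourier integral operator of order $0$ and weight $0$ associated with $\Sp$, of precisely the type in the integral representation \eqref{T_integral_form}, and likewise for $\tau^{-\mu}T^*\tilde\tau^{\mu}$. It thus suffices to prove both estimates with $\mu=0$.

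Next, $L^2_{\cu}$ and $H^s_{\cu}$ boundedness. Each $T_j\chi_j$ is a cusp FIO associated with a canonical graph (the cusp structure being benign down to $\{\tau=0\}$ by Theorem \ref{thm_Sp}), hence $L^2_{\cu}$--bounded, so $T$ is too, and from $T^*T=\Id_M+E$ on $\mathcal{U}$ with $E\in\Psi_{\cu}^{-\infty,0}(M)$ one gets $\|Tu\|_{L^2_{\cu}(\tilde M)}^2=\langle T^*Tu,u\rangle\lesssim\|u\|_{L^2_{\cu}(M)}^2$ by $L^2_{\cu}$--boundedness of cusp pseudodifferential operators; the same applies to $T^*$. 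For general $s$, take $\Lambda_s\in\Psi_{\cu}^{s,0}(\tilde M)$ elliptic; by the Egorov property, $Q_s:=T^*\Lambda_s T\in\Psi_{\cu}^{s,0}(M)$ is elliptic and $\Lambda_s T=TQ_s+R'$ with $R'=(TT^*-\Id_{\tilde M})\Lambda_s T$ smoothing on the relevant microsupport, whence
\begin{align*}
\|Tu\|_{H^s_{\cu}(\tilde M)}\approx\|\Lambda_s Tu\|_{L^2_{\cu}(\tilde M)}\lesssim\|TQ_s u\|_{L^2_{\cu}(\tilde M)}+\|u\|_{H^{-N}_{\cu}(M)}\lesssim\|u\|_{H^s_{\cu}(M)}+\|u\|_{H^{-N}_{\cu}(M)},
\end{align*}
which, after reinstating the weight, is the first line of (\ref{est_decay}). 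For the reverse inequality, pick $A\in\Psi_{\cu}^{0,0}(M)$ with $\WF'_{\cu}(A)\subset\mathcal{U}$ and $A$ elliptic on $\WF'_{\cu}(u)$ (this uses that $u$ is microsupported in $\mathcal{U}$, which holds in the application; the piece $(\Id-A)u$ has empty wavefront set and is absorbed into the $H^{-N}_{\cu}$ term). Then $(T^*T-\Id_M)A\in\Psi_{\cu}^{-\infty,0}(M)$, and $TA=(TAT^*)T+R''$ with $TAT^*\in\Psi_{\cu}^{0,0}(\tilde M)$ by Egorov, so
\begin{align*}
\|Au\|_{H^s_{\cu}(M)}\le\|T^*TAu\|_{H^s_{\cu}(M)}+\|(T^*T-\Id_M)Au\|_{H^s_{\cu}(M)}\lesssim\|Tu\|_{H^s_{\cu}(\tilde M)}+\|u\|_{H^{-N}_{\cu}(M)},
\end{align*}
using boundedness of $T^*$ from the previous step; reinstating the weight gives the second line of (\ref{est_decay}).

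The main obstacle is the point just sketched in the second paragraph: checking that the weight--twisted operator $\tilde\tau^{-\mu}T\tau^{\mu}$ is genuinely a cusp FIO of the same order and weight, which rests on the boundedness of $X_4=x_4-t$ furnished by Theorem \ref{thm_Sp} and on the amplitude thus produced being cusp--conormal down to $\{\tau=0\}$. The remaining subtlety is purely bookkeeping of corner errors: $T^*T$ and $TT^*$ equal the identity only modulo $\Psi_{\cu}^{-\infty,0}$ \emph{microlocally} on $\mathcal{U}$, resp.\ $\Sp(\mathcal{U})$, so every reduction must be localised by cutoffs whose errors are harmless because they land in the $\|u\|_{H^{-N}_{\cu}(M)}$ term; beyond that the argument is a routine assembly of the $L^2_{\cu}$ mapping property and the Egorov calculus established in the preceding subsections.
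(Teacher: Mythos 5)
Your proof is correct in outline but takes a genuinely different route from the paper's. The paper works directly with the oscillatory-integral representation of each $T_j$: it performs a partial Fourier transform in the time variable $t_y$, exploits that $X_4$ (and hence the exponential factors $e^{-iX_4(z(x,\eta))\eta_t}e^{i(z_X-y_X)\cdot\eta_X}$) is independent of $t_x$ so these factors commute with $\mathcal{F}_t$, and then reads off the weighted Sobolev regularity from the resulting inverse Fourier transform; the converse is obtained by running the same argument for $T^*$ and using $T^*T=\Id_M+R$. You instead work at the level of the FIO calculus: absorb the weight into the amplitude via the boundedness of $X_4=x_4-t$ (the Kuranishi/Taylor trick at the stationary set of the phase), then obtain $L^2_{\cu}$-boundedness from the microlocal near-unitarity $T^*T-\Id_M,\,TT^*-\Id_{\tilde M}\in\Psi_{\cu}^{-\infty,0}$, and finally lift to $H^s_{\cu}$ via the Egorov property (Property~(\ref{property2})) by conjugating an elliptic $\Lambda_s\in\Psi_{\cu}^{s,0}(\tilde M)$ through $T$. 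Both routes hinge on the same geometric input (boundedness/time-independence of $X_4$ from Theorem~\ref{thm_Sp}), but yours trades the explicit Fourier analysis for a cleaner operator-theoretic reduction.

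Two points you should tighten. First, the step ``the prefactor $\tilde\tau(x)^{-\mu}\tau(y)^{\mu}$ is, after using the phase to eliminate one variable, an amplitude of order $0$'' compresses the Kuranishi expansion; in the cusp setting you should spell out that on the stationary set $y=\varphi'_\eta(x,\eta)$ one has $t_y=x_4-X_4$ with $X_4$ bounded and cusp-conormal, so $(x_4/t_y)^{\mu}$ and all its cusp derivatives are bounded down to $\tilde\tau=0$, and that the off-critical piece is improved by $\partial_\eta$-integration by parts within the cusp amplitude class. Second, you correctly flag that the near-unitarity identities hold only \emph{microlocally} on $\mathcal{U}$, resp.\ $\Sp(\mathcal{U})$, and that the converse therefore requires $u$ to be microsupported in $\mathcal{U}$; this hypothesis is indeed used implicitly in the paper's application and you are right to make it explicit, but as written it weakens the stated converse, so in a final write-up you should either add a microlocalizer to the statement or note that $(\Id-A)u$ is smooth with norm controlled by $\|u\|_{H^{-N}_{\cu}}$ only under that extra microsupport hypothesis.
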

\begin{proof}
For the inequality that $Tu$ is on the left hand side, by the definition of $T$, we only need to prove the same estimates for $T_j,j=1,2$ respectively. In this proof we denote the coordinates on $\mathcal{U}_j$ by $(y,\eta)=(t_y,r_y,\varphi_y,\theta_y,\eta_t,\eta_r,\eta_\varphi,\eta_\theta)$ and the coordinates on $\Sp(\mathcal{U}_j)$ by $(x,\xi)=(x_1,x_2,x_3,x_4=\tilde{\tau}^{-1},\xi_1,\xi_2,\xi_3,\xi_4)$, which is valid down to $\tau=0$ when we consider oscillatory integral expression of $T_j$. First suppose that $u \in \tau^\mu H_{\mathrm{cu}}^{s}(M)$ and $\supp u \subset \pi_M(\mathcal{U}_j)$ and we need to show $T_ju \in \tilde{\tau}^\mu H_{\mathrm{cu}}^s(\tilde{M})$. By the remark at the end of Section 9 of \cite{grigis1994microlocal}, the generating function is
\begin{align*}
\varphi(x,\eta) = y\cdot \eta|_{G(\Sp)}=\pi_y(\pi_{x,\eta}^{-1}(x,\eta))\cdot \eta.
\end{align*}
By the argument in \cite{grigis1994microlocal}, this is valid on the entire $G(\Sp)$ since we have shown that the projection from $G(\Sp)$ to $(x,\eta)$ is a diffeomorphism. $\pi_y(\pi_{x,\eta}^{-1}(x,\eta))$ is the $y$-component of the point on $G(\Sp)$ parametrized by $(x,\eta)$, which is different, actually independent of, the $y$ written in the oscillatory integral. $T_j$ can be written as
\begin{align*}
T_ju(x)=\int \int e^{i(\pi_y(\pi_{x,\eta}^{-1}(x,\eta))-y)\cdot \eta}a(x,y,\eta)u(y)dyd\eta.    
\end{align*}
We choose $a(x,y,\eta)$ to be smooth and uniformly bounded. Denote $\pi_y(\pi_{x,\eta}^{-1}(x,\eta))$ by $z(x,\eta)=(t_z,r_z,\varphi_z,\theta_z)$ and denote components of $y$ by $(t_y,r_y,\varphi_y,\theta_y)$, then
\begin{align}
T_ju(x)=\int \int e^{i(z(x,\eta)-y)\cdot \eta} a(x,y,\eta) u(y)
dt_ydr_yd\varphi_yd\theta_y d\eta.   \label{T_int}
\end{align}
In discussion below, subindex $X$ means function spaces or variables corresponds to variables on $X$ and their dual variables. For example, $y_X=(r_y,\varphi_y,\theta_y)$. Define the Sobolev space $H_{\mathrm{cu},t}^{s_1}H_{\mathrm{cu},X}^{s_2}(M)$ as: $u \in H_{\mathrm{cu},t}^{s_1}H_{\mathrm{cu},X}^{s_2}(M)$ if and only if
\begin{align} \label{defn_decomposed_Sobolev}
\la\eta_t\ra^{s_1} \la \eta_X \ra^{s_2} \hat{u}(\eta) \in L^2(\R^4),
\end{align}
where $\eta_X=(\eta_\theta,\eta_\varphi,\eta_r)$. 
Now suppose $t^\mu u(y) \in H_{\cu}^{s}(M)$, which is equivalent to (since we assume $\supp u \subset \{t \geq 1\}$ for all functions $u$ in our discussion)
\begin{align*}
|t|^\mu  D_t ^{\alpha_1} D_X^{\alpha_2}u \in L_{\mathrm{cu}}^2(M),
\end{align*}
for all $\alpha_1\in \R,\alpha_2 \in \R^{n-1}$ such that $\alpha_1$ and components of $\alpha_2$ have the same sign as $s$ and sum to be $s$. $D_t ^{\alpha_1},D_X^{\alpha_2}$ are understood as the left quantizations of $ \eta_t^{\alpha_1},\eta_X^{\alpha_2}$. Thus, we can take partial Fourier transform in $t$ and obtain
\begin{align}
|\eta_t|^{\alpha_1}D_X^{\alpha_2}\mathcal{F}_tu \in H^\mu_{\eta_t}L_{\mathrm{cu},X}^2(\R_{\eta_t}\times X),
\end{align}
where $H^\mu_{\eta_t}L_{\mathrm{cu},X}^2(\R_{\eta_t}\times X)$ is defined in the same manner as (\ref{defn_decomposed_Sobolev}).
Integrate against $t_y$ in (\ref{T_int}) first, we obtain
\begin{align*}
T_ju(x) = & \int \int e^{i(z(x,\eta)\cdot \eta-y_X\cdot \eta_X)} \mathcal{F}_t(au)(x,\eta_t,y_X,\eta)d\eta_tdXd\eta_X\\
        = & \int \int e^{it_x\eta_t}e^{-iX_4(z(x,\eta))\eta_t}e^{i(z_X-y_X)\cdot\eta_X} \mathcal{F}_t(au)(x,\eta_t,y_X,\eta)d\eta_tdXd\eta_X.
\end{align*}
By the construction process of $X_4$, it is independent of $t_x$, hence we can interchange $e^{-iX_4(z(x,\eta))\eta_t}e^{i(z_X-y_X)\cdot\eta_X}$ and $\mathcal{F}_t$, and multiplying $e^{-iX_4(z(x,\eta))\eta_t}e^{i(z_X-y_X)\cdot\eta_X}$  does not affect both smoothness and integrability of $au$, thus 
\begin{align}
|\eta_t|^{\alpha_1}D_X^{\alpha_2}\mathcal{F}_t(e^{-iX_4(z(x,\eta))\eta_t}e^{i(z_X-y_X)\cdot\eta_X}u) \in H^\mu_{\eta_t}L_X^2(\R_{\eta_t}\times X).
\label{T1_2}
\end{align}
The integration against $\eta_t$ is an inverse Fourier transform evaluated at $t_x$, and by (\ref{T1_2}) we know
\begin{align*}
\la t_x \ra^{\mu} T_ju(x) \in  H_{\mathrm{cu}}^s(\tilde{M}),
\end{align*}
which completes the proof of the first claim. Notice that our procedure above is using the equivalences of norms between different Sobolev spaces, hence we have the first inequality of (\ref{est_decay}):
\begin{align*}
\begin{split}
||\tilde{\tau}^{-\mu}Tu||_{H_{\mathrm{cu}}^{s}(\tilde{M})} \lesssim ||\tau^{-\mu}u||_{H_{\mathrm{cu}}^s(M)}.
\end{split}
\end{align*}
Conversely, the same argument applies to $T^*$, which quantizes $\Sp^{-1}$, and notice that $T^*T=\Id_{M}+R$, with $R \in \Psi_{\mathrm{cu}}^{-\infty,0}(M)$, we obtain the second inequality of (\ref{est_decay}).
\end{proof}
 
Since the symplectomorphism preserves the symplectic form, it preserves equations using Hamiltonian vector fields, hence the normally hyperbolic trapping (more concretely, \eqref{hpphi}) is preserved under the symplectomorphism. Define $\tilde{\Gamma}^u_0=\Sp(\Gamma^u_0), \tilde{\Gamma}^u = \Sp(\Gamma^u),\tilde{\Gamma}^s_0=\Sp(\Gamma^s_0),
\tilde{\Gamma}^s = \Sp(\Gamma^s), \tilde{p}= (\Sp^{-1})^*p.$
Since $\Sp$ is a symplectomorphism, we know $H_{\tilde{p}}=\Sp_*H_p$. 
In addition, the minimal contract/expansion rate $\nu_{\min}$ is unchanged under this symplectomorphism. Applying Proposition \ref{prop: conjugate operator} to $P-P^*$, we know that the bound \eqref{eq: skew p1 bound} on the subprincipal symbol holds for $\tilde{P}$ if $P$ satisfy the same bound.
Summarizing discussion above, we have:
\begin{prop} \label{prop: dynamics after conjugation}
The trapping of the flow of $H_{\tilde{p}}$ is eventually absolutely $r$-normally hyperbolic for every $r$ in the sense of \cite{wunsch2011resolvent}. The unstable and stable manifolds are $\tilde{\Gamma}^u,\tilde{\Gamma}^s$ with defining functions $(\Sp^{-1})^*\phi^{u}, (\Sp^{-1})^*\phi^{u}$, where $\phi^{u/s}$ are given in Theorem \ref{thm: structure of perturbed dynamics}. The stationary extension of their intersection with the time infinity $\{\tau=0\}$ are $\tilde{\Gamma}^{u/s}_0$, which have defining functions $x_1,(\Sp^{-1})^*\hat{\varphi}^s$ respectively, and they satisfy dynamical assumptions \eqref{assumption1}-\eqref{assumption5}, \eqref{assumption6'}, \eqref{assumption7} in Section \ref{sec_assumptions} with $\bar{\phi}^u,\bar{\phi}^s$ replaced by $x_1,(\Sp^{-1})^*\hat{\varphi}^s$ respectively
and $\phi^u,\phi^s$ replaced by $(\Sp^{-1})^*\phi^{u}, (\Sp^{-1})^*\phi^{u}$ respectively. In addition, when ${\tb p}_1$ satisfies \eqref{eq: skew p1 bound}, it also holds for $\tilde{\tb{p}}_1$ (with $\Gamma$ replaced by $\tilde{\Gamma}=\tilde{\Gamma}^u \cap \tilde{\Gamma}^s \cap \{\tau = 0\}$), the subprincipal symbol of $\tilde{P}$.
\end{prop}
Next we prove that wavefront sets of functions and operators are preserved under the action of $\Sp$ and its quantization.
\begin{prop}
\begin{align}
\WF_{\cu}^{s,r}(Tv) = \Sp(\WF_{\cu}^{s,r}(v))              \label{wf_transform}
\end{align}
\label{prop_wf_transform}
\end{prop}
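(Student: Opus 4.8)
The plan is to prove the two inclusions $\WF_{\cu}^{s,r}(Tv) \subseteq \Sp(\WF_{\cu}^{s,r}(v))$ and $\Sp(\WF_{\cu}^{s,r}(v)) \subseteq \WF_{\cu}^{s,r}(Tv)$ separately, using the defining property of $T$ as a quantization of $\Sp$ together with the Egorov-type statement of Proposition \ref{Tj_property}. Recall that $\mathfrak{z} \notin \WF_{\cu}^{s,r}(v)$ means there is $A \in \Psi_{\cu}^{s,r}(M)$ elliptic at $\mathfrak{z}$ with $Av \in L_{\cu}^2(M)$.

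First I would prove $\WF_{\cu}^{s,r}(Tv) \subseteq \Sp(\WF_{\cu}^{s,r}(v))$. Take a point $\mathfrak{w} = \Sp(\mathfrak{z}) \notin \Sp(\WF_{\cu}^{s,r}(v))$, so $\mathfrak{z} \notin \WF_{\cu}^{s,r}(v)$, and pick $A \in \Psi_{\cu}^{s,r}(M)$ elliptic at $\mathfrak{z}$ with $\WF'_{\cu}(A)$ contained in a small neighborhood of $\mathfrak{z}$ inside some chart $\mathcal{U}_j$, and with $Av \in L_{\cu}^2(M)$. Set $\tilde{A} := T_j A T_j^* \in \Psi_{\cu}^{s,r}(\tilde M)$; by Property (\ref{property2}) of Proposition \ref{Tj_property} its principal symbol is $(\Sp^{-1})^*(\sigma(A))$, hence $\tilde{A}$ is elliptic at $\mathfrak{w} = \Sp(\mathfrak{z})$. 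Using $T_j^* T_j = \Id_M + R$ with $R \in \Psi_{\cu}^{-\infty,0}(M)$ (Property (\ref{property1})), I write $\tilde{A}\, T_j v = T_j A (T_j^* T_j) v = T_j A v + T_j A R v$; the first term lies in $L_{\cu}^2(\tilde M)$ since $Av \in L_{\cu}^2$ and Fourier integral operators of the type $T_j$ are $L^2$-bounded (which follows from Proposition \ref{prop_decay} with $s = \mu = 0$), and the second term is in $L_{\cu}^2$ because $R$ is smoothing and $v \in H_{\cu}^{-N,\mu}$ a priori. Finally one replaces $T_j v$ by $Tv$: near $\mathfrak{w}$ the two differ by applying an elliptic microlocal cutoff and by the compatibility relation $T_k^* T_j - c_{kj}e^{i\alpha_{kj}} \in \Psi_{\cu}^{-\infty,0}$ (Property (\ref{property3})), exactly the mechanism used in (\ref{PTcommute}), so $\tilde{A}$ (possibly composed with a cutoff elliptic at $\mathfrak{w}$) applied to $Tv$ lands in $L_{\cu}^2(\tilde M)$, giving $\mathfrak{w} \notin \WF_{\cu}^{s,r}(Tv)$.

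For the reverse inclusion I would run the same argument with the roles of $M$ and $\tilde M$ interchanged, using $T^*$, which quantizes $\Sp^{-1}$ and satisfies $T T^* = \Id_{\tilde M} + R'$ and $T^* T = \Id_M + R$ microlocally on $\Sp(\mathcal{U})$ and $\mathcal{U}$ respectively. Concretely, if $\mathfrak{w} = \Sp(\mathfrak{z}) \notin \WF_{\cu}^{s,r}(Tv)$, choose $\tilde{B} \in \Psi_{\cu}^{s,r}(\tilde M)$ elliptic at $\mathfrak{w}$, microlocalized near $\mathfrak{w}$, with $\tilde{B}\, Tv \in L_{\cu}^2(\tilde M)$; set $B := T^* \tilde{B} T$, which by the improved Egorov property (\ref{improved_egorov}) is in $\Psi_{\cu}^{s,r}(M)$ and elliptic at $\mathfrak{z}$. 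Then $B v = T^* \tilde{B} T v \in L_{\cu}^2(M)$ modulo a smoothing error applied to $v \in H_{\cu}^{-N,\mu}$, using $L^2$-boundedness of $T^*$ and the relation $T^* T = \Id + R$. This yields $\mathfrak{z} \notin \WF_{\cu}^{s,r}(v)$, i.e. $\mathfrak{w} \notin \Sp(\WF_{\cu}^{s,r}(v))$.

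The main obstacle, and the step I would be most careful about, is the bookkeeping around the multi-valued nature of $T$: one must ensure that all compositions $T_j A T_k^*$ with $j \neq k$ produce only the harmless transition factors $c_{jk}e^{i\alpha_{jk}}$ (which cancel when $A$ is microlocalized in a single chart and conjugated symmetrically), and that the microlocal cutoffs introduced when passing between $T$ and the individual $T_j$ do not shrink the region of ellipticity away from the target point. This is the same phenomenon exploited in the proof of (\ref{PTcommute}); here it has to be combined with the a priori regularity $v \in H_{\cu}^{-N,\mu}$ to absorb all smoothing error terms into $L_{\cu}^2$. A secondary technical point is that the weighted wavefront set $\WF_{\cu}^{s,r}$ involves the weight $\tau^{-r}$, so one must check that conjugation by $T_j$ respects the weight — this follows from the fact that $\Sp$ preserves time infinity ($\Sp(\{\tau=0\}) \subseteq \{\tilde\tau=0\}$) and from Proposition \ref{prop_decay}, so the argument above goes through verbatim after the overall reduction to $r = 0$ by replacing $v$ with $\tau^{-r}v$.
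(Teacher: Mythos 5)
Your proof is correct and follows essentially the same route as the paper: conjugate a microlocal cutoff $A$ elliptic at $\mathfrak{z}$ by the Fourier integral operator to get $TAT^*$, invoke the Egorov property (\ref{improved_egorov}) for ellipticity of the conjugate at $\Sp(\mathfrak{z})$, and use the microlocal unitarity $T^*T = \Id_M + R$ to transfer $L^2$-membership. The paper's version is more compressed and does not belabor the compatibility transition factors across charts; your writeup fills in that bookkeeping (invoking Property (\ref{property3}) and the mechanism of (\ref{PTcommute})), which is legitimate extra care but does not change the argument.
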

\begin{proof}
We consider the complement of $\WF_{\cu}^{s,r}(v)$. Suppose $x_0 \notin \WF_{\cu}^{s,r}(v)$, then there is an $A \in \Psi_{\mathrm{cu}}^{0,0}(M)$ such that $Au \in H^{s,r}_{\cu}(M)$ and $A$ is elliptic at $x_0$. Since $T^*T =\Id_M$ microlocally, this is equivalent to the existence of $B \in \Psi_{\mathrm{cu}}^{0,0}(\tilde{M})$ such that $B Tv \in H^{s,r}_{\cu}(\tilde{M})$ with $B=TAT^*$, which is elliptic at $\Sp(x_0)$. This shows $\Sp((\WF_{\cu}^{s,r}(v))^c) \subset (\WF^{s,r}(Tv))^c$. The converse also holds, which implies $\WF^{s,r}(Tv) = \Sp(\WF^{s,r}(v))$. 
\end{proof}

Proposition \ref{prop_decay}, Proposition \ref{prop: dynamics after conjugation} and \eqref{PTcommute} imply that, the estimate (\ref{main}) in which $\tilde{P}$ and $Tv$ playing the role of $P$ and $v$ implies the same estimate for $P$ and $v$:
\begin{thm}
\label{thm_main_app}
For $\lambda$ satisfying \eqref{eq: lambda condition}, $0<\alpha<1$ and $s,N\in \R$, there exists $B \in \Psi_{\mathrm{cu},\alpha}^{0,0,r}(\tilde{M},\tilde{\Gamma}^u)$ which is  elliptic on $\tilde{\Gamma}$ and the front face in blown-up ${}^{\mathrm{cu}}\bar{T}^*\tilde{M}$, and $B_1,G_0 \in \Psi_{\mathrm{cu},\alpha}^{0,0,r}(\tilde{M},\tilde{\Gamma}^u)$ with $\WF_{\mathrm{cu},\alpha}'(B_1) \cap  \tilde{\Gamma}^u \cap \{\tilde{\tau}=0\} = \emptyset, \WF_{\mathrm{cu},\alpha}'(G_0)$ contained in a fixed neighborhood of $\tilde{\Gamma}$ and $G_0$ is elliptic near $\tilde{\Gamma}$, such that:
\begin{align}
\begin{split}
||B Tv||_{s,s,0} \lesssim & ||B_1Tv||_{s+1-\lambda \alpha, s,0} + ||G_0TPv||_{s-m+2-\lambda\alpha,s-m+2-\alpha,0}  \\
& + ||Tv||_{-N,-N,0}. 
\end{split}  
\label{main_app}
\end{align}
\end{thm}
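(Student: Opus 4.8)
The plan is to obtain $(\ref{main_app})$ by applying the abstract microlocal estimate of Section \ref{sec_commutator}, in the form $(\ref{est_thm})$, to the conjugated operator $\tilde{P}$ and the function $Tv$, and then to replace $\tilde{P}Tv$ by $T(Pv)$ via the intertwining identity $(\ref{PT_eq})$.

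First I would check that $\tilde{P}=\sum_{j\in J}T_j(\chi_jP)T_j^*\in\Psi_{\cu}^{2}(\tilde{M})$ satisfies, near $\tilde{\Gamma}$, all of the hypotheses of Section \ref{sec_assumptions} (with $\Gamma^{u/s}$ replaced by $\tilde{\Gamma}^{u/s}$). The dynamical part of these assumptions is supplied by Proposition \ref{prop_symp_norma_trap}: since $\Sp$ is a homogeneous symplectomorphism, $H_{\tilde{p}}=\Sp_*H_p$ and $\Sp$ carries the trapped set and the (stationary extensions of the) un/stable manifolds of $p$ to those of $\tilde{p}$, and by construction sends $\Gamma^u$ to $\{x_1=0\}$ and $\Gamma^s$ to $\{\xi_1=0\}$, so that $x_1$ and $(\Sp^{-1})^*\varphi^s$ are admissible defining functions satisfying $(\ref{hpphi})$. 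The delicate point is the quantitative condition $(\ref{inq2})$ on $\lambda$ and $c$, which involves the normalized subprincipal symbol $\tb{p}_1$ of $\tilde{P}$; here I would use the improved Egorov statement $(\ref{improved_egorov})$ of Proposition \ref{Tj_property}(\ref{property2}), according to which $\tilde{P}$ has principal symbol $(\Sp^{-1})^*p$ and moreover agrees with $P$ up to the subprincipal level after transport by $\Sp^{-1}$. Because $\Sp$ preserves the flow and the trapped set, the constants $\nu_{\min}$, $\nu_{\max}$, $s_{1,\cdots}$ entering $(\ref{inq2})$ are unchanged, so the admissible range of $\lambda$ is the same for $\tilde{P}$ as for $P$. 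I would also observe that $(\ref{est_thm})$ is microlocal near $\tilde{\Gamma}$: it only uses $\tilde{P}$ on $\Sp(\mathcal{U})$ and produces operators $B,B_1,G_0$ with wave front set contained in $\Sp(\mathcal{U})$, so the (otherwise arbitrary) extension of $\tilde{P}$ outside $\Sp(\mathcal{U})$ is irrelevant.

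Then I would invoke $(\ref{est_thm})$ with $(P,\Gamma^u,v)$ replaced by $(\tilde{P},\tilde{\Gamma}^u,Tv)$; the a priori membership $Tv\in H_{\cu,\alpha}^{-N,-N,0}(\tilde{M},\tilde{\Gamma}^u)$ needed to run that argument follows from the mapping property of $T$ in Proposition \ref{prop_decay} together with the inclusion $(\ref{inclusion})$. This yields
\begin{align*}
\|BTv\|_{s,s,0}&\lesssim\|B_1Tv\|_{s+1-\lambda\alpha,s,0}\\
&\quad+c^{-1}\|G_0\tilde{P}(Tv)\|_{s-m+2-\lambda\alpha,s-m+2-\alpha,0}\\
&\quad+\|Tv\|_{-N,-N,0},
\end{align*}
with $B$ elliptic on $\tilde{\Gamma}$ and the front face and $\WF_{\cu,\alpha}'(B_1)\cap\tilde{\Gamma}^u=\WF_{\cu,\alpha}'(G_0)\cap\tilde{\Gamma}^u=\emptyset$. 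Finally, by $(\ref{PT_eq})$ one has $\tilde{P}(Tv)=T(Pv)+Rv$ with $R$ infinitely smoothing, hence $G_0\tilde{P}(Tv)=G_0T(Pv)+G_0Rv$ and $\|G_0Rv\|_{s-m+2-\lambda\alpha,s-m+2-\alpha,0}\lesssim\|v\|_{-N',-N',\mu}$ for every $N'$, which is $\lesssim\|Tv\|_{-N,-N,0}$ by Proposition \ref{prop_decay}; absorbing this into the last summand gives $(\ref{main_app})$. The main obstacle is the first step: reproducing not only the dynamical but also the \emph{quantitative} assumptions of Section \ref{sec_assumptions} for $\tilde{P}$, which is exactly where the subprincipal accuracy of the glued conjugation — and the cancellation of the transition phases $e^{i\alpha_{kj}}$ that makes the globally defined $\tilde{P}=\sum_jT_j(\chi_jP)T_j^*$ well posed — enters in an essential way; everything else reduces to bookkeeping of orders together with the mapping properties already established.
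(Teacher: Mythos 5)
Your proposal follows essentially the same route as the paper: apply the abstract commutator estimate (\ref{est_thm})/(\ref{main}) to $\tilde P$ and $Tv$, then use the intertwining relation $\tilde P T = TP + R$ from (\ref{PT_eq}) to convert $\tilde P(Tv)$ into $T(Pv)$ plus a smoothing remainder that is absorbed into the $\|Tv\|_{-N,-N,0}$ term. If anything you are slightly more careful than the paper's written proof in spelling out why the \emph{quantitative} hypothesis (\ref{inq2}) transfers: the paper merely says $\lambda,c$ are to satisfy (\ref{inq2}) with the sup/inf taken for the subprincipal part of $\tilde P$ over $\Sp(M_{0,2C_1}\cap\bar{\mathcal U}_{2\eta_1})$, whereas you explicitly invoke the sub-principal-level accuracy of (\ref{improved_egorov}) to conclude those constants coincide with the ones for $P$, which is exactly what makes the theorem statement (phrased in terms of $P$'s (\ref{inq2})) consistent.
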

\begin{proof} Let $\alpha, s, N, \lambda$ be as stated above, 
then by Proposition \ref{prop: dynamics after conjugation}, we can apply (\ref{main}) with $\tilde{P},Tv$ being $P,v$ there, we obtain
\begin{align*}
\begin{split}
||B Tv||_{s,s,r} \lesssim & ||B_1Tv||_{s+1-\lambda \alpha, s,r} + ||G_0\tilde{P}Tv||_{s-m+2-\lambda\alpha,s-m+2-\alpha,r}  
\\& + ||Tv||_{-N,-N,r},  
\end{split}  
\end{align*}
where $B \in \Psi_{\mathrm{cu},\alpha}^{0,0,r}(\tilde{M},\tilde{\Gamma}^u)$ which is elliptic near $\tilde{\Gamma}=\tilde{\Gamma}^u\cap \tilde{\Gamma}^s \cap \{\tilde{\tau}=0\}$ and the front face. $B_1,G_0 \in \Psi_{\mathrm{cu},\alpha}^{0,0,r}(\tilde{M},\tilde{\Gamma}^u)$ and $\WF_{\mathrm{cu},\alpha}'(B_1) \cap  \tilde{\Gamma}^u \cap \{\tilde{\tau}=0\} = \emptyset,\WF_{\mathrm{cu},\alpha}'(G_0) \cap {\Gamma}^u = \emptyset$, 
Then we apply (\ref{PTcommute}) to obtain
\begin{align}
\begin{split}
||B Tv||_{s,s,r} \lesssim & ||B_1Tv||_{s+1-\lambda \alpha, s,r} + ||G_0TPv||_{s-m+2-\lambda\alpha,s-m+2-\alpha,r}  
\\&+ ||Tv||_{-N,-N,r}, \label{main2}
\end{split}  
\end{align}
\end{proof}

Combining Proposition \ref{prop_wf_transform} and the microlocal estimate in Theorem \ref{thm_main_app}, we obtain Theorem \ref{thm_propagation_M}.

\section*{Acknowledgements}
The author would like to thank Andr\'as Vasy, who suggested this project and offered helpful discussions and plenty of advice throughout this project.
The author would also like to thank the referee, whose suggestions are very helpful and considerably improved the quality of this paper.

\bibliographystyle{plain}
\bibliography{bib_KdS}

\begin{thebibliography}{10}

\bibitem{sarp2011kds}
Sarp Akcay and Richard~A. Matzner.
\newblock The {K}err–de {S}itter universe.
\newblock {\em Classical and Quantum Gravity}, 28(8):085012, Mar 2011.

\bibitem{andersson2015hidden}
Lars Andersson and Pieter Blue.
\newblock Hidden symmetries and decay for the wave equation on the {K}err
  spacetime.
\newblock {\em Annals of Mathematics}, pages 787--853, 2015.

\bibitem{angelopoulos2018late}
Yannis Angelopoulos, Stefanos Aretakis, and Dejan Gajic.
\newblock Late-time asymptotics for the wave equation on spherically symmetric,
  stationary spacetimes.
\newblock {\em Advances in Mathematics}, 323:529--621, 2018.

\bibitem{jean1986second}
Jean-Michel Bony.
\newblock Second microlocalization and propagation of singularities for
  semi-linear hyperbolic equations.
\newblock In {\em Hyperbolic equations and related topics}, pages 11--49.
  Elsevier, 1986.

\bibitem{casals2021hidden}
Marc Casals and Rita~Teixeira da~Costa.
\newblock Hidden spectral symmetries and mode stability of subextremal
  {K}err(-d{S}) black holes.
\newblock {\em arXiv preprint arXiv:2105.13329}, 2021.

\bibitem{dafermos2021non}
Mihalis Dafermos, Gustav Holzegel, Igor Rodnianski, and Martin Taylor.
\newblock The non-linear stability of the {S}chwarzschild family of black
  holes.
\newblock {\em arXiv preprint arXiv:2104.08222}, 2021.

\bibitem{dafermos2012black}
Mihalis Dafermos and Igor Rodnianski.
\newblock The black hole stability problem for linear scalar perturbations.
\newblock In {\em The Twelfth Marcel Grossmann Meeting: On Recent Developments
  in Theoretical and Experimental General Relativity, Astrophysics and
  Relativistic Field Theories (In 3 Volumes)}, pages 132--189. World
  Scientific, 2012.

\bibitem{dafermos2016decay}
Mihalis Dafermos, Igor Rodnianski, and Yakov Shlapentokh-Rothman.
\newblock Decay for solutions of the wave equation on kerr exterior spacetimes
  {III}: The full subextremal case {$|a|<M$}.
\newblock {\em Annals of mathematics}, pages 787--913, 2016.

\bibitem{de2015diffraction}
Maarten De~Hoop, Gunther Uhlmann, and Andr{\'a}s Vasy.
\newblock Diffraction from conormal singularities.
\newblock {\em Ann. Sci. {\'E}c. Norm. Sup{\'e}r.(4)}, 48(2):351--408, 2015.

\bibitem{duistermaat1972fourier}
Johannes~J. Duistermaat and Lars H{\"o}rmander.
\newblock Fourier integral operators. {II}.
\newblock {\em Acta mathematica}, 128:183--269, 1972.

\bibitem{dy15}
Semyon Dyatlov.
\newblock Asymptotics of linear waves and resonances with applications to black
  holes.
\newblock {\em Comm. Math. Phys.}, 335(3):1445--1485, 2015.

\bibitem{dyatlov2016spectral}
Semyon Dyatlov.
\newblock Spectral gaps for normally hyperbolic trapping.
\newblock In {\em Annales de l'Institut Fourier}, volume~66, pages 55--82,
  2016.

\bibitem{giorgi2024kerrwave}
Elena Giorgi, Sergiu Klainerman, and J{\'e}r{\'e}mie Szeftel.
\newblock Wave equations estimates and the nonlinear stability of slowly
  rotating kerr black holes.
\newblock {\em Pure and Applied Mathematics Quarterly}, to appear.

\bibitem{grafakos2014modern}
Loukas Grafakos.
\newblock {\em Modern fourier analysis}.
\newblock Springer, 2014.

\bibitem{grigis1994microlocal}
Alain Grigis and Johannes Sj{\"o}strand.
\newblock {\em Microlocal analysis for differential operators: an
  introduction}, volume 196.
\newblock Cambridge University Press, 1994.

\bibitem{hafner2021linear}
Dietrich H{\"a}fner, Peter Hintz, and Andr{\'a}s Vasy.
\newblock Linear stability of slowly rotating {K}err black holes.
\newblock {\em Inventiones mathematicae}, 223(3):1227--1406, 2021.

\bibitem{hintz2017resonance}
Peter Hintz.
\newblock Resonance expansions for tensor-valued waves on asymptotically
  kerr--de sitter spaces.
\newblock {\em Journal of Spectral Theory}, 7(2):519--557, 2017.

\bibitem{hintz2021normally}
Peter Hintz.
\newblock Normally hyperbolic trapping on asymptotically stationary spacetimes.
\newblock {\em Probability and Mathematical Physics}, 2(1):71--126, 2021.

\bibitem{hintz2022sharpprice}
Peter Hintz.
\newblock A sharp version of price’s law for wave decay on asymptotically
  flat spacetimes.
\newblock {\em Communications in Mathematical Physics}, 389(1):491--542, 2022.

\bibitem{hintz2018global}
Peter Hintz and Andr{\'a}s Vasy.
\newblock The global non-linear stability of the {K}err--de {S}itter family of
  black holes.
\newblock {\em Acta mathematica}, 220(1):1--206, 2018.

\bibitem{hirsch2006invariant}
Morris~W. Hirsch, Charles~Chapman Pugh, and Michael Shub.
\newblock {\em Invariant manifolds}, volume 583.
\newblock Springer, 2006.

\bibitem{hitrik2004non}
Michael Hitrik and Johannes Sj{\"o}strand.
\newblock Non-selfadjoint perturbations of selfadjoint operators in 2
  dimensions {I}.
\newblock {\em Annales Henri Poincar{\'e}}, 5(1):1--73, 2004.

\bibitem{hormander_propagation}
Lars H\"{o}rmander.
\newblock On the existence and the regularity of solutions of linear
  pseudo-differential equations.
\newblock {\em Enseign. Math. (2)}, 17:99--163, 1971.

\bibitem{hormander2007analysis}
Lars H{\"o}rmander.
\newblock {\em The analysis of linear partial differential operators III:
  Pseudo-differential operators}.
\newblock Springer Science \& Business Media, 2007.

\bibitem{klainerman2020global}
Sergiu Klainerman and J{\'e}r{\'e}mie Szeftel.
\newblock {\em Global Nonlinear Stability of Schwarzschild Spacetime under
  Polarized Perturbations:(AMS-210)}, volume 395.
\newblock Princeton University Press, 2020.

\bibitem{klainerman2023kerr}
Sergiu Klainerman and J{\'e}r{\'e}mie Szeftel.
\newblock Kerr stability for small angular momentum.
\newblock {\em Pure and Applied Mathematics Quarterly}, 19(3):791--1678, 2023.

\bibitem{luk2013null}
Jonathan Luk.
\newblock The null condition and global existence for nonlinear wave equations
  on slowly rotating {K}err spacetimes.
\newblock {\em Journal of the European Mathematical Society}, 15(5):1629--1700,
  2013.

\bibitem{mazzeo1998pseudodifferential}
Rafe Mazzeo and Richard~B. Melrose.
\newblock Pseudodifferential operators on manifolds with fibred boundaries.
\newblock {\em Asian Journal of Mathematics}, 2(4):833--866, 1998.

\bibitem{melrose1996MWC}
Richard~B. Melrose.
\newblock {\em Differential analysis on manifolds with corners}.
\newblock unpublished manuscript, available at
  https://math.mit.edu/~rbm/book.html, 1996.

\bibitem{nonnenmacher2015decay}
St{\'e}phane Nonnenmacher and Maciej Zworski.
\newblock Decay of correlations for normally hyperbolic trapping.
\newblock {\em Inventiones mathematicae}, 200(2):345--438, 2015.

\bibitem{petersen2024wave}
Oliver Petersen and Andr{\'a}s Vasy.
\newblock Wave equations in the kerr--de sitter spacetime: the full subextremal
  range.
\newblock {\em Journal of the European Mathematical Society}, 2024.

\bibitem{price1972}
Richard~H. Price.
\newblock Nonspherical perturbations of relativistic gravitational collapse.
  {I}. {S}calar and gravitational perturbations.
\newblock {\em Phys. Rev. D}, 5:2419--2438, May 1972.

\bibitem{sbierski2015characterisation}
Jan Sbierski.
\newblock Characterisation of the energy of gaussian beams on lorentzian
  manifolds: with applications to black hole spacetimes.
\newblock {\em Analysis \& PDE}, 8(6):1379--1420, 2015.

\bibitem{shen2024kerr}
Dawei Shen.
\newblock Kerr stability in external regions.
\newblock {\em Annals of PDE}, 10(1):9, 2024.

\bibitem{shlapentokh2015quantitative}
Yakov Shlapentokh-Rothman.
\newblock Quantitative mode stability for the wave equation on the {K}err
  spacetime.
\newblock In {\em Annales Henri Poincar{\'e}}, volume~16, pages 289--345.
  Springer, 2015.

\bibitem{silva2007accuracy}
Jorge~Drumond Silva.
\newblock An accuracy improvement in {E}gorov's theorem.
\newblock {\em Publicacions Matem{\`a}tiques}, pages 77--120, 2007.

\bibitem{tataru2013local}
Daniel Tataru.
\newblock Local decay of waves on asymptotically flat stationary space-times.
\newblock {\em American Journal of Mathematics}, 135(2):361--401, 2013.

\bibitem{tataru2011local}
Daniel Tataru and Mihai Tohaneanu.
\newblock A local energy estimate on {K}err black hole backgrounds.
\newblock {\em International Mathematics Research Notices}, 2011(2):248--292,
  2011.

\bibitem{vasysemiclassical}
Aandr\'as Vasy and Jared Wunsch.
\newblock Semiclassical econd microlocalization on a {L}agrangian.

\bibitem{vasy2013microlocal}
Andr{\'a}s Vasy.
\newblock Microlocal analysis of asymptotically hyperbolic and {K}err-de
  {S}itter spaces (with an appendix by {S}emyon {D}yatlov).
\newblock {\em Inventiones mathematicae}, 194(2):381--513, 2013.

\bibitem{vasy2018minicourse}
Andr{\'a}s Vasy.
\newblock A minicourse on microlocal analysis for wave propagation.
\newblock {\em Asymptotic analysis in general relativity}, 443:219--374, 2018.

\bibitem{vasy2021resolvent}
Andr{\'a}s Vasy.
\newblock Resolvent near zero energy on {R}iemannian scattering (asymptotically
  conic) spaces, a {L}agrangian approach.
\newblock {\em Communications in Partial Differential Equations},
  46(5):823--863, 2021.

\bibitem{wald1979note}
Robert~M. Wald.
\newblock Note on the stability of the {S}chwarzschild metric.
\newblock {\em Journal of Mathematical Physics}, 20(6):1056--1058, 1979.

\bibitem{whiting1989mode}
Bernard~F. Whiting.
\newblock Mode stability of the kerr black hole.
\newblock {\em Journal of Mathematical Physics}, 30(6):1301--1305, 1989.

\bibitem{wunsch2011resolvent}
Jared Wunsch and Maciej Zworski.
\newblock Resolvent estimates for normally hyperbolic trapped sets.
\newblock In {\em Annales Henri Poincar{\'e}}, volume~12, pages 1349--1385.
  Springer, 2011.

\bibitem{zilhao2014testing}
Miguel Zilhao, Vitor Cardoso, Carlos Herdeiro, Luis Lehner, and Ulrich
  Sperhake.
\newblock Testing the nonlinear stability of {K}err-{N}ewman black holes.
\newblock {\em Physical Review D}, 90(12):124088, 2014.

\end{thebibliography}

\end{document}